\documentclass[12pt]{amsart}
\usepackage[utf8]{inputenc}
\usepackage[margin=2.5cm]{geometry}
\usepackage{amsmath}
\usepackage{amsfonts}
\usepackage{amssymb}
\usepackage{amsthm}
\usepackage{tikz-cd}
\usepackage{mathtools}
\usepackage{outlines}
\usepackage{hyperref}
\usepackage{verbatim}

\counterwithin{equation}{section}

\theoremstyle{plain}
\newtheorem*{thm*}{Theorem}
\newtheorem{thm}{Theorem}[section]
\newtheorem{prop}[thm]{Proposition}
\newtheorem{lem}[thm]{Lemma}
\newtheorem{cor}[thm]{Corollary}

\theoremstyle{definition}
\newtheorem{qstn}[thm]{Question}
\newtheorem{dfn}[thm]{Definition}

\newtheorem{rem}[thm]{Remark}
\newtheorem{ntt}[thm]{Notation}
\newtheorem{case}{Case}

\makeatletter
    \@addtoreset{case}{thm}
    \@addtoreset{case}{prop}
    \@addtoreset{case}{lem}
    \@addtoreset{case}{cor}
\makeatother

\newcommand{\ZZ}{\mathbb{Z}}

\newcommand{\NN}{\mathbb{N}}

\newcommand{\kk}{\Bbbk}

\newcommand{\II}{\mathbb{I}}

\newcommand{\mc}{\mathcal}

\newcommand{\mb}{\mathbb}

\newcommand{\ang}[1]{\left\langle #1 \right\rangle}

\newcommand{\nonzero}{\setminus \{0\}}

\DeclareMathOperator{\im}{Im}

\DeclareMathOperator{\GKdim}{GKdim}

\DeclareMathOperator{\id}{id}

\DeclareMathOperator{\spn}{span}

\newcommand{\ndo}{\underline{\operatorname{end}}}
\newcommand{\aut}{\underline{\operatorname{aut}}}

\newcommand{\qdet}{\operatorname{det}_q}
\newcommand{\invqdet}{\operatorname{det}_{q^{-1}}}

\DeclareMathOperator{\Inv}{Inv}

\newcommand{\surj}{\twoheadrightarrow}

\newcommand{\wt}{\widetilde}

\makeatletter
    \newcommand{\extp}{\@ifnextchar^\@extp{\@extp^{\,}}}
    \def\@extp^#1{\mathop{\bigwedge\nolimits^{\!#1}}}
\makeatother

\newcommand\restr[2]{{
  \left.\kern-\nulldelimiterspace 
  #1 
  \vphantom{\big|}
  \right|_{#2}
  }}

\newcommand{\etalchar}[1]{$^{#1}$}

\title{Coactions of cocommutative Hopf algebras on skew polynomial rings}
\author{Lucas Buzaglo}
\author{Daniel Rogalski}
\date{}

\address{Department of Mathematics, UC San Diego, La Jolla, CA 92093-0112, USA}
\email{\href{mailto:lbuzaglo@ucsd.edu}{lbuzaglo@ucsd.edu}, \href{mailto:drogalski@ucsd.edu}{drogalski@ucsd.edu}}

\keywords{Hopf algebra, cocommutative, coaction, universal quantum group, Artin--Schelter regular algebra, skew polynomial ring, Manin matrix}
\subjclass[2020]{16T05, 16W50, 16T20}

\begin{document}

\begin{abstract}
    We classify the cocommutative Hopf algebras which coact inner-faithfully on (one-parameter) skew polynomial rings $A_q(n) = \kk\ang{x_1,\dots,x_n}/(x_j x_i - q x_i x_j \mid i < j)$ for $n = 2$ and $3$. As a direct corollary, we obtain a classification of group gradings on two- and three-variable skew polynomial rings, recovering a result of Crawford in the two-variable case. Our results are achieved via Manin's universal coacting Hopf algebra construction, often denoted $\aut(A_q(n))$, by classifying all its cocommutative quotients. We therefore also give an explicit presentation of $\aut(A_q(n))$ for arbitrary $q \in \kk^*$ and $n \in \NN$.
\end{abstract}

\maketitle

\section{Introduction}

The main motivation of this paper (as well as \cite{BuzagloRogalski}) is to understand \emph{quantum group symmetries}, a generalization of the classical work on group actions on polynomial rings. In other words, we are interested in Hopf algebras which (co)act on Artin--Schelter (AS) regular algebras. The most fundamental examples of Hopf algebras are group algebras (which act by automorphisms) and universal enveloping algebras of Lie algebras (which act by derivations). Together, these two classes of Hopf algebras essentially encompass all actions by cocommutative Hopf algebras, by the Cartier--Kostant--Gabriel theorem. Much work has been done on actions by finite-dimensional cocommutative (or more generally, semisimple) Hopf algebras, although many open questions remain. See, for example, \cite{CohenWestreich, KirkmanKuzmanovichZhang, EtingofWalton, ChanWaltonZhang, ChanWaltonWangZhang, ChanKirkmanWaltonZhang} and references therein.

In this paper, we study actions by dual group algebras, which are commutative Hopf algebras. If $G$ is a finite group, an action by the dual group algebra $(\kk G)^*$ on a $\kk$-algebra $A$ is equivalent to a $G$-grading on $A$ \cite[Example 1.6.7]{Montgomery}, where $\kk$ is a field. The invariant ring under the $(\kk G)^*$-action is then easily seen to be $A_e$, the identity component of $A$ under the $G$-grading. We study dual group actions by viewing a $(\kk G)^*$-action as a \emph{coaction} by $\kk G$ (see Definition \ref{dfn:coaction}).

Of main interest are coactions by noncommutative Hopf algebras, called \emph{genuine} coactions in \cite{EtingofGoswamiMandalWalton}; see also \cite{KramerOni}. Indeed, a Hopf algebra is said to admit \emph{no quantum symmetry} when it does not admit inner-faithful coactions by noncommutative Hopf algebras. Coactions on two-dimensional AS regular algebras are reasonably well understood -- see, for example, \cite{WaltonWang, RaedscheldersVanDenBergh2}. Indeed, the classification of $\kk G$-coactions (or $G$-gradings) in this case is complete: the only two-dimensional AS regular algebra which admits an inner-faithful coaction by the group algebra of a nonabelian group is the quantum plane $\kk\ang{x,y}/(xy + yx)$, graded by the nonabelian group $\Gamma \coloneqq \ang{f,g \mid f^2 = g^2}$ \cite[Theorem 1.1]{Crawford}. Some work has been done on coactions on higher-dimensional AS regular algebras \cite{KirkmanKuzmanovichZhang2, GoetzKirkmanMooreVashaw}, but no such classification exists in dimensions three and higher.

In this paper, we focus on three-dimensional AS regular algebras, with the goal of generalizing Crawford's result classifying group gradings on two-dimensional AS regular algebras. One advantage of our approach through coactions it that we can use Manin's \emph{universal coacting Hopf algebra} construction \cite{ManinBook}: given a quadratic algebra $A$, there is a Hopf algebra $\aut(A)$ which (right) coacts on it in a universal way, preserving the $\NN$-grading of $A$ (see Definition \ref{dfn:Manin}). If we assume that $A$ has a \emph{faithful} $G$-grading (see Definition \ref{dfn:grading}) which refines its $\NN$-grading, then $\aut(A)$ surjects onto $\kk G$. Therefore, we can classify all the group gradings of $A$ by classifying the group algebras which arise as quotients of $\aut(A)$.

Notice that a Hopf algebra is a group algebra if and only if it is cocommutative and is spanned by its group-like elements. Given this observation, our strategy is as follows.
\begin{enumerate}
    \item Classify the cocommutative Hopf quotients of $\aut(A)$.\label{obj:intro cocommutative}
    \item Given a cocommutative Hopf quotient of $\aut(A)$, determine when it is spanned by its group-like elements.\label{obj:intro group algebra}
\end{enumerate}
Step \eqref{obj:intro group algebra} is straightforward, so the main difficulty lies in step \eqref{obj:intro cocommutative}. To find cocommutative quotients of $\aut(A)$, we use the well-known fact that a cocommutative Hopf algebra is involutive, meaning its antipode has order two. Therefore, step \eqref{obj:intro cocommutative} is split into two parts: first, we \emph{involutize} $\aut(A)$ by taking the quotient by the Hopf ideal $(S^2(x) - x \mid x \in \aut(A))$ (see Definition \ref{dfn:involutization}), and then we find cocommutative quotients of the involutization of $\aut(A)$.

For simplicity, we restrict our attention to the \emph{(one-parameter) skew polynomial rings} over an algebraically closed field $\kk$ of characteristic zero:
$$A_q = A_q(n) \coloneqq \frac{\kk\ang{x_1,\dots,x_n}}{(x_j x_i - q x_i x_j \mid i < j)},$$
where $q \in \kk^*$. First, we describe the universal coacting Hopf algebra $\aut(A_q)$. Although this Hopf algebra has been studied before, it is difficult to find the explicit presentation of $\aut(A_q)$ that we need in the literature for arbitrary $q \in \kk^*$ and an arbitrary number of variables. See, for example, \cite[Example 9.6]{ManinBook}, \cite[Lemma 5.1]{ChanWaltonZhang}, \cite[Appendix A]{RaedscheldersVanDenBergh}, \cite{RaedscheldersVanDenBergh2}, and \cite{ChirvasituWaltonWang}. Therefore, our first main result is a description of $\aut(A_q)$. In the theorem below, the \emph{quantum determinant} of a $k \times k$ matrix $P = (p_{ij})$ is defined by
$$\qdet(P) \coloneqq \sum_{\sigma \in S_k} (-q)^{-I(\sigma)} p_{1 \sigma(1)} \dots p_{k\sigma(k)},$$
where $I(\sigma)$ is the inversion number of $\sigma$, and $P_{\hat\imath \hat\jmath}$ is the $(k - 1) \times (k - 1)$ matrix obtained by removing row $i$ and column $j$ from the matrix $P$.
 
\begin{thm}[Theorem \ref{thm:description of universal Hopf algebra}]
    Let $q \in \kk^*$. The Hopf algebra $\aut(A_q)$ is generated by $x_{ij}$ for $1 \leq i,j \leq n$ and $D^{-1}$, subject to the following relations:
    \begin{gather*}
        x_{ik}x_{ij} = q x_{ij}x_{ik} \quad (k > j), \\
        ad - q^{-1}bc = \qdet\begin{pmatrix}
            a & b \\
            c & d
        \end{pmatrix} = da - q cb, \\
        \sum_{k = 1}^n (-q)^{k - i} \qdet(X_{\hat k \hat\imath}) D^{-1} x_{k j} = \delta_{ij}, \\
        D^{-1} D = D D^{-1} = 1,
    \end{gather*}
    for all $i,j,k \in \{1,\dots,n\}$, and all $2 \times 2$ sub-matrices $\begin{pmatrix}
        a & b \\
        c & d
    \end{pmatrix}$ of the matrix $X \coloneqq (x_{ij})$, where $D \coloneqq \qdet(X)$ is the quantum determinant of $X$. The Hopf structure of $\aut(A_q)$ is given by
    \begin{align*}
        \Delta(x_{ij}) &= \sum_{k = 1}^n x_{ik} \otimes x_{kj}, \\
        \varepsilon(x_{ij}) &= \delta_{ij}, \\
        S(x_{ij}) &= (-q)^{j - i} \qdet(X_{\hat{\jmath}\hat{\imath}})D^{-1}.
    \end{align*}
\end{thm}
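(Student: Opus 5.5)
We first pin down the bialgebra $\ndo(A_q)$ from Manin's construction, and then identify $\aut(A_q)$ as its Hopf envelope. The Koszul dual of $A_q$ is $A_q^! = \kk\ang{y_1,\dots,y_n}/(y_i^2,\; y_iy_j + q^{-1}y_jy_i \mid i<j)$, a quantum exterior algebra. Manin's bialgebra $\ndo(A_q)$ is generated by $x_{ij}$ with $\Delta(x_{ij}) = \sum_k x_{ik}\otimes x_{kj}$. Its relations are exactly those forcing the map $x_i \mapsto \sum_j x_j \otimes x_{ji}$ (with the appropriate side convention) to extend to an algebra map $A_q \to A_q \otimes \ndo(A_q)$, together with the dual condition coming from $A_q^!$. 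Concretely, expanding $\rho(x_j)\rho(x_i) - q\rho(x_i)\rho(x_j)$ in the PBW basis $\{x_ax_b\}_{a\le b}$ of $A_q\otimes$ and setting coefficients to zero gives:
\begin{itemize}
\item for $a=b$, the relations $x_{ik}x_{ij} = qx_{ij}x_{ik}$ when $k>j$ (after matching the index convention);
\item for $a<b$, relations of the form $da - qcb = q(\dots)$.
\end{itemize}
Doing the same with the Koszul dual gives the mirrored relations. Combining the two families and simplifying yields precisely the commutation relations in the statement together with $ad - q^{-1}bc = da - qcb$. This step is bookkeeping, and I would do it carefully for a single $2\times2$ minor, since the general case only involves two rows and two columns at a time.

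Next, we show that $D=\qdet(X)$ is group-like and that the quantum Laplace expansions hold in $\ndo(A_q)$. Namely, with $\wt S_{ij} := (-q)^{j-i}\qdet(X_{\hat\jmath\hat\imath})$, we want
\[
\sum_k x_{ik}\wt S_{kj} = \delta_{ij}D \quad\text{and}\quad \sum_k \wt S_{ik}x_{kj} = \delta_{ij}D.
\]
The cleanest route is to use the quantum exterior algebra. The top component $(A_q^!)_n$ is one-dimensional, and the induced coaction on it is $y_1\cdots y_n \mapsto y_1\cdots y_n \otimes D$, which makes $D$ group-like. The coaction on $(A_q^!)_{n-1}\otimes (A_q^!)_1 \to (A_q^!)_n$ then encodes the Laplace identities via the comatrix of minors. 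This is the standard argument for $\mathcal O_q(M_n)$ adapted to our relations. I also need $D$ to be (skew-)normal, i.e.\ $Dx_{ij} = q^{?}x_{ij}D$ or at least some Ore-type condition, so that the localization is well behaved. For arbitrary $q$, including roots of unity, this may fail in the ``Manin matrix'' setting: here the algebra is not $\mathcal O_q(M_n)$, because $q$ appears in one-sided relations only.

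This last point is the main obstacle. Because $\ndo(A_q)$ is the Manin-type (one-sided) matrix algebra, $D$ need not be central or normal, and a priori the Hopf envelope could require infinitely many successive inverses, namely of $S(x_{ij})$, then $S^2$, and so on. My plan is to show that adjoining a two-sided inverse $D^{-1}$ and imposing only the left-inverse relation $\sum_k \wt S_{ik} D^{-1}x_{kj} = \delta_{ij}$ already yields a Hopf algebra. For the right-inverse property, I would show that $T=(\wt S_{ij}D^{-1})$ is also a right inverse of $X$. The idea is to check that the transpose-type relations hold for the matrix $T$, so that $T$ satisfies Manin-type relations for $A_{q^{-1}}$ or $A_q^!$. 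Left invertibility of the matrix $T$ with entries $\wt S_{ij}D^{-1}$ then follows from the same Laplace identity applied to $T$, using that $\qdet(T) = D^{-1}$; a matrix that has both a left and a right inverse has equal inverses, so $XT = I$. Once this is established, $S$ defined as in the statement is an anti-algebra map. One verifies this by checking that it respects the defining relations, using that $S$ reverses the order of products and maps minors to minors. It is also an antipode, since $\sum_k x_{ik}S(x_{kj}) = \delta_{ij} = \sum_k S(x_{ik})x_{kj}$ on generators, and $S(D^{-1})=D$ follows. Finally, we check the universal property. Any Hopf algebra $H$ coacting on $A_q$ receives a bialgebra map from $\ndo(A_q)$. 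In $H$ the matrix $X$ is invertible, so the group-like element $D$ is invertible with inverse $S(D)$, and the left-inverse relations hold in $H$ because the inverse of $X$ is unique. Hence the map factors uniquely through the presented algebra, which is therefore $\aut(A_q)$.
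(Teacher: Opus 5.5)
Your architecture is sound: present $\ndo(A_q)$ by the $q$-Manin relations, adjoin $D^{-1}$ together with only the one-sided relation \eqref{eq:Manin long relation}, show the result is a Hopf algebra, and then check the universal property. Your final paragraph on the universal property is also correct. The gap is the middle step, which is where all the content lies: showing that the presented algebra actually has an antipode (call it $H$, as in the paper's proof).

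For $x_{ij}\mapsto T_{ij}=\wt S_{ij}D^{-1}$, $D^{-1}\mapsto D$ to extend to an anti-homomorphism of $H$, you need at least that $T$ satisfies \eqref{eq:Manin q commute} and \eqref{eq:Manin 2x2 minor} with products reversed. In other words, $T=X^{-1}$ must be a $q^{-1}$-Manin matrix. You only say you ``would check that the transpose-type relations hold'' and that $S$ ``maps minors to minors''. No argument is given, and none is routine. This is the $q$-analogue of the theorem that the inverse of a Manin matrix is again Manin, together with the quantum Jacobi ratio theorem \cite[Theorems 4.5, 4.7]{ChervovFalquiRubtsovSilantyev}. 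It is exactly what the paper uses to show that $Z_{k+1}=Z_k^{-1}$ is $q^{-1}$-Manin and to derive \eqref{eq:redundant long relation}. Without it, nothing in your proposal rules out that the Hopf envelope imposes relations beyond \eqref{eq:Manin full relations}, which is the worry you raised yourself.

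There are two smaller problems.
\begin{enumerate}
\item Your second paragraph claims that both $X\wt S=DI$ and $\wt SX=DI$ hold in $\ndo(A_q)$, but only the first does. For $n=2$, $(\wt SX)_{12}=x_{22}x_{12}-qx_{12}x_{22}$. This is nonzero in $\ndo(A_q)$, since no defining relation lies in that row/column multidegree. That is exactly why \eqref{eq:Manin long relation} must be imposed, as you later do.
\item The right-inverse property needs no detour: $XT=I$ is just $X\wt S=DI$ multiplied by $D^{-1}$. Your route via $\qdet(T)=D^{-1}$ and a Laplace expansion for $T$ presupposes facts that are normally proved from $T$ already being a two-sided inverse.
\end{enumerate}

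Once you grant that $T$ is $q^{-1}$-Manin (you may cite CFRS, since $XT=TX=I$ and $D$ is a unit in $H$), the rest of the well-definedness of $S$ needs no statement about minors.
\begin{enumerate}
\item The assignment $x_{ij}\mapsto T_{ij}$ gives an algebra map $\tilde S\colon\ndo(A_q)\to H^{\mathrm{op}}$.
\item Induction on word length gives $\sum\tilde S(a_1)a_2=\varepsilon(a)=\sum a_1\tilde S(a_2)$ for all $a\in\ndo(A_q)$. The generator cases are \eqref{eq:Manin long relation} and $X\wt S=DI$.
\item Applying this to $a=D$ gives $S(D)=D^{-1}$.
\item Applying it to $a=\wt S_{ij}$, whose coproduct is $\sum_\ell\wt S_{\ell j}\otimes\wt S_{i\ell}$, shows that $S$ kills \eqref{eq:Manin long relation}.
\end{enumerate}

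Completed this way, your route defines the antipode on $H$ directly. The paper never does this: it maps Manin's explicit envelope, generated by $\wt Z_0,\wt Z_1,\dots$, onto $H$ via the explicit matrices $Z_k$ and inverts that map, so $H$ inherits its Hopf structure. Both routes rest on the same CFRS input.
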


We then proceed to classify the cocommutative Hopf algebras which right coact on $A_q(2)$ inner-faithfully by classifying the cocommutative quotients of $\aut(A_q(2))$.

\begin{thm}[Theorem \ref{thm:cocommutative quotient 2 variables}]\label{thm:intro cocommutative 2 variables}
    Let $q \in \kk^*$. If $q \neq -1$, then the only cocommutative Hopf algebras which right coact on $A_q(2)$ inner-faithfully are the Hopf quotients of $\kk\ZZ^2$ and $\mathcal{A}(0,q^{\pm 1})$ (see Definition \ref{dfn:Goodearl-Zhang Hopf algebra}). If $q = -1$, we get an additional coaction by (Hopf quotients of) $\kk \Gamma$.
\end{thm}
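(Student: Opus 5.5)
Since every inner-faithful right coaction of a Hopf algebra $H$ on $A_q(2)$ preserving the $\NN$-grading factors through a surjection $\aut(A_q(2)) \surj H$, it suffices to classify the cocommutative Hopf quotients of $\aut(A_q(2))$. Any such quotient is involutive, so it factors through the involutization. The first step is to write down $\aut(A_q(2))$ explicitly using Theorem \ref{thm:description of universal Hopf algebra} with generators $a=x_{11}, b=x_{12}, c=x_{21}, d=x_{22}$ and $D^{-1}$. The relations are
\[ ba = qab,\quad dc = qcd,\quad ad - q^{-1}bc = da - qcb = D, \]
and the antipode is $S(a) = dD^{-1}$, $S(b) = -q^{-1} bD^{-1}$, $S(c) = -q\, cD^{-1}$, $S(d) = aD^{-1}$ (up to the order of the $D^{-1}$ factors dictated by the theorem). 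I would then compute $S^2$ on the generators; it should be given by conjugation by $D$ twisted by powers of $q$. For example, one expects $S^2(b) = q^{\pm 2}\, D b D^{-1}$-type formulas. The involutization therefore imposes explicit commutation relations between $D$ and $a,b,c,d$.

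The second step is to impose cocommutativity directly. In a cocommutative quotient $H$, applying $\tau\circ\Delta = \Delta$ to the generators gives, for example, $a\otimes b + b\otimes d = b\otimes a + d\otimes b$. Combined with linear independence considerations, this forces, in the image, either $b=0$, or $a=d$ on the relevant part. Rather than work with tensors abstractly, I would use the subcoalgebra $C = \spn\{a,b,c,d\}$. Its image in $H$ is a cocommutative matrix coalgebra image, hence a quotient of the $2\times 2$ comatrix coalgebra that is cocommutative. Over algebraically closed $\kk$ of characteristic zero, such a coalgebra is pointed, with grouplikes and skew-primitives. So after a change of basis preserving the $q$-commutation relations (the change of basis must be an automorphism of $A_q(2)$), the matrix $X$ becomes upper triangular, diagonal, or antidiagonal up to a permutation.

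This yields a case split on the image of $C$:
\begin{enumerate}
\item $b=c=0$, which gives $a$ and $d$ grouplike. The relation $ad = da = D$ shows $a,d$ commute and are invertible, so the quotient is a quotient of $\kk\ZZ^2$.
\item Exactly one off-diagonal entry survives, say $c \neq 0$, $b=0$. Then $a,d$ are grouplike and $c$ is $(d,a)$-skew primitive. The relations $dc = qcd$ and $ad = da$, together with the relations from $S^2 = \id$ (which force $c a = q^{\pm1} a c$ type relations), identify $H$ as a quotient of $\mathcal{A}(0,q^{\pm1})$ in Definition \ref{dfn:Goodearl-Zhang Hopf algebra}. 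The two signs come from $b$ versus $c$ surviving.
\item Both $b$ and $c$ survive. Cocommutativity then forces $a=d$ and $b,c$ to be grouplike-like. Examining the coproduct shows instead that $a=d=0$ and $b,c$ are grouplike. The relation $ad - q^{-1}bc = da - qcb$ then gives $q^{-1}bc = q\,cb$, while $D = -q^{-1}bc$ must be grouplike and central up to scalars. I expect that comparing $ad - q^{-1}bc = -q^{-1}bc$ with $da - qcb = -qcb$, together with the condition $S^2=\id$, forces $q^2 = 1$. For $q=1$ this collapses to a commutative quotient already covered. For $q=-1$ we get $b^2$ and $c^2$ related so as to give a quotient of $\kk\Gamma$ with $f=b$, $g=c$: indeed $bc=cb\cdot q^2$, and $S$ forces $b^2 = c^2$ central. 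Mixed cases with nontrivial skew-primitives on both sides should be excluded by the $S^2$ relations.
\end{enumerate}
Finally, I would check that each listed Hopf algebra does coact inner-faithfully, by exhibiting the surjection from $\aut(A_q(2))$.

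The main obstacle is the second step: rigorously reducing a cocommutative quotient to the triangular or antidiagonal forms. The difficulty is that the images of $a,b,c,d$ may be linearly dependent. I would handle this by examining the possible $\varepsilon$- and $\Delta$-compatible linear dependencies among $a,b,c,d$, and then using the structure of the coradical filtration of cocommutative pointed coalgebras.
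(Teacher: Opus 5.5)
Your change-of-basis reduction is where the argument breaks, and your Case 3 contains a false claim.

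\textbf{The reduction.} Pointedness does give a one-dimensional subcomodule $\kk w\subseteq \kk x_1\oplus\kk x_2$. But you give no reason why $w$ should be a multiple of $x_1$ or $x_2$, and you cannot move it there. For $q\neq\pm1$ the only graded automorphisms of $A_q(2)$ are diagonal, and for $q=-1$ they are diagonal or antidiagonal. Showing that this flag is a coordinate flag is the main content of the theorem for $q\neq1$, so it cannot be assumed.

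For $q=-1$ your normal form is actually false. In the $\kk\Gamma$ quotient the comodule matrix is $\begin{pmatrix} a & b\\ \lambda b & a\end{pmatrix}$ with $\lambda\neq0$. No diagonal or antidiagonal change of basis makes this triangular. It is diagonalized only by passing to $x_1\pm\mu x_2$ with $\mu^2=\lambda$, and in that basis the defining relation becomes $X^2=Y^2$, not a $q$-commutation relation.

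\textbf{Case 3.} The configuration $a=d=0$ is impossible: it contradicts $\varepsilon(a)=\varepsilon(d)=1$, and equivalently $\Delta(b)=a\otimes b+b\otimes d$ would vanish. So a comodule matrix is never antidiagonal, and $b,c$ are not group-like. Your derivation of $q^2=1$ from $D=-q^{-1}bc$ therefore has no footing. The actual group-likes are $a\pm\mu b$, and they satisfy $(a\pm\mu b)^2=a^2+\lambda b^2=\overline{D}$ because $ab=-ba$.

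\textbf{What is missing} is the use of the algebra relations of the involutization to pin down the coordinate flag, which is how the paper proceeds. Two corrections first: your presentation omits relation \eqref{eq:Manin long relation}, and it is this relation together with $S^2=\id$ that produces $ad=da$, $ca=q^{-1}ac$, $db=q^{-1}bd$ and $cb=q^{-2}bc$. Also, $S(b)=-q\,bD^{-1}$ and $S(c)=-q^{-1}cD^{-1}$, not the other way round.

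Cocommutativity makes each pair among $b$, $c$, $a-d$ linearly dependent. The argument then splits as follows.
\begin{itemize}
\item If $b\neq0$ and $a-d=\lambda b$, applying $S$ gives $-\lambda b\overline{D}^{-1}=S(a-d)=\lambda S(b)=-\lambda q\,b\overline{D}^{-1}$. Hence $a=d$ whenever $q\neq1$. Your Case 2 needs exactly this: to land in $\mathcal{A}(0,q^{\pm1})$ the surviving off-diagonal entry must be $(a,a)$-skew primitive, and you never prove $a=d$.
\item If $b,c\neq0$, then $c=\lambda b$ with $\lambda\neq0$. The relations $bc=q^2cb$, $qca=ac$ and $ba=qab$ give $b^2=0$ and $ac=0$ when $q^2\neq1$. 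Meanwhile $\overline{D}=a^2$ makes $a$ a unit, which forces $c=0$, a contradiction. Hence $q=-1$, and then $a\pm\mu b$ generate a quotient of $\kk\Gamma$.
\end{itemize}
Only for $q=1$, where all of $GL_2(\kk)$ acts by automorphisms of $\kk[x_1,x_2]$, is your change-of-basis-to-triangular-form idea legitimate. There it coincides with the paper's argument.
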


As a consequence of Theorem \ref{thm:intro cocommutative 2 variables}, we recover Crawford's classification of group gradings on two-variable skew polynomial rings \cite[Theorem 1.1]{Crawford}.  The formulas for the coactions are omitted here, as they are clear from the proof, which constructs each of these Hopf algebras coacting on $A_q(2)$ as an explicit quotient of the universal involutive coacting Hopf algebra $\aut(A_q(2))$.

We then prove a similar result for three-variable skew polynomial rings. Note that we have to exclude the case $q = \pm 1$ due to the computational difficulty of this situation.

\begin{thm}[Theorem \ref{thm:generic}]\label{thm:intro cocommutative 3 variables}
    Let $q \in \kk^* \setminus \{\pm 1\}$. Then the only cocommutative Hopf algebras which right coact on $A_q(3)$ inner-faithfully are the Hopf quotients of $\kk\ZZ^3, \mathcal{B}_{q^{\pm 1}}$, and $\mathcal{C}_{q^{\pm 1}}$ (see Definition \ref{dfn:Bq and Cq}). Consequently, $A_q(3)$ does not have a faithful grading by a nonabelian group.
\end{thm}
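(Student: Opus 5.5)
Since every Hopf algebra coacting inner-faithfully on $A_q(3)$ in a grading-preserving way is a Hopf quotient of $\aut(A_q(3))$, the plan is to classify all cocommutative Hopf quotients of $\aut(A_q(3))$. Any cocommutative Hopf algebra is involutive, so such a quotient factors through the involutization $\aut(A_q(3))/(S^2(x)-x)$. The first step is therefore to write down the relations $S^2(x_{ij}) = x_{ij}$ using the explicit antipode from Theorem \ref{thm:description of universal Hopf algebra}. Concretely, I expect $S^2(x_{ij})$ to equal $q^{c(i,j)} D\, x_{ij} D^{-1}$ up to a power of $q$ depending on $j-i$, as in the quantum $GL_n$ case. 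Combined with the commutation of $D$ with the generators, this gives relations of the form $(q^{m}-1)x_{ij}=0$ or $x_{ij} D = q^{m} D x_{ij}$. Since $q \neq \pm 1$, several off-diagonal generators should be forced to vanish or to satisfy strong constraints.

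Next I would impose cocommutativity, $\Delta = \Delta^{op}$, on the quotient. Comparing $\sum_k x_{ik}\otimes x_{kj}$ with $\sum_k x_{kj}\otimes x_{ik}$ is only meaningful in a quotient where the $x_{ij}$ might become linearly dependent. The cleaner route is a case split on which entries of $X$ are zero, because the ideal generated by an entry is often a Hopf-ideal-type condition.

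\begin{itemize}
\item In the diagonal case, all $x_{ij}$ with $i\neq j$ vanish. Then the $x_{ii}$ are commuting group-likes (the $2\times 2$ determinant relations give $x_{ii}x_{jj}=x_{jj}x_{ii}$), and we obtain a quotient of $\kk\ZZ^3$.
\item In the non-diagonal cases, some off-diagonal entry is nonzero. I would use the relations $x_{ik}x_{ij}=qx_{ij}x_{ik}$, the $2\times2$ determinant relations, and cocommutativity to show that the generator is skew-primitive (relative to group-likes $x_{ii},x_{jj}$). I would then show that only a pattern supported on a corner, such as $x_{13}$ or an entry adjacent to the diagonal, survives. This should produce exactly the algebras $\mathcal{B}_{q^{\pm1}}$ and $\mathcal{C}_{q^{\pm1}}$ of Definition \ref{dfn:Bq and Cq}. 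These are presumably of Goodearl--Zhang type: group-likes together with one or two $(g,h)$-primitive elements $q$-commuting with them, analogous to $\mathcal{A}(0,q^{\pm1})$ in Theorem \ref{thm:intro cocommutative 2 variables}.
\end{itemize}

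Conversely, I would check that each of $\kk\ZZ^3$, $\mathcal{B}_{q^{\pm1}}$, $\mathcal{C}_{q^{\pm1}}$ really is a quotient of $\aut(A_q(3))$ by verifying the defining relations, and hence coacts inner-faithfully.

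The main obstacle is the non-diagonal case analysis. There are nine generators, and the involutivity and cocommutativity relations interact with the cubic quantum determinant. Ruling out configurations with two "crossing" nonzero off-diagonal entries (for instance $x_{12}$ and $x_{21}$ both nonzero) will require carefully chosen products of relations. For such configurations I would first try the $2\times2$ relation $ad-q^{-1}bc = da-qcb$ on the relevant submatrix together with $S^2=\id$ to derive $(q^2-1)bc=0$. I would then argue that $b$ and $c$ generate a Hopf ideal modulo which the quotient is still cocommutative, and that cocommutativity forces one of them to be zero in any domain-free way.

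Finally, for the last assertion: a group algebra $\kk G$ is a cocommutative Hopf quotient of $\aut(A_q(3))$ spanned by group-likes. In $\mathcal{B}_q$ and $\mathcal{C}_q$ the nonzero skew-primitives are not spanned by group-likes, so a group algebra quotient must kill them. It is then a quotient of $\kk\ZZ^3$, hence $G$ is abelian.
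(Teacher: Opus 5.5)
Your overall frame matches the paper's: pass to the involutization, classify its cocommutative quotients, check that $\kk\ZZ^3$, $\mathcal{B}_{q^{\pm1}}$ and $\mathcal{C}_{q^{\pm1}}$ occur, and kill primitives to get abelian gradings. The last step is fine. But the core of the argument is missing, and the mechanisms you do propose would not work.

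\begin{itemize}
\item \textbf{Involutivity kills nothing.} In $\aut(A_q)$ the quantum determinant is not central: $S^2(x_{ij})=q^{2(j-i)}Dx_{ij}D^{-1}$. So involutivity only imposes $h_{ij}D=q^{2(j-i)}Dh_{ij}$ and forces no generator to vanish. Indeed, $\mathcal{B}_q$ and $\mathcal{C}_q$ are involutive quotients with a nonzero off-diagonal generator.
\item \textbf{The case split has no engine.} The ideal generated by a single entry is not a Hopf ideal, since $\Delta(h_{12})$ contains $h_{13}\otimes h_{32}$. A case split on which entries vanish therefore gives no handle on configurations where all, or most, off-diagonal images are nonzero. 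Excluding those is exactly where the work lies; saying you would show ``only a pattern supported on one entry survives'' restates the theorem.
\item \textbf{The crossing-pair argument fails.} Already in $H_q(2)$, the relations (including involutivity) give only $h_{21}h_{12}=q^{-2}h_{12}h_{21}$, not $(q^2-1)bc=0$. You need cocommutativity first to make $b$ and $c$ proportional. Even then, $bc=0$ does not give $b=0$ or $c=0$ until you know the diagonal entries are units.
\end{itemize}

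The missing idea is precisely the comparison you set aside. In an arbitrary cocommutative quotient $\overline{H}$, equating $\Delta(g_{ij})$ with $\Delta^{\mathrm{op}}(g_{ij})$ gives identities in $\bigwedge^2\overline{H}$:
\begin{itemize}
\item $g_{12}\wedge g_{21}=g_{31}\wedge g_{13}=g_{23}\wedge g_{32}$;
\item $(g_{ii}-g_{jj})\wedge g_{ij}=g_{kj}\wedge g_{ik}$.
\end{itemize}
If such a wedge is nonzero, the four elements are related by a matrix in $SL_2(\kk)$. Conjugation by $\overline{D}$ rescales $g_{ij}$ by $q^{2(j-i)}$ and fixes the diagonal. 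Applying it to the resulting linear relations forces coefficients to vanish when $q^2\neq 1$, contradicting determinant one. The case $q^6=1$ needs an extra argument through the cubic determinant. This is how one proves that some off-diagonal $g_{ij}$ is zero.

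From there the argument proceeds as follows.
\begin{itemize}
\item $g_{ij}=0$ forces $g_{ik}\otimes g_{kj}=0$, so a second entry vanishes.
\item Then $\overline{D}$ factors as $g_{rr}D'$, which supplies the needed units.
\item The complementary $2\times 2$ block generates a Hopf subalgebra that is a quotient of $H_q(2)$, so the two-variable classification applies.
\item Iterating the resulting dichotomies shows at most one off-diagonal entry survives: opposite entries cannot both be nonzero, $g_{ij}\neq 0$ forces $g_{ii}=g_{jj}$, and equal diagonal entries kill further entries.
\end{itemize}
Without this linear-dependence-plus-$D$-rescaling mechanism and the reduction to $n=2$, your non-diagonal case has no proof.
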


It is also clear that any grading of the commutative polynomial ring $A_1(3)$ must be by a factor group of $\mb{Z}^3$, so the preceding result implies that only when $q = -1$ can $A_q(3)$ possibly be graded by a nonabelian group, similarly as for $A_q(2)$.  Conversely, gradings by nonabelian exist for $q = -1$, because writing $A_{-1}(3) \cong A_{-1}(2)[z; \sigma]$ as an Ore extension, one easily sees that $A_{-1}(3)$ is graded by $\Gamma \times \mb{Z}$.  

In the upcoming paper \cite{BuzagloRogalski}, we will study group gradings of more general regular algebras of dimension $3$ by different methods, involving an explicit analysis of superpotentials.  This will allow us to show that the only gradings of $A_{-1}(3)$ by nonabelian groups $G$ are the obvious ones where $G$ is a factor of $\Gamma \times \mb{Z}$. The methods of \cite{BuzagloRogalski} will not give us information on coactions by more general cocommutative Hopf algebras, however.

The paper is organized as follows: in Section \ref{sec:preliminaries}, we give the necessary definitions and basic results from the literature that we use throughout the paper. In Sections \ref{sec:Frobenius} and \ref{sec:Manin Hopf algebra of Aq}, we construct a presentation of the universal coacting Hopf algebra $\aut(A_q)$, and we describe its involutization in Section \ref{sec:involutization}. Next, we classify cocommutative coactions on two- and three-variable skew polynomial rings in Sections \ref{sec:two variables} and \ref{sec:three variables}. Finally, we present some applications of our results and some open questions in Section \ref{sec:applications}.

\subsection*{Acknowledgments}

We thank Ellen Kirkman, Akira Masuoka, Susan Montgomery, Chelsea Walton, and James Zhang for interesting discussions. The first-named author is supported by an AMS--Simons travel grant.

\section*{Conventions}

Throughout, we work over an algebraically closed field $\kk$ of characteristic zero. The notations $\Delta$, $\varepsilon$, and $S$ denote the coproduct, counit, and antipode of any Hopf algebra. If needed for clarity, we use the notation $\Delta_H$, $\varepsilon_H$, and $S_H$ for the coproduct, counit, and antipode of a Hopf algebra $H$. The notations $G(H)$ and $P(H)$ denote the group of group-like elements of $H$ and the Lie algebra of primitive elements of $H$, respectively.

\section{Preliminaries}\label{sec:preliminaries}

We start by outlining our goals and recalling the required definitions and algebras of interest for this paper.

\subsection{Group gradings and coactions of group algebras}

First, we recall the definition of a group grading.

\begin{dfn}\label{dfn:grading}
    We say that a $\kk$-algebra $A$ is \emph{graded} by a group $G$, or \emph{$G$-graded}, if $A$ can be decomposed into abelian groups as
    $$A = \bigoplus_{g \in G} A_g, \quad \text{where } A_g A_h \subseteq A_{gh},$$
    for all $g,h \in G$.

    If $A$ is $G$-graded, we say that the $G$-grading is \emph{faithful} if $\{g \in G \mid A_g \neq 0\}$ generates the group $G$.
\end{dfn}

We will always assume that group gradings are faithful, since this means that the algebra cannot be graded by a proper subgroup of the grading group.

\begin{rem}
    The notion of a faithful grading from Definition \ref{dfn:grading} matches the terminology used in \cite{Dascalescu}. However, other authors prefer to call such a grading \emph{connected} (see, for example, \cite[Proposition 2.4]{CibilsRedondoSolotar}).
\end{rem}

It is well-known that a grading of a $\kk$-algebra $A$ by a group $G$ is equivalent to a (left or right) coaction by the group algebra $\kk G$ \cite[Example 1.6.7]{Montgomery}. We define the notion of a coaction next.

\begin{dfn}\label{dfn:coaction}
    Let $H$ be a Hopf algebra. A vector space $M$ is a \emph{right comodule} for $H$, or that $H$ \emph{right coacts} on $M$, if there is a map
    $$\rho \colon M \to M \otimes H$$
    that is compatible with the Hopf algebra structure of $H$. In other words, $\rho$ is \emph{co-associative} and \emph{co-unital}, meaning the diagrams
    \begin{center}
        \begin{tikzcd}
            M \arrow[r, "\rho"] \arrow[d, "\rho"']       & M \otimes H \arrow[d, "\id_M \otimes \Delta_H"] &  & M \arrow[rd, "-\otimes1"'] \arrow[r, "\rho"] & M \otimes H \arrow[d, "\id_M \otimes \varepsilon_H"] \\
            M \otimes H \arrow[r, "\rho \otimes \id_H"'] & M \otimes H \otimes H                           &  &                                              & M \otimes \kk                                     
        \end{tikzcd}
    \end{center}
    commute. One can similarly define a left coaction as a map $\lambda \colon M \to H \otimes M$ which is co-associative and co-unital.

    If $A$ is a $\kk$-algebra, we that $A$ is a \emph{right $H$-comodule algebra}, or that $H$ \emph{right coacts} on $A$, if $A$ is a right $H$-comodule, and the comodule map $\rho \colon A \to A \otimes H$ is an algebra homomorphism. A left comodule algebra is defined similarly.
\end{dfn}

We now describe how a coaction of a group algebra $\kk G$ on an algebra $A$ gives rise to a $G$-grading of $A$. To that end, suppose $A$ is a right $\kk G$-comodule algebra via the map $\rho \colon A \to A \otimes \kk G$. Given $a \in A$, write
$$\rho(a) = \sum_{g \in G} a_g \otimes g$$
where $a_g \in A$. For $g \in G$, define
$$A_g \coloneqq \{a_g \mid a \in A\}.$$
It is easy to show using the co-associativity and co-unitality of $\rho$ that $A = \bigoplus_{g \in G} A_g$, and we can further use that $\rho$ is an algebra homomorphism to deduce that $A_g A_h \subseteq A_{gh}$.

Since we always assume that group gradings are faithful, this should correspond to some property of the right coaction of the group algebra defined by the grading. This property is \emph{inner-faithfulness}, defined below.

\begin{dfn}
    Let $H$ be a Hopf algebra and let $M$ be a (right) $H$-comodule via the map $\rho \colon M \to M \otimes H$. We say that the $H$-coaction is \emph{inner-faithful} if $\rho(M) \not\subseteq M \otimes H'$ for any proper Hopf subalgebra $H' \subsetneqq H$.
\end{dfn}

In other words, we are interested in inner-faithful coactions of group algebras.

\subsection{Grading skew polynomial rings using Manin's universal coacting Hopf algebra}

Fix $n \geq 2$ and $q \in \kk^*$. Our main goal is to classify (faithful) group gradings of the skew polynomial ring
$$A_q(n) \coloneqq \frac{\kk\ang{x_1,\ldots,x_n}}{(x_j x_i - q x_i x_j \mid j > i)}.$$
We usually drop the $(n)$ if the value of $n$ is clear and simply write $A_q$ instead of $A_q(n)$.

We will classify the gradings by classifying the possible (inner-faithful) coactions of group algebras on $A_q$. We only consider gradings of $A_q$ which are compatible with its natural $\NN$-grading, where each $x_i$ is homogeneous of degree 1. In terms of coactions, this means that we require the map $A_q \to A_q \otimes \kk G$ to respect the $\NN$-grading of $A_q$. In other words, $x_i$ maps to $\sum_{j = 1}^n x_j \otimes g_{ji}$ for some $g_{ji} \in \kk G$.

To achieve our classification, we will use the following construction: there is a universal right coacting Hopf algebra for $A_q$, denoted $\aut^r(A_q)$ \cite{ManinBook}. We define this Hopf algebra now, but we postpone its explicit construction to Sections \ref{sec:Frobenius} and \ref{sec:Manin Hopf algebra of Aq}.

\begin{dfn}\label{dfn:Manin}
    A $\kk$-algebra $A$ is \emph{quadratic} if it can be presented as $A = TV/(R)$, where $V$ is a finite-dimensional vector space and $R \subseteq V \otimes V$. A quadratic algebra $A = TV/(R)$ is automatically $\NN$-graded as $A = \bigoplus_{k \in \NN} A_k$, where $A_k$ is the image of $V^{\otimes k}$ in $A$.

    Given a quadratic algebra $A$, the \emph{universal right coacting Hopf algebra} of $A$, denoted $\aut^r(A)$, is the Hopf algebra which right coacts on $A$ via a map $\rho \colon A \to A \otimes \aut^r(A)$, compatible with its natural $\NN$-grading, satisfying the following universal property: if $H$ is a Hopf algebra which right coacts on $A$ via a map $\rho' \colon A \to A \otimes H$, also compatible with its natural $\NN$-grading, then there is a unique Hopf algebra homomorphism $\phi \colon \aut^r(A) \to H$ such that the diagram
    \begin{center}
        \begin{tikzcd}
            & A \otimes \aut^r(A) \arrow[d, dashed, "\id_A \otimes \phi"]  \\
            A \arrow[ru, "\rho"] \arrow[r, "\rho'"] & A \otimes H
        \end{tikzcd}
    \end{center}
    commutes. There is also a \emph{universal left coacting Hopf algebra} $\aut^\ell(A)$ defined analogously.
\end{dfn}
While we have emphasized coactions by Hopf algebras here, one can make the same definitions for coactions by bialgebras. There is a bialgebra $\ndo^r(A)$ which right coacts on $A$, compatible with the grading, and which is universal in the same sense for right coactions by bialgebras that are compatible with the grading.  Similarly, there is a universal left coacting bialgebra $\ndo^{\ell}(A)$.  We will need the bialgebra construction only in passing, as an intermediate step in constructing $\aut^r(A)$.

If we have a $G$-grading on $A_q$, then by the universal property there exists a unique Hopf algebra homomorphism $\aut^r(A_q) \to \kk G$. This homomorphism must be surjective, thanks to the assumption that the $G$-grading on $A_q$ is faithful. We therefore approach the classification of group gradings on $A_q$ by classifying Hopf algebra homomorphisms from $\aut^r(A_q)$ to group algebras.

\subsection{Observations about group algebras}\label{subsec:observations}

A Hopf algebra $H$ is group algebra if and only if $H$ is cocommutative and is spanned by its group-like elements. Given this observation, our strategy to achieve the classification of group gradings of $A_q$ is the following:
\begin{enumerate}
    \item Classify Hopf ideals $I$ of $\aut^r(A_q)$ such that $\aut^r(A_q)/I$ is cocommutative.\label{obj:cocommutative quotient}
    \item For $I$ as above, determine when $\aut^r(A_q)/I$ is spanned by its group-like elements.\label{obj:group-likes}
\end{enumerate}
To aid with \eqref{obj:cocommutative quotient}, we use the well-known property that cocommutative Hopf algebras are involutive (defined below).

\begin{dfn}
    A Hopf algebra $H$ is \emph{involutive} if $S_H^2 = \id_H$.
\end{dfn}

\begin{prop}[{\cite[Corollary 1.5.12]{Montgomery}}]
    Cocommutative Hopf algebras are involutive.
\end{prop}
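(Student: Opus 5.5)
The plan is to show $S(S(h)) = h$ for every $h \in H$ using only the antipode axioms and the identity $\sum h_{(1)} \otimes h_{(2)} = \sum h_{(2)} \otimes h_{(1)}$, in Sweedler notation. The idea is to prove that $S^2$ is a convolution inverse of $S$ in $\operatorname{Hom}_\kk(H,H)$. We already know $\id_H$ is the (two-sided) convolution inverse of $S$, and convolution inverses are unique. Together these force $S^2 = \id_H$.

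First I would recall the standard fact that $S$ is an algebra anti-homomorphism. In particular $S(ab) = S(b)S(a)$ and $S(1)=1$. This holds for any Hopf algebra and is assumed known.

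Next, for $h \in H$, I would compute $(S \ast S^2)(h) = \sum S(h_{(1)}) S^2(h_{(2)})$. By the anti-multiplicativity of $S$ this equals $S\bigl(\sum S(h_{(2)}) h_{(1)}\bigr)$. Cocommutativity lets us swap the Sweedler legs, so the inner sum is $\sum S(h_{(1)})h_{(2)} = \varepsilon(h)1$. Hence $(S\ast S^2)(h) = \varepsilon(h)S(1) = \varepsilon(h)1$. The symmetric computation gives
$$(S^2 \ast S)(h) = S\Bigl(\sum h_{(2)}S(h_{(1)})\Bigr) = S\Bigl(\sum h_{(1)}S(h_{(2)})\Bigr) = \varepsilon(h)1.$$
So $S^2$ is a two-sided convolution inverse of $S$.

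Finally, since $\id_H \ast S = S \ast \id_H = u\varepsilon$, associativity of convolution gives
$$S^2 = S^2 \ast (S \ast \id_H) = (S^2 \ast S)\ast \id_H = \id_H.$$
The only point needing care is the anti-multiplicativity of $S$. If that is not taken as known, I would prove it first by the usual argument: show that $m\circ(S\otimes S)\circ\tau$ and $S\circ m$ are both convolution inverses of $m$ in $\operatorname{Hom}(H\otimes H,H)$. Everything else is a formal manipulation, and cocommutativity is used exactly once in each of the two computations.
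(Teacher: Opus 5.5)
Your argument is correct. The paper gives no proof of its own beyond the citation to Montgomery, and your argument is the standard one behind that reference: show $S^2$ is a convolution inverse of $S$ using that $S$ is an anti-homomorphism and that cocommutativity turns $\sum h_{(2)}S(h_{(1)})$ into $\varepsilon(h)1$, then conclude by uniqueness of convolution inverses. One of your two computations already suffices, since $S^2 = S^2 \ast (S \ast \mathrm{id}_H) = (S^2 \ast S) \ast \mathrm{id}_H = \mathrm{id}_H$ needs only $S^2 \ast S = u\varepsilon$.
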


Therefore, we make the following observation: let $I$ be a Hopf ideal of $\aut^r(A_q)$ such that $H \coloneqq \aut^r(A_q)/I$ is cocommutative. Then it must be the case that $S_{H}^2 = \id_{H}$. Hence, to find cocommutative quotients of $\aut^r(A_q)$, we first ``involutize'' $\aut^r(A_q)$ (see Definition \ref{dfn:involutization}), and then we further specialize to cocommutative quotients. Once we have found a cocommutative quotient $H$ of $\aut^r(A_q)$, step \eqref{obj:group-likes} is straightforward.

\subsection{Maximal cocommutative quotients}

A Hopf algebra has both a universal commutative quotient (its \emph{abelianization}) and a universal involutive quotient (its \emph{involutization}). However, Hopf algebras do not have universal cocommutative quotients, making the study of cocommutative quotients of Hopf algebras more difficult, but also richer, than studying commutative or involutive quotients. Although universal cocommutative quotients do not exist, we now prove that Hopf algebras still have \emph{maximal} cocommutative quotients.

\begin{prop}\label{prop:maximal cocommutative quotients}
    Let $H$ be a Hopf algebra and let $\varphi \colon H \to C$ be a Hopf algebra homomorphism, where $C$ is cocommutative. Then there exists a cocommutative quotient $K = H/I$ of $H$ (where $I$ is a Hopf ideal of $H$) and a Hopf algebra homomorphism $\psi \colon K \to C$ such that the diagram
    \begin{center}
        \begin{tikzcd}
            & K \arrow[d, dashed, "\psi"]  \\
            A \arrow[ru, "\pi"] \arrow[r, "\varphi"] & C
        \end{tikzcd}
    \end{center}
    commutes, and the cocommutative quotient $K$ is \emph{maximal}, in the following sense: if $I'$ is another Hopf ideal of $H$ strictly contained in $I$, then $H/I'$ is not cocommutative.
\end{prop}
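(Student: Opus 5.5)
Our plan is a Zorn's lemma argument on the set of Hopf ideals contained in $\ker\varphi$ whose quotients are cocommutative. (The diagram in the statement labels the source $A$; we read it as $H$.) Let $\mathcal{S}$ be the set of Hopf ideals $J$ of $H$ such that $J \subseteq \ker\varphi$ and $H/J$ is cocommutative, ordered by \emph{reverse} inclusion. A maximal element for this order is a minimal ideal of $\mathcal{S}$, which gives the maximal cocommutative quotient we want. Note that $\ker\varphi$ is itself a Hopf ideal, and $H/\ker\varphi \cong \varphi(H)$ is a Hopf subalgebra of the cocommutative $C$, hence cocommutative. So $\ker\varphi \in \mathcal{S}$ and $\mathcal{S}$ is nonempty.

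The key step is to show that every chain $\{J_\alpha\}$ in $\mathcal{S}$ (a chain under inclusion) has a lower bound, namely $J \coloneqq \bigcap_\alpha J_\alpha$. First, $J$ is an ideal, $J \subseteq \ker\varphi$, $\varepsilon(J)=0$, and $S(J)\subseteq J$; all of this is clear. The nontrivial parts are:
\begin{enumerate}
    \item $\Delta(J) \subseteq J \otimes H + H \otimes J$;
    \item $H/J$ is cocommutative, i.e.\ $(\Delta - \tau\Delta)(h) \in J\otimes H + H\otimes J$ for all $h \in H$, where $\tau$ is the flip.
\end{enumerate}
Both reduce to a single linear-algebra claim. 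For subspaces $J_\alpha$ forming a chain, we claim
$$\bigcap_\alpha \left(J_\alpha \otimes H + H \otimes J_\alpha\right) = J \otimes H + H \otimes J.$$
Equivalently, writing $H\otimes H/(J'\otimes H + H\otimes J') \cong H/J' \otimes H/J'$, the natural map
$$H/J \otimes H/J \to \prod_\alpha H/J_\alpha \otimes H/J_\alpha$$
is injective. Given the claim, $\Delta(x)$ for $x\in J$ lies in each $J_\alpha\otimes H+H\otimes J_\alpha$ because each $J_\alpha$ is a Hopf ideal. Likewise $(\Delta-\tau\Delta)(h)$ lies in each of these subspaces because each $H/J_\alpha$ is cocommutative. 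So both (1) and (2) follow.

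We expect the claim to be the main obstacle. The plan to prove it is as follows. Take a finite-rank element $t = \sum_{i=1}^m a_i\otimes b_i$ lying in the intersection. Let $W$ be the finite-dimensional span of the $a_i$ and $b_i$. Then $t \in W\otimes W$, and membership of $t$ in $J'\otimes H+H\otimes J'$ is equivalent to $t$ mapping to zero in $W/(W\cap J')\otimes W/(W\cap J')$. The subspaces $W\cap J_\alpha$ of the finite-dimensional space $W$ form a chain, so it stabilizes at its minimum; that is, there is an index $\alpha_0$ with $W\cap J_{\alpha_0} = W\cap J$. Then $t$ maps to zero in $W/(W\cap J)\otimes W/(W\cap J)$, so $t\in J\otimes H+H\otimes J$. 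The only care needed is to justify the reduction to $W$: the map $W/(W\cap J')\to H/J'$ is injective, so tensoring over a field keeps it injective.

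With chains bounded, Zorn's lemma yields a minimal element $I\in\mathcal{S}$. Set $K = H/I$ with projection $\pi\colon H\to K$. Since $I\subseteq\ker\varphi$, the map $\varphi$ factors through a Hopf map $\psi\colon K\to C$ with $\psi\pi=\varphi$. For maximality, suppose $I'\subsetneq I$ is a Hopf ideal with $H/I'$ cocommutative. Then $I'\subseteq\ker\varphi$, so $I'\in\mathcal{S}$, which contradicts the minimality of $I$. Hence no such $I'$ exists, as required.
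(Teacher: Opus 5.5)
Your proof is correct and follows essentially the same route as the paper's: Zorn's lemma on the Hopf ideals contained in $\ker\varphi$ with cocommutative quotient, where the intersection of a chain is handled by passing to a finite-dimensional subspace on which the chain of intersections stabilizes. The differences are cosmetic. You justify the finite-dimensional reduction via injectivity of $W/(W\cap J')\otimes W/(W\cap J')\to H/J'\otimes H/J'$ instead of the paper's compatible-basis lemma, and you explicitly check that $\ker\varphi\in\mathcal{S}$ to get nonemptiness, which the paper leaves implicit.
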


In other words, Proposition \ref{prop:maximal cocommutative quotients} says that every cocommutative quotient of a Hopf algebra is a quotient of one (or more) of the maximal cocommutative quotients. Therefore, one can study cocommutative quotients of a Hopf algebra by classifying the maximal ones. At first, this seems like a potentially impossible task, but we will see later in this paper that we can fully classify the maximal cocommutative quotients of $\aut^r(A_q(n))$ up to isomorphism for $n = 2$ and $3$.

To prove Proposition \ref{prop:maximal cocommutative quotients}, we require the following easy linear algebra result.

\begin{lem}\label{lem:intersection distributes over sum}
    Let $V$ be a vector space and let $\mathcal{B}$ be a basis for $V$. Let $A,B,C$ be subspaces of $V$ which are spanned by subsets of $\mathcal{B}$. Then $A \cap (B + C) = (A \cap B) + (A \cap C)$.
\end{lem}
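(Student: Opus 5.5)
The natural approach is to reduce everything to basis elements. Since $A$, $B$, $C$ are spanned by subsets of $\mathcal{B}$, write $A = \spn(\mathcal{B}_A)$, $B = \spn(\mathcal{B}_B)$ and $C = \spn(\mathcal{B}_C)$ with $\mathcal{B}_A, \mathcal{B}_B, \mathcal{B}_C \subseteq \mathcal{B}$. The key observation is that for any subset $\mathcal{S} \subseteq \mathcal{B}$, a vector $v \in V$ lies in $\spn(\mathcal{S})$ if and only if every basis element appearing with nonzero coefficient in the (unique) expansion of $v$ in $\mathcal{B}$ belongs to $\mathcal{S}$. This follows directly from the uniqueness of expansions in a basis.

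From this observation I will show that sums and intersections of such subspaces are again spanned by subsets of $\mathcal{B}$, namely
\begin{equation*}
\spn(\mathcal{S}) + \spn(\mathcal{T}) = \spn(\mathcal{S} \cup \mathcal{T}), \qquad \spn(\mathcal{S}) \cap \spn(\mathcal{T}) = \spn(\mathcal{S} \cap \mathcal{T}).
\end{equation*}
The first identity is immediate. For the second, the inclusion $\supseteq$ is clear. For $\subseteq$, if $v$ lies in both subspaces, then the support of $v$ is contained in both $\mathcal{S}$ and $\mathcal{T}$ by the observation, so it is contained in $\mathcal{S} \cap \mathcal{T}$.

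Applying these identities gives
\begin{equation*}
A \cap (B + C) = \spn\bigl(\mathcal{B}_A \cap (\mathcal{B}_B \cup \mathcal{B}_C)\bigr) = \spn\bigl((\mathcal{B}_A \cap \mathcal{B}_B) \cup (\mathcal{B}_A \cap \mathcal{B}_C)\bigr) = (A \cap B) + (A \cap C).
\end{equation*}
The middle equality is just distributivity of intersection over union for sets.

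There is no serious obstacle here. The only point needing care is the intersection identity, which relies on the uniqueness of expansions in the basis. Without the hypothesis that $A$, $B$, $C$ are spanned by subsets of a common basis, the lemma fails, for instance for three distinct lines in $\kk^2$.
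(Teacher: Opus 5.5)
Your proof is correct and takes essentially the same route as the paper. Both identify $A \cap (B+C)$ and $(A \cap B) + (A \cap C)$ as the spans of $\mathcal{B}_A \cap (\mathcal{B}_B \cup \mathcal{B}_C) = (\mathcal{B}_A \cap \mathcal{B}_B) \cup (\mathcal{B}_A \cap \mathcal{B}_C)$ and then apply distributivity of intersection over union for sets. Your version also justifies, via uniqueness of basis expansions, the identity $\spn(\mathcal{S}) \cap \spn(\mathcal{T}) = \spn(\mathcal{S} \cap \mathcal{T})$, which the paper uses without comment.
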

\begin{proof}
    Let $\mathcal{B}_1$, $\mathcal{B}_2$, and $\mathcal{B}_3$ be bases for $A$, $B$, and $C$, respectively, all of which are contained in $\mathcal{B}$. Then $\mathcal{B}_1 \cap (\mathcal{B}_2 \cup \mathcal{B}_3)$ is a basis for $A \cap (B + C)$. Note that $\mathcal{B}_1 \cap (\mathcal{B}_2 \cup \mathcal{B}_3) = (\mathcal{B}_1 \cap \mathcal{B}_2) \cup (\mathcal{B}_1 \cap \mathcal{B}_3)$, which is a basis for $(A \cap B) + (A \cap C)$. The result follows.
\end{proof}

We now prove Proposition \ref{prop:maximal cocommutative quotients}.

\begin{proof}[Proof of Proposition \ref{prop:maximal cocommutative quotients}]
    Define
    $$S \coloneqq \{J \subseteq \ker(\varphi) \mid J \text{ is a Hopf ideal of } H \text{ and } H/J \text{ is cocommutative}\}.$$
    We apply Zorn's lemma to prove that $S$ contains a minimal element. To that end, let $X \subseteq S$ be a chain in $S$, and define $I \coloneqq \bigcap_{J \in X} J$. We claim that $I$ is a Hopf ideal of $H$. Certainly, $I$ is an ideal of $H$, $\varepsilon(I) = 0$, and $S(I) \subseteq I$ by construction, so it remains to check that $\Delta(I) \subseteq H \otimes I + I \otimes H$.

    Fix $x \in I$, and let $V$ be a finite-dimensional subspace of $H$ such that $\Delta(x) \in V \otimes V$. Letting $J \in X$, we have $x \in J$ by definition, and thus
    $$\Delta(x) \in (H \otimes J + J \otimes H) \cap (V \otimes V).$$
    We claim that $(H \otimes J + J \otimes H) \cap (V \otimes V) = V \otimes (J \cap V) + (J \cap V) \otimes V$, which we will prove by using Lemma \ref{lem:intersection distributes over sum}. To that end, we now construct a basis for $H \otimes H$ which satisfies the hypotheses of Lemma \ref{lem:intersection distributes over sum}. Let $\mathcal{B}_0$ be a basis for $J \cap V$, and let $\mathcal{B}_1$ and $\mathcal{B}_2$ be bases of $V$ and $J$, respectively, which contain $\mathcal{B}_0$. Finally, we let $\mathcal{B}$ be a basis of $H$ containing $\mathcal{B}_1 \cup \mathcal{B}_2$.
    
    Note that $\mathcal{B} \otimes \mathcal{B} = \{u \otimes v \mid u,v \in \mathcal{B}\}$ is a basis of $H \otimes H$. Furthermore, $\mathcal{B}_1 \otimes \mathcal{B}_1$ is a basis of $V \otimes V$, while $\mathcal{B} \otimes \mathcal{B}_2$ and $\mathcal{B}_2 \otimes \mathcal{B}$ are bases of $H \otimes J$ and $J \otimes H$, respectively. Since all of these are subsets of $\mathcal{B} \otimes \mathcal{B}$, it follows by Lemma \ref{lem:intersection distributes over sum} that $(H \otimes J + J \otimes H) \cap (V \otimes V) = V \otimes (J \cap V) + (J \cap V) \otimes V$, as claimed.

    Since $V$ is finite-dimensional, there exists $\mathcal{J} \in X$ such that $J \cap V = \mathcal{J} \cap V$ for all $J \in X$ with $J \subseteq \mathcal{J}$. Note that we then have $I \cap V = \bigcap_{J \in X} J \cap V = \mathcal{J} \cap V$. It follows that
    \begin{multline*}
        \Delta(x) \in \bigcap_{J \in X} \Big(V \otimes (J \cap V) + (J \cap V) \otimes V\Big) = V \otimes (\mathcal{J} \cap V) + (\mathcal{J} \cap V) \otimes V \\
        = V \otimes (I \cap V) + (I \cap V) \otimes V \subseteq H \otimes I + I \otimes H.
    \end{multline*}
    In other words, we have $\Delta(x) \in H \otimes I + I \otimes H$, and thus $\Delta(I) \subseteq H \otimes I + I \otimes H$. Therefore, $I$ is a Hopf ideal, as claimed.

    To conclude the proof, we must show that $H/I$ is cocommutative. Write $\Delta^{\mathrm{op}} \coloneqq \tau \circ \Delta$ for the opposite comultiplication on $H$, where $\tau \colon H \otimes H \to H \otimes H$ is the swap map $\tau(u \otimes v) = v \otimes u$. Since $H/J$ is cocommutative for all $J \in X$, we have
    $$\Delta(h) - \Delta^{\mathrm{op}}(h) \in \bigcap_{J \in X} \big(H \otimes J + J \otimes H\Big)$$
    for all $h \in H$. Proceeding as above, we can show that $\Delta(h) - \Delta^{\mathrm{op}}(h) \in H \otimes I + I \otimes H$, which proves that $H/I$ is cocommutative. The result follows.
\end{proof}

\section{The universal Hopf algebra of a Frobenius algebra}\label{sec:Frobenius}

Instead of directly describing the construction of the universal coacting Hopf algebra $\aut^r(A_q)$, it is easier to construct $\aut^\ell(A_q^!)$, where $A_q^!$ is the \emph{quadratic dual} of $A_q$. It is well-known that $\aut^\ell(A_q^!)$ and $\aut^r(A_q)$ are isomorphic \cite[Theorem 6.10]{ManinBook}.

\begin{dfn}
    The \emph{quadratic dual} of a quadratic algebra $A$, denoted $A^!$, is defined by
    $$A^! \coloneqq TV^*/(R^\perp),$$
    where $R^\perp \subseteq V^* \otimes V^*$ consists of the elements of $V^* \otimes V^*$ vanishing on $R$, in other words,
    $$R^\perp \coloneqq \{\alpha \in V^* \otimes V^* \mid \alpha(R) = 0\}.$$
\end{dfn}

The quadratic dual of $A_q$ is
$$A^!_q \coloneqq \frac{\kk\ang{t_1,\ldots,t_n}}{(t_i^2, t_j t_i + q^{-1} t_i t_j \mid j > i)},$$
which is a \emph{quantum Grassmann algebra} of dimension $n$, a special type of \emph{Frobenius algebra}. We define these notions next.

\begin{dfn}
    A quadratic algebra $A$ is said to be a \emph{Frobenius algebra} of dimension $n$ if
    \begin{enumerate}
        \item $\dim(A_n) = 1$.
        \item $A_k = 0$ for $k > n$.
        \item For all $k$, the multiplication map $A_k \otimes A_{n - k} \to A_n$ is a perfect duality.
    \end{enumerate}
    The algebra $A$ is called a \emph{quantum Grassmann algebra} if, in addition,
    \begin{itemize}
        \item[(4)] $\dim A_k = \binom{n}{k}$.
    \end{itemize}
\end{dfn}

Let $A = TV/(R)$ be a Frobenius algebra of dimension $n$. We now describe the construction of $\aut^\ell(A)$, which will later allow us to construct $\aut^r(A_q)$. The content of this section is well-known (see, for example, \cite[Chapter 9]{ManinBook} and \cite[Section 3]{ChanWaltonZhang}), but we still opt to present this construction in detail to fix our notation and to maximize clarity.

Before we begin describing $\aut^\ell(A)$, we mention the properties that $\aut^\ell(A)$ must satisfy: since $A$ is a graded left $\aut^\ell(A)$-comodule algebra via a map $\lambda \colon A \to \aut^\ell(A) \otimes A$, we have:
\begin{enumerate}
    \item $\lambda$ is co-associative and co-unital.
    \item $\lambda$ is a $\kk$-algebra homomorphism.
    \item $\lambda$ respects the natural $\NN$-grading of $A$, meaning $\lambda(A_1) \subseteq \aut^\ell(A) \otimes A_1$.
\end{enumerate}
Furthermore, $\aut^\ell(A)$ is the universal Hopf algebra satisfying the above properties. This means that we should only impose relations on $\aut^\ell(A)$ which are forced by the above conditions. Doing so will completely describe $\aut^\ell(A)$.

We now begin the construction of $\aut^\ell(A)$. First, since $\dim(A_n) = 1$, there exists $D \in \aut^\ell(A)$ such that $\lambda(w) = D \otimes w$ for all $w \in A_n$. The element $D \in \aut^\ell(A)$ is called the \emph{quantum determinant} of $\aut^\ell(A)$. It is easy to see that $D$ is group-like.

\begin{lem}
    The quantum determinant $D$ is group-like. Consequently, $D$ has an inverse $D^{-1} \in \aut^\ell(A)$.
\end{lem}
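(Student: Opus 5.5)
Fix a nonzero element $w$ spanning $A_n$, so that $\lambda(w) = D \otimes w$. To show $D$ is group-like, I will use that $\lambda$ is co-associative and co-unital. Co-associativity gives
$$(\Delta \otimes \id_A)\lambda(w) = (\id \otimes \lambda)\lambda(w).$$
The left side is $\Delta(D) \otimes w$. The right side is $D \otimes \lambda(w) = D \otimes D \otimes w$. Since $w \neq 0$, comparing the coefficients of $w$ in $\aut^\ell(A) \otimes \aut^\ell(A) \otimes A_n$ (which is canonically $\aut^\ell(A)^{\otimes 2}$) yields $\Delta(D) = D \otimes D$. Co-unitality gives $(\varepsilon \otimes \id)\lambda(w) = w$, so $\varepsilon(D) w = w$ and hence $\varepsilon(D) = 1$. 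Thus $D$ is group-like.

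For the inverse, I use the standard fact that in any Hopf algebra a group-like element $g$ satisfies $S(g)g = gS(g) = \varepsilon(g)1 = 1$. This follows by applying the antipode axiom $m(S \otimes \id)\Delta = \eta\varepsilon = m(\id \otimes S)\Delta$ to $g$. Setting $D^{-1} \coloneqq S(D)$ therefore gives a two-sided inverse.

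The only delicate point is the logic of the construction. If $\aut^\ell(A)$ is already known to be a Hopf algebra coacting on $A$, the argument above is complete. If instead the lemma is meant as a step in building it from $\ndo^\ell(A)$, then one cannot yet invoke the antipode. In that case the inverse should be read as a relation forced on $\aut^\ell(A)$: any Hopf algebra coacting on $A$ receives the image of $D$, that image is group-like by the computation above, and it is therefore invertible there. So adjoining $D^{-1}$ imposes no extra constraint beyond what the universal property forces.

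In either reading, the substantive step is the coefficient comparison. It relies on $\dim A_n = 1$, which ensures $\lambda(A_n) \subseteq \aut^\ell(A) \otimes A_n$ has exactly the form $D \otimes w$.
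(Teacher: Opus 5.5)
Your proof is correct and matches the paper's argument: applying co-associativity of $\lambda$ to $\lambda(w) = D \otimes w$ gives $\Delta(D) = D \otimes D$, and co-unitality gives $\varepsilon(D) = 1$. The paper leaves the invertibility step implicit, and your explicit computation $S(D)D = DS(D) = \varepsilon(D) = 1$ is exactly the standard fact it relies on.
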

\begin{proof}
    By the co-associativity of $\lambda$, we have $\Delta(D) \otimes w = D \otimes \lambda(w)$ for all $w \in A_n$. Therefore,
    $$\Delta(D) \otimes w = D \otimes D \otimes w,$$
    so $\Delta(D) = D \otimes D$. Similarly, the co-unitality of $\lambda$ implies that $\varepsilon(D)w = w$, so $\varepsilon(D) = 1$. This concludes the proof.
\end{proof}

Using the perfect duality between $A_k$ and $A_{n - k}$, we introduce the following notation.

\begin{ntt}\label{ntt:Frobenius duality}
    Fix $w \in A_n \nonzero$. For each $k,\ell \in \{1,\ldots,n\}$, choose bases $\{t_i^{(k)}\}$ of $A_k$ and $\{s_i^{(\ell)}\}$ of $A_\ell$ such that
    \begin{equation}\label{eq:duality of a with b}
        t_i^{(k)} s_j^{(n - k)} = \delta_{ij} w.
    \end{equation}
    Letting $d_k \coloneqq \dim(A_k)$, we define elements $x_{ij}^{(k)}, y_{ij}^{(k)} \in \aut^\ell(A)$ such that
    \begin{equation}\label{eq:lambda on a and b}
        \lambda(t_i^{(k)}) = \sum_{j = 1}^{d_k} x_{ij}^{(k)} \otimes t_j^{(k)}, \quad \lambda(s_i^{(k)}) = \sum_{j = 1}^{d_k} y_{ij}^{(k)} \otimes s_j^{(k)},
    \end{equation}
    and let $X^{(k)} \coloneqq (x_{ij}^{(k)})$ and $Y^{(k)} \coloneqq (y_{ij}^{(k)})$ in $M_{d_k}(A)$.
\end{ntt}

We now describe how the comultiplication and counit of $\aut^\ell(A)$ interact with the elements $x_{ij}^{(k)}$ and $y_{ij}^{(k)}$ defined in Notation \ref{ntt:Frobenius duality}.

\begin{lem}\label{lem:Frobenius Manin coalgebra structure}
    For all $k \in \{1,\dots,n\}$ and $i,j \in \{1,\dots,d_k\}$, we have
    $$\Delta(x_{ij}^{(k)}) = \sum_{\ell = 1}^{d_k} x_{i\ell}^{(k)} \otimes x_{\ell j}^{(k)}, \quad \Delta(y_{ij}^{(k)}) = \sum_{\ell = 1}^{d_k} y_{i\ell}^{(k)} \otimes y_{\ell j}^{(k)},$$
    and $\varepsilon(x_{ij}^{(k)}) = \varepsilon(y_{ij}^{(k)}) = \delta_{ij}$.
\end{lem}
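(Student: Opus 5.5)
I will prove the formulas for $x_{ij}^{(k)}$; the argument for $y_{ij}^{(k)}$ is identical, with $s_i^{(k)}$ in place of $t_i^{(k)}$. As in the proof that $D$ is group-like, the tools are co-associativity and co-unitality of $\lambda$, applied to a basis element and compared coefficientwise.

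First I record that $\lambda(A_k) \subseteq \aut^\ell(A) \otimes A_k$ for every $k$. This holds because $\lambda$ is an algebra homomorphism with $\lambda(A_1) \subseteq \aut^\ell(A) \otimes A_1$, and $A_k$ is spanned by products of $k$ elements of $A_1$. So the expansions \eqref{eq:lambda on a and b} make sense, and $\lambda$ restricts to a left coaction on each finite-dimensional space $A_k$, with basis $\{t_j^{(k)}\}$.

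For the coproduct, I apply both sides of the co-associativity identity $(\Delta \otimes \id_A)\circ\lambda = (\id \otimes \lambda)\circ\lambda$ to $t_i^{(k)}$. The left side is $\sum_j \Delta(x_{ij}^{(k)}) \otimes t_j^{(k)}$. The right side is
$$\sum_\ell x_{i\ell}^{(k)} \otimes \lambda(t_\ell^{(k)}) = \sum_{j}\Big(\sum_\ell x_{i\ell}^{(k)} \otimes x_{\ell j}^{(k)}\Big) \otimes t_j^{(k)}.$$
Since the $t_j^{(k)}$ are linearly independent, $\aut^\ell(A)^{\otimes 2} \otimes A_k$ decomposes as $\bigoplus_j \aut^\ell(A)^{\otimes 2} \otimes t_j^{(k)}$. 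Comparing coefficients of $t_j^{(k)}$ then gives the stated formula for $\Delta(x_{ij}^{(k)})$.

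For the counit, co-unitality gives
$$t_i^{(k)} = \sum_j \varepsilon(x_{ij}^{(k)})\, t_j^{(k)}.$$
Linear independence of the $t_j^{(k)}$ then forces $\varepsilon(x_{ij}^{(k)}) = \delta_{ij}$.

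No step is a real obstacle. The only point needing care is the justification that each $A_k$ is preserved by $\lambda$, so that the coefficient comparison takes place inside $\aut^\ell(A) \otimes A_k$ with respect to the chosen basis.
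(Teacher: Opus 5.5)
Your proposal is correct and follows essentially the same route as the paper: you apply co-associativity and co-unitality of $\lambda$ to the basis element $t_i^{(k)}$ (resp.\ $s_i^{(k)}$) and compare coefficients of the linearly independent $t_j^{(k)}$. Your extra check that $\lambda(A_k) \subseteq \aut^\ell(A) \otimes A_k$, which holds because $\lambda$ is a grading-preserving algebra map, is a point the paper leaves implicit when it sets up the notation.
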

\begin{proof}
    The co-associativity of $\lambda$ implies that
    $$\sum_{j,\ell = 1}^{d_k} x_{ij}^{(k)} \otimes x_{j\ell}^{(k)} \otimes t_\ell^{(k)} = \sum_{j = 1}^{d_k} x_{ij}^{(k)} \otimes \lambda(t_j^{(k)}) = \sum_{j = 1}^{d_k} \Delta(x_{ij}^{(k)}) \otimes t_j^{(k)}.$$
    Swapping $j$ and $\ell$ in the first summation, we get
    $$\sum_{j,\ell = 1}^{d_k} x_{i\ell}^{(k)} \otimes x_{\ell j}^{(k)} \otimes t_j^{(k)} = \sum_{j = 1}^{d_k} \Delta(x_{ij}^{(k)}) \otimes t_j^{(k)},$$
    and thus
    $$\Delta(x_{ij}^{(k)}) = \sum_{\ell = 1}^{d_k} x_{i\ell}^{(k)} \otimes x_{\ell j}^{(k)}$$
    for all $i,j$. By the same argument, we also have $\Delta(y_{ij}^{(k)}) = \sum_{\ell = 1}^{d_k} y_{i\ell}^{(k)} \otimes y_{\ell j}^{(k)}$ for all $i,j$. The counitality of $\lambda$ implies that
    $$\sum_j \varepsilon(x_{ij}^{(k)}) t_j^{(k)} = t_i^{(k)}, \quad \sum_j \varepsilon(y_{ij}^{(k)}) s_j^{(k)} = s_i^{(\ell)},$$
    by \eqref{eq:lambda on a and b}, from which it follows that $\varepsilon(x_{ij}^{(k)}) = \varepsilon(y_{ij}^{(k)}) = \delta_{ij}$.
\end{proof}

Having analyzed the coalgebra structure of $\aut^\ell(A)$, we now move on to the antipode. For a matrix $M = (m_{ij}) \in M_d(\aut^\ell(A))$, define $S(M)$ to be the matrix whose $(i,j)$-entry is $S(m_{ij})$.

\begin{lem}\label{lem:U and V are invertible}
    For all $k \in \{1,\dots,n\}$, the matrices $X^{(k)}$ and $Y^{(k)}$ are invertible, and we have $S(X^{(k)}) = (X^{(k)})^{-1}$ and $S(Y^{(k)}) = (Y^{(k)})^{-1}$.
\end{lem}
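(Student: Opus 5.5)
The argument needs only the antipode axiom together with the coalgebra structure from Lemma \ref{lem:Frobenius Manin coalgebra structure}. Recall that in any Hopf algebra, $m \circ (S \otimes \id) \circ \Delta = \eta \circ \varepsilon = m \circ (\id \otimes S) \circ \Delta$. We apply this to the generators $x_{ij}^{(k)}$, using $\Delta(x_{ij}^{(k)}) = \sum_{\ell} x_{i\ell}^{(k)} \otimes x_{\ell j}^{(k)}$ and $\varepsilon(x_{ij}^{(k)}) = \delta_{ij}$.

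The first identity gives
$$\sum_{\ell = 1}^{d_k} S(x_{i\ell}^{(k)})\, x_{\ell j}^{(k)} = \delta_{ij},$$
which is precisely the $(i,j)$-entry of the matrix equation $S(X^{(k)})\,X^{(k)} = I_{d_k}$ in $M_{d_k}(\aut^\ell(A))$. The second identity gives
$$\sum_{\ell} x_{i\ell}^{(k)} S(x_{\ell j}^{(k)}) = \delta_{ij},$$
that is, $X^{(k)}\,S(X^{(k)}) = I_{d_k}$. So $S(X^{(k)})$ is a two-sided inverse of $X^{(k)}$; in particular $X^{(k)}$ is invertible and $(X^{(k)})^{-1} = S(X^{(k)})$.

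The argument for $Y^{(k)}$ is identical, since Lemma \ref{lem:Frobenius Manin coalgebra structure} gives it the same matrix-coalgebra structure.

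There is no real obstacle here. The only point to watch is the order of the factors: because $\aut^\ell(A)$ is noncommutative, we must check that the two antipode identities produce the products $S(X)X$ and $XS(X)$ with the matrix multiplication in the correct order, rather than their transposes. With the indexing above they do.
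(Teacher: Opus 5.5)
Your proposal is correct and follows the paper's own proof: apply the two antipode identities to the matrix-coalgebra elements $x_{ij}^{(k)}$ and $y_{ij}^{(k)}$ to obtain $S(X^{(k)})X^{(k)} = X^{(k)}S(X^{(k)}) = I_{d_k}$, and likewise for $Y^{(k)}$. Your check of the factor order is also right.
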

\begin{proof}
    We have
    \begin{equation}\label{eq:antipode on u and v}
        \begin{aligned}
            \sum_\ell S(x_{i\ell}^{(k)}) x_{\ell j}^{(k)} &= \sum_\ell x_{i\ell}^{(k)} S(x_{\ell j}^{(k)}) = \varepsilon(x_{ij}^{(k)}) = \delta_{ij}, \\
            \sum_\ell S(y_{i\ell}^{(k)}) y_{\ell j}^{(k)} &= \sum_\ell y_{i\ell}^{(k)} S(y_{\ell j}^{(k)}) = \varepsilon(y_{ij}^{(k)}) = \delta_{ij}.
        \end{aligned}
    \end{equation}
    Then \eqref{eq:antipode on u and v} gives
    $$S(X^{(k)})X^{(k)} = X^{(k)}S(X^{(k)}) = I_{d_k}, \quad S(Y^{(k)})Y^{(k)} = Y^{(k)}S(Y^{(k)}) = I_{d_k},$$
    in other words, $X^{(k)}$ and $Y^{(k)}$ are invertible, and we have $S(X^{(k)}) = (X^{(k)})^{-1}$ and $S(Y^{(k)}) = (Y^{(k)})^{-1}$.
\end{proof}

As we show next, the matrices $X^{(k)}$ and $Y^{(n - k)}$ are inverse to each other, up to multiplication by $D^{-1}$ and transposition.

\begin{prop}\label{prop:Y is inverse of X}
    For all $k \in \{1,\dots,n\}$, we have $X^{(k)}(Y^{(n - k)})^{\mathsf{T}} = D I_n$. Consequently, $(X^{(k)})^{-1} = (Y^{(n - k)})^{\mathsf{T}} D^{-1}$, and
    $$S(x_{ij}^{(k)}) = y_{ji}^{(n - k)}D^{-1}$$
    for all $i,j \in \{1,\dots,d_k\}$.
\end{prop}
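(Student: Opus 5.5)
The natural approach is to apply the algebra map $\lambda$ to the defining duality relation \eqref{eq:duality of a with b}, $t_i^{(k)} s_j^{(n-k)} = \delta_{ij} w$. Since $\lambda$ is an algebra homomorphism and $\lambda(w) = D \otimes w$, the right-hand side becomes $\delta_{ij} D \otimes w$. Expanding the left-hand side with \eqref{eq:lambda on a and b} gives
\[
\lambda(t_i^{(k)})\lambda(s_j^{(n-k)}) = \sum_{\ell,m} x_{i\ell}^{(k)} y_{jm}^{(n-k)} \otimes t_\ell^{(k)} s_m^{(n-k)} = \sum_{\ell} x_{i\ell}^{(k)} y_{j\ell}^{(n-k)} \otimes w,
\]
where we use $t_\ell^{(k)} s_m^{(n-k)} = \delta_{\ell m} w$ once more. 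Comparing coefficients of $w$ yields $\sum_\ell x_{i\ell}^{(k)} y_{j\ell}^{(n-k)} = \delta_{ij} D$. This is exactly the $(i,j)$ entry of $X^{(k)}(Y^{(n-k)})^{\mathsf T} = D I_{d_k}$; the statement's $I_n$ should presumably be read as $I_{d_k}$, since $d_k = d_{n-k}$ by the perfect duality.

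Next, multiply on the right by $D^{-1}$, noting that $D$ is invertible by the lemma on group-likeness. This gives $X^{(k)}\bigl((Y^{(n-k)})^{\mathsf T} D^{-1}\bigr) = I$, so $(Y^{(n-k)})^{\mathsf T} D^{-1}$ is a right inverse of $X^{(k)}$.

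By Lemma \ref{lem:U and V are invertible}, $X^{(k)}$ is invertible with two-sided inverse $S(X^{(k)})$. A right inverse of an invertible matrix over a ring must equal its two-sided inverse: multiply $X^{(k)}R = I$ on the left by $S(X^{(k)})$. Hence $(X^{(k)})^{-1} = (Y^{(n-k)})^{\mathsf T}D^{-1}$. Reading off the $(i,j)$ entry gives $S(x_{ij}^{(k)}) = y_{ji}^{(n-k)}D^{-1}$.

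The main point needing care is the bookkeeping in the expansion. The tensor factor ordering in $\aut^\ell(A)\otimes A$ is what forces the product $x_{i\ell}y_{jm}$, with $x$ on the left, so the result is $X (Y)^{\mathsf T}$ rather than $Y^{\mathsf T} X$. Because the algebra is noncommutative, we cannot use the other-order product. We also cannot deduce that $D^{-1}$ commutes with the $y$'s, and the argument above never needs it. Left-invertibility is supplied by Lemma \ref{lem:U and V are invertible}, so no further obstacle is expected.
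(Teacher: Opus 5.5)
Your proposal is correct and follows the paper's proof essentially step for step. You apply $\lambda$ to $t_i^{(k)} s_j^{(n-k)} = \delta_{ij} w$ to obtain $X^{(k)}(Y^{(n-k)})^{\mathsf{T}} = D I_{d_k}$, then use Lemma \ref{lem:U and V are invertible} to identify the resulting right inverse with $S(X^{(k)})$. Your reading of $I_n$ as $I_{d_k}$ matches what the paper's proof actually establishes, and your explicit right-inverse-equals-inverse step only spells out what the paper leaves implicit.
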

\begin{proof}
    It follows from \eqref{eq:duality of a with b} that
    \begin{equation}\label{eq:lambda dual bases part 1}
        \lambda(t_i^{(k)}s_j^{(n - k)}) = \delta_{ij} \lambda(w) = \delta_{ij} D \otimes w.
    \end{equation}
    On the other hand, $\lambda$ is an algebra homomorphism, so we get
    \begin{equation}\label{eq:lambda dual bases part 2}
        \begin{aligned}
            \lambda(t_i^{(k)}s_j^{(n - k)}) &= \lambda(t_i^{(k)}) \lambda(s_j^{(n - k)}) = \sum_{a,b = 1}^{d_k} x_{ia}^{(k)} y_{jb}^{(n - k)} \otimes t_a^{(k)}s_b^{(n - k)} \\
            &= \sum_{a,b = 1}^{d_k} \delta_{ab} x_{ia}^{(k)} y_{jb}^{(n - k)} \otimes w \quad (\text{by \eqref{eq:duality of a with b}}) \\
            &= \sum_{\ell = 1}^{d_k} x_{i\ell}^{(k)} y_{j\ell}^{(n - k)} \otimes w.
        \end{aligned}
    \end{equation}
    Combining \eqref{eq:lambda dual bases part 1} and \eqref{eq:lambda dual bases part 2}, we deduce that $\sum_{\ell = 1}^{d_k} x_{i\ell}^{(k)} y_{j\ell}^{(n - k)} = \delta_{ij} D$. Therefore,
    $$X^{(k)}(Y^{(n - k)})^{\mathsf{T}} = D I_{d_k}$$
    for all $k \in \{1,\ldots,n\}$. By Lemma \ref{lem:U and V are invertible}, it follows that $S(X^{(k)}) = (X^{(k)})^{-1} = (Y^{(n - k)})^{\mathsf{T}} D^{-1}$. We conclude that $S(x_{ij}^{(k)}) = y_{ji}^{(n - k)}D^{-1}$, which finishes the proof.
\end{proof}

We summarize our findings in the next result.

\begin{prop}\label{prop:Frobenius Manin Hopf algebra}
    The Hopf algebra $\aut^\ell(A)$ contains elements $x_{ij} = x_{ij}^{(1)}$ for $i,j \in \{1,\dots,n\}$ and $D^{\pm 1}$, subject to the relations which make $\lambda$ into an algebra homomorphism, and the relations
    $$X^{(k)}(Y^{(n - k)})^{\mathsf{T}} D^{-1} = (Y^{(n - k)})^{\mathsf{T}} D^{-1}X^{(k)} = I_{d_k},$$
    for all $k \in \{1,\ldots,n\}$, where we use the notation from Notation \ref{ntt:Frobenius duality}.

    The comultiplication and counit on the elements $x_{ij}$ is given by
    $$\Delta(x_{ij}) = \sum_{k = 1}^n x_{ik} \otimes x_{kj}, \quad \varepsilon(x_{ij}) = \delta_{ij}.$$
    The antipode on the elements $x_{ij}$ is given by $S(x_{ij}) = y_{ji}^{(n - 1)} D^{-1}$.
\end{prop}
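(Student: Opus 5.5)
This proposition is essentially a bookkeeping consequence of the lemmas above, so the plan is to verify two things. First, the listed elements and relations hold in $\aut^\ell(A)$. Second, they form a complete presentation, i.e.\ the Hopf algebra they define has the universal property of Definition \ref{dfn:Manin} (in its left version).

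For the first part, I take $\lambda(t_i)=\sum_j x_{ij}\otimes t_j$ on $A_1$. Since $A$ is generated by $A_1$ and $\lambda$ is an algebra map, every $x^{(k)}_{ij}$, every $y^{(k)}_{ij}$ and $D$ is a noncommutative polynomial in the $x_{ij}$. Concretely, expanding $\lambda$ of a degree-$k$ monomial and rewriting $A_k$ in the chosen bases expresses these elements in terms of the $x_{ij}$. The relations making $\lambda$ well defined come from applying $\lambda$ to $R$: requiring $\lambda(R)\subseteq \aut^\ell(A)\otimes R$ gives quadratic relations in the $x_{ij}$. The inverse relations $X^{(k)}(Y^{(n-k)})^{\mathsf T}D^{-1}=(Y^{(n-k)})^{\mathsf T}D^{-1}X^{(k)}=I_{d_k}$ follow from Proposition \ref{prop:Y is inverse of X} together with Lemma \ref{lem:U and V are invertible}, using that $D$ is invertible because it is group-like. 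The formulas for $\Delta$ and $\varepsilon$ are Lemma \ref{lem:Frobenius Manin coalgebra structure} with $k=1$, and $S(x_{ij})=y^{(n-1)}_{ji}D^{-1}$ is Proposition \ref{prop:Y is inverse of X} with $k=1$.

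For the second part, let $H$ be the algebra generated by $x_{ij}$ and $D^{-1}$ modulo exactly these relations, with $D$ and the $y^{(k)}_{ij}$ understood as the polynomials above. I would check four things in order.
\begin{enumerate}
\item The formulas for $\Delta$ and $\varepsilon$ respect the relations. The quadratic relations are the defining relations of Manin's $\ndo^\ell(A)$, and those are compatible with matrix comultiplication by the standard argument. Since $\Delta(D)=D\otimes D$, setting $\Delta(D^{-1})=D^{-1}\otimes D^{-1}$ is consistent.
\item The map $S$ determined by $S(X)=(Y^{(n-1)})^{\mathsf T}D^{-1}$ and $S(D^{-1})=D$ extends to an anti-homomorphism and satisfies the antipode axioms on generators. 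The axioms are exactly the imposed relations for $k=1$. For $D^{-1}$ they are immediate.
\item $\lambda$ defines a coaction making $A$ a left $H$-comodule algebra. This holds by construction.
\item Universality. Given a coaction $\lambda'\colon A\to K\otimes A$ by a Hopf algebra $K$ preserving the grading, define $x_{ij}\mapsto x'_{ij}$ with $D^{-1}\mapsto D'^{-1}$. The arguments of the preceding lemmas, run inside $K$, show that the images satisfy all the relations. Uniqueness holds because the generators are determined by $\lambda'$.
\end{enumerate}

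I expect step (2) to be the main obstacle: showing that $S$ is well defined on all of $H$. In particular, it must send the quadratic relations and the $k\ge 2$ inverse relations to relations. My first attempt would avoid direct computation. I would instead invoke Manin's construction of $\aut^\ell(A)$ as the Hopf envelope of $\ndo^\ell(A)$ \cite{ManinBook}, and argue that in the Frobenius case inverting $D$ alone suffices. This should follow because Proposition \ref{prop:Y is inverse of X} exhibits an explicit two-sided inverse of $X$. Applying the same reasoning to $S(X)$ then gives the iterated inverses that the Hopf envelope would otherwise adjoin. Hence the envelope collapses to the localization at $D$ modulo the stated relations, and it is not necessary to write the antipode compatibility out explicitly.
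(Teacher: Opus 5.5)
Your first part is exactly the paper's proof. The paper only observes that the relations and the formulas for $\Delta$, $\varepsilon$, $S$ are forced by Lemmas \ref{lem:Frobenius Manin coalgebra structure}, \ref{lem:U and V are invertible} and Proposition \ref{prop:Y is inverse of X}. That is all the proposition asserts: the paper says immediately afterwards that it does \emph{not} claim these relations are sufficient. Your second part therefore tries to prove a stronger statement (a complete presentation), and as sketched it has a genuine gap.

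The gap is your step (2), and the proposed workaround does not close it. For $S$ to be a well-defined anti-homomorphism on $H$ (equivalently, for Manin's Hopf envelope to collapse onto $H$), a two-sided inverse of $X$ is not enough. The envelope also imposes the opposite quadratic relations $R_0^{\mathrm{op}}$ on $\wt{Z}_1 = S(X)$, and $R_0$ on $\wt{Z}_2$, and so on. You also need the image under $S$ of the polynomial $D$ to equal $D^{-1}$.

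Concretely, set $W = X^{-1}$. The matrix $(x_{ac}x_{bd})$ of $\lambda$ on $V \otimes V$ preserves $R$, and its two-sided inverse is $(w_{bd}w_{ac})$. The statement ``$W$ satisfies $R_0^{\mathrm{op}}$'' says precisely that this inverse also preserves $R$. Over a noncommutative ring, an invertible block-triangular matrix need not have a block-triangular inverse. So exhibiting an explicit inverse says nothing about which relations it satisfies, and your remark that ``it is not necessary to write the antipode compatibility out explicitly'' skips the actual content.

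In the case $A = A_q^!$, the paper supplies exactly this in the proof of Theorem \ref{thm:description of universal Hopf algebra}. It uses Proposition \ref{prop:q-Manin matrix}\eqref{item:invertible q-Manin matrix}: the inverse of a $q$-Manin matrix is a $q^{-1}$-Manin matrix, with $\invqdet(P^{-1}) = \qdet(P)^{-1}$. That is genuine external input.

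For a general Frobenius $A$, you could close the gap using the $k \geq 2$ relations, which your sketch lists only as things to check, never as tools. The block that the matrix of $\lambda$ on $V^{\otimes k}$ induces on the quotient $A_k$ is conjugate to $X^{(k)}$, which is invertible by your relations. This forces the inverse matrix (the reversed tensor power of $W$) to preserve $\ker(V^{\otimes k} \to A_k)$ and to induce $(X^{(k)})^{-1}$ on $A_k$. Taking $k = 2$ gives $R_0^{\mathrm{op}}$ for $S(X)$, taking $k = n$ gives $S(D) = D^{-1}$, and general $k$ gives compatibility of $S$ with the $k$-th relations.
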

\begin{proof}
    By the discussion above, the relations and the formulas for the comultiplication, counit, and antipode given in the statement are all necessary to have a well-defined left coaction on $A$.
\end{proof}

Note that we do not claim that the relations in Proposition \ref{prop:Frobenius Manin Hopf algebra} are sufficient to define a Hopf algebra, but they are certainly necessary. In general, more generators and relations may be required, which can be obtained via Manin's \emph{Hopf envelope} construction \cite[Chapter 8]{ManinBook}.
We work this out in detail in the next section, but only for the special case of the algebras $A_q^!$.

\section{Construction of \texorpdfstring{$\aut^r(A_q)$}{aut(Aq)}}\label{sec:Manin Hopf algebra of Aq}

As mentioned before, we can construct $\aut^r(A_q)$ by describing $\aut^\ell(A_q^!)$ using the discussion from Section \ref{sec:Frobenius}. This is because $\aut^r(A_q) \cong \aut^\ell(A_q^!)$, by the following well-known result.

\begin{prop}[{\cite[Theorem 6.10]{ManinBook}, \cite[Lemma 2.1.5(3)]{Hexagon}}]\label{prop:left algebra of dual}
    Let $A$ be a quadratic algebra. Then $\aut^r(A) \cong \aut^\ell(A^!)$.
\end{prop}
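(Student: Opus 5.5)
We prove $\aut^r(A) \cong \aut^\ell(A^!)$ by matching universal properties at the level of comodules over the degree-one and degree-two pieces. Write $A = TV/(R)$ with $\dim V = n$ and $A^! = TV^*/(R^\perp)$.

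The first step is to record what a graded right coaction of a Hopf algebra $H$ on $A$ amounts to. It is a right $H$-comodule structure $\rho\colon V \to V \otimes H$, written $\rho(v_i) = \sum_j v_j \otimes h_{ji}$, such that the induced comodule structure on $V \otimes V$ (via the multiplication of $H$) preserves $R$. Indeed, $\rho$ extends uniquely to an algebra map $TV \to TV \otimes H$. That extension is again coassociative and counital, since it is so on generators. It descends to $A$ if and only if $R$ is a subcomodule of $V^{\otimes 2}$. So a graded right $H$-coaction on $A$ is the same as a matrix $(h_{ij})$ that is multiplicative ($\Delta(h_{ij}) = \sum_k h_{ik}\otimes h_{kj}$, $\varepsilon(h_{ij})=\delta_{ij}$) and satisfies the quadratic relations expressing $\rho^{\otimes 2}(R) \subseteq R \otimes H$. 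The same description holds for graded left coactions on $A^!$, with $V^*$ and $R^\perp$ in place of $V$ and $R$.

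The second step is to use the antipode to pass between right comodules and left comodules of the dual. If $V$ is a right $H$-comodule via the matrix $(h_{ij})$, then $V^*$ becomes a left $H$-comodule. Taking the dual basis $\{t_i\}$, set $\lambda(t_i) = \sum_j h_{ij} \otimes t_j$; this uses the transpose pattern and needs no antipode for the left structure. One checks it is coassociative: $(\Delta\otimes \id)\lambda(t_i) = \sum_{j,k} h_{ik}\otimes h_{kj}\otimes t_j = (\id\otimes\lambda)\lambda(t_i)$. The key compatibility is the claim that $R \subseteq V\otimes V$ is a right subcomodule if and only if $R^\perp \subseteq V^*\otimes V^*$ is a left subcomodule for this structure. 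This holds because the evaluation pairing $(V^*\otimes V^*) \otimes (V\otimes V) \to \kk$, $\langle t_a\otimes t_b, v_c \otimes v_d\rangle = \delta_{ac}\delta_{bd}$, is $H$-balanced: both structures are governed by the same matrix $h_{ac}h_{bd}$ read in transposed directions. Annihilators of subcomodules for a balanced pairing are then subcomodules. Here one should be careful to use the pairing with the matching ordering of tensor factors, so that no antipode or flip of $H$ is introduced. This gives a bijection, natural in $H$, between graded right coactions on $A$ and graded left coactions on $A^!$, given by identical matrices of coefficients.

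The final step is formal. Both $\aut^r(A)$ and $\aut^\ell(A^!)$ represent the same functor from Hopf algebras to sets, so Yoneda gives a Hopf isomorphism. Concretely, the universal coaction of $\aut^r(A)$ on $A$ induces a coaction on $A^!$, which yields a Hopf map $\aut^\ell(A^!) \to \aut^r(A)$; the reverse construction yields a map back. Both composites fix the generators $x_{ij}$, so by the uniqueness in Definition \ref{dfn:Manin} they are identities.

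The main obstacle is the subcomodule duality in the second step. Because $H$ is noncommutative, one must verify that the pairing on $V^{\otimes 2}$ is compatible with the products $h_{ac}h_{bd}$ in the correct order. If the naive dual structure reverses the order, I would instead pair $V^*\otimes V^*$ with $V\otimes V$ via $\langle t_a\otimes t_b, v_c\otimes v_d\rangle$ and verify the balance condition entrywise, checking that the relations $\rho^{\otimes 2}(R)\subseteq R\otimes H$ and $\lambda^{\otimes 2}(R^\perp)\subseteq H\otimes R^\perp$ both say that $\sum_{a,b} r_{ab}\, h_{ac}h_{bd}$, tested against $R^\perp$, vanishes.
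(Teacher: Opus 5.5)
Your argument is correct. It takes a different route from the paper, which gives no proof of its own and only cites Manin and \cite{Hexagon}.

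The route implicit in Section~\ref{sec:Manin Hopf algebra of Aq} is the explicit one. There $\ndo$ is presented by one quadratic relation for each pair consisting of a relation of $A$ and a relation of $A^!$ (as recalled in Lemma~\ref{lem:presentation of End}), so the bialgebras attached to $A$ and $A^!$ can be compared through their presentations. Then $\aut$ is obtained from $\ndo$ by Manin's Hopf envelope.

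You argue by representability instead. You identify the functors $H \mapsto \{\text{graded right coactions of } H \text{ on } A\}$ and $H \mapsto \{\text{graded left coactions of } H \text{ on } A^!\}$, then apply Yoneda. This needs neither the explicit presentation nor the Hopf envelope. It applies verbatim to bialgebras, so it also yields $\ndo^r(A) \cong \ndo^\ell(A^!)$. It also shows that the isomorphism matches the two universal coefficient matrices entry by entry, which is the identification the paper uses implicitly when it passes from the coaction $\lambda(t_i) = \sum_j x_{ij} \otimes t_j$ on $A_q^!$ to a coaction on $A_q$.

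Two small corrections. First, despite the opening sentence of your second step, no antipode is used anywhere; that is exactly why the argument also works for bialgebras. Second, the obstacle you flag at the end does not arise. With $\rho(v_i) = \sum_j v_j \otimes h_{ji}$ and $\lambda(t_i) = \sum_j h_{ij} \otimes t_j$, one gets $\rho(v_c v_d) = \sum_{a,b} v_a v_b \otimes h_{ac} h_{bd}$ and $\lambda(t_a t_b) = \sum_{c,d} h_{ac} h_{bd} \otimes t_c t_d$, with the factors in the same order. So $\rho(R) \subseteq R \otimes H$ and $\lambda(R^\perp) \subseteq H \otimes R^\perp$ both say exactly that $\sum_{a,b,c,d} \alpha_{ab} r_{cd} h_{ac} h_{bd} = 0$ for all $\alpha = \sum \alpha_{ab} t_a t_b \in R^\perp$ and $r = \sum r_{cd} v_c v_d \in R$. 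The only input beyond bookkeeping is $(R^\perp)^\perp = R$, which holds because $\dim V < \infty$.
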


We now proceed to construct the Hopf algebra $\aut^\ell(A_q^!) \cong \aut^r(A_q)$. This Hopf algebra has been studied before, but it is difficult to find the explicit presentation of $\aut^r(A_q(n))$ we want in the literature, in particular for arbitrary $q \in \kk^*$ and arbitrary number of variables $n$. For example, \cite[Appendix A]{RaedscheldersVanDenBergh} describes the Hopf algebra $\aut^r(A_1(n))$ for arbitrary $n$, while \cite[Lemma 5.1]{ChanWaltonZhang} describes $\aut^r(A_q(2))$ for arbitrary $q \in \kk^*$.  See also \cite[Example 9.6]{ManinBook}, \cite{RaedscheldersVanDenBergh2}, and \cite{ChirvasituWaltonWang}.

We keep the notation from Notation \ref{ntt:Frobenius duality} with
$$A = A_q^! = \frac{\kk\ang{t_1,\ldots,t_n}}{(t_i^2, t_j t_i + q^{-1} t_i t_j \mid j > i)},$$
where we make the following choices.

\begin{ntt}\label{ntt:I_k indexing set}
    Define $w \coloneqq t_1 \dots t_n$. For $k \in \{1,\dots,n\}$, define 
    $$\II_k \coloneqq \{\*r = (r_1,\dots,r_k) \in \ZZ^k \mid 1 \leq r_1 < r_2 < \dots < r_k \leq n\}.$$
    Note that $\II_1 = \{1,\dots,n\}$.  
    Given $\*r \in \II_k$, we define $t_{\*r} \coloneqq t_{r_1} t_{r_2} \dots t_{r_k}$, so that $\{ t_{\*r} \}$ is a $\kk$-basis of $(A^!_q)_k$ indexed by $\*r \in \II_k$. 

    For $\*r \in \II_k$, let $\widehat{\*r}$ be the element of $\II_{n - k}$ obtained by removing $r_1,\dots,r_k$ from $(1,\dots,n)$, and let $\sigma_{\*r} \in S_n$ be the permutation
    $$\sigma_{\*r} \coloneqq \begin{pmatrix}
        1 & \dots & k & k + 1 & \dots & n \\
        r_1 & \dots & r_k & \widehat{r}_1 & \dots & \widehat{r}_{n - k}
    \end{pmatrix}.$$
\end{ntt}

For all $\*r \in \II_k$, we have
$$\lambda(t_{\*r}) = \sum_{\*c \in \II_k} x_{\*r\*c} \otimes t_{\*c}.$$
for some elements $x_{\*r\*c} \in  \aut^\ell(A)$. Note that we have dropped the superscript $(k)$ from $t_{\*c}$ and $x_{\*r\*c}$ in this equation, since it is implied by the fact that $\*r, \*c \in \II_k$. We will continue to drop the superscript below for variables indexed by $\II_k$, unless it is needed for clarity.

We introduce quantum determinants and quantum minors, which will be important for our presentation of $\aut^r(A_q)$.

\begin{dfn}\label{dfn:qdet and inversion number}
    Let $S$ and $T$ be subsets of $\{1, \dots, n\}$ for some $n$. Given a function $f \colon S \to T$, we define its \emph{inversion set} as
    $$\Inv(f) \coloneqq \{(i,j) \in S \times S \mid i < j \text{ and } f(i) > f(j)\}.$$
    The \emph{inversion number} of $f$ is $I(f) \coloneqq |\Inv(f)|$. Given a $k \times k$ matrix $P = (p_{ij}) \in M_n(R)$ over some $\kk$-algebra $R$, we define the \emph{quantum determinant} of $P$ as follows:
    $$\qdet(P) \coloneqq \sum_{\sigma \in S_n} (-q)^{-I(\sigma)} p_{1\sigma(1)} \dots p_{n\sigma(n)}.$$
    Define $X \coloneqq (x_{ij}) \in M_n(\aut^r(A_q))$. Of course, $X = X^{(1)}$ from Notation \ref{ntt:Frobenius duality}. Given $\*r, \*c \in \II_k$, we define $X_{\*r \*c}$ to be the $k \times k$ sub-matrix of $X$ using rows $r_1,\dots,r_k$ and columns $c_1,\dots,c_k$. The quantum determinant $\qdet(X_{\*r \*c})$ is known as a \emph{quantum minor} of $X$.
\end{dfn}

For example,
$$\qdet\begin{pmatrix}
    a & b \\
    c & d
\end{pmatrix} = ad - q^{-1} bc.$$
We also called the element $D$ from the previous section a quantum determinant. The next result shows that there is no conflict of terminology.

\begin{lem}\label{lem:Manin qdet}
    We have $D = \qdet(X)$, where $X = (x_{ij}) \in M_n(\aut^r(A_q))$.
\end{lem}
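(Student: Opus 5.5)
We compute $\lambda(w)$ directly for $w = t_1 t_2 \cdots t_n$ and compare it with the defining identity $\lambda(w) = D \otimes w$. Since $\lambda$ is an algebra homomorphism,
\begin{equation*}
    \lambda(w) = \lambda(t_1)\cdots\lambda(t_n) = \sum_{j_1,\dots,j_n=1}^n x_{1j_1}x_{2j_2}\cdots x_{nj_n} \otimes t_{j_1}t_{j_2}\cdots t_{j_n}.
\end{equation*}
We then reduce each monomial $t_{j_1}\cdots t_{j_n}$ in $A_q^!$ using its defining relations. If two indices coincide, we can move the two equal generators next to each other, picking up only nonzero scalars from the relations $t_j t_i = -q^{-1} t_i t_j$. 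The relation $t_i^2 = 0$ then kills the monomial. So only the terms with $j_k = \sigma(k)$ for a permutation $\sigma \in S_n$ survive.

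For such a term, we claim that
\begin{equation*}
    t_{\sigma(1)}\cdots t_{\sigma(n)} = (-q^{-1})^{I(\sigma)}\, t_1\cdots t_n = (-q)^{-I(\sigma)}\, w.
\end{equation*}
We prove this by induction on $I(\sigma)$, sorting the word by adjacent transpositions as in bubble sort. Each adjacent swap of a pair $t_j t_i$ with $j>i$ into $t_i t_j$ uses the relation $t_j t_i = -q^{-1} t_i t_j$ once and lowers the inversion number by exactly one. After $I(\sigma)$ swaps the word is sorted, so we have accumulated exactly $(-q^{-1})^{I(\sigma)}$. Substituting gives
\begin{equation*}
    \lambda(w) = \Big(\sum_{\sigma \in S_n} (-q)^{-I(\sigma)} x_{1\sigma(1)}\cdots x_{n\sigma(n)}\Big) \otimes w = \qdet(X) \otimes w.
\end{equation*}
Comparing with $\lambda(w) = D\otimes w$ and using $w \neq 0$ gives $D = \qdet(X)$.

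The only real care needed is in this sign bookkeeping. We must check that the relation in $A_q^!$ is $t_j t_i = -q^{-1}t_i t_j$ for $j>i$, so that the exponent comes out as $(-q)^{-I(\sigma)}$ rather than $(-q)^{I(\sigma)}$. We must also check that the inversion count of $\sigma$, read as positions $a<b$ with $\sigma(a)>\sigma(b)$, is exactly the number of adjacent swaps. Both points follow from the standard fact that bubble sort uses precisely $I(\sigma)$ adjacent transpositions, each removing one inversion.

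Finally, the convention that the coefficient in $\lambda(t_i)$ is $x_{ij}$, with the row index attached to the source generator, is what puts the matrix entries in the row order $x_{1\sigma(1)}\cdots x_{n\sigma(n)}$. This matches the definition of $\qdet$ exactly, so no transpose of $X$ is needed.
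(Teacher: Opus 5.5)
Your proposal is correct and follows essentially the same route as the paper: expand $\lambda(t_1)\cdots\lambda(t_n)$, keep only the permutation terms, and reorder each word $t_{\sigma(1)}\cdots t_{\sigma(n)}$ to $(-q)^{-I(\sigma)}\,t_1\cdots t_n$ to get $\qdet(X)\otimes w$. You also justify two steps the paper leaves implicit: why terms with a repeated index vanish, and why the reordering scalar is exactly $(-q^{-1})^{I(\sigma)}$. For the first, it helps to pick the closest pair of equal indices so that every swap involves distinct generators.
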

\begin{proof}
    Recall that $D$ is defined by the property that $\lambda(w) = D \otimes w$, where $w = t_1 \dots t_n$. We have
    \begin{align*}
        \lambda(w) &= \lambda(t_1 \dots t_n) = \lambda(t_1) \dots \lambda(t_n) \\
        &= \left(\sum_{j = 1}^n x_{1j} \otimes t_j\right) \dots \left(\sum_{j = 1}^n x_{nj} \otimes t_j\right) \\
        &= \sum_{\sigma \in S_n} x_{1\sigma(1)} \dots x_{n\sigma(n)} \otimes t_{\sigma(1)} \dots t_{\sigma(n)} \\
        &= \sum_{\sigma \in S_n} (-q)^{-I(\sigma)} x_{1\sigma(1)} \dots x_{n\sigma(n)} \otimes t_1 \dots t_n \\
        &= \qdet(X) \otimes w,
    \end{align*}
    and therefore $D = \qdet(X)$.
\end{proof}

Next, we state the main result of this section, which fully describes the universal right coacting Hopf algebra $\aut^r(A_q)$.

\begin{thm}\label{thm:description of universal Hopf algebra}
    The Hopf algebra $\aut^r(A_q)$ is generated by $x_{ij}$ for $1 \leq i,j \leq n$ and $D^{-1}$, subject to the following relations:
    \begin{subequations}\label{eq:Manin full relations}
        \begin{gather}
            x_{ik}x_{ij} = q x_{ij}x_{ik} \quad (k > j),\label{eq:Manin q commute} \\
            ad - q^{-1}bc = \qdet\begin{pmatrix}
                a & b \\
                c & d
            \end{pmatrix} = da - q cb,\label{eq:Manin 2x2 minor} \\
            \sum_{k = 1}^n (-q)^{k - i} \qdet(X_{\hat k \hat\imath}) D^{-1} x_{k j} = \delta_{ij},\label{eq:Manin long relation} \\
            D^{-1} D = D D^{-1} = 1,
        \end{gather}
    \end{subequations}
    for all $i,j,k \in \{1,\dots,n\}$, and all $2 \times 2$ sub-matrices $\begin{pmatrix}
        a & b \\
        c & d
    \end{pmatrix}$ of $X$, where $D \coloneqq \qdet(X)$ is the quantum determinant of the matrix $X = (x_{ij})$. The Hopf structure of $\aut^r(A_q)$ is given by
    \begin{align}
        \Delta(x_{ij}) &= \sum_{k = 1}^n x_{ik} \otimes x_{kj}, \nonumber \\
        \varepsilon(x_{ij}) &= \delta_{ij}, \label{eq:Manin Hopf structure}\\
        S(x_{ij}) &= (-q)^{j - i} \qdet(X_{\hat{\jmath}\hat{\imath}})D^{-1}. \nonumber
    \end{align}
\end{thm}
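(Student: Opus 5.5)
We will work throughout with $\aut^\ell(A_q^!)$, which is isomorphic to $\aut^r(A_q)$ by Proposition \ref{prop:left algebra of dual}. Let $H$ be the algebra presented by the generators and relations \eqref{eq:Manin full relations}. The plan has two halves. First, we show that every relation in \eqref{eq:Manin full relations} is forced in $\aut^\ell(A_q^!)$. Second, we show that $H$ with the stated structure maps is a Hopf algebra coacting on $A_q^!$. Universality then gives mutually inverse maps between $H$ and $\aut^\ell(A_q^!)$.

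\textbf{Necessity of the relations.} We apply $\lambda(t_i)=\sum_j x_{ij}\otimes t_j$ to the defining relations of $A_q^!$.
\begin{itemize}
    \item The relation $t_i^2=0$ expands to $\sum_{j<k}(x_{ij}x_{ik}-q x_{ik}x_{ij})\otimes t_jt_k=0$, which gives \eqref{eq:Manin q commute}.
    \item The relation $t_jt_i+q^{-1}t_it_j=0$ for $j>i$ gives, after comparing coefficients of $t_ct_d$ with $c<d$, exactly the equality of the two expressions for the $2\times2$ quantum minor in \eqref{eq:Manin 2x2 minor}.
    \item For the long relation, we compute $\lambda$ on the basis $t_{\*r}$ of $(A_q^!)_k$, as in Lemma \ref{lem:Manin qdet}. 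This gives $x_{\*r\*c}=\qdet(X_{\*r\*c})$; the reordering sign is $(-q)^{-I}$ by the $q$-commutation rule, combined with \eqref{eq:Manin q commute} and \eqref{eq:Manin 2x2 minor}.
    \item We take the dual basis $s_{\*r}$ with $t_{\*r}s_{\*c}=\delta_{\*r\*c}w$, namely $s_{\*r}=\pm(-q)^{\ast}t_{\widehat{\*r}}$. The sign is computed from the inversion number of $\sigma_{\*r}$; for $k=1$, $t_i t_{\hat\imath}=(-q)^{-(i-1)}w$. It follows that $y^{(n-1)}_{ji}=(-q)^{j-i}\qdet(X_{\hat\jmath\hat\imath})$ up to the matching normalization.
    \item Proposition \ref{prop:Y is inverse of X} with $k=1$ then gives $S(x_{ij})=(-q)^{j-i}\qdet(X_{\hat\jmath\hat\imath})D^{-1}$. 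The identity $S(X)X=I$ is precisely \eqref{eq:Manin long relation}.
\end{itemize}
The other identity $XS(X)=I$ is imposed by Proposition \ref{prop:Frobenius Manin Hopf algebra}. We plan to deduce it from \eqref{eq:Manin long relation} together with a quantum Laplace expansion of $D$ along rows, valid in the Manin-matrix algebra. Alternatively, we will note that it follows in $H$ once $S$ is shown to be an antipode. Finally, $D^{-1}$ must exist by the lemma that $D$ is group-like.

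\textbf{Sufficiency.}
\begin{itemize}
    \item Relations \eqref{eq:Manin q commute}–\eqref{eq:Manin 2x2 minor} are exactly what makes $\lambda$ well defined as an algebra map on $A_q^!$ (equivalently, they say $X$ is a $q$-Manin matrix). So $H$ coacts, and the higher-degree components are automatically well defined.
    \item $\Delta$ and $\varepsilon$ respect the relations, because $\Delta$ is dual to composing coactions: if $X$ satisfies the Manin relations, so does the product of matrices with commuting entries, $X\otimes X$ in $H\otimes H$. The element $D$ is group-like, so $\Delta(D^{-1})=D^{-1}\otimes D^{-1}$. Relation \eqref{eq:Manin long relation} is preserved, since by Lemma \ref{lem:Frobenius Manin coalgebra structure} its left side is a matrix coefficient of a composite coaction.
    \item The main obstacle is defining $S$ as an algebra anti-homomorphism $H\to H$ and checking the antipode axiom. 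The axiom on the generators $x_{ij}$ is the pair $S(X)X=XS(X)=I$. We set $S(D^{-1})=D$, using $S(D)=D^{-1}$, which we verify via the quantum Laplace expansion $\sum_k (-q)^{k-i}x_{ik}\qdet(X_{\hat\imath\hat k})=D$ (or its column analogue).
    \item To show that $S$ respects the relations, we would first try the following: $S(X)$ is, up to $D^{-1}$, a transposed matrix of $(n-1)$-minors, i.e. the coaction matrix on $(A_q^!)_{n-1}$. Those minors satisfy Manin relations for the parameter $q^{-1}$ with the opposite multiplication, which is the right shape for an anti-homomorphism.
    \item If this becomes too technical, we fall back on Manin's Hopf envelope construction \cite[Chapter 8]{ManinBook}. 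That construction adjoins $S^k(x_{ij})$ freely, and we would show that $S^2(X)$ is expressible in $X$ and $D^{\pm1}$ via the minor identities, so no new generators are needed.
\end{itemize}
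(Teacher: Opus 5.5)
\textbf{Verdict: genuine gap.} Your proposal is incomplete at the antipode step: the key lemma needed there is never identified or proved.

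\textbf{What works.} The necessity half is fine. It is essentially the paper's construction of $\psi\colon H\to\aut^\ell(A_q^!)$. There is one slip: the coefficient of $t_jt_k$ in $\lambda(t_i)^2$ is $x_{ij}x_{ik}-q^{-1}x_{ik}x_{ij}$, and that is what yields \eqref{eq:Manin q commute}.

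\textbf{Where the gap is.} It sits exactly where you flag the main obstacle. You never show that $x_{ij}\mapsto(-q)^{j-i}\qdet(X_{\hat\jmath\hat\imath})D^{-1}$, $D^{-1}\mapsto D$ extends to an anti-homomorphism of $H$; equivalently, that $H$ already satisfies all relations of Manin's Hopf envelope. Neither of your two plans goes through with the quantum Laplace expansion alone.

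The missing ingredient is Proposition~\ref{prop:q-Manin matrix}\eqref{item:invertible q-Manin matrix}. It says that if $X$ is $q$-Manin with a two-sided inverse and $\qdet(X)$ is invertible, then $X^{-1}$ is a $q^{-1}$-Manin matrix, $\invqdet(X^{-1})=D^{-1}$, and the minors of $X^{-1}$ are rescaled complementary minors of $X$ times $D^{-1}$.

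\textbf{Plan (a).} Your heuristic concerns the wrong object. $S(X)$ carries a right factor $D^{-1}$, which need not commute with the minors in $\aut^r(A_q)$. What must be proved is that $X^{-1}$ itself satisfies $R_0^{\mathrm{op}}$. Also, ``verify $S(D)=D^{-1}$ via Laplace expansion'' has no content until $S$ is defined, since $S(D)$ is a reversed-order $q$-determinant of $S(X)$.

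\textbf{Plan (b).} Expressing $S^2(X)$ through $X$ and $D^{\pm1}$ is the easy part. The envelope also demands two further things of the proposed images:
\begin{itemize}
\item the image $Z_1$ of $\wt Z_1$ must satisfy $R_0^{\mathrm{op}}$;
\item $Z_2^{\mathsf T}Z_1^{\mathsf T}=I$ must hold, and this unwinds to \eqref{eq:redundant long relation}, $\sum_\ell(-q)^{i-\ell}x_{\ell i}D^{-1}\qdet(X_{\hat\ell\hat\jmath})=\delta_{ij}$.
\end{itemize}
The second is a column-type identity with $D^{-1}$ sandwiched in the middle. No row expansion produces it, because in a noncommutative ring the inverse of a transpose need not be the transpose of the inverse. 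The paper obtains it by applying the Laplace expansion to the $q^{-1}$-Manin matrix $Z_1=X^{-1}$ and using the minor formula.

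\textbf{How plan (a) closes once you have that proposition.} It actually closes more cheaply than the paper's argument.
\begin{itemize}
\item In $H$, $W\coloneqq S(X)$ is a two-sided inverse of $X$: $WX=I$ is \eqref{eq:Manin long relation}, and $XW=I$ is the Laplace expansion. Hence $W$ is $q^{-1}$-Manin, and $x_{ij}\mapsto W_{ij}$ defines an algebra map $\sigma\colon\ndo^\ell(A_q^!)\to H^{\mathrm{op}}$.
\item $\sigma$ is a two-sided convolution inverse of the canonical map on generators. The standard multiplicativity argument then makes it one on all of $\ndo^\ell(A_q^!)$.
\item Since $D$ is group-like, this gives $\sigma(D)=D^{-1}$, so $D^{-1}\mapsto D$ respects $DD^{-1}=D^{-1}D=1$.
\item Applied to the matrix coalgebra of $(n-1)$-minors, it gives $\sum_k\qdet(X_{\hat\jmath\hat k})\,\sigma(\qdet X_{\hat k\hat\imath})=\delta_{ij}$. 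That is exactly what applying $S$ to \eqref{eq:Manin long relation} requires.
\end{itemize}
Your $\Delta$-check of \eqref{eq:Manin long relation} should be done the same way, via $\Delta(\qdet X_{\hat k\hat\imath})=\sum_m\qdet X_{\hat k\hat m}\otimes\qdet X_{\hat m\hat\imath}$.

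This route avoids the infinite family $Z_k$, the transposed relations, and the inductive check that $\psi\circ\pi=\id$. The paper instead follows your plan (b). It realizes $\aut^\ell(A_q^!)$ as the Hopf envelope, defines $Z_k\in M_n(H)$ by conjugating by powers of $D$, and verifies every envelope relation using both parts of Proposition~\ref{prop:q-Manin matrix}.
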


\begin{rem}
    One way to prove Theorem \ref{thm:description of universal Hopf algebra} would be to take the presentation of $\aut^r(A_1)$ from \cite[Appendix A]{RaedscheldersVanDenBergh} and twist it by an appropriate \emph{$2$-cocycle}, as in \cite[Theorem 2.2.3]{Hexagon}. We opt for a more direct approach, as the presentation given in \cite{RaedscheldersVanDenBergh} relies on a lot of technical machinery. Furthermore, our approach has the advantage that \eqref{eq:Manin full relations} excludes some unnecessary relations given in \cite[Appendix A]{RaedscheldersVanDenBergh}. In particular, Raedschelders and Van den Bergh give relations of the form
    $$\sum_{\*m \in \II_k} (-q)^{I(\sigma_{\*m}) - I(\sigma_{\*r})} \qdet(X_{\widehat{\*m} \widehat{\*r}}) D^{-1} \qdet(X_{\*m \*c}) = \delta_{\*r \*c},$$
    which are in fact consequences of \eqref{eq:Manin full relations}. This is explained in more detail in Remark \ref{rem:redundant relations}.
\end{rem}

 We will use results from \cite{ChervovFalquiRubtsovSilantyev} to help prove Theorem~\ref{thm:description of universal Hopf algebra}, so we recall some of their terminology next.

\begin{dfn}
    Let $A$ be a $\kk$-algebra, and let $q \in \kk^*$. A \emph{$q$-Manin matrix in $A$} is a matrix $P = (p_{ij}) \in M_n(A)$ whose entries satisfy $p_{ik} p_{ij} = q p_{ij} p_{ik}$ for all $i,j,k \in \{1,\dots,n\}$ with $j < k$, and
    $$ad - q^{-1} bc = da - q cb$$
    for all $2 \times 2$ sub-matrices $\begin{pmatrix}
        a & b \\
        c & d
    \end{pmatrix}$ of $P$.  In other words, the entries of $P$ satisfy the relations \eqref{eq:Manin q commute} and \eqref{eq:Manin 2x2 minor} upon substituting $x_{ij} \mapsto p_{ij}$.
\end{dfn}

\begin{rem}
    The definition of a $q$-Manin matrix in \cite{ChervovFalquiRubtsovSilantyev} is transposed from our convention. In other words, the authors of \cite{ChervovFalquiRubtsovSilantyev} require the entries in the same column to $q$-commute, while we require the entries in the same row to $q$-commute. Similarly, the relation coming from $2 \times 2$ quantum minors of $P$ is also transposed. We will apply several results from \cite{ChervovFalquiRubtsovSilantyev} by implicitly translating their results into our conventions without further comment.
\end{rem}

The following result from \cite{ChervovFalquiRubtsovSilantyev} extends some basic facts about determinants (namely, expansion by minors, and the adjoint formula for the inverse) to the $q$-matrix case.

\begin{prop}[{\cite[Proposition 3.3(6), and Theorems 4.5 and 4.7]{ChervovFalquiRubtsovSilantyev}}]\label{prop:q-Manin matrix}
    Let $R$ be a $\kk$-algebra, let $q \in \kk^*$, and let $P \in M_n(R)$ be a $q$-Manin matrix. Then the following hold.
    \begin{enumerate}
        \item We have
        $$\sum_{\*m \in \II_k} (-q)^{I(\sigma_{\*c}) - I(\sigma_{\*m})} \qdet(P_{\*r \*m}) \qdet(P_{\widehat{\*c} \widehat{\*m}}) = \delta_{\*r \*c} \qdet(P)$$
        for all $k \in \{1,\dots,n - 1\}$ and $\*r, \*c \in \II_k$.\label{item:redundant relations}
        \item If $P$ and $\qdet(P)$ are invertible, then $P^{-1}$ is a $q^{-1}$-Manin matrix and $\invqdet(P^{-1}) = \qdet(P)^{-1}$. Furthermore,
        $$\invqdet((P^{-1})_{\*r\*c}) = (-q)^{I(\sigma_{\*c}) - I(\sigma_{\*r})} \qdet(P_{\widehat{\*c} \widehat{\*r}}) \qdet(P)^{-1}$$
        for all $k \in \{1,\dots,n - 1\}$ and $\*r,\*c \in \II_k$.\label{item:invertible q-Manin matrix}
    \end{enumerate}
\end{prop}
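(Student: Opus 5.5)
The plan is to encode the $q$-Manin condition through the quantum Grassmann algebra $A_q^!$ from Section \ref{sec:Manin Hopf algebra of Aq}. Let $P$ be a $q$-Manin matrix over $R$. In $R \otimes A_q^!$, where $R$ commutes with the $t_j$, put $\psi_i \coloneqq \sum_j p_{ij} \otimes t_j$.

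\textbf{Step 1: the $\psi_i$ satisfy the relations of $A_q^!$.}
\begin{itemize}
    \item Since $t_j^2 = 0$, we get $\psi_i^2 = \sum_{j<k} (p_{ij}p_{ik} - q^{-1}p_{ik}p_{ij})\, t_jt_k$. This vanishes by the row relation $p_{ik}p_{ij} = q p_{ij}p_{ik}$ for $k > j$.
    \item The expression $\psi_j\psi_i + q^{-1}\psi_i\psi_j$ for $j>i$ should reduce, coefficient by coefficient, to the $2\times 2$ relation $ad - q^{-1}bc = da - qcb$. The diagonal terms are handled by the row relation again.
    \item Hence $t_i \mapsto \psi_i$ extends to an algebra homomorphism $\Phi \colon A_q^! \to R \otimes A_q^!$.
    \item Expanding exactly as in the proof of Lemma \ref{lem:Manin qdet}, I expect $\Phi(t_{\*r}) = \sum_{\*c \in \II_k} \qdet(P_{\*r\*c}) \otimes t_{\*c}$. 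In particular $\Phi(w) = \qdet(P) \otimes w$.
\end{itemize}

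\textbf{Step 2: part \eqref{item:redundant relations}.} I compute $\Phi(t_{\*r}t_{\widehat{\*c}})$ in two ways.
\begin{itemize}
    \item In $A_q^!$ we have $t_{\*m}t_{\widehat{\*m}} = (-q)^{-I(\sigma_{\*m})}w$, and $t_{\*m}t_{\*m'} = 0$ unless $\*m' = \widehat{\*m}$.
    \item So $t_{\*r}t_{\widehat{\*c}} = \delta_{\*r\*c}(-q)^{-I(\sigma_{\*c})}w$, and applying $\Phi$ gives $\delta_{\*r\*c}(-q)^{-I(\sigma_{\*c})}\qdet(P)\otimes w$.
    \item Applying $\Phi$ factor by factor instead gives $\sum_{\*m}\qdet(P_{\*r\*m})\qdet(P_{\widehat{\*c}\widehat{\*m}})(-q)^{-I(\sigma_{\*m})}\otimes w$.
    \item Comparing coefficients of $w$ and multiplying by $(-q)^{I(\sigma_{\*c})}$ yields \eqref{item:redundant relations}. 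The only real work here is sign bookkeeping.
\end{itemize}

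\textbf{Step 3: part \eqref{item:invertible q-Manin matrix}.} First, the case $k = n-1$ of \eqref{item:redundant relations} says that the quantum adjugate $\widetilde P$, with entries $\widetilde P_{ji} = (-q)^{j-i}\qdet(P_{\hat\imath\hat\jmath})$, satisfies $P\widetilde P = \qdet(P) I$. If $P$ and $\qdet(P)$ are invertible, then $P^{-1} = \widetilde P\,\qdet(P)^{-1}$. For the minors, I would package \eqref{item:redundant relations} as a matrix identity $\Lambda^k(P)\, C_k(P) = \qdet(P)\, I_{\binom nk}$. Here $\Lambda^k(P) = (\qdet(P_{\*r\*m}))$, and $C_k(P)$ is the signed matrix of complementary minors. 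By invertibility this gives $C_k(P)\qdet(P)^{-1} = \Lambda^k(P)^{-1}$.

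The claimed formula for $\invqdet((P^{-1})_{\*r\*c})$ then reduces to two facts.
\begin{enumerate}
    \item \emph{Multiplicativity:} $\Lambda^k(P^{-1})$ (built with $q^{-1}$-minors) equals $\Lambda^k(P)^{-1}$.
    \item \emph{Manin property of the inverse:} $P^{-1}$ is $q^{-1}$-Manin.
\end{enumerate}
For (2), I would try to show that the elements $\phi_i \coloneqq \sum_j x_j \otimes (P^{-1})_{ji}$ satisfy the $q^{-1}$-commutation relations of $A_{q^{-1}}$. This is the column version dual to Step 1, obtained by inverting the relation $\psi = P t$.

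\textbf{Main obstacle.} Step 3 is where I expect the difficulty, because the entries of $P^{-1}$ need not commute with the $\psi_j$. One therefore cannot simply rewrite $t = P^{-1}\psi$ and read off the relations. I would first attempt a direct verification of the $q^{-1}$-Manin relations for $\widetilde P\,\qdet(P)^{-1}$ using only the $2\times 2$ relations and \eqref{item:redundant relations}. I would then deduce the minor formula (1) by induction on $k$, via a quantum Jacobi-type argument. If that fails, I would fall back on the cited results \cite[Theorems 4.5, 4.7]{ChervovFalquiRubtsovSilantyev}, translated into our transposed conventions.
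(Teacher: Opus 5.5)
\textbf{Part (1).} Your proof of part (1) is correct and complete. Step 1 shows exactly that the $q$-Manin relations are the relations of $A_q^!$ for the $\psi_i$: for $k<l$, the coefficient of $t_kt_l$ in $\psi_j\psi_i+q^{-1}\psi_i\psi_j$ is $q^{-1}$ times the difference of the two sides of the $2\times 2$ relation. Step 2 is the Frobenius-pairing argument of Proposition \ref{prop:Y is inverse of X} and Lemma \ref{lem:Manin qdet}, run for an arbitrary $q$-Manin matrix instead of the universal one, and the signs check out. The paper gives no argument of its own here; it imports both parts from \cite{ChervovFalquiRubtsovSilantyev}. So for (1) you have a genuinely self-contained proof.

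\textbf{Part (2) is not proved.} Nothing in Step 3 establishes that $P^{-1}$ is $q^{-1}$-Manin, that $\invqdet(P^{-1})=\qdet(P)^{-1}$, or the minor formula. Your reduction also does not lower the difficulty.
\begin{enumerate}
    \item Identity (1) only shows that $C_k(P)\qdet(P)^{-1}$ is a \emph{right} inverse of $\Lambda^k(P)$. Over a noncommutative $R$ this does not identify it with $\Lambda^k(P)^{-1}$, so ``by invertibility'' is premature.
    \item Your ``multiplicativity'' $\Lambda^k(P^{-1})=\Lambda^k(P)^{-1}$ is not an instance of quantum Cauchy--Binet, which needs the entries of the two factors to commute with one another; that fails for $P$ and $P^{-1}$.
    \item Given (1), multiplicativity is equivalent to the minor formula together with the left-handed identities $C_k(P)\qdet(P)^{-1}\Lambda^k(P)=I$. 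These are exactly the relations the paper derives \emph{from} part (2) in Remark \ref{rem:redundant relations}.
\end{enumerate}

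\textbf{The missing ingredient} is the left-inverse identity $\widetilde P\,\qdet(P)^{-1}P=I$. This is where two-sided invertibility of $P$ enters, and it is not among the identities in (1), which are all of the type $P\widetilde P=\qdet(P)I$ and its complementary-minor analogues. It is not a formality. For $n=2$ and $Q=\widetilde P\,\qdet(P)^{-1}$, the $q^{-1}$-row relations of $Q$ are literally the entries $(QP)_{12}=(QP)_{21}=0$ of that identity. So a direct check ``using only the $2\times 2$ relations and (1)'' leaves out the input that has to be used. For general $n$, turning this identity into the $q^{-1}$-Manin property and the complementary-minor formula is precisely the quantum Cramer/Jacobi-ratio content of \cite[Theorems 4.5 and 4.7]{ChervovFalquiRubtsovSilantyev}. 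That is your fallback, and it is the paper's only argument.

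\textbf{Two small slips.}
\begin{enumerate}
    \item $P\widetilde P=\qdet(P)I$ is the case $k=1$ of (1), not $k=n-1$; the latter puts the cofactors to the left of the entries of $P$.
    \item The adjugate entry should be $\widetilde P_{ji}=(-q)^{i-j}\qdet(P_{\hat\imath\hat\jmath})$, consistent with $S(x_{ij})=(-q)^{j-i}\qdet(X_{\hat\jmath\hat\imath})D^{-1}$.
\end{enumerate}
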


To prove Theorem \ref{thm:description of universal Hopf algebra}, we will follow the method of Manin in \cite{ManinBook}, where the universal coacting bialgebra $\ndo^{\ell}(A)$ is constructed first, and then $\aut^{\ell}(A)$ is constructed from it as a  \emph{Hopf envelope}.

\begin{lem}
    \label{lem:presentation of End}
    The bialgebra $\ndo^{\ell}(A)$ is generated as an algebra by $x_{ij}$ for $1 \leq i, j \leq n$, subject to the relations \eqref{eq:Manin q commute} and \eqref{eq:Manin 2x2 minor}.  In particular, every minor of $X = (x_{ij})$ is a $q$-Manin matrix.  The coalgebra structure is given by $\Delta, \varepsilon$ as in \eqref{eq:Manin Hopf structure}, and the coaction $\lambda \colon A \to \ndo^{\ell}(A) \otimes A$ is given by $t_i \mapsto \sum_j x_{ij} \otimes t_j$. 
\end{lem}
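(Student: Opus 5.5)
Following Manin, $\ndo^\ell(A)$ for $A = TV/(R)$ is the free algebra on the entries $x_{ij}$ of a matrix $X$, with exactly the relations forcing the linear map $\lambda(t_i) = \sum_j x_{ij}\otimes t_j$ to descend to an algebra map $A \to \ndo^\ell(A)\otimes A$. Concretely, extend $\lambda$ multiplicatively to $T(V)$, apply it to each quadratic relation in $R$, and expand in a basis of $A_2$; the coefficients of the basis vectors are the defining relations. The bialgebra structure $\Delta(x_{ij}) = \sum_k x_{ik}\otimes x_{kj}$, $\varepsilon(x_{ij}) = \delta_{ij}$ is forced by Lemma \ref{lem:Frobenius Manin coalgebra structure}, and the universal property is immediate from this construction. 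So the plan is simply to carry out this computation for $A = A_q^!$.

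Take the basis $\{t_r t_s \mid r<s\}$ of $(A_q^!)_2$. In $A_q^!$ we have $t_s t_s = 0$ and $t_s t_r = -q^{-1} t_r t_s$ for $s>r$. First apply $\lambda$ to $t_i^2$. We get $\sum_{j,k} x_{ij}x_{ik}\otimes t_jt_k$, and the coefficient of $t_rt_s$ ($r<s$) is $x_{ir}x_{is} - q^{-1}x_{is}x_{ir}$. Setting this to zero gives $x_{is}x_{ir} = q x_{ir}x_{is}$, which is \eqref{eq:Manin q commute}.

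Next apply $\lambda$ to $t_jt_i + q^{-1}t_it_j$ with $j>i$. For columns $r<s$, the coefficient of $t_rt_s$ is
\[
x_{jr}x_{is} - q^{-1}x_{js}x_{ir} + q^{-1}\bigl(x_{ir}x_{js} - q^{-1}x_{is}x_{jr}\bigr).
\]
The diagonal terms $t_rt_r$ vanish automatically. Write $a = x_{ir}$, $b = x_{is}$, $c = x_{jr}$, $d = x_{js}$, so the $2\times 2$ submatrix has rows $i<j$ and columns $r<s$. Setting the coefficient to zero and multiplying by $q$ gives $q\,cb - da + ad - q^{-1}bc = 0$, that is, $ad - q^{-1}bc = da - q\,cb$. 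This is \eqref{eq:Manin 2x2 minor}.

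Since these are all the relations, $\ndo^\ell(A_q^!)$ has the stated presentation, and $X$ is a $q$-Manin matrix by definition. For the remaining claim, that every submatrix $X_{\*r\*c}$ is $q$-Manin, note that both defining conditions only involve entries within a single row or within a $2\times 2$ submatrix. Those conditions restrict verbatim to any submatrix with rows and columns kept in increasing order. The only real care needed is bookkeeping of signs and of which index is larger, so the normal-form computation in $A_q^!$ is the step to double-check.
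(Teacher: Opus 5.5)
Your proposal is correct and is essentially the paper's proof. The paper reads off the relations from Manin's formula, pairing each relation of $A_q^!$ with each relation of $(A_q^!)^! = A_q$. Your coefficients of $t_rt_s$ ($r<s$) are exactly these pairings up to scalars, because the coordinate functionals of the basis $\{t_rt_s\}$ of $(A_q^!)_2$ are multiples of the relations $x_sx_r - qx_rx_s$ of $A_q$, which span $R^\perp$. One small correction: Lemma~\ref{lem:Frobenius Manin coalgebra structure} only shows that the formulas for $\Delta,\varepsilon$ are forced. Their well-definedness on your presented algebra comes, as in Manin (whom the paper also cites), from its universal property applied to the algebra maps $(\id\otimes\lambda)\circ\lambda$ and $t_i\mapsto 1\otimes t_i$.
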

\begin{proof}
    Recall that 
    $$A = A_q^! = \frac{\kk\ang{t_1,\ldots,t_n}}{(t_i^2, t_j t_i + q^{-1} t_i t_j \mid j > i)},$$
    and therefore $$A^! = (A_q)^{!!} = A_q = \frac{\kk\ang{x_1, \dots, x_n}}{(x_{\ell} x_k - q x_k x_{\ell} \mid \ell> k)}.$$
    According to \cite[Equation (6.4)]{ManinBook}, $\ndo^{\ell}(A)$ is generated by elements $\{ x_{ij} \mid 1 \leq i, j \leq n \}$, with one relation $r_{\alpha \beta} = \sum_{i,j,k,\ell} c_{ij}^{\alpha} d_{k\ell}^{\beta} x_{ik} x_{j\ell}$ for each pair of relations $r_{\alpha} = \sum_{ij} c_{ij}^{\alpha} t_i t_j$ of $A$ and $r_{\beta} = \sum_{k\ell} d_{\beta}^{k\ell} x_k x_{\ell}$ of $A^!$.  

    Taking $r_{\alpha} = t_i^2$ and $r_{\beta} = x_{\ell} x_k - q x_k x_\ell$ gives a relation $r_{\alpha \beta} = x_{i\ell}x_{ik} - q x_{ik}x_{i\ell}$, for all $i$ and all $\ell > k$.  These are exactly the relations \eqref{eq:Manin q commute}.    Taking $r_{\alpha} = t_j t_i + q^{-1} t_i t_j$ instead gives a relation $r_{\alpha \beta} = x_{j \ell} x_{ik} - q x_{jk}x_{i\ell} - x_{ik}x_{j\ell} + q^{-1} x_{i\ell}x_{jk}$ for all $j > i$ and $\ell > k$, which is relation \eqref{eq:Manin 2x2 minor} for the $2 \times 2$ minor $\begin{pmatrix} x_{ik} & x_{i\ell} \\x_{jk} & x_{j\ell} \end{pmatrix}$ of $X = (x_{ij})$.  Thus $\ndo^{\ell}(A)$ is presented by $\{ x_{ij}\}$ with the relations \eqref{eq:Manin q commute} and \eqref{eq:Manin 2x2 minor} as claimed.

    In \cite[Sections 6.7 and 6.8]{ManinBook} it is shown that $\ndo^{\ell}(A)$ is indeed a bialgebra when given the coalgebra structure with the formulas in \eqref{eq:Manin Hopf structure}. Moreover, $\lambda \colon A \to \ndo^{\ell}(A) \otimes A$ with the given formula is the universal graded coaction by a bialgebra making $A$ into a left comodule algebra \cite[Sections 6.4--6.6]{ManinBook}.
\end{proof}

We are now ready to prove Theorem \ref{thm:description of universal Hopf algebra}.

\begin{proof}[Proof of Theorem \ref{thm:description of universal Hopf algebra}]
    As above, write 
    $$A = A_q^! = \frac{\kk\ang{t_1,\ldots,t_n}}{(t_i^2, t_j t_i + q^{-1} t_i t_j \mid j > i)}.$$
    By Lemma~\ref{lem:presentation of End}, $\ndo^{\ell}(A) = \kk\langle z_{ij} \rangle/(R_0)$, where we have chosen to use the variables $z_{ij}$ instead of $x_{ij}$ to avoid confusion below.  The relations $R_0$ are exactly the ones in \eqref{eq:Manin q commute} and \eqref{eq:Manin 2x2 minor} (written in terms of $z$), with coalgebra structure given by $\Delta, \varepsilon$ as in \eqref{eq:Manin Hopf structure}, and the coaction $\lambda \colon A \to \ndo^{\ell}(A) \otimes A$ given by $t_i \mapsto \sum_j z_{ij} \otimes t_j$.  
    
    Manin's Hopf envelope construction from \cite[Chapter 8]{ManinBook} can be used to obtain $\aut^{\ell}(A)$ from $\ndo^{\ell}(A)$.  In detail, by \cite[Theorem 8.3]{ManinBook}, the Hopf algebra $\aut^{\ell}(A)$ is generated by the entries of infinitely many $n \times n$ matrices $\wt{Z}_0, \wt{Z}_1, \wt{Z}_2, \dots$ subject to the following relations:
    \begin{enumerate}
        \item The elements of $R_0$ written for $\wt{Z}_k$ if $k$ is even.
        \item The elements of $R_0^{\mathrm{op}}$ written for $\wt{Z}_k$ if $k$ is odd.
        \item $\wt{Z}_k \wt{Z}_{k + 1} = \wt{Z}_{k + 1} \wt{Z}_{k} = I_n$ if $k$ is even.
        \item $\wt{Z}_k^\mathsf{T} \wt{Z}_{k + 1}^\mathsf{T} = \wt{Z}_{k + 1}^\mathsf{T} \wt{Z}_{k}^\mathsf{T} = I_n$ if $k$ is odd.
    \end{enumerate}
    The Hopf structure of this Hopf algebra is given by
    $$\Delta(\wt{Z}_k) = \begin{cases}
        \wt{Z}_k \otimes \wt{Z}_k, &\text{if } k \text{ is even}, \\
        (\wt{Z}_k^\mathsf{T} \otimes \wt{Z}_k^\mathsf{T})^\mathsf{T}, &\text{if } k \text{ is odd},
    \end{cases} \qquad \varepsilon(\wt{Z}_k) = I_n, \qquad S(\wt{Z}_k) = \wt{Z}_{k + 1},$$
    where the maps are applied entry-wise, and the tensor product $P \otimes Q$ of two $n \times n$ matrices $P = (p_{ij})$ and $Q = (q_{ij})$ is defined by
    $$(P \otimes Q)_{ij} \coloneqq \sum_{k = 1}^n p_{ik} \otimes q_{kj}.$$
    By construction, the map $\ndo^{\ell}(A) \to \aut^{\ell}(A)$ given by $z_{ij} \mapsto (\wt{Z}_0)_{ij}$ is a bialgebra map and it is universal for maps from $\ndo^{\ell}(A)$ to Hopf algebras by \cite[Theorem 8.3]{ManinBook}.  Together with the universal property of $\ndo^{\ell}(A)$ in \cite[Lemma 6.6]{ManinBook} this shows 
    that $\aut^{\ell}(A)$ is the universal Hopf algebra left coacting on $A$, with the same formula for the coaction.
    
    Let $H$ be the algebra generated by elements $x_{ij}$ and 
    $D^{-1}$, with relations \eqref{eq:Manin full relations}.  We want to show that $H \cong \aut^{\ell}(A)$.
    Let $X = (x_{ij})$.
    Define the matrices $Z_k \in M_n(H)$ by
    $$(Z_k)_{ij} \coloneqq \begin{cases}
        (-q)^{k(j - i)} D^{\frac{k}{2}} x_{ij} D^{-\frac{k}{2}}, &\text{if } k \text{ is even}, \\
        (-q)^{k(j - i)} D^{\frac{k - 1}{2}} \qdet(X_{\hat\jmath \hat\imath}) D^{-\frac{k + 1}{2}}, &\text{if } k \text{ is odd}.  
    \end{cases}$$
    Throughout the rest of the proof, if $\pi$ is a homomorphism and $M, M'$ are matrices we will write $\pi(M) = M'$
    to mean $\pi(M_{ij}) = M'_{ij}$ for all $i,j$.
    We claim that there is an algebra map $\pi \colon \aut^{\ell}(A) \to H$ such that $\pi(\wt{Z}_k) = Z_k$ for all $k$.

    First, we check that $Z_k Z_{k + 1} = Z_{k + 1} Z_{k} = I_n$ if $k$ is even. Indeed, we have
    $$(Z_k Z_{k + 1})_{ij} = (-q)^{k(j - i)} D^{\frac{k}{2}} \left(\sum_{\ell = 1}^n (-q)^{j - \ell} x_{i\ell} \qdet(X_{\hat\jmath \hat\ell})\right) D^{-\left(\frac{k}{2} + 1\right)}.$$
    By definition $X$ satisfies the relations \eqref{eq:Manin q commute} and \eqref{eq:Manin 2x2 minor}, in other words $X$ is a $q$-Manin matrix.   Then 
    $$\sum_{\ell = 1}^n (-q)^{j - \ell} x_{i\ell} \qdet(X_{\hat\jmath \hat\ell}) = \delta_{ij} D,$$
    by Proposition \ref{prop:q-Manin matrix}\eqref{item:redundant relations}, and thus $(Z_k Z_{k + 1})_{ij} = \delta_{ij}$. For the reverse order, consider
    $$(Z_{k + 1} Z_k)_{ij} = (-q)^{k(j - i)} D^{\frac{k}{2}} \left(\sum_{\ell = 1}^n (-q)^{\ell - i} \qdet(X_{\hat\ell \hat\imath}) D^{-1} x_{\ell j}\right) D^{-\frac{k}{2}}.$$
    In this case, we can use \eqref{eq:Manin long relation} to get
    $$\sum_{\ell = 1}^n (-q)^{\ell - i} \qdet(X_{\hat\ell \hat\imath}) D^{-1} x_{\ell j} = \delta_{ij},$$
    from which it follows that $(Z_{k + 1} Z_k)_{ij} = \delta_{ij}$. We conclude that $Z_k Z_{k + 1} = Z_{k + 1} Z_k = I_n$ for all even integers $k$, as required.

    Next, we check that $Z_k^\mathsf{T} Z_{k + 1}^\mathsf{T} = Z_{k + 1}^\mathsf{T} Z_{k}^\mathsf{T} = I_n$ if $k$ is odd. We have
    $$(Z_k^\mathsf{T} Z_{k + 1}^\mathsf{T})_{ij} = (-q)^{k(i - j)} D^{\frac{k - 1}{2}} \left(\sum_{\ell = 1}^n (-q)^{\ell - j} \qdet(X_{\hat\imath \hat\ell}) x_{j\ell}\right) D^{-\frac{k + 1}{2}} = \delta_{ij},$$
    by Proposition \ref{prop:q-Manin matrix}\eqref{item:redundant relations}. Similarly, multiplying the matrices in the reverse order gives
    \begin{equation}\label{eq:transpose Z_k relation}
        (Z_{k + 1}^\mathsf{T} Z_k^\mathsf{T})_{ij} = (-q)^{k(i - j)} D^{\frac{k + 1}{2}} \left(\sum_{\ell = 1}^n (-q)^{i - \ell} x_{\ell i} D^{-1} \qdet(X_{\hat\ell \hat\jmath})\right) D^{-\frac{k + 1}{2}}.
    \end{equation}
    We claim that
    \begin{equation}\label{eq:redundant long relation}
        \sum_{\ell = 1}^n (-q)^{i - \ell} x_{\ell i} D^{-1} \qdet(X_{\hat\ell \hat\jmath}) = \delta_{ij}.
    \end{equation}
    As shown above, we have $Z_1 = Z_0^{-1} = X^{-1}$. Since $X$ is a $q$-Manin matrix by construction, it follows by Proposition \ref{prop:q-Manin matrix}\eqref{item:invertible q-Manin matrix} that $Z_1$ is a $q^{-1}$-Manin matrix. By Proposition \ref{prop:q-Manin matrix}\eqref{item:redundant relations}, we therefore have
    \begin{equation}\label{eq:qdet of Z1}
        \sum_{\ell = 1}^n (-q)^{j - \ell} \invqdet((Z_1)_{\hat\imath \hat\ell}) (Z_1)_{j\ell} = \delta_{ij} \invqdet(Z_1) = \delta_{ij} D^{-1},
    \end{equation}
    since $\invqdet(Z_1) = \qdet(X)^{-1} = D^{-1}$ by Proposition \ref{prop:q-Manin matrix}\eqref{item:invertible q-Manin matrix}. Again applying Proposition \ref{prop:q-Manin matrix}\eqref{item:invertible q-Manin matrix}, we have
    $$\invqdet((Z_1)_{\hat\imath \hat\ell}) = (-q)^{i - \ell} x_{\ell i} D^{-1}.$$
    Therefore, \eqref{eq:qdet of Z1} becomes
    $$\sum_{\ell = 1}^n (-q)^{i - \ell} x_{\ell i} D^{-1} \qdet(X_{\hat\ell \hat\jmath}) D^{-1} = \delta_{ij} D^{-1},$$
    where we used that $(Z_1)_{j\ell} = (-q)^{\ell - j} \qdet(X_{\hat\ell \hat\jmath}) D^{-1}$ by definition. This proves the claim.
    
    Substituting \eqref{eq:redundant long relation} into \eqref{eq:transpose Z_k relation}, we get
    $$(Z_{k + 1}^\mathsf{T} Z_k^\mathsf{T})_{ij} = (-q)^{k(i - j)} D^{\frac{k + 1}{2}} \left(\sum_{\ell = 1}^n (-q)^{i - \ell} x_{\ell i} D^{-1} \qdet(X_{\hat\ell \hat\jmath})\right) D^{-\frac{k + 1}{2}} = \delta_{ij}.$$
    It follows that $Z_k^\mathsf{T} Z_{k + 1}^\mathsf{T} = Z_{k + 1}^\mathsf{T} Z_k^\mathsf{T} = I_n$ for all odd integers $k$, as required.

    It remains to check that $Z_k$ satisfies $R_0$ if $k$ is even and $R_0^\mathrm{op}$ if $k$ is odd. We know that $Z_0 = X$ satisfies $R_0$ by definition. Furthermore, it is easy to see from the definition of $Z_k$ that $Z_k$ also satisfies $R_0$ for all even $k$. In other words, $Z_k$ is a $q$-Manin matrix for all even $k$. By Proposition \ref{prop:q-Manin matrix}\eqref{item:invertible q-Manin matrix}, it follows that $Z_k^{-1}$ is a $q^{-1}$-Manin matrix if $k$ is even. Since $Z_k^{-1} = Z_{k + 1}$, this means that $Z_{k + 1}$ is a $q^{-1}$-Manin matrix.  But it is easy to check that being a $q^{-1}$-Manin matrix is equivalent to the matrix entries satisfying the relations in  $R_0^\mathrm{op}$.  We have now checked all of the relations, and so there is indeed a well-defined algebra map $\pi \colon \aut^{\ell}(A) \to H$ with the formula $\wt{Z}_k \mapsto Z_k$.  
    
    Because $\aut^{\ell}(A)$ coacts on $A$ via $\lambda(t_i) = \sum_{j} (\wt{Z}_0)_{ij} \otimes t_j$, the results of Section~\ref{sec:Frobenius} apply, with the matrix $\wt{Z}_0$ in the role of $X$.  There is a quantum determinant 
    $\wt{D} \in \aut^{\ell}(A)$ such that $\lambda(w) = \wt{D} \otimes w$, and there is a matrix $Y = (y_{ij}) \in M_n(\aut^{\ell}(A))$ such that $\wt{Z}_0 Y^{\mathsf{T}} = \wt{D} I_n$ by Proposition~\ref{prop:Y is inverse of X}. Thus, $\wt{Z}_1 = \wt{Z}_0^{-1} = Y^{\mathsf{T}} \wt{D}^{-1}$. By Lemma~\ref{lem:Manin qdet}, $\wt{D} = \qdet(\wt{Z}_0)$.  Since $\wt{D} = \det_q(\wt{Z}_0)$ is a unit in $\aut^{\ell}(A)$, 
    Proposition~\ref{prop:q-Manin matrix}\eqref{item:invertible q-Manin matrix} applied with $\*r = \{i \}$ and $\*c = \{ j \}$  implies that
    $(\wt{Z}_0^{-1})_{ij} = (-q)^{j-i} \det_q((\wt{Z}_0)_{\hat{\jmath} \hat{\imath}}) D^{-1}$.  Comparing the two formulas for $\wt{Z}_0^{-1}$ we see that $Y_{ij} = (-q)^{i-j} \det_q((\wt{Z}_0)_{\hat{\imath} \hat{\jmath}})$ for all $i,j$.
    The equation $\wt{Z}_1 \wt{Z}_0 = I$ can be written as $Y^\mathsf{T} \wt{D}^{-1} \wt{Z}_0 = I$, or 
    $$\sum_{k = 1}^n (-q)^{k - i} \qdet((\wt{Z}_0)_{\hat k \hat\imath}) \wt{D}^{-1} (\wt{Z}_0)_{k j} = \delta_{ij}.$$
    We have now checked the analogs of all of the relations in \eqref{eq:Manin full relations} for $\wt{Z}_0$ and $\wt{D}^{-1}$, so we conclude 
    that there is a homomorphism $\psi \colon H \to \aut^{\ell}(A)$ with $\psi(X) = \wt{Z}_0$ and $\psi(D^{-1}) = \wt{D}^{-1}$.

    We now check that $\pi$ and $\psi$ are inverse to each other. It is obvious that $\pi \circ \psi = \id_H$. We claim that $\psi(Z_i) = \wt{Z}_i$ for all $i \geq 0$, which we prove by induction.   
    The base case $\psi(Z_0) = \wt{Z}_0$ is clear.  If $\psi(Z_i) = \wt{Z}_i$ holds for even $i$, then the relations $\wt{Z}_{i+1} = \wt{Z}_i^{-1}$ and $Z_{i+1} = Z_i^{-1}$ imply that
    $$\psi(Z_{i+1}) = \psi(Z_i^{-1}) = \psi(Z_i)^{-1} = \wt{Z}_i^{-1} = \wt{Z}_{i+1},$$
    meaning $\psi(Z_{i + 1}) = \wt{Z}_{i + 1}$ is forced. Similarly if $\psi(Z_i) = \wt{Z}_i$ holds for an odd $i$, then $\psi(Z_i^\mathsf{T}) = \wt{Z}_i^\mathsf{T}$ also, and the relations
    $\wt{Z}_{i+1}^\mathsf{T} = (\wt{Z}_i^{\mathsf{T}})^{-1}$ and $Z_{i+1}^\mathsf{T} = (Z_i^{\mathsf{T}})^{-1}$ force 
    $\psi(Z_{i+1}^\mathsf{T}) = \wt{Z}_{i+1}^\mathsf{T}$. It follows that $\psi(Z_{i+1}) = \wt{Z}_{i+1}$, which concludes the induction.
    
    With this claim in hand it is now clear that $\psi \circ \pi = \id_{\aut^{\ell}(A)}$. Therefore, we have proved that $H \cong \aut^\ell(A) \cong \aut^r(A_q)$, as required.
\end{proof}

\begin{rem}\label{rem:redundant relations}
    Notice that some of the relations from Proposition \ref{prop:Frobenius Manin Hopf algebra} are not present in \eqref{eq:Manin full relations}. Indeed, not all of the relations coming from the matrix equations
    $$X^{(k)} (Y^{(n - k)})^{\mathsf{T}} D^{-1} = (Y^{(n - k)})^{\mathsf{T}} D^{-1} X^{(k)} = I$$
    appear in \eqref{eq:Manin full relations}. By Proposition \ref{prop:q-Manin matrix}\eqref{item:redundant relations}, the relations $X^{(k)} (Y^{(n - k)})^{\mathsf{T}} D^{-1} = I$ are consequences of \eqref{eq:Manin q commute} and \eqref{eq:Manin 2x2 minor} and the definition of $D$, so they are not necessary for the presentation of $\aut^r(A_q)$.

    To see why the relations $(Y^{(n - k)})^{\mathsf{T}} D^{-1} X^{(k)} = I$ for $k \neq 1$ are also not necessary, we can apply results from \cite{ChervovFalquiRubtsovSilantyev}. Let $W \coloneqq Z_1$ in the notation from the proof of Theorem \ref{thm:description of universal Hopf algebra}. In other words,
    $$W_{ij} = (-q)^{j - i} \qdet(X_{\hat\jmath \hat\imath}) D^{-1}.$$
    We know that $W = X^{-1}$, and therefore $W$ is a $q^{-1}$-Manin matrix, by Proposition \ref{prop:q-Manin matrix}\eqref{item:invertible q-Manin matrix}. It follows from Proposition \ref{prop:q-Manin matrix}\eqref{item:redundant relations} that
    \begin{equation}\label{eq:W qdet}
        \sum_{\*m \in \II_k} (-q)^{I(\sigma_{\*m}) - I(\sigma_{\*c})} \invqdet(W_{\*r \*m}) \invqdet(W_{\widehat{\*c} \widehat{\*m}}) = \delta_{\*r \*c} \invqdet(W) = \delta_{\*r \*c} D^{-1},
    \end{equation}
    since $\invqdet(W) = D^{-1}$ by Proposition \ref{prop:q-Manin matrix}\eqref{item:invertible q-Manin matrix}. By \cite[Theorem 4.5]{ChervovFalquiRubtsovSilantyev}, we have
    $$\invqdet(W_{\*r \*m}) = (-q)^{I(\sigma_{\*m}) - I(\sigma_{\*r})} \qdet(X_{\widehat{\*m} \widehat{\*r}}) D^{-1},$$
    and therefore \eqref{eq:W qdet} becomes
    $$\sum_{\*m \in \II_k} (-q)^{I(\sigma_{\*m}) - I(\sigma_{\*r})} \qdet(X_{\widehat{\*m} \widehat{\*r}}) D^{-1} \qdet(X_{\*m \*c}) = \delta_{\*r \*c}.$$
    This is precisely the relation $(Y^{(n - k)})^{\mathsf{T}} D^{-1} X^{(k)} = I$.
\end{rem}

\section{Involutization of \texorpdfstring{$\aut^r(A_q)$}{aut(Aq)}}\label{sec:involutization}

Having described the Hopf algebra $\aut^r(A_q)$, we now move on to step \eqref{obj:cocommutative quotient} of the method outlined in Subsection \ref{subsec:observations}. In other words, we begin our classification of cocommutative quotients of $\aut^r(A_q)$. As mentioned in Subsection \ref{subsec:observations}, we start by ``involutizing'' the Hopf algebra $\aut^r(A_q)$. It is easy to see that every Hopf algebra has a maximal involutive quotient, which we call its \emph{involutization}. In other words, the ideal $(S_H^2(x) - x \mid x \in H)$ of a Hopf algebra $H$ is always a Hopf ideal.

\begin{dfn}\label{dfn:involutization}
    If $H$ is a Hopf algebra, we define its \emph{involutization} as
    $$H_{\mathrm{inv}} \coloneqq \frac{H}{(S_H^2(x) - x \mid x \in H)},$$
    where $S_H^2 = S_H \circ S_H$.
\end{dfn}

The main goal of this section is to explicitly compute the involutization of $\aut^r(A_q)$. To that end, we establish the following notation.

\begin{ntt}\label{ntt:universal involutive}
    Let $H_q(n) \coloneqq \aut^r(A_q(n))_{\mathrm{inv}}$. We often omit the $(n)$ if the value of $n$ is clear, and simply write $H_q$ instead of $H_q(n)$. We write $h_{ij}$ for the image of $x_{ij}$ in $H_q$, and we abuse notation by also using $D$ to denote the image of the quantum determinant of $\aut^r(A_q)$ in $H_q$. Furthermore, we define $\mathcal{H} \coloneqq (h_{ij}) \in M_n(H_q)$.
\end{ntt}

\begin{rem}
    The universal involutive coacting Hopf algebra of an Artin--Schelter regular algebra has been studied before -- see, for example, \cite[Definition 2.9(b)]{ChanWaltonZhang} and \cite[Definition 2.7(b)]{WaltonWang}. The Hopf algebra $H_q$ satisfies the following universal property: if $K$ is an involutive Hopf algebra which right coacts on $A_q$, then there is a unique homomorphism of Hopf algebras $H_q \to K$ such that the diagram
    \begin{center}
        \begin{tikzcd}
            & A \otimes H_q \arrow[d, dashed] \\
            A \arrow[ru] \arrow[r] & A \otimes K
        \end{tikzcd}
    \end{center}
    commutes.
\end{rem}

The following result gives an explicit description of $H_q$ by generators and relations.

\begin{prop}\label{prop:involutive Hopf algebra}
    Let $q \in \kk^*$. As an algebra, $H_q$ is generated by $h_{ij}$ for $1 \leq i,j \leq n$ and $D^{-1}$, subject to the following relations:
    \begin{subequations}\label{eq:general involutive relations}
        \begin{gather}
            h_{ik}h_{ij} = q h_{ij}h_{ik} \quad (k > j),\label{eq:general involutive q commute} \\
            ad - q^{-1}bc = \qdet\begin{pmatrix}
                a & b \\
                c & d
            \end{pmatrix} = da - q cb,\label{eq:general involutive 2x2 minor} \\
            h_{ij} D = q^{2(j - i)} D h_{ij}, \\
            \sum_{k = 1}^n (-q)^{i - k} \qdet(\mathcal{H}_{\hat k \hat\imath}) h_{k j} = \delta_{ij} D,\label{eq:general involutive long relation} \\
            D^{-1}D = DD^{-1} = 1,
        \end{gather}
    \end{subequations}
    for all $i,j,k \in \{1,\dots,n\}$, $\*r,\*c \in \II_k$, and all $2 \times 2$ sub-matrices $\begin{pmatrix}
        a & b \\
        c & d
    \end{pmatrix}$ of $\mathcal{H}$, where $D \coloneqq \qdet(\mathcal{H})$ is the quantum determinant of the matrix $\mathcal{H} = (h_{ij})$. The Hopf structure of $\aut^r(A_q)$ is given by
    \begin{align*}
        \Delta(h_{ij}) &= \sum_{k = 1}^n h_{ik} \otimes h_{kj}, \\
        \varepsilon(h_{ij}) &= \delta_{ij}, \\
        S(h_{ij}) &= (-q)^{j - i} \qdet(\mathcal{H}_{\hat{\jmath}\hat{\imath}})D^{-1}.
    \end{align*}
\end{prop}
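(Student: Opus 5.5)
We compute $S^2$ on the generators of $\aut^r(A_q)$ using Theorem \ref{thm:description of universal Hopf algebra}, and then show that the resulting relations, added to \eqref{eq:Manin full relations}, can be rewritten as the stated list \eqref{eq:general involutive relations}. Since $S^2$ is an algebra endomorphism and the generators are the $x_{ij}$ and $D^{-1}$, the ideal $(S^2(x)-x \mid x \in H)$ is generated by the elements $S^2(x_{ij}) - x_{ij}$ and $S^2(D^{-1}) - D^{-1}$. The last of these is automatic, because $D$ is group-like and so $S(D) = D^{-1}$ and $S^2(D^{-1}) = D^{-1}$.

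The first step is to compute $S^2(x_{ij})$. In the notation of the proof of Theorem \ref{thm:description of universal Hopf algebra}, $S(X) = Z_1$ and $S(\wt{Z}_k) = \wt{Z}_{k+1}$, so $S^2(X) = Z_2$. That is,
$$S^2(x_{ij}) = q^{2(j-i)} D\, x_{ij} D^{-1}.$$
Directly from Manin's construction, this identification is immediate from the isomorphism $\psi$. Alternatively, one can verify it by applying $S$ to $(-q)^{j-i}\qdet(X_{\hat\jmath\hat\imath})D^{-1}$ and using Proposition \ref{prop:q-Manin matrix}\eqref{item:invertible q-Manin matrix} for $W = X^{-1}$, which identifies the relevant $(n-1)$-minors of $W$ with entries of $X$ times $D^{-1}$. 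Imposing $S^2(x_{ij}) = x_{ij}$ is then equivalent to $q^{2(j-i)} D x_{ij} = x_{ij} D$, which is the new relation $h_{ij}D = q^{2(j-i)}Dh_{ij}$.

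The second step is to rewrite \eqref{eq:Manin long relation} in the quotient. In $H_q$, each quantum minor $\qdet(\mathcal{H}_{\hat k\hat\imath})$ is a sum of products of $n-1$ entries, one from each row other than $k$ and one from each column other than $i$. Repeatedly applying the commutation relation with $D$, we get
$$D^{-1} h_{kj} = q^{2(j-k)} h_{kj} D^{-1}.$$
More usefully, $\qdet(\mathcal{H}_{\hat k\hat\imath})$ commutes with $D$ up to the scalar
$$q^{2\left(\sum_{c\neq i} c - \sum_{r \neq k} r\right)} = q^{2(k-i)}.$$
Hence
$$(-q)^{k-i}\qdet(\mathcal{H}_{\hat k\hat\imath})D^{-1}h_{kj} = (-q)^{k-i}q^{-2(k-i)}D^{-1}\qdet(\mathcal{H}_{\hat k\hat\imath})h_{kj} = D^{-1}(-q)^{i-k}\qdet(\mathcal{H}_{\hat k\hat\imath})h_{kj},$$
possibly up to a sign convention that must be checked carefully. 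Multiplying on the left by $D$, we see that \eqref{eq:Manin long relation} is equivalent, modulo the commutation relation, to \eqref{eq:general involutive long relation}. The reverse implication holds because $D$ is invertible. The Hopf structure maps descend unchanged.

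I expect the main obstacle to be the first step: pinning down the exact scalar $q^{2(j-i)}$, including its direction and sign. This requires correctly matching the minors of $X^{-1}$ to entries of $X$ with the right powers of $(-q)$ and the right placement of $D^{\pm1}$. The bookkeeping of exponents $I(\sigma_{\*r})$ for singletons versus complements is delicate. As a safeguard, I would double-check the result for $n = 2$ by hand, where $S(x_{11}) = x_{22}D^{-1}$ and $S(x_{12}) = -q x_{12}D^{-1}$, and compute $S^2$ directly.
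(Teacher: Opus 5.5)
Your proposal is correct and takes essentially the paper's route: compute $S^2(x_{ij}) = q^{2(j-i)}Dx_{ij}D^{-1}$ from the proof of Theorem \ref{thm:description of universal Hopf algebra}, then add the relation $x_{ij}D = q^{2(j-i)}Dx_{ij}$ to that presentation. Your reading $S^2(X)=Z_2$ gives the same fact the paper obtains via $S(x_{ij})=y_{ji}D^{-1}$, and your rewriting of \eqref{eq:Manin long relation}, which the paper leaves as immediate, is right with no sign ambiguity, since $(-q)^{k-i}q^{-2(k-i)}=(-q)^{i-k}$ exactly.
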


To prove Proposition \ref{prop:involutive Hopf algebra}, we simply have to compute $S^2(x_{ij})$ for all $i,j$.

\begin{lem}\label{lem:computation of S^2}
    We have $S^2(x_{ij}) = q^{2(j - i)} D x_{ij} D^{-1}$.
\end{lem}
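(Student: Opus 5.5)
We compute $S^2(x_{ij})$ using the matrices $Z_k$ from the proof of Theorem \ref{thm:description of universal Hopf algebra}. That proof shows that under the isomorphism $\aut^\ell(A_q^!) \cong \aut^r(A_q)$ the matrix $\wt{Z}_k$ corresponds to $Z_k$, and that the antipode satisfies $S(\wt{Z}_k) = \wt{Z}_{k+1}$ entrywise. Hence
$$S(x_{ij}) = (Z_1)_{ij}, \qquad S^2(x_{ij}) = S((Z_1)_{ij}) = (Z_2)_{ij}.$$
Reading off the definition of $Z_2$ with $k = 2$ gives
$$(Z_2)_{ij} = (-q)^{2(j-i)} D x_{ij} D^{-1} = q^{2(j-i)} D x_{ij} D^{-1},$$
which is the claimed formula.

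The only step needing care is the identification $S(\wt{Z}_1) = \wt{Z}_2$. In Manin's Hopf envelope the antipode sends $\wt{Z}_k$ to $\wt{Z}_{k+1}$ entrywise (not transposed), as recorded in the proof. One must also check that $\pi(\wt{Z}_2) = Z_2$, but this was verified there: $\pi$ and $\psi$ are mutually inverse with $\psi(Z_k) = \wt{Z}_k$. So there is essentially no obstacle; the lemma is a direct reading of that construction.

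As a sanity check independent of the envelope, one can argue directly. By Lemma \ref{lem:U and V are invertible}, $S(X) = X^{-1} = Z_1$. Since $S$ is an anti-homomorphism, applying $S$ to $Z_1 X = I$ gives $S(X)\cdot {}^{t}$-type relations, namely $\sum_\ell S(x_{\ell j}) S((Z_1)_{i\ell}) = \delta_{ij}$. This means $S(Z_1)^{\mathsf{T}}$ is the inverse of $X^{\mathsf{T}}$ (suitably ordered), which is exactly the defining relation $Z_1^{\mathsf{T}} Z_2^{\mathsf{T}} = Z_2^{\mathsf{T}} Z_1^{\mathsf{T}} = I$. It was checked in the proof of Theorem \ref{thm:description of universal Hopf algebra} that $Z_2$ satisfies this relation. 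Uniqueness of two-sided inverses then forces $S(Z_1) = Z_2$, and hence
$$S^2(x_{ij}) = q^{2(j-i)} D x_{ij} D^{-1}.$$
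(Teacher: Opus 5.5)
Your proof is correct and is essentially the paper's argument: both read $S^2(x_{ij})$ off the proof of Theorem~\ref{thm:description of universal Hopf algebra}. The paper writes $S(x_{ij}) = y_{ji}D^{-1}$, so $S^2(x_{ij}) = D\,S(y_{ji})$, and then uses $S(Y) = Y^{-1}$ (Lemma~\ref{lem:U and V are invertible}) together with the identification of $Y$ and its inverse from that proof; you instead use $S(\wt{Z}_1) = \wt{Z}_2$ in the Hopf envelope directly, or equivalently, in your sanity check, $Z_1^{\mathsf{T}}Z_2^{\mathsf{T}} = Z_2^{\mathsf{T}}Z_1^{\mathsf{T}} = I$. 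One small slip in that sanity check: applying $S$ to $Z_1X = I$ shows that $S(Z_1)^{\mathsf{T}}$ is a right inverse of $Z_1^{\mathsf{T}}$, not of $X^{\mathsf{T}}$, and this is exactly what the uniqueness-of-inverses step needs.
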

\begin{proof}
    We have $S(x_{ij}) = y_{ji}D^{-1}$, by Proposition \ref{prop:Y is inverse of X}. Since $S$ is an anti-homomorphism and $S(D^{-1}) = D$, we therefore have $S^2(x_{ij}) = D S(y_{ji})$. It follows from Lemma \ref{lem:U and V are invertible} and the proof of Theorem \ref{thm:description of universal Hopf algebra} that $S(y_{ji}) = q^{2(j - i)} x_{ij} D^{-1}$, which concludes the proof.
\end{proof}

Equipped with Lemma \ref{lem:computation of S^2}, the proof of Proposition \ref{prop:involutive Hopf algebra} follows easily.

\begin{proof}[Proof of Proposition \ref{prop:involutive Hopf algebra}]
    By Lemma \ref{lem:computation of S^2}, we have
    $$H_q = \frac{\aut^r(A_q)}{(x_{ij}D - q^{2(j - i)} D x_{ij})}.$$
    The result now follows immediately from Theorem \ref{thm:description of universal Hopf algebra}.
\end{proof}

\section{The two-variable case}\label{sec:two variables}

In this section, we consider the case where $n = 2$: we classify group gradings of the quantum plane
$$A_q(2) = \frac{\kk\ang{x,y}}{(yx - q xy)}.$$
Although this case has already been studied \cite{Crawford}, it is still useful to work out the details of our approach using Manin's universal quantum group, since this will lead to insights in the higher-dimensional cases. Furthermore, our method yields stronger results: on top of classifying group gradings of $A_q(2)$, we also classify all possible inner-faithful right coactions of cocommutative Hopf algebras on $A_q(2)$ by classifying the maximal cocommutative quotients of $\aut^r(A_q(2))$.

\subsection{Crawford's result}

Crawford proved that $A_q(2)$ can only be graded by abelian groups unless $q = -1$.

\begin{thm}[{\cite[Theorem 1.1]{Crawford}}]\label{thm:Simon}
    Suppose $A_q(2)$ has a faithful $G$-grading which refines its natural $\NN$-grading, where $G$ is a nonabelian group. Then $q = -1$ and $G$ is a quotient of the group $\ang{f,g \mid f^2 = g^2}$.
\end{thm}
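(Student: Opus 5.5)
Given a faithful $G$-grading refining the $\NN$-grading, we obtain an inner-faithful right coaction $\rho\colon A_q(2)\to A_q(2)\otimes \kk G$ with $\rho(x_i)=\sum_j x_j\otimes g_{ji}$. By the universal property (Definition \ref{dfn:Manin}) this gives a surjective Hopf map $\aut^r(A_q(2))\to \kk G$. Since $\kk G$ is cocommutative, hence involutive, the map factors through $H_q(2)$ of Proposition \ref{prop:involutive Hopf algebra}. The plan is therefore to work directly with the images $g_{ij}\in\kk G$ of $h_{ij}$, which form the matrix of the coaction.

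\textbf{Step 1: reduce to a $2$-dimensional $G$-graded piece.} The degree-one space $V=\kk x\oplus\kk y$ is a $G$-graded subspace, so $V=\bigoplus_g V_g$. We may therefore choose a homogeneous basis $u,v$ of $V$ with degrees $a,b\in G$. Since $A_q(2)$ is generated by $V$, faithfulness means $a$ and $b$ generate $G$. As $G$ is nonabelian, $a\neq b$, and in particular $u,v$ lie in distinct components.

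\textbf{Step 2: the relation is homogeneous.} The relation space $R=\kk(yx-qxy)\subseteq V\otimes V$ is one-dimensional and must be $G$-homogeneous. This holds because $A_2=(V\otimes V)/R$ is graded, so $R$ is a graded subspace. Write $yx-qxy=\alpha\, u\otimes u+\beta\, u\otimes v+\gamma\, v\otimes u+\delta\, v\otimes v$ in terms of the new basis; the four tensors have degrees $a^2, ab, ba, b^2$.

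\textbf{Step 3: case analysis.} If $ab\neq ba$, then $ab$ and $ba$ differ from each other, and each also differs from $a^2$ and $b^2$, since $a\ne b$. Homogeneity then forces the relation to be a single monomial or a combination of $u^2$ and $v^2$ (possible only when $a^2=b^2$). The relation $yx-qxy$ is a rank-$2$ tensor, so it is not a pure monomial like $u\otimes v$. Hence it has the form $\alpha u^2+\delta v^2$ with $\alpha,\delta\neq0$ and $a^2=b^2$. This is the main computational step: $yx-qxy$ corresponds to a $2\times2$ matrix of coefficients, which in the new basis becomes $P^{\mathsf T}MP$ with $M=\begin{pmatrix}0&-q\\1&0\end{pmatrix}$. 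A congruent form is diagonal only if its symmetric part is nondegenerate and its antisymmetric part vanishes. The antisymmetric part of $M$ is zero exactly when $q=-1$. So $q=-1$ is forced, and $G$ is generated by $a,b$ with $a^2=b^2$, making it a quotient of $\ang{f,g\mid f^2=g^2}$.

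The remaining case $ab=ba$ makes $G$ abelian, which contradicts the hypothesis. I expect the congruence/antisymmetry argument in Step 3 to be the only delicate point. It could alternatively be phrased through the relations \eqref{eq:general involutive q commute} on the $g_{ij}$, but the bilinear form argument is cleaner.
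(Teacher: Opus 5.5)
Your proof is correct, and it takes a genuinely different route from the paper's.

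\textbf{The paper's route.} The paper does not argue this statement directly. It deduces it from Theorem \ref{thm:cocommutative quotient 2 variables}, which classifies all maximal cocommutative quotients of $\aut^r(A_q(2))$. That classification passes through the involutization $H_q(2)$ and the linear-dependence conditions of Lemma \ref{lem:linear dependence 2 variables}. The case analysis produces $\kk\ZZ^2$, $\mathcal{A}(0,q^{\pm 1})$, and, only when $q=-1$, $\kk\Gamma$. A faithful $G$-grading gives a surjection $\aut^r(A_q(2)) \surj \kk G$ factoring through one of these. Only $\kk\Gamma$ has a nonabelian group algebra as a quotient.

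\textbf{Your route.} You work with the grading itself. Because $\kk G$ is spanned by group-likes, $V$ has a homogeneous basis $u,v$. The one-dimensional relation space is the kernel of the graded map $V\otimes V\to A_2$, so it is homogeneous. Degree bookkeeping, plus the congruence-invariance of the rank and of the symmetry of the coefficient matrix $\begin{pmatrix}0&-q\\1&0\end{pmatrix}$, then forces $a^2=b^2$ and $q=-1$. All steps check out: the distinctness of $ab$ and $ba$ from $a^2,b^2$, the rank-$2$ exclusion of single monomials, and the faithfulness giving $G=\langle a,b\rangle$.

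\textbf{Comparison.} Your argument is much shorter. It needs none of the presentation of $\aut^r(A_q)$ or $H_q(2)$, and it works over any field. Your opening paragraph about factoring through $H_q(2)$ is never used and can be dropped.

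What your argument does not deliver is the paper's stronger conclusion. The diagonalization in your Step 1 is available only because the coacting Hopf algebra is a group algebra. It fails for coactions by cocommutative Hopf algebras such as $\mathcal{A}(0,q^{\pm 1})$, where $V$ is not a direct sum of one-dimensional subcomodules. The universal-quotient approach captures those coactions as well, and it is the template the paper then scales up to $A_q(3)$.
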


Given Theorem \ref{thm:Simon}, we make the following definition.

\begin{ntt}\label{ntt:Simon group}
    We let $\Gamma \coloneqq \ang{f,g \mid f^2 = g^2}$.
\end{ntt}

\subsection{Cocommutative quotients of \texorpdfstring{$H_q(2)$}{Hq}}

We now proceed to classify all cocommutative quotients of $H_q(2)$. First, we apply Proposition \ref{prop:involutive Hopf algebra} to give an explicit presentation of $H_q(2)$. This presentation was also given in \cite[Lemma 5.3]{ChanWaltonZhang}.

\begin{cor}\label{cor:involutive relations 2 variables}
    Let $q \in \kk^*$. The Hopf algebra $H_q(2)$ has generators $h_{11}, h_{12}, h_{21}, h_{22}$, and $D^{-1}$, subject to the relations
    \begin{equation}\label{eq:involutive relations}
        \begin{gathered}
            \begin{aligned}
                h_{22} h_{11} &= h_{11} h_{22}, & h_{21} h_{12} &= q^{-2} h_{12} h_{21}, \\
                h_{12} h_{11} &= q h_{11} h_{12}, & h_{22} h_{21} &= qh_{21} h_{22}, \\
                h_{21} h_{11} &= q^{-1} h_{11} h_{21}, & h_{22} h_{12} &= q^{-1} h_{12} h_{22},
            \end{aligned} \\
            D D^{-1} = 1 = D^{-1} D,
        \end{gathered}
    \end{equation}
    where $D = h_{11}h_{22} - q^{-1}h_{12}h_{21}$ is the quantum determinant of $H_q(2)$.
\end{cor}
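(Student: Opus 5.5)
The corollary should follow by specializing Proposition~\ref{prop:involutive Hopf algebra} to $n = 2$ and then simplifying the resulting presentation. I plan to show two things: that every relation listed in \eqref{eq:involutive relations} holds in $H_q(2)$, and conversely that every relation in \eqref{eq:general involutive relations} follows from \eqref{eq:involutive relations}. The Hopf structure is then simply inherited from Proposition~\ref{prop:involutive Hopf algebra}.

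Write $a = h_{11}$, $b = h_{12}$, $c = h_{21}$, $d = h_{22}$, and note that for $n = 2$ the $1\times 1$ quantum minors $\qdet(\mathcal{H}_{\hat k \hat\imath})$ are just single entries.

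\textbf{Forward direction.} Relation \eqref{eq:general involutive q commute} gives the row relations $ba = qab$ and $dc = qcd$. Relation \eqref{eq:general involutive 2x2 minor} gives $ad - q^{-1}bc = da - qcb$. The long relation \eqref{eq:general involutive long relation} has four instances:
\begin{itemize}
\item for $(i,j) = (1,1)$ it reads $da - q^{-1}bc = D = ad - q^{-1}bc$, so $da = ad$;
\item for $(i,j) = (1,2)$ it reads $db - q^{-1}bd = 0$;
\item for $(i,j) = (2,1)$ it reads $-qca + ac = 0$, i.e.\ $ca = q^{-1}ac$;
\item for $(i,j) = (2,2)$ it reads $ad - qcb = D$, so $qcb = q^{-1}bc$, i.e.\ $cb = q^{-2}bc$.
\end{itemize}
The last of these also follows from $da = ad$ together with the $2\times 2$ minor relation. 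Together these give all six relations in \eqref{eq:involutive relations}, along with $DD^{-1} = D^{-1}D = 1$.

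\textbf{Converse.} Assume the relations \eqref{eq:involutive relations}. Then:
\begin{itemize}
\item the row relations are among them;
\item the minor relation holds, since $da - qcb = ad - q\cdot q^{-2}bc = ad - q^{-1}bc$;
\item the four instances of \eqref{eq:general involutive long relation} hold by reversing the computations above.
\end{itemize}
The remaining relation is $h_{ij}D = q^{2(j-i)}Dh_{ij}$, which has to be checked entry by entry using $D = ad - q^{-1}bc$. For example, $a$ commutes with $ad$, and
$$abc = q^{-1}bac = q^{-1}\cdot q\, bca = bca,$$
so $aD = Da$. Similarly, moving $b$ through $ad$ and through $bc$ produces a factor of $q^{-2}$ in each term, giving $Db = q^{-2}bD$; the computations for $c$ and $d$ are analogous.

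The main obstacle is this last step: keeping track of the $q$-powers when commuting each generator past both monomials of $D$. It is routine but error-prone, and a sign or power mismatch there would signal a typo in one of the listed relations.
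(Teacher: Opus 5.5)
Your proposal is correct and takes essentially the same approach as the paper. Like the paper, you specialize the presentation of $H_q$ to $n = 2$, read off the row relations, and extract the remaining relations from the instances $(i,j) = (1,1), (1,2), (2,1)$ of the long relation, combined with the $2\times 2$ minor relation. Your converse check, including the verification that $h_{ij}D = q^{2(j-i)}Dh_{ij}$, is carried out correctly; the paper simply asserts this step as easy to check.
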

\begin{proof}
    The relations $h_{12} h_{11} = q h_{11} h_{12}$ and $h_{22} h_{21} = q h_{21} h_{22}$ come from \eqref{eq:general involutive q commute}.
    
    By \eqref{eq:general involutive long relation} with $i = 1$ and $j = 2$, we have
    $$h_{22} h_{12} - q^{-1} h_{12} h_{22} = 0,$$
    so we get the relation $h_{22} h_{12} = q^{-1} h_{12} h_{22}$. The relation $h_{21} h_{11} = q^{-1} h_{11} h_{21}$ follows similarly.

    Finally, by \eqref{eq:general involutive long relation} with $i = j = 1$, we get
    $$h_{22} h_{11} - q^{-1} h_{12} h_{21} = D.$$
    Recalling that $D = h_{11} h_{22} - q^{-1} h_{12} h_{21} = h_{22} h_{11} - q h_{21} h_{12}$, it follows that $h_{22} h_{11} = h_{11} h_{22}$ and that $h_{21} h_{12} = q^{-2} h_{12} h_{21}$.

    It is easy to check that the relations \eqref{eq:involutive relations} imply all the relations in \eqref{eq:general involutive relations}.
\end{proof}

\begin{rem}\label{rem:involutive coaction is compatible}
    The algebra $H_q(2)$ is also isomorphic to a deformation of $GL_2$ denoted by $GL_{q,q^{-1}}(2)$ which is given by Takeuchi in \cite{Takeuchi} (see also \cite[Proposition 5.4]{ChanWaltonZhang}).
\end{rem}

To state the classification of cocommutative Hopf coactions on $A_q(2)$, we must introduce the following family of Hopf algebras.

\begin{dfn}[{\cite[Construction 1.1]{GoodearlZhang}}]\label{dfn:Goodearl-Zhang Hopf algebra}
    For $n \in \ZZ$ and $q \in \kk^*$, define the Hopf algebra
    $$\mathcal{A}(n,q) \coloneqq \frac{\kk\ang{x^{\pm 1},y}}{(yx - qxy)}$$
    with Hopf structure given by
    \begin{align*}
        \Delta(x) &= x \otimes x, & \Delta(y) &= y \otimes 1 + x^n \otimes y, \\
        \varepsilon(x) &= 1, & \varepsilon(y) &= 0, \\
        S(x) &= x^{-1}, & S(y) &= -x^{-n}y.
    \end{align*}
    In other words, $x$ is group-like and $y$ is $x^n$-skew primitive.
\end{dfn}

Certainly, $\mathcal{A}(n,q)$ is cocommutative if and only if $n = 0$. As we will show in this section, the Hopf algebras $\mathcal{A}(0,q^{\pm 1})$ (and their quotients) are the only cocommutative Hopf algebras which coact on $A_q(2)$ inner-faithfully but are not group algebras.

\begin{thm}\label{thm:cocommutative quotient 2 variables}
    Let $q \in \kk^*$. Then, up to isomorphism, the maximal cocommutative quotients of $\aut^r(A_q)$ are the following.
    \begin{enumerate}
        \item The group algebra of $\ZZ^2$.
        \item The Hopf algebra $\mathcal{A}(0,q^{\pm 1})$ defined in Definition \ref{dfn:Goodearl-Zhang Hopf algebra}.
        \item The group algebra of $\Gamma$ (only if $q = -1$).
    \end{enumerate}
    Consequently, if $C$ is a cocommutative Hopf algebra which right coacts on $A_q(2)$ inner-faithfully, then $C$ is a quotient of one of the Hopf algebras on the list above.
\end{thm}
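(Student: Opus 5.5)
Every cocommutative Hopf algebra is involutive, so any cocommutative quotient of $\aut^r(A_q(2))$ factors through $H_q(2)$. We therefore work with the presentation in Corollary \ref{cor:involutive relations 2 variables} and determine which relations make a quotient $C = H_q(2)/I$ cocommutative. Write $\bar h_{ij}$ for the images in $C$. Cocommutativity of $C$ forces
$$\sum_k \bar h_{ik}\otimes \bar h_{kj} = \sum_k \bar h_{kj}\otimes \bar h_{ik}$$
for all $i,j$. First take $i=j=1$; this gives $\bar h_{12}\otimes \bar h_{21} = \bar h_{21}\otimes \bar h_{12}$. Taking $i=1$, $j=2$ gives
$$\bar h_{11}\otimes \bar h_{12} + \bar h_{12}\otimes \bar h_{22} = \bar h_{12}\otimes \bar h_{11} + \bar h_{22}\otimes \bar h_{12}.$$
These elementary tensor identities drive a case split on whether $\bar h_{12}$ and $\bar h_{21}$ vanish, and on linear dependence among the $\bar h_{ij}$.

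\textbf{Case 1: $\bar h_{12}=\bar h_{21}=0$.} Then $\bar h_{11},\bar h_{22}$ are group-like (they are invertible since $D=\bar h_{11}\bar h_{22}$ is), and they commute. So $C$ is a quotient of $\kk\ZZ^2$.

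\textbf{Case 2: exactly one of them vanishes,} say $\bar h_{21}=0$ and $\bar h_{12}\neq 0$. Then $\bar h_{11},\bar h_{22}$ are group-like and $\bar h_{12}$ is $(\bar h_{11},\bar h_{22})$-skew primitive. The second tensor identity reads $(\bar h_{11}-\bar h_{22})\otimes \bar h_{12} = \bar h_{12}\otimes(\bar h_{11}-\bar h_{22})$. Because $\bar h_{12}$ is a nonzero skew-primitive and the $\bar h_{ii}$ are group-like, I expect this to force $\bar h_{11}=\bar h_{22}=:g$; alternatively $\bar h_{12}$ is a scalar multiple of $\bar h_{11}-\bar h_{22}$, and that case should reduce to Case 1 after a change of basis or be handled by a small additional argument. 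With $g$ in hand, set $y=g^{-1}\bar h_{12}$, which is primitive. The relations $h_{12}h_{11}=qh_{11}h_{12}$ then become $yg = q\,gy$ up to the convention $q^{\pm1}$. Hence $C$ is a quotient of $\mathcal A(0,q)$, and the symmetric case $\bar h_{12}=0$ gives $\mathcal A(0,q^{-1})$.

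\textbf{Case 3: both are nonzero.} The identity $\bar h_{12}\otimes\bar h_{21}=\bar h_{21}\otimes\bar h_{12}$ gives $\bar h_{21}=\lambda \bar h_{12}$ for some $\lambda\in\kk^*$. Substituting into the relation $h_{21}h_{12}=q^{-2}h_{12}h_{21}$ gives $\lambda(1-q^{-2})\bar h_{12}^2=0$. Together with the other relations and the invertibility of $D$, this should force either $q^2=1$ or a contradiction. The case $q=1$ should degenerate: the $\bar h_{ij}$ commute, and diagonalizing the commuting group-likes returns us to Case 1 after a change of variables, possibly via an isomorphic quotient. For $q=-1$ I would expect $\bar h_{11}=\bar h_{22}=0$ to be forced (or again a reduction to earlier cases). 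Then $f=\bar h_{12}$ and $g=\bar h_{21}$ are group-like, and the relation $D=-\bar h_{12}\bar h_{21}\cdot q^{-1}$ combined with the other form of $D$ yields $f g = g f$-type relations of the form $f^2=g^2$ after rescaling. So $C$ is a quotient of $\kk\Gamma$.

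Conversely, I will check that each of the listed algebras is a cocommutative Hopf quotient, by writing down explicit algebra maps from $H_q(2)$ sending the $h_{ij}$ as above and verifying the relations of Corollary \ref{cor:involutive relations 2 variables}. Maximality should follow because none of them is a proper quotient of another cocommutative quotient. Proposition \ref{prop:maximal cocommutative quotients} and the universal property then give the final consequence about inner-faithful coactions.

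The main obstacle is the mixed situation where the tensor identities only yield linear dependencies: $\bar h_{12}$ proportional to $\bar h_{11}-\bar h_{22}$, or the diagonal entries not vanishing when $q=-1$. I would handle this by analyzing the subcoalgebra spanned by the $\bar h_{ij}$ as a cocommutative matrix coalgebra. Over algebraically closed $\kk$ of characteristic zero, such a coalgebra has a basis change making $\bar{\mathcal H}$ upper triangular. I would then show that only the three normal forms above are compatible with the $q$-relations, which must be transported under the basis change.
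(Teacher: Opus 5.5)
Your skeleton (pass to $H_q(2)$, extract linear dependences from cocommutativity, split on which of $\bar h_{12},\bar h_{21}$ vanish) is the paper's. However, the step that drives the whole $q\neq 1$ analysis is missing, and the reason you offer for it does not work. In Case 2 you expect a nonzero skew-primitive $\bar h_{12}$ to force $\bar h_{11}=\bar h_{22}$ on coalgebra grounds. But $\bar h_{11}-\bar h_{22}$ is itself $(\bar h_{11},\bar h_{22})$-skew-primitive, so $\bar h_{12}\in\kk(\bar h_{11}-\bar h_{22})$ is compatible with every coproduct identity. This really occurs for $q=1$: for group-likes $a\neq c$ generating $\kk\ZZ^2$, the assignment $h_{11}\mapsto a$, $h_{22}\mapsto c$, $h_{12}\mapsto a-c$, $h_{21}\mapsto 0$ is a Hopf map $H_1(2)\to\kk\ZZ^2$. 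The missing input is algebraic and uses $q\neq 1$. Writing $\bar h_{11}-\bar h_{22}=\lambda\bar h_{12}$, the paper compares $S(h_{11}-h_{22})=(h_{22}-h_{11})D^{-1}$ with $S(h_{12})=-q\,h_{12}D^{-1}$ to get $(1-q)\lambda\bar h_{12}=0$. Hence $\bar h_{11}=\bar h_{22}$ as soon as some off-diagonal entry is nonzero. Your Case 3 argument for $q^2\neq 1$ is also unfinished, since $\bar h_{12}^2=0$ is not itself a contradiction. You need $\bar h_{11}\bar h_{21}=q^2\,\bar h_{11}\bar h_{21}$, which follows from $h_{21}h_{11}=q^{-1}h_{11}h_{21}$, $h_{12}h_{11}=qh_{11}h_{12}$ and $\bar h_{21}=\lambda\bar h_{12}$. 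You also need $\bar h_{11}$ to be a unit, which holds because $\overline D=\bar h_{11}\bar h_{22}$ with commuting factors once $\bar h_{12}^2=0$.

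Your $q=-1$ case is wrong rather than merely incomplete. $\bar h_{11}=0$ is impossible because $\varepsilon(h_{11})=1$, and $\bar h_{12}$ cannot be group-like because $\varepsilon(\bar h_{12})=0$, so the map you intend to write down for the converse does not exist. What actually happens is $\bar h_{11}=\bar h_{22}$ (by the lemma above) and $\bar h_{21}=\lambda\bar h_{12}$. The group-likes are $u,v=\bar h_{11}\pm\mu\bar h_{12}$ with $\mu^2=\lambda$. The anticommutation $\bar h_{12}\bar h_{11}=-\bar h_{11}\bar h_{12}$ gives $u^2=v^2=\bar h_{11}^2+\lambda\bar h_{12}^2=\overline D$, which is where $\Gamma=\langle f,g\mid f^2=g^2\rangle$ comes from; one then checks that $(h_{11}-h_{22},\,h_{21}-\lambda h_{12})$ is a Hopf ideal with quotient $\kk\Gamma$.

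For $q=1$, ``diagonalizing the commuting group-likes'' also fails: the $\bar h_{ij}$ are not group-like, and the matrix need not be diagonalizable. The paper conjugates by some $M\in GL_2(\kk)$, which induces an automorphism of $H_1(2)$ because $GL_2(\kk)$ acts on $\kk[x,y]$. This makes $\bar{\mathcal H}$ upper triangular with group-like diagonal entries $a,c$ and corner entry $b$. If $a\neq c$ one gets a quotient of $\kk\ZZ^2$. In the Jordan-block case $a=c$, however, $a^{-1}b$ is primitive and one gets a quotient of $\mathcal A(0,1)$, which your reduction to Case 1 would miss.

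Your closing triangularization idea is therefore right for $q=1$. For $q\neq 1$, though, a general change of basis of $\kk x_1+\kk x_2$ does not preserve the relation of $A_q(2)$, so it does not induce an automorphism of $H_q(2)$. You would have to carry the transformed relations along explicitly, which the proposal does not do; the paper avoids this entirely with the direct arguments above.
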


Note that when we say ``maximal cocommutative quotient'', this is meant in the sense of Proposition \ref{prop:maximal cocommutative quotients}. As an immediate consequence of Theorem \ref{thm:cocommutative quotient 2 variables}, we recover Crawford's result (Theorem \ref{thm:Simon}).

For the rest of this section, we fix a Hopf ideal $I$ of $H_q(2)$ such that $\overline{H} \coloneqq H_q(2)/I$ is cocommutative. We write $g_{ij} \coloneqq h_{ij} + I$ for the images of the generators $h_{ij}$ in $\overline{H}$, and $\overline{D} \coloneqq D + I$ for the image of the quantum determinant in $\overline{H}$. The following lemma analyzes the consequences of requiring $\overline{H}$ to be cocommutative.

\begin{lem}\label{lem:linear dependence 2 variables}
    We have the following conditions:
    \begin{enumerate}
        \item $g_{12}$ and $g_{21}$ are linearly dependent.\label{cond:linear dependence of b and c}
        \item $g_{12}$ and $g_{11} - g_{22}$ are linearly dependent.\label{cond:b is proportional to a - d}
        \item $g_{21}$ and $g_{11} - g_{22}$ are linearly dependent.\label{cond:c is proportional to a - d}
    \end{enumerate}
\end{lem}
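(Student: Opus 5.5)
Since $\overline{H}$ is cocommutative, $\Delta(g_{ij}) = \Delta^{\mathrm{op}}(g_{ij})$ for all $i,j$. The coproduct in $\overline{H}$ is inherited from Proposition~\ref{prop:involutive Hopf algebra}, so $\Delta(g_{ij}) = \sum_k g_{ik}\otimes g_{kj}$. The plan is to write out the resulting equalities in $\overline{H}\otimes\overline{H}$ for the off-diagonal and diagonal generators, and then read off linear dependence.

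For $(i,j) = (1,2)$, cocommutativity gives
$$g_{11}\otimes g_{12} + g_{12}\otimes g_{22} = g_{12}\otimes g_{11} + g_{22}\otimes g_{12},$$
that is,
$$(g_{11}-g_{22})\otimes g_{12} = g_{12}\otimes (g_{11}-g_{22}).$$
The same computation for $(2,1)$ gives
$$(g_{22}-g_{11})\otimes g_{21} = g_{21}\otimes(g_{22}-g_{11}).$$
For $(1,1)$ we get $g_{11}\otimes g_{11} + g_{12}\otimes g_{21} = g_{11}\otimes g_{11} + g_{21}\otimes g_{12}$, hence
$$g_{12}\otimes g_{21} = g_{21}\otimes g_{12}.$$

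These three identities all have the form $u\otimes v = v\otimes u$ in $V\otimes V$, where $V = \overline{H}$ is a vector space over $\kk$. The key elementary lemma is that $u\otimes v = v\otimes u$ forces $u$ and $v$ to be linearly dependent. To prove it, suppose $u,v$ are independent. Extend them to a basis of $V$. Then $u\otimes v$ and $v\otimes u$ are distinct elements of the induced basis of $V\otimes V$, so they cannot be equal, which is a contradiction. Applying this lemma to the three identities gives conditions \eqref{cond:b is proportional to a - d}, \eqref{cond:c is proportional to a - d} and \eqref{cond:linear dependence of b and c} respectively.

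I expect no real obstacle. The only point needing care is that the comultiplication formula descends to the quotient by the Hopf ideal $I$, and that linear dependence allows one of the elements to be zero, which is consistent with the lemma.
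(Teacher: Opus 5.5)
Your proposal is correct and matches the paper's proof: apply cocommutativity to $\Delta(g_{11})$, $\Delta(g_{12})$, $\Delta(g_{21})$ to get identities of the form $u\otimes v = v\otimes u$, then conclude linear dependence. The only difference is that you prove the fact that $u\otimes v = v\otimes u$ forces linear dependence (by extending to a basis), whereas the paper quotes it as standard.
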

\begin{proof}
    In this proof, we use the standard fact that if $V$ is a vector space and $u,v \in V$, then $u \otimes v = v \otimes u$ in $V \otimes V$ if and only if $u$ and $v$ are linearly dependent.

    The coproduct on $g_{11}$ is
    $$\Delta(g_{11}) = g_{11} \otimes g_{11} + g_{12} \otimes g_{21}.$$
    By the cocommutativity of $\overline{H}$, it follows that
    $$g_{11} \otimes g_{11} + g_{12} \otimes g_{21} = g_{11} \otimes g_{11} + g_{21} \otimes g_{12},$$
    and therefore
    $$g_{12} \otimes g_{21} = g_{21} \otimes g_{12}.$$
    Condition \eqref{cond:linear dependence of b and c} follows.
    
    For \eqref{cond:b is proportional to a - d}, consider
    $$\Delta(g_{12}) = g_{11} \otimes g_{12} + g_{12} \otimes g_{22}.$$
    Since $\overline{H}$ is cocommutative, we have
    $$g_{11} \otimes g_{12} + g_{12} \otimes g_{22} = g_{12} \otimes g_{11} + g_{22} \otimes g_{12},$$
    and therefore,
    $$(g_{11} - g_{22}) \otimes g_{12} = g_{12} \otimes (g_{11} - g_{22}).$$
    Condition \eqref{cond:b is proportional to a - d} follows. Condition \eqref{cond:c is proportional to a - d} is obtained similarly by considering $\Delta(g_{21})$.
\end{proof}

First, we consider what happens when $g_{12} = g_{21} = 0$.

\begin{lem}\label{lem:commutative quotient}
    The ideal $I = (h_{12},h_{21})$ of $H_q(2)$ is a Hopf ideal, and $H_q(2)/I \cong \kk\ZZ^2$ as Hopf algebras.
\end{lem}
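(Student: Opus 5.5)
The plan is to check the Hopf ideal conditions on the generators $h_{12}, h_{21}$ directly, and then identify the quotient by writing down mutually inverse Hopf algebra maps with $\kk\ZZ^2 = \kk[u^{\pm1},v^{\pm1}]$.

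\textbf{Step 1: $I$ is a Hopf ideal.} Since $I$ is generated as an ideal by $h_{12}$ and $h_{21}$, it suffices to check the conditions on these two elements. Multiplicativity of $\Delta$ and $\varepsilon$, together with antimultiplicativity of $S$, then propagates them to all of $I$.
\begin{itemize}
\item For the counit, $\varepsilon(h_{12}) = \varepsilon(h_{21}) = 0$.
\item For the coproduct, $\Delta(h_{12}) = h_{11}\otimes h_{12} + h_{12}\otimes h_{22}$ lies in $H_q\otimes I + I \otimes H_q$, and similarly for $h_{21}$.
\item For the antipode, Proposition \ref{prop:involutive Hopf algebra} with $n=2$ gives $S(h_{12}) = (-q)\qdet(\mathcal{H}_{\hat2\hat1})D^{-1} = -q\,h_{12}D^{-1} \in I$. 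Likewise $S(h_{21}) = -q^{-1}h_{21}D^{-1}\in I$.
\end{itemize}

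\textbf{Step 2: presentation of the quotient.} In $H_q(2)/I$, the relations of Corollary \ref{cor:involutive relations 2 variables} collapse. The only surviving nontrivial relation is $h_{22}h_{11} = h_{11}h_{22}$. Also $D = h_{11}h_{22}$, and $D^{-1}$ is a two-sided inverse of it.

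Since $h_{11}$ and $h_{22}$ commute with each other, they commute with $D$. Hence they also commute with $D^{-1}$, because $D^{-1}h_{11} = D^{-1}h_{11}DD^{-1} = D^{-1}Dh_{11}D^{-1} = h_{11}D^{-1}$. So the quotient is commutative. Moreover, $h_{11}$ is invertible with inverse $h_{22}D^{-1}$, and $h_{22}$ is invertible with inverse $h_{11}D^{-1}$.

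\textbf{Step 3: the comparison maps.} Define $\phi\colon \kk[u^{\pm1},v^{\pm1}]\to H_q(2)/I$ by $u\mapsto \bar h_{11}$ and $v\mapsto \bar h_{22}$. This is well defined by Step 2. For the other direction, define $\psi\colon H_q(2)\to \kk\ZZ^2$ on generators by
$$h_{11}\mapsto u,\quad h_{22}\mapsto v,\quad h_{12},h_{21}\mapsto 0,\quad D^{-1}\mapsto u^{-1}v^{-1}.$$
Each relation in \eqref{eq:involutive relations} holds in $\kk\ZZ^2$ under $\psi$: the target is commutative, the off-diagonal images vanish, and $\psi(D) = uv$ is inverted by $u^{-1}v^{-1}$. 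Since $\psi$ kills $I$, it descends to the quotient.

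The two compositions fix the generators, so they are mutually inverse. Finally, $\phi$ is a Hopf map because in the quotient $\Delta(\bar h_{11}) = \bar h_{11}\otimes\bar h_{11}$ and $\varepsilon(\bar h_{11}) = 1$, and the same holds for $\bar h_{22}$. Thus $u,v$ are sent to group-likes, and compatibility with the antipode is automatic for a bialgebra map between Hopf algebras.

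There is no real obstacle in this argument. The only point needing care is the antipode formula for $n=2$: one must get the minors $\mathcal{H}_{\hat\jmath\hat\imath}$ right to see that $S(h_{12})$ is a multiple of $h_{12}$ rather than $h_{21}$.
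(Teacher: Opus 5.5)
Your proposal is correct and follows the same route as the paper. Both arguments kill $h_{12},h_{21}$, use Corollary \ref{cor:involutive relations 2 variables} to see that $\bar h_{11}$ and $\bar h_{22}$ are commuting invertible group-likes, and send them to the generators of $\kk\ZZ^2$. You supply the details the paper leaves out: the Hopf ideal check (with the correct antipode formula $S(h_{12}) = -q\,h_{12}D^{-1}$), invertibility via the explicit inverses $h_{22}D^{-1}$ and $h_{11}D^{-1}$ rather than via group-likeness, and the explicit inverse map $\psi$ that makes injectivity of $\kk\ZZ^2 \to H_q(2)/I$ evident.
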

\begin{proof}
    We omit proving that $I$ is a Hopf ideal, since this is straightforward. Adopting the notation $\overline{H}$, $g_{ij}$, and $\overline{D}$ from above, we know that $g_{11}$ and $g_{22}$ commute by Corollary \ref{cor:involutive relations 2 variables}. Furthermore, we can easily see that $g_{11}$ and $g_{22}$ are group-like, and therefore are invertible in $\overline{H}$. It is now clear that $\overline{H} \cong \kk \ZZ^2$ by $g_{11} \mapsto (1,0)$ and $g_{22} \mapsto (0,1)$.
\end{proof}

Therefore, it remains to consider the cases where at least one of $g_{12}$ or $g_{21}$ is nonzero.

\begin{lem}\label{lem:diagonals are equal}
    Assume that $q \neq 1$. Suppose that $g_{12} \neq 0$ or $g_{21} \neq 0$. Then $g_{11} = g_{22}$.
\end{lem}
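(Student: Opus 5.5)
The plan is to combine the linear dependences of Lemma~\ref{lem:linear dependence 2 variables} with the commutation relations of Corollary~\ref{cor:involutive relations 2 variables}, which pass to $\overline{H}$. By the symmetry $h_{12}\leftrightarrow h_{21}$ (with $q \leftrightarrow q^{-1}$ in the relevant relations), I may assume $g_{12} \neq 0$. Then Lemma~\ref{lem:linear dependence 2 variables}\eqref{cond:b is proportional to a - d} gives a scalar $\lambda \in \kk$ with
$$g_{11} - g_{22} = \lambda g_{12},$$
and the goal is to show $\lambda = 0$.

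\textbf{Step 1: a twisted commutation identity.} From Corollary~\ref{cor:involutive relations 2 variables} we have $g_{11}g_{12} = q^{-1} g_{12} g_{11}$ and $g_{22} g_{12} = q^{-1} g_{12} g_{22}$. Subtracting these gives
$$(g_{11}-g_{22})g_{12} = q^{-1} g_{12}(g_{11}-g_{22}).$$
Substituting $g_{11}-g_{22} = \lambda g_{12}$ yields $\lambda(1-q^{-1})\,g_{12}^2 = 0$. Since $q \neq 1$, it suffices to show that $g_{12}^2 \neq 0$.

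\textbf{Step 2: $g_{12}^2 \neq 0$.} This is the main obstacle. I would argue by contradiction, supposing $g_{12}^2 = 0$, and apply $\Delta$ to $g_{12}^2$:
$$0 = \Delta(g_{12})^2 = g_{11}^2 \otimes g_{12}^2 + g_{11}g_{12} \otimes g_{12}g_{22} + g_{12}g_{11} \otimes g_{22}g_{12} + g_{12}^2 \otimes g_{22}^2 .$$
The two outer terms vanish by assumption. Rewriting the third term with $g_{12}g_{11} = q g_{11}g_{12}$ and $g_{22}g_{12} = q^{-1}g_{12}g_{22}$ collapses the middle terms to $2\, g_{11}g_{12} \otimes g_{12}g_{22}$. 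Since $\operatorname{char}\kk = 0$, this forces $g_{11}g_{12} = 0$ or $g_{12}g_{22} = 0$.

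\textbf{Step 3: ruling out these zero products.} I would use invertibility coming from the Hopf structure.
\begin{itemize}
\item The matrix $G = (g_{ij})$ has two-sided inverse $S(G)$.
\item $\overline{D} = g_{11}g_{22} - q^{-1}g_{12}g_{21}$ is an invertible group-like element.
\item $g_{11}$ and $g_{22}$ commute and $q^{\pm1}$-commute with $g_{12}$.
\end{itemize}
Suppose, say, $g_{11}g_{12} = 0$. I would multiply $\overline{D}$ on the right by $g_{12}$ and use $g_{11}g_{22}g_{12} = q^{-1}g_{22}g_{11}g_{12} = 0$, obtaining $\overline{D}g_{12} = -q^{-1}g_{12}g_{21}g_{12}$. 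By Lemma~\ref{lem:linear dependence 2 variables}\eqref{cond:linear dependence of b and c} we have $g_{21} = \mu g_{12}$ (with $\mu$ possibly $0$), so the right-hand side is a multiple of $g_{12}^3 = 0$. Hence $\overline{D}g_{12} = 0$, and invertibility of $\overline{D}$ gives $g_{12} = 0$, a contradiction. The case $g_{12}g_{22} = 0$ is handled the same way, multiplying $\overline{D}$ by $g_{12}$ on the left. This shows $g_{12}^2 \neq 0$, so $\lambda = 0$ and $g_{11} = g_{22}$.

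If the bookkeeping in Step 3 fails (for instance, the case where $g_{21}$ is not a multiple of $g_{12}$ needs more care), my fallback is to instead use the entries of $G\,S(G) = I$ to produce a left or right inverse relation that cancels $g_{11}$ or $g_{22}$.
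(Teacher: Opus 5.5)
Your proof is correct, but it takes a different route from the paper's.

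\textbf{The paper's route.} The paper also starts from $g_{11}-g_{22}=\lambda g_{12}$, but then applies the antipode. Using $S(h_{11})=h_{22}D^{-1}$, $S(h_{22})=h_{11}D^{-1}$ and $S(h_{12})=-q\,h_{12}D^{-1}$, one side gives $S(g_{11}-g_{22})=(g_{22}-g_{11})\overline{D}^{-1}=-\lambda g_{12}\overline{D}^{-1}$. The other side gives $\lambda S(g_{12})=-\lambda q\, g_{12}\overline{D}^{-1}$. Comparing yields $(1-q)\lambda g_{12}=0$ directly. Since $g_{12}\neq 0$ is the standing assumption, $\lambda=0$ follows at once.

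\textbf{Your route.} You use only the quadratic commutation relations. These give $\lambda(1-q^{-1})g_{12}^2=0$, a constraint on $g_{12}^2$ rather than on $g_{12}$. That is why you need the extra non-nilpotency argument of Steps~2--3. Expanding $\Delta(g_{12})^2$ forces $g_{11}g_{12}=0$ or $g_{12}g_{22}=0$, and then multiplying $\overline{D}$ by $g_{12}$ on the appropriate side and using that $\overline{D}$ is a unit gives a contradiction. I checked both cases and the symmetric $g_{21}\neq 0$ case, and they go through.

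\textbf{Comparison.} The paper's argument is shorter and works in any characteristic. Yours needs $\operatorname{char}\kk\neq 2$ in Step~2, though that is harmless here. In exchange, yours avoids the quantum-minor formula for the antipode entirely and uses only the algebra relations, the coproduct, and invertibility of $\overline{D}$.

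\textbf{Minor points.}
\begin{itemize}
\item In Step~3, the identity should read $g_{11}g_{22}g_{12}=g_{22}g_{11}g_{12}$, because $g_{11}$ and $g_{22}$ commute, so there is no factor $q^{-1}$. Since the product is $0$ anyway, this does not affect the argument.
\item Your fallback worry is unnecessary. Because $g_{12}\neq 0$, Lemma~\ref{lem:linear dependence 2 variables} already makes $g_{21}$ a scalar multiple of $g_{12}$. Alternatively, $g_{21}g_{12}=q^{-2}g_{12}g_{21}$ gives $g_{12}g_{21}g_{12}=q^{-2}g_{12}^2g_{21}=0$ directly.
\end{itemize}
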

\begin{proof}
    First assume that $g_{12} \neq 0$. By Lemma \ref{lem:linear dependence 2 variables}, we must have $g_{11} - g_{22} = \lambda g_{12}$ for some $\lambda \in \kk$. To prove the result, it suffices to show that $\lambda = 0$. Considering the antipode on $\overline{H}$, we have
    $$S(g_{11} - g_{22}) = (g_{22} - g_{11})\overline{D}^{-1} = -\lambda g_{12} \overline{D}^{-1},$$
    where we used that $g_{11} - g_{22} = \lambda g_{12}$ in the last equality. On the other hand,
    $$S(g_{11} - g_{22}) = \lambda S(g_{12}) = -\lambda q g_{12} \overline{D}^{-1},$$
    and therefore $(1 - q) \lambda g_{12} = 0$, which implies that $\lambda = 0$ (since $q \neq 1$), and therefore $g_{11} = g_{22}$. The case $g_{21} \neq 0$ follows similarly.
\end{proof}

We now consider the case where exactly one of $g_{12}$ or $g_{21}$ is zero.

\begin{prop}\label{prop:non-group cocommutative quotient}
    The ideals $I_1 = (h_{11} - h_{22}, h_{21})$ and $I_2 = (h_{11} - h_{22}, h_{12})$ are Hopf ideals of $H_q(2)$, and $H_q(2)/I_1 \cong \mathcal{A}(0,q)$ and $H_q(2)/I_2 \cong \mathcal{A}(0,q^{-1})$ as Hopf algebras.
\end{prop}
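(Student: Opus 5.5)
We treat $I_1$ in detail; $I_2$ is entirely symmetric, with the roles of $h_{12}$ and $h_{21}$ exchanged and $q$ replaced by $q^{-1}$.

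\textbf{$I_1$ is a Hopf ideal.} By Corollary \ref{cor:involutive relations 2 variables}, $H_q(2)$ is generated by the $h_{ij}$ and $D^{-1}$. Since $\varepsilon$ and $\Delta$ are algebra maps and $S$ is an anti-algebra map, it suffices to check the generators $h_{11}-h_{22}$ and $h_{21}$ of $I_1$. Clearly $\varepsilon(h_{11}-h_{22}) = 0 = \varepsilon(h_{21})$. For the coproduct we have
$$\Delta(h_{21}) = h_{21}\otimes h_{11} + h_{22}\otimes h_{21} \in I_1\otimes H + H\otimes I_1.$$
For the difference of the diagonal entries,
$$\Delta(h_{11}-h_{22}) = h_{11}\otimes h_{11} - h_{22}\otimes h_{22} + h_{12}\otimes h_{21} - h_{21}\otimes h_{12}.$$
The last two terms lie in $H\otimes I_1 + I_1\otimes H$. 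The first two can be rewritten as
$$h_{11}\otimes h_{11} - h_{22}\otimes h_{22} = (h_{11}-h_{22})\otimes h_{11} + h_{22}\otimes(h_{11}-h_{22}),$$
which also lies there. For the antipode, Proposition \ref{prop:involutive Hopf algebra} gives $S(h_{21}) = -q^{-1}h_{21}D^{-1} \in I_1$ and $S(h_{11}-h_{22}) = (h_{22}-h_{11})D^{-1} \in I_1$. Hence $I_1$ is a Hopf ideal.

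\textbf{The isomorphism with $\mathcal{A}(0,q)$.} In $\overline{H} = H_q(2)/I_1$ write $a = g_{11} = g_{22}$ and $b = g_{12}$. Then $\overline{D} = a^2$, so $a$ is invertible with $a^{-1} = a\overline{D}^{-1}$; here we use that $a$ commutes with $\overline{D}$, which follows from the relation $h_{ij}D = q^{2(j-i)}Dh_{ij}$ in the case $i=j$. The relation $h_{12}h_{11} = qh_{11}h_{12}$ becomes $ba = qab$. On coalgebra structure, $\Delta(a) = a\otimes a$ and $\Delta(b) = a\otimes b + b\otimes a$.

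These are not quite the conventions of Definition \ref{dfn:Goodearl-Zhang Hopf algebra}, so I would normalize by setting $y = a^{-1}b$. Then $\Delta(y) = 1\otimes y + y\otimes 1$, so $y$ is primitive. The commutation relation is preserved:
$$ya = a^{-1}ba = q\,a^{-1}ab = q\,b = q\,a\,y,$$
using $ba = qab$ in the second equality. Thus $ya = qay$, matching $\mathcal{A}(0,q)$ with $x = a$.

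Define $\phi\colon\mathcal{A}(0,q)\to\overline{H}$ by $x\mapsto a$ and $y\mapsto a^{-1}b$. This is a well-defined algebra map by the relation just checked, and it is a bialgebra map by the coproduct computation; hence it is a Hopf algebra map. For the inverse, define $\psi\colon\overline{H}\to\mathcal{A}(0,q)$ on the presentation of Corollary \ref{cor:involutive relations 2 variables} by
$$h_{11},h_{22}\mapsto x,\qquad h_{12}\mapsto xy,\qquad h_{21}\mapsto 0,\qquad D^{-1}\mapsto x^{-2}.$$
The main work is checking that $\psi$ respects all six relations of \eqref{eq:involutive relations}, together with $D = h_{11}h_{22} - q^{-1}h_{12}h_{21} \mapsto x^2$ being inverse to the image of $D^{-1}$. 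The relations involving $h_{21}$ map to $0=0$; the only nontrivial one is $h_{22}h_{12} = q^{-1}h_{12}h_{22}$. Its right side maps to $q^{-1}xyx = q^{-1}x\cdot qxy = x^2y$, using $yx = qxy$, and its left side maps to $x\cdot xy = x^2y$, so it holds. Finally, $\psi$ kills $I_1$, and $\phi$ and $\psi$ are mutually inverse on generators.

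\textbf{The ideal $I_2$.} The same steps apply with $b = g_{21}$. The relation $h_{21}h_{11} = q^{-1}h_{11}h_{21}$ now gives $ba = q^{-1}ab$, which yields $\mathcal{A}(0,q^{-1})$. The main obstacle is only the bookkeeping of conventions: one must rescale by $a^{-1}$ to obtain a genuinely primitive generator, and confirm that this rescaling preserves the $q$-commutation.
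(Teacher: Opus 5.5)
Your proposal is correct and follows essentially the same route as the paper. You check $\varepsilon$, $\Delta$ and $S$ on the two generators of $I_1$, expanding $\Delta(h_{11}-h_{22})$ directly in $H\otimes I_1 + I_1\otimes H$ rather than pushing it to $\overline{H}\otimes\overline{H}$; you then use $\overline{D}=g_{11}^2$ and the primitive element $g_{11}^{-1}g_{12}$ to identify the quotient with $\mathcal{A}(0,q)$. Your explicit inverse $\psi$, built from the presentation in Corollary \ref{cor:involutive relations 2 variables}, makes precise the bijectivity that the paper only asserts.
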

\begin{proof}
    We only prove the result for $I_1$, since the proof for $I_2$ is similar. To that end, we let $I \coloneqq I_1$, and adopt the notation from above: we let $\overline{H} = H_q(2)/I$, $g_{ij} = h_{ij} + I$, and $\overline{D} = D + I$.
    
    First, we prove that $I$ is a Hopf ideal of $H_q(2)$. It is clear that $\varepsilon(I) = 0$. Let $\Phi$ be the composition
    $$\Phi \colon H_q(2) \xrightarrow{\Delta} H_q(2) \otimes H_q(2) \surj \overline{H} \otimes \overline{H}.$$
    To prove that $\Delta(I) \subseteq H_q(2) \otimes I + I \otimes H_q(2)$, it suffices to show that $\Phi(I) = 0$. We have
    $$\Phi(h_{21}) = g_{21} \otimes g_{11} + g_{22} \otimes g_{21} = 0,$$
    since $g_{21} = 0$ in $\overline{H}$. Furthermore,
    $$\Phi(h_{11} - h_{22}) = g_{11} \otimes g_{11} + g_{12} \otimes g_{21} - (g_{21} \otimes g_{12} + g_{22} \otimes g_{22}) = g_{11} \otimes g_{11} - g_{11} \otimes g_{11} = 0,$$
    where we used that $g_{21} = 0$ and $g_{11} = g_{22}$. It follows that $\Delta(I) \subseteq H_q(2) \otimes I + I \otimes H_q(2)$, as required.

    Finally, we check the antipode:
    $$S(h_{21}) = -q^{-1} h_{21} D^{-1} \in I, \quad S(h_{11} - h_{22}) = (h_{22} - h_{11})D^{-1} \in I,$$
    and thus $S(I) \subseteq I$. We conclude that $I = I_1 = (h_{11} - h_{22}, h_{21})$ is a Hopf ideal of $H_q(2)$.

    We now prove the isomorphism in the statement of the proposition. The Hopf algebra $\overline{H}$ is generated by $g_{11}$, $g_{12}$, and $\overline{D}^{-1}$, subject to the relation $g_{12}g_{11} = q g_{11}g_{12}$, by Corollary \ref{cor:involutive relations 2 variables}. By definition of the quantum determinant $D$, we have $\overline{D} = g_{11}^2$, so $g_{11}$ is invertible in $\overline{H}$. Note that
    $$\Delta(g_{11}^{-1} g_{12}) = 1 \otimes (g_{11}^{-1} g_{12}) + (g_{11}^{-1} g_{12}) \otimes 1,$$
    and thus $g_{11}^{-1} g_{12}$ is primitive. We therefore get an isomorphism of Hopf algebras
    \begin{align*}
        \mathcal{A}(0,q) &\xrightarrow{\sim} \overline{H} \\
        x &\mapsto g_{11}, \\
        y &\mapsto g_{11}^{-1}g_{12}. \qedhere
    \end{align*}
\end{proof}

The last case to consider is when $g_{12}$ and $g_{21}$ are both nonzero in $\overline{H}$. As we prove next, this can only happen if $q^2 = 1$.

\begin{lem}\label{lem:b and c nonzero}
    Suppose $g_{12}, g_{21} \neq 0$. Then $q^2 = 1$.
\end{lem}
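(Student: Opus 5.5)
The plan is to argue by contradiction. Assume $g_{12}, g_{21} \neq 0$ and $q^2 \neq 1$, and show that $g_{12}$ is killed by the unit $\overline{D}$, which forces $g_{12} = 0$. Since $q^2 \neq 1$ we have in particular $q \neq 1$, so Lemma \ref{lem:diagonals are equal} applies and gives $g_{11} = g_{22} =: a$. By Lemma \ref{lem:linear dependence 2 variables}\eqref{cond:linear dependence of b and c}, together with $g_{12}, g_{21} \neq 0$, we can write $g_{21} = \mu g_{12}$ for some $\mu \in \kk^*$. Everything in $\overline{H}$ is then expressed in terms of $a$, $g_{12}$ and $\overline{D}^{-1}$, and I will feed these substitutions into the relations \eqref{eq:involutive relations} of Corollary \ref{cor:involutive relations 2 variables}.

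\textbf{Step 1.} Substitute into the relation $h_{21}h_{12} = q^{-2}h_{12}h_{21}$. This gives $\mu g_{12}^2 = q^{-2}\mu g_{12}^2$. Since $\mu \neq 0$, we get $(1 - q^{-2})g_{12}^2 = 0$, and hence $g_{12}^2 = 0$.

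\textbf{Step 2.} Compare two of the commutation relations with the diagonal entries. From $h_{12}h_{11} = q h_{11}h_{12}$ we get $g_{12}a = q\, a g_{12}$. From $h_{21}h_{11} = q^{-1}h_{11}h_{21}$, after substituting $g_{21} = \mu g_{12}$ and dividing by $\mu$, we get $g_{12}a = q^{-1} a g_{12}$. Subtracting the two gives $(q - q^{-1})\, a g_{12} = 0$. Since $q^2 \neq 1$, this means $a g_{12} = 0$, and then also $g_{12} a = 0$.

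\textbf{Step 3.} Use the quantum determinant. The image of $D$ is
$$\overline{D} = a^2 - q^{-1}\mu g_{12}^2 = a^2 - q^{-1}g_{12}g_{21}.$$
Multiplying on the right by $g_{12}$ gives $\overline{D} g_{12} = a(a g_{12}) - q^{-1}\mu g_{12}^3 = 0$, using Steps 1 and 2. Since $\overline{D}$ is invertible in $\overline{H}$ (it has inverse $\overline{D}^{-1}$), it follows that $g_{12} = 0$. This contradicts the hypothesis, so $q^2 = 1$.

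The only delicate point is that neither Step 1 nor Step 2 alone yields a contradiction. In a general cocommutative quotient, $g_{12}^2$ or $a g_{12}$ could a priori vanish without $g_{12}$ vanishing, so a single relation cannot be turned into $1 - q^{\pm 2} = 0$. The key is to obtain both vanishing statements and then exploit the invertibility of $\overline{D}$ in Step 3. The remaining work is just checking the index and sign conventions in the relations of Corollary \ref{cor:involutive relations 2 variables} when substituting.
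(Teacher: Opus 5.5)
Your proof is correct and follows essentially the same route as the paper: obtain $g_{12}^2 = 0$ from the $q^{-2}$-commutation relation between $g_{12}$ and $g_{21}$, show that $g_{11}$ annihilates the off-diagonal entry by comparing the two $q^{\pm 1}$-commutation relations with $g_{11}$, and conclude using Lemma \ref{lem:diagonals are equal} and the invertibility of $\overline{D} = g_{11}^2 - q^{-1}g_{12}g_{21}$. The only cosmetic difference is the last step. The paper first notes $\overline{D} = g_{11}^2$, so $g_{11}$ is a unit that can be cancelled from $g_{11}g_{21} = 0$. You instead show $\overline{D}g_{12} = 0$ directly.
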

\begin{proof}
    Assume, for a contradiction, that $q^2 \neq 1$. Lemma \ref{lem:linear dependence 2 variables} implies that $g_{21} = \lambda g_{12}$ for some $\lambda \in \kk^*$. By \eqref{eq:involutive relations}, we have $g_{12}g_{21} = q^2 g_{21}g_{12}$. Since $g_{21} = \lambda g_{12}$ and $\lambda \neq 0$, it follows that $g_{12}^2 = q^2 g_{12}^2$. But $q^2 \neq 1$ by assumption, so $g_{12}^2 = 0$.
    
    Similarly, we have $q g_{21}g_{11} = g_{11}g_{21}$. On the other hand, using that $g_{21} = \lambda g_{12}$, we get
    $$q g_{21}g_{11} = \lambda q g_{12}g_{11} = \lambda q^2 g_{11}g_{12} = q^2 g_{11}g_{21},$$
    and thus $g_{11}g_{21} = q^2 g_{11}g_{21}$. But $q^2 \neq 1$, so $g_{11}g_{21} = 0$.

    Note that by Lemma~\ref{lem:diagonals are equal}, $g_{11} = g_{22}$.  Thus by definition of $D$, we have 
    $$\overline{D} = g_{11}g_{22} - q^{-1}g_{12}g_{21} = g_{11}^2 - \lambda q^{-1} g_{12}^2 = g_{11}^2.$$
    Therefore, $g_{11}$ is invertible in $\overline{H}$. But then $g_{11}g_{21} = 0$ implies that $g_{21} = 0$ in $\overline{H}$, a contradiction.
\end{proof}

As the next result shows, if $q = -1$ and $g_{12}, g_{21} \neq 0$, then the only possibility is that $\overline{H}$ is a quotient of $\kk \Gamma$.

\begin{prop}\label{prop:nonabelian group quotient}
    Let $\lambda \in \kk^*$. Then the ideal $I = (h_{11} - h_{22}, h_{21} - \lambda h_{12})$ of $H_{-1}(2)$ is a Hopf ideal and $H_{-1}(2)/I \cong \kk\Gamma$ as Hopf algebras.
\end{prop}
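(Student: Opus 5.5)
We follow the template of Proposition \ref{prop:non-group cocommutative quotient}. Write $\overline{H} = H_{-1}(2)/I$, $g_{ij} = h_{ij} + I$ and $\overline{D} = D + I$, so that $g_{22} = g_{11}$ and $g_{21} = \lambda g_{12}$.

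First we show that $I$ is a Hopf ideal. Clearly $\varepsilon(h_{11} - h_{22}) = 0$ and $\varepsilon(h_{21} - \lambda h_{12}) = 0$. For the coproduct, let $\Phi \colon H_{-1}(2) \to \overline{H} \otimes \overline{H}$ be $\Delta$ followed by the projection; it suffices to show that $\Phi$ kills both generators. We have
$$\Phi(h_{11} - h_{22}) = g_{11} \otimes g_{11} + g_{12} \otimes g_{21} - g_{21} \otimes g_{12} - g_{22} \otimes g_{22}.$$
This vanishes because $g_{12} \otimes \lambda g_{12} = \lambda g_{12} \otimes g_{12}$ and $g_{11} = g_{22}$. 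Similarly,
$$\Phi(h_{21} - \lambda h_{12}) = g_{21} \otimes g_{11} + g_{22} \otimes g_{21} - \lambda(g_{11} \otimes g_{12} + g_{12} \otimes g_{22}),$$
which vanishes after substituting $g_{21} = \lambda g_{12}$ and $g_{22} = g_{11}$.

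For the antipode, the formula in Proposition \ref{prop:involutive Hopf algebra} with $q = -1$ gives $S(h_{11}) = h_{22}D^{-1}$ and $S(h_{22}) = h_{11}D^{-1}$. It also gives $S(h_{12}) = (-q)h_{12}D^{-1} = h_{12}D^{-1}$ and $S(h_{21}) = h_{21}D^{-1}$. Hence $S(h_{11} - h_{22}) = -(h_{11} - h_{22})D^{-1} \in I$ and $S(h_{21} - \lambda h_{12}) = (h_{21} - \lambda h_{12})D^{-1} \in I$. This is where $q = -1$ is essential: for general $q$ the signs $(-q)^{\pm 1}$ would differ and the second generator would not be mapped into $I$.

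Next we identify the quotient. Choose $\mu \in \kk$ with $\mu^2 = \lambda$, which is possible since $\kk$ is algebraically closed. Set $f = g_{11} + \mu g_{12}$ and $g = g_{11} - \mu g_{12}$. A direct computation from $\Delta(g_{11}) = g_{11} \otimes g_{11} + \lambda g_{12} \otimes g_{12}$ and $\Delta(g_{12}) = g_{11} \otimes g_{12} + g_{12} \otimes g_{11}$ shows that
$$\Delta(g_{11} \pm \mu g_{12}) = (g_{11} \pm \mu g_{12}) \otimes (g_{11} \pm \mu g_{12}),$$
so $f$ and $g$ are group-like. By Corollary \ref{cor:involutive relations 2 variables} with $q = -1$, the relation $g_{12}g_{11} = -g_{11}g_{12}$ holds. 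Therefore
$$f^2 = g_{11}^2 + \lambda g_{12}^2 = g^2.$$
Moreover $\overline{D} = g_{11}^2 + \lambda g_{12}^2 = f^2$. Since $\overline{D}$ is invertible, $f$ and $g$ are invertible, with $f^{-1} = f\overline{D}^{-1}$.

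It follows that $f \mapsto f$, $g \mapsto g$ defines a surjective Hopf map $\kk\Gamma \to \overline{H}$. The images $f$ and $g$ generate $\overline{H}$, because $g_{11} = (f+g)/2$, $g_{12} = (f-g)/(2\mu)$, and $\overline{D}^{-1} = f^{-2}$. For the inverse, we build an algebra map $\overline{H} \to \kk\Gamma$ by sending $h_{11}, h_{22} \mapsto (f+g)/2$, $h_{12} \mapsto (f-g)/(2\mu)$, $h_{21} \mapsto \lambda(f-g)/(2\mu)$, and $D^{-1} \mapsto f^{-2}$. We then verify the relations of Corollary \ref{cor:involutive relations 2 variables} in $\kk\Gamma$. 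All of them reduce to $f^2 = g^2$, equivalently to $fg + gf$ commuting appropriately. For instance, $(f+g)(f-g) = f^2 - g^2 + gf - fg = -(f-g)(f+g)$. The two maps are visibly mutually inverse on generators.

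The main obstacle is this last relation check. Each relation is a short computation, but the $q$-commutation signs must be tracked carefully. The determinant relation $D = h_{11}h_{22} - q^{-1}h_{12}h_{21}$ must also be confirmed to map to $f^2$.
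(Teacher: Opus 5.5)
Your proof is correct and follows the paper's argument. The same $\Phi$ and antipode computations show $I$ is a Hopf ideal, and the same group-likes $g_{11}\pm\mu g_{12}$, with common square $\overline{D}$, identify the quotient; the only difference is that you write down the inverse map $\overline{H}\to\kk\Gamma$ and check the relations of Corollary~\ref{cor:involutive relations 2 variables} explicitly, whereas the paper reads off the presentation $\kk\ang{u^{\pm 1},v^{\pm 1}}/(u^2-v^2)$ directly from that corollary. One aside is inaccurate but harmless: the antipode check also works for $q=1$, since the signs $(-q)^{\pm 1}$ agree whenever $q^2=1$, so $q=-1$ is really used only in the anticommutation $g_{12}g_{11}=-g_{11}g_{12}$ that gives $f^2=g^2$.
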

\begin{proof}
    We preserve the notation $\overline{H} = H_{-1}(2)/I$, $g_{ij} = h_{ij} + I$, and $\overline{D} = D + I$ from above. First, it is clear that $\varepsilon(I) = 0$. As before, we let $\Phi$ be the composition
    $$\Phi \colon H_{-1}(2) \xrightarrow{\Delta} H_{-1}(2) \otimes H_{-1}(2) \surj \overline{H} \otimes \overline{H}.$$
    We have
    \begin{align*}
        \Phi(h_{11} - h_{22}) &= g_{11} \otimes g_{11} + g_{12} \otimes g_{21} - (g_{21} \otimes g_{12} + g_{22} \otimes g_{22}) \\
        &= g_{11} \otimes g_{11} + g_{12} \otimes (\lambda g_{12}) - (\lambda g_{12}) \otimes g_{12} - g_{11} \otimes g_{11} = 0,
    \end{align*}
    where we used that $g_{22} = g_{11}$ and $g_{21} = \lambda g_{12}$. Similarly,
    \begin{align*}
        \Phi(h_{21} - \lambda h_{12}) &= g_{21} \otimes g_{11} + g_{22} \otimes g_{21} - \lambda(g_{11} \otimes g_{12} + g_{12} \otimes g_{22}) \\
        &= (\lambda g_{12}) \otimes g_{11} + g_{11} \otimes (\lambda g_{12}) - \lambda g_{11} \otimes g_{12} - \lambda g_{12} \otimes g_{11} = 0.
    \end{align*}
    Checking the antipode, we get
    $$S(h_{11} - h_{22}) = (h_{22} - h_{11})D^{-1} \in I, \quad S(h_{21} - \lambda h_{12}) = (h_{21} - \lambda h_{12})D^{-1} \in I,$$
    and therefore $S(I) \subseteq I$, so $I$ is a Hopf ideal of $H_{-1}(2)$. It follows from Corollary \ref{cor:involutive relations 2 variables} that $\overline{H}$ is generated by $g_{11}$, $g_{12}$, and $\overline{D}^{-1}$ subject to the relation $g_{12}g_{11} = -g_{11}g_{12}$. In this case, we have $\overline{D} = g_{11}^2 + \lambda g_{12}^2$. Letting $\mu \coloneqq \sqrt{\lambda}$, we see that
    $$(g_{11} + \mu g_{12})^2 = (g_{11} - \mu g_{12})^2 = g_{11}^2 + \lambda g_{12}^2 = \overline{D}.$$
    Therefore, letting $u \coloneqq g_{11} + \mu g_{12}$ and $v \coloneqq g_{11} - \mu g_{12}$, it follows that $\overline{D} = u^2 = v^2$, and thus $u$ and $v$ are both invertible in $\overline{H}$. Hence,
    $$\overline{H} = \frac{\kk\ang{u^{\pm 1},v^{\pm 1}}}{(u^2 - v^2)}.$$
    It is straightforward to check that $u$ and $v$ are group-like, so $G(\overline{H}) = \ang{u,v \mid u^2 = v^2} \cong \Gamma$, and thus $\overline{H} \cong \kk \Gamma$.
\end{proof}

We are now ready to prove Theorem \ref{thm:cocommutative quotient 2 variables}.

\begin{proof}[Proof of Theorem \ref{thm:cocommutative quotient 2 variables}]
    Let $C$ be a cocommutative Hopf algebra and let $\varphi \colon H_q(2) \to C$ be a Hopf algebra homomorphism. Write $g_{ij} \coloneqq \varphi(h_{ij})$. If $g_{12} = g_{21} = 0$, then Lemma \ref{lem:commutative quotient} implies that $\varphi$ factors through $\kk \ZZ^2$. So, suppose at least one of $g_{12}$ or $g_{21}$ is nonzero.
    
    First, we assume that $q \neq 1$. Then $g_{11} = g_{22}$, by Lemma \ref{lem:diagonals are equal}. Proposition \ref{prop:non-group cocommutative quotient} implies that if $g_{21} = 0$ then $\varphi$ factors through $\mathcal{A}(0,q)$, while if $g_{12} = 0$ then $\varphi$ factors through $\mathcal{A}(0,q^{-1})$.
    
    Next, if $g_{12}, g_{21} \neq 0$, then Lemma \ref{lem:b and c nonzero} implies that $q = -1$. By Lemma~\ref{lem:linear dependence 2 variables}\eqref{cond:linear dependence of b and c}, $g_{21} = \lambda g_{12}$ for some $\lambda \in \kk^*$. Now it follows by Proposition \ref{prop:nonabelian group quotient} that $\varphi$ factors through $\kk\Gamma$.

    Finally, we now consider the case $q = 1$. Since $H_1(2)$ is commutative, it follows that $\overline{H} \coloneqq \im(\varphi)$ is commutative. By Lemma \ref{lem:linear dependence 2 variables}, every pair of $g_{12}, g_{21}$, and $g_{11} - g_{22}$ is linearly dependent. Assume that $g_{12} \neq 0$, since the other case is similar. Now we can get $g_{21} = \lambda g_{12}$ and $g_{22} = g_{11} + \mu g_{12}$ for some $\lambda, \mu \in \kk$. In other words,
    $$\begin{pmatrix}
        g_{11} & g_{12} \\
        g_{21} & g_{22}
    \end{pmatrix} = \begin{pmatrix}
        g_{11}  & g_{12} \\
        \lambda g_{12} & g_{11} + \mu g_{12}
    \end{pmatrix}.$$
    For any matrix $M = \begin{pmatrix} a & b \\ c & d \end{pmatrix} \in GL_2(\kk)$, we have an automorphism of $\kk[x_1, x_2]$ with 
    \[
    \begin{pmatrix}
        x_1 \\
        x_2
    \end{pmatrix} \mapsto \begin{pmatrix} 
        a & b \\
        c & d
    \end{pmatrix} \begin{pmatrix}
        x_1 \\
        x_2
    \end{pmatrix} = \begin{pmatrix}
        ax_1 + bx_2 \\
        cx_1 + dx_2
    \end{pmatrix}.
    \]
    This induces an automorphism of the universal coacting Hopf algebra via
    \[
    \begin{pmatrix}
        h_{11} & h_{12} \\
        h_{21} & h_{22}
    \end{pmatrix} \mapsto \begin{pmatrix}
        h'_{11} & h'_{12} \\
        h'_{21} & h'_{22}
    \end{pmatrix} \coloneqq M^{-1} \begin{pmatrix}
        h_{11} & h_{12} \\
        h_{21} & h_{22}
    \end{pmatrix} M
    \]
    We claim that we can choose $M$ so that $\varphi(h_{21}') = 0$. Define $\alpha \coloneqq \frac{\mu + \sqrt{\mu^2 + 4\lambda}}{2}$ and $\beta \coloneqq \frac{\mu - \sqrt{\mu^2 + 4\lambda}}{2}$, and let $M \coloneqq \begin{pmatrix} 
        1 & 0 \\
        \alpha & 1
    \end{pmatrix}$. Then
    $$M^{-1} \begin{pmatrix}
        g_{11} & g_{12} \\
        g_{21} & g_{22}
    \end{pmatrix} M = \begin{pmatrix}
        g_{11} + \alpha g_{12} & g_{12} \\
        0 & g_{11} + \beta g_{12}
    \end{pmatrix}.$$
    Let $a \coloneqq g_{11} + \alpha g_{12}$, $b \coloneqq g_{12}$, and $c \coloneqq g_{11} + \beta g_{12}$. Using the fact that $\alpha^2 = \alpha \mu + \lambda$ and $\beta^2 = \beta \mu + \lambda$, we can easily see that
    $$\Delta(a) = a \otimes a, \qquad \Delta(c) = c \otimes c.$$
    In other words, both $a$ and $c$ are group-like. Note that, if $\alpha \neq \beta$ (equivalently, $\mu^2 + 4\lambda \neq 0$), then $a$ and $c$ are sufficient to generate $\overline{H}$. Otherwise, $\overline{H}$ is generated by $a$ and $b$ (since $a = c$ in this case). As a result, the rest of the proof is split into two cases.

    \begin{case}
        $\mu^2 + 4\lambda \neq 0$.
    \end{case}

    As mentioned above, in this case the elements $a$ and $c$ generate the Hopf algebra $\overline{H}$, so $\overline{H}$ is generated by group-like elements. Therefore, $\overline{H}$ is an abelian group algebra, and thus is a quotient of $\kk\ZZ^2$.

    \begin{case}
        $\mu^2 + 4\lambda = 0$.
    \end{case}

    In this case, we have $\mu = 2\alpha$, so it is straightforward to check that $\Delta(b) = a \otimes b + b \otimes a$. It follows that $a^{-1}b$ is primitive, and therefore $\overline{H}$ is a quotient of $\mathcal{A}(0,1)$ via the map $x \mapsto a$ and $y \mapsto a^{-1} b$.
\end{proof}

\section{The three-variable case}\label{sec:three variables}

We now move on to the case where $n = 3$: we classify group gradings of the three-variable skew polynomial ring
$$A_q(3) = \frac{\kk\ang{x_1,x_2,x_3}}{(x_j x_i - q x_i x_j \mid j > i)}.$$
Just like we did in the two-variable case, we also classify all possible inner-faithful coactions of Hopf algebras on $A_q(3)$ (but only for $q^2 \neq 1$).

\subsection{The universal involutive Hopf algebra}

We now explicitly describe the Hopf algebra $H_q(3)$ in the three-variable case. By \eqref{eq:general involutive relations}, $H_q(3)$ has the following relations:
\begin{subequations}\label{eq:involutive relations 3 variables}
    \begin{gather}
        h_{ik}h_{ij} = q h_{ij}h_{ik} \quad (k > j),\label{eq:involutive q commute} \\
        ad - q^{-1}bc = \qdet\begin{pmatrix}
            a & b \\
            c & d
        \end{pmatrix} = da - q cb,\label{eq:involutive 2x2 minor} \\
        h_{ij} D = q^{2(j - i)}D h_{ij},\label{eq:involutive qdet commutation} \\
        \sum_{k = 1}^3 (-q)^{i - k} \qdet(\mathcal{H}_{\hat{k}\hat{\imath}}) h_{kj} = \delta_{ij} D,\label{eq:involutive long relation} \\
        D^{-1}D = DD^{-1} = 1,
    \end{gather}
\end{subequations}
for all $i,j,k \in \{1,2,3\}$ and all $2 \times 2$ sub-matrices $\begin{pmatrix}
    a & b \\
    c & d
\end{pmatrix}$ of $\mathcal{H}$, where the quantum determinant of $H_q(3)$ is
$$D = h_{11}h_{22}h_{33} - q^{-1} h_{11}h_{23}h_{32} - q^{-1} h_{12}h_{21}h_{33} + q^{-2} h_{12}h_{23}h_{31} + q^{-2} h_{13}h_{21}h_{32} - q^{-3} h_{13}h_{22}h_{31}.$$

\subsection{Cocommutative quotients of \texorpdfstring{$H_q(3)$}{Hq}}

Fix a Hopf ideal $I$ of $H_q(3)$ such that $\overline{H} \coloneqq H_q(3)/I$ is cocommutative. We write $g_{ij} \coloneqq h_{ij} + I$ for the images of the generators $h_{ij}$ in $\overline{H}$, and $\overline{D} \coloneqq D + I$ for the image of the quantum determinant in $\overline{H}$.

To state the classification result, we introduce two cocommutative Hopf algebras which are not group algebras. Both of these are extensions of the Hopf algebra $\mathcal{A}(0,q)$ from Definition \ref{dfn:Goodearl-Zhang Hopf algebra}.

\begin{dfn}\label{dfn:Bq and Cq}
    For $q \in \kk^*$, we define Hopf algebras
    $$\mathcal{B}_q \coloneqq \frac{\kk\ang{x^{\pm 1},y^{\pm 1},z}}{(yx - xy, zx - qxz, zy - yz)}, \quad \mathcal{C}_q \coloneqq \frac{\kk\ang{x^{\pm 1},y^{\pm 1},z}}{(yx - xy, zx - qxz, zy - q^2 yz)},$$
    with Hopf structures given by $x$ and $y$ being group-like and $z$ being primitive.
\end{dfn}
It is straightforward to check directly that $\mathcal{B}_q$ and $\mathcal{C}_q$ are Hopf algebras, but in any case this will follow from Proposition~\ref{prop:generic quotient Bq or Cq} below.

The following is the main result of this section, which classifies all inner-faithful cocommutative coactions on $A_q(3)$ for $q \neq \pm 1$. Just like the two-variable case, we only get coactions by abelian groups in this case, and also by $\mathcal{B}_{q^{\pm 1}}$ and $\mathcal{C}_{q^{\pm 1}}$, analogously to the coaction by $\mathcal{A}(0,q^{\pm 1})$ in the two-variable case.

\begin{thm}\label{thm:generic}
    Let $q \in \kk^* \setminus \{\pm 1\}$. Then, up to isomorphism, the maximal cocommutative quotients of $\aut^r(A_q)$ are the following.
    \begin{enumerate}
        \item The group algebra of $\ZZ^3$.
        \item The Hopf algebra $\mathcal{B}_{q^{\pm 1}}$.
        \item The Hopf algebra $\mathcal{C}_{q^{\pm 1}}$.
    \end{enumerate}
    Consequently, if $C$ is a cocommutative Hopf algebra which right coacts on $A_q(3)$ inner-faithfully, then $C$ is a quotient of one of the Hopf algebras on the list above. In particular, $A_q(3)$ does not have a faithful grading by a nonabelian group.
\end{thm}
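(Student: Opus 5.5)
Since every cocommutative Hopf algebra is involutive, any Hopf map $\aut^r(A_q(3)) \to C$ with $C$ cocommutative factors through $H_q(3)$. So I will fix a Hopf ideal $I$ with $\overline{H} = H_q(3)/I$ cocommutative and analyse the matrix $G = (g_{ij})$, mirroring Section~\ref{sec:two variables}.

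\textbf{Step 1: linear-dependence constraints.} I first extract the analogue of Lemma~\ref{lem:linear dependence 2 variables}. Cocommutativity applied to $\Delta(g_{ij}) = \sum_k g_{ik} \otimes g_{kj}$ gives
\[
\sum_k g_{ik} \otimes g_{kj} = \sum_k g_{kj} \otimes g_{ik}.
\]
For $i = j$ this says $\sum_{k \neq i} (g_{ik} \otimes g_{ki} - g_{ki} \otimes g_{ik}) = 0$. Equivalently, the symmetric-tensor identities force the off-diagonal entries and the differences of diagonal entries to span a small space. In the $3 \times 3$ case I expect to show that all $g_{ij}$ ($i \neq j$) and all $g_{ii} - g_{jj}$ lie in a subspace on which the antisymmetrized tensors vanish, hence pairwise proportional within appropriate blocks.

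\textbf{Step 2: antipode and $q$-commutation.} Next I combine these linear relations with the antipode formula $S(h_{ij}) = (-q)^{j-i}\qdet(\mathcal{H}_{\hat\jmath\hat\imath})D^{-1}$, the relations \eqref{eq:involutive q commute}--\eqref{eq:involutive long relation}, and \eqref{eq:involutive qdet commutation}. The argument imitates Lemmas~\ref{lem:diagonals are equal} and~\ref{lem:b and c nonzero}.

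\begin{enumerate}
\item A relation $g_{ij} = \lambda g_{kl}$ between two nonzero off-diagonal entries will, via two distinct $q$-commutation rules, give $(1 - q^{m}) g_{kl}^2 = 0$ or $(1-q^m) g_{ii}g_{kl} = 0$ for some $m \in \{1,2\}$. Since $q^2 \neq 1$, this kills the entry once a diagonal entry is shown invertible. Invertibility will come from $\overline{D}$ reducing to a product of diagonal entries.
\item Comparing $S(g_{ii} - g_{jj})$ with $\lambda S(g_{kl})$ forces diagonal entries to coincide whenever they are linked by a nonzero off-diagonal entry.
\end{enumerate}

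The expected outcome is that at most one off-diagonal entry $g_{ij}$ survives, and that $g_{ii} = g_{jj}$ for that pair. Equivalently, $\overline{H}$ is generated by the commuting group-likes $g_{11}, g_{22}, g_{33}$ together with at most one entry $g_{ij}$, where $g_{ii}^{-1}g_{ij}$ is primitive.

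\textbf{Step 3: identifying the quotients.} For each surviving position $(i,j)$ I write down the Hopf ideal generated by the other off-diagonal $h$'s and $h_{ii} - h_{jj}$. I verify it is a Hopf ideal exactly as in Proposition~\ref{prop:non-group cocommutative quotient}. I then compute the quotient from \eqref{eq:involutive relations 3 variables}: with $x = g_{ii}$, $y$ the remaining diagonal entry, and $z = x^{-1}g_{ij}$, the relations give $zx = q^{\pm1}xz$. The commutation of $z$ with $y$ is $1$ or $q^{\pm 2}$, depending on whether the third index lies between $i$ and $j$ or outside. This yields $\mathcal{B}_{q^{\pm1}}$ or $\mathcal{C}_{q^{\pm1}}$, with the other positions isomorphic to these after relabelling. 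The case with no surviving off-diagonal entry gives $\kk\ZZ^3$ as in Lemma~\ref{lem:commutative quotient}. Combining these with Proposition~\ref{prop:maximal cocommutative quotients} gives the list. Nonabelian group gradings are excluded because the group-likes $x, y$ commute and any group-algebra quotient of $\mathcal{B}$ or $\mathcal{C}$ must kill $z$, so it is abelian.

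\textbf{Main obstacle.} The hardest part is Steps 1--2: ruling out two or more simultaneously nonzero off-diagonal entries, for instance $g_{12}$ and $g_{23}$ together, or $g_{13}$ and $g_{31}$ together. Here the cocommutativity constraints are coupled across $\Delta(g_{ij})$ for different $(i,j)$, and invertibility of diagonal entries is not immediate because $\overline{D}$ has six terms. My first approach is to use the order-two cocommutativity constraints to prove $g_{ik}g_{kj}$-type products vanish whenever they appear in $\overline{D}$. This should reduce $\overline{D}$ to $g_{11}g_{22}g_{33}$ and give invertibility. Failing that, I would pass to $\overline{H}$ modulo its augmentation-ideal filtration and use Cartier--Kostant--Gabriel: $\overline{H} \cong U(\mathfrak g) \# \kk G$, and the $g_{ij}$ give a $3$-dimensional comodule. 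That comodule is then a representation of $\mathfrak g$ twisted by $G$. The $q$-relations with $q^2 \neq 1$ would force $\mathfrak g$ to act by at most one nilpotent matrix unit.
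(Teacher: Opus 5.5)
Your overall architecture is right: factor through $H_q(3)$, show that at most one off-diagonal $g_{ij}$ survives and that then $g_{ii}=g_{jj}$, and identify the quotients as in Proposition \ref{prop:generic quotient Bq or Cq}. Step 3 and the final remark on group gradings are fine. The genuine gap is Steps 1--2, which are the whole substance of the theorem: you state their desired conclusion rather than prove it, and the mechanisms you sketch fail for $n=3$.

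\textbf{Step 1.} Cocommutativity does not make the off-diagonal entries pairwise proportional. Comparing $\Delta$ with $\Delta^{\mathrm{op}}$ on $g_{ii}$ and $g_{ij}$ yields only the three-term identities of Lemma \ref{lem:linear dependence 3 variables}:
$g_{12}\wedge g_{21}=g_{31}\wedge g_{13}=g_{23}\wedge g_{32}$ and $(g_{ii}-g_{jj})\wedge g_{ij}=g_{kj}\wedge g_{ik}$.
These are satisfied, for example, by $g_{31}=\alpha g_{12}$, $g_{13}=\alpha^{-1}g_{21}$ with $g_{12},g_{21}$ independent. Via Lemma \ref{lem:wedge} they give only $SL_2$-relations between pairs.

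\textbf{Step 2.} Your item (2) is the $2\times 2$ trick. For $n=3$, $S(g_{ij})$ is a quadratic quantum minor times $\overline{D}^{-1}$, so comparing antipodes gives no linear information. The tool that replaces it is \eqref{eq:involutive qdet commutation}, i.e.\ $S^2=\mathrm{id}$. Conjugation by $\overline{D}$ rescales $g_{ij}$ by $q^{2(j-i)}$, so every linear relation must respect these weights, and this kills most of the $SL_2$ matrices. It does not kill all of them. When $q^6=1$ the weights of $g_{13}$ and $g_{21}$ (and of $g_{31}$ and $g_{12}$) coincide, which is exactly the configuration above. Lemma \ref{lem:opposite wedges are zero} needs a separate lengthy computation with the relations to rule it out. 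Likewise, the weight argument leaves $q^4=1$ open in Lemma \ref{lem:one of the g_ij is zero}, which is finished by direct computation.

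\textbf{Invertibility of the diagonal.} You correctly flag this as an obstacle but leave it open. The Cartier--Kostant--Gabriel fallback, ending in ``at most one nilpotent matrix unit'', is an expectation, not an argument. The paper's route is as follows.
\begin{enumerate}
\item First show some off-diagonal $g_{k\ell}$ vanishes (Lemma \ref{lem:one of the g_ij is zero}).
\item Then $0=\Delta(g_{k\ell})=g_{km}\otimes g_{m\ell}$ (Lemma \ref{lem:Hopf ideal}) gives two zeros in a row or column.
\item Proposition \ref{prop:Hopf subalgebra} shows the complementary $2\times 2$ block generates a Hopf subalgebra that is a Hopf quotient of $H_q(2)$, so Theorem \ref{thm:cocommutative quotient 2 variables} applies.
\end{enumerate}
This reduction is what gives $\overline{D}=g_{11}g_{22}g_{33}$ with units on the diagonal, and the dichotomies of Corollary \ref{cor:many dichotomies}. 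Your ``at most one survivor'' (Proposition \ref{prop:off-diagonals are mostly zero}) then follows by short $q$-commutation arguments such as Lemma \ref{lem:leveraging equal diagonal elements}. Without the $\overline{D}$-weight argument and the reduction to two variables, the proposal has no proof of its central claim.
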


First, we show how the Hopf algebras $\mathcal{B}_{q^{\pm 1}}$ and $\mathcal{C}_{q^{\pm 1}}$ arise as quotients of $H_q(3)$.

\begin{prop}\label{prop:generic quotient Bq or Cq}
    Let $i,j \in \{1,2,3\}$ with $i \neq j$, and define
    $$I_{ij} \coloneqq (h_{ii} - h_{jj}, h_{k\ell} \mid k \neq \ell, (k,\ell) \neq (i,j))$$
    as an ideal of $H_q(3)$. Then $I_{ij}$ is a Hopf ideal of $H_q(3)$, and
    $$H_q(3)/I_{ij} \cong \begin{cases}
        \mathcal{B}_q, &\text{if } j = i + 1, \\
        \mathcal{B}_{q^{-1}}, &\text{if } i = j + 1, \\
        \mathcal{C}_q, &\text{if } (i,j) = (1,3), \\
        \mathcal{C}_{q^{-1}}, &\text{if } (i,j) = (3,1),
    \end{cases}$$
    where $\mathcal{B}_q$ and $\mathcal{C}_q$ are defined in Definition \ref{dfn:Bq and Cq}.
\end{prop}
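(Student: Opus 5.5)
The proof follows the pattern of Proposition \ref{prop:non-group cocommutative quotient}. Write $\overline{H} = H_q(3)/I_{ij}$, $g_{k\ell} = h_{k\ell} + I_{ij}$ and $\overline{D} = D + I_{ij}$. Let $m$ be the third index. By construction the only possibly nonzero entries of the matrix $(g_{k\ell})$ are $g_{ii} = g_{jj}$, $g_{mm}$ and $g_{ij}$.

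\textbf{Step 1: $I_{ij}$ is a Hopf ideal.} Clearly $\varepsilon(I_{ij}) = 0$. For the coproduct, let $\Phi \colon H_q(3) \to \overline{H} \otimes \overline{H}$ be $\Delta$ followed by the projection; it suffices to check that $\Phi$ kills the generators.
\begin{itemize}
\item For an off-diagonal $h_{k\ell}$ with $(k,\ell) \neq (i,j)$, every term of $\sum_p g_{kp} \otimes g_{p\ell}$ needs both $(k,p)$ and $(p,\ell)$ to lie in the allowed support. Since $k \neq \ell$, one of the two factors must be off-diagonal, hence equal to $(i,j)$. That forces $(k,\ell) = (i,j)$, so every term vanishes.
\item For $h_{ii} - h_{jj}$ we get $g_{ii} \otimes g_{ii} + g_{ij} \otimes g_{ji} - g_{ji} \otimes g_{ij} - g_{jj} \otimes g_{jj} = 0$.
\end{itemize}

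For the antipode, use $S(h_{k\ell}) = (-q)^{\ell - k}\qdet(\mathcal{H}_{\hat\ell \hat k}) D^{-1}$. Modulo $I_{ij}$, each $2\times 2$ minor becomes a quantum determinant of a matrix with at most the single off-diagonal entry $g_{ij}$. Note that such a matrix is triangular, and the determinant of a triangular matrix reduces to the product of its diagonal entries.
\begin{itemize}
\item For $(k,\ell) \neq (i,j)$ off-diagonal, the minor $\mathcal{H}_{\hat\ell\hat k}$ omits row $\ell$ and column $k$, so its diagonal-type terms must use $g_{kk}$-free positions. I expect this minor to vanish modulo $I_{ij}$, hence $S(h_{k\ell}) \in I_{ij}$. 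I will check this by listing the six cases.
\item For $h_{ii} - h_{jj}$, the two minors are $g_{jj}g_{mm}$ and $g_{ii}g_{mm}$ in suitable order. Their difference vanishes once we know $g_{mm}$ commutes with $g_{ii}$, which follows from \eqref{eq:involutive 2x2 minor} applied to the diagonal $2\times2$ submatrix.
\end{itemize}
This case check is routine but index-heavy, and I expect it to be the main bookkeeping obstacle.

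\textbf{Step 2: identify the quotient.} Set $x = g_{ii}$, $y = g_{mm}$ and $w = g_{ij}$. Modulo $I_{ij}$ the determinant reduces to a diagonal term, $\overline{D} = x^2 y$ up to ordering. So $x$ and $y$ are invertible, and $\overline{H}$ is generated by $x^{\pm1}, y^{\pm1}, w$. The coproduct gives $\Delta(x) = x\otimes x$, $\Delta(y) = y \otimes y$ and $\Delta(w) = x \otimes w + w \otimes x$. Hence $z = x^{-1}w$ is primitive, and $x$, $y$ are group-like.

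Next I read off the relations from \eqref{eq:involutive q commute}, \eqref{eq:involutive 2x2 minor} and the long relation (as in Corollary \ref{cor:involutive relations 2 variables}).
\begin{itemize}
\item $xy = yx$, from the diagonal $2\times 2$ minors.
\item $w x = q^{\pm1} x w$, by the row relation if $i<j$ and by the $q^{-1}$ version if $i>j$, exactly as in the two-variable case. This gives $zx = q^{\pm1}xz$.
\item The commutation of $w$ with $y = g_{mm}$ comes from the $2\times2$ minor on rows $\{i,m\}$ and columns $\{j,m\}$ (or its transpose). It should give $wy = yw$ when $m$ is outside the interval between $i$ and $j$. It should give $wy = q^{\pm 2} yw$ when $m$ lies strictly between, i.e. $\{i,j\} = \{1,3\}$; this also uses $h_{ij}D = q^{2(j-i)}Dh_{ij}$ together with $D = x^2y$. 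Translating to $z$ then gives the relations of $\mathcal{B}_{q^{\pm1}}$, respectively $\mathcal{C}_{q^{\pm1}}$.
\end{itemize}

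Finally, this gives a surjection of Hopf algebras from $\mathcal{B}$ or $\mathcal{C}$ onto $\overline{H}$. To get the inverse, I would check that the assignment $h_{ii}, h_{jj} \mapsto x$, $h_{mm} \mapsto y$, $h_{ij} \mapsto xz$ and all other $h_{k\ell} \mapsto 0$ respects every relation in \eqref{eq:involutive relations 3 variables}, including the long relation \eqref{eq:involutive long relation}. This is a finite verification. It shows the map is well defined, hence that the two Hopf algebras are isomorphic, and in particular that $\mathcal{B}_q$ and $\mathcal{C}_q$ are Hopf algebras.
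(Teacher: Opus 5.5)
Your proposal is correct and takes essentially the same route as the paper. Both check the Hopf ideal conditions as in Proposition~\ref{prop:non-group cocommutative quotient}, observe $\overline{D}=g_{ii}^2g_{mm}$ and that $g_{ii}^{-1}g_{ij}$ is primitive, and read off the relations from \eqref{eq:involutive q commute} and \eqref{eq:involutive 2x2 minor}; your explicit inverse map makes precise the paper's remark that the remaining relations impose nothing new.

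One small slip: not every $2\times 2$ submatrix is triangular modulo $I_{ij}$. For $(i,j)=(1,3)$ the minor $\mathcal{G}_{\hat 3\hat 1}$ is anti-diagonal, and it is exactly where $wy=q^2yw$ comes from. This does not break the antipode check, because that minor only enters $S(h_{ij})$. The minors $\mathcal{H}_{\hat\ell\hat k}$ needed for the generators $h_{k\ell}$, $(k,\ell)\neq(i,j)$, do vanish modulo $I_{ij}$, since each of their two terms contains an off-diagonal factor other than $h_{ij}$.
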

\begin{proof}
    It is straightforward to prove that $I_{ij}$ is a Hopf ideal of $H_q(3)$ (the proof is similar to that of Proposition \ref{prop:non-group cocommutative quotient}). We now let $I = I_{ij}$ and adopt the notation from above: we have $\overline{H} = H_q(3)/I$, $g_{ij} = h_{ij} + I$, and $\overline{D} = D + I$. We will only prove the result in the cases $(i,j) = (1,2)$ and $(i,j) = (1,3)$, since the other cases are similar.

    \begin{case}\label{case:g12 isomorphism to Bq}
        $(i,j) = (1,2)$.
    \end{case}

    The Hopf algebra $\overline{H}$ is generated by $g_{11}, g_{33}, g_{12}$, and $\overline{D}^{-1}$, subject to the relations
    \begin{alignat*}{2}
        g_{12} g_{11} &= q g_{11} g_{12} \quad & &\text{by } \eqref{eq:involutive q commute} \text{ with } i = j = 1, k = 2, \\
        g_{33}g_{11} &= g_{11}g_{33} \quad & &\text{by } \eqref{eq:involutive 2x2 minor} \text{ with } \begin{pmatrix}
            a & b \\
            c & d
        \end{pmatrix} = \begin{pmatrix}
            h_{11} & h_{13} \\
            h_{31} & h_{33}
        \end{pmatrix}, \\
        g_{12}g_{33} &= g_{33}g_{12} \quad & &\text{by } \eqref{eq:involutive 2x2 minor} \text{ with } \begin{pmatrix}
            a & b \\
            c & d
        \end{pmatrix} = \begin{pmatrix}
            h_{12} & h_{13} \\
            h_{32} & h_{33}
        \end{pmatrix}.
    \end{alignat*}
    The other relations from \eqref{eq:involutive relations 3 variables} do not result in any additional relations in $\overline{H}$. We also have $\overline{D} = g_{11}^2 g_{33}$, and thus $g_{11}$ and $g_{33}$ are invertible in $\overline{H}$ (in fact, it is easy to check that $g_{11}$ and $g_{33}$ are group-like). Furthermore,
    $$\Delta(g_{11}^{-1} g_{12}) = 1 \otimes (g_{11}^{-1} g_{12}) + (g_{11}^{-1} g_{12}) \otimes 1,$$
    so we see that $g_{11}^{-1} g_{12}$ is primitive. It follows that
    \begin{align*}
        \mathcal{B}_q &\to \overline{H} \\
        x &\mapsto g_{11}, \\
        y &\mapsto g_{33}, \\
        z &\mapsto g_{11}^{-1} g_{12}
    \end{align*}
    is an isomorphism of Hopf algebras.

    \begin{case}
        $(i,j) = (1,3)$.
    \end{case}

    This is similar to Case \ref{case:g12 isomorphism to Bq}, except $\overline{H}$ is generated by the elements $g_{11}, g_{22}, g_{13}$, and $\overline{D}^{-1}$, and the relations are slightly different:
    \begin{alignat*}{2}
        g_{13} g_{11} &= q g_{11} g_{13} \quad & &\text{by } \eqref{eq:involutive q commute} \text{ with } i = j = 1, k = 3, \\
        g_{22}g_{11} &= g_{11}g_{22} \quad & &\text{by } \eqref{eq:involutive 2x2 minor} \text{ with } \begin{pmatrix}
            a & b \\
            c & d
        \end{pmatrix} = \begin{pmatrix}
            h_{11} & h_{12} \\
            h_{21} & h_{22}
        \end{pmatrix}, \\
        g_{13}g_{22} &= q^2 g_{22}g_{13} \quad & &\text{by } \eqref{eq:involutive 2x2 minor} \text{ with } \begin{pmatrix}
            a & b \\
            c & d
        \end{pmatrix} = \begin{pmatrix}
            h_{12} & h_{13} \\
            h_{22} & h_{23}
        \end{pmatrix}.
    \end{alignat*}
    Similarly to Case \ref{case:g12 isomorphism to Bq}, we now see that the map
    \begin{align*}
        \mathcal{C}_q &\to \overline{H} \\
        x &\mapsto g_{11}, \\
        y &\mapsto g_{22}, \\
        z &\mapsto g_{11}^{-1}g_{13}
    \end{align*}
    is an isomorphism of Hopf algebras.

    The other cases follow similarly with very minor changes.
\end{proof}

\subsubsection{Consequences of \texorpdfstring{$\overline{H}$}{H} being a Hopf algebra}

As a first step in the proof of Theorem \ref{thm:generic}, we analyze some consequences of the requirement that $\overline{H}$ is a Hopf algebra. We start with an observation which follows easily by the structure of the comultiplication on $\overline{H}$.

\begin{lem}\label{lem:Hopf ideal}
    Choose $i,j,k$ such that $\{i,j,k\} = \{1,2,3\}$. If $g_{ij} = 0$, then either $g_{ik} = 0$ or $g_{kj} = 0$.
\end{lem}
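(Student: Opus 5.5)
Since $I$ is a Hopf ideal, the quotient map $H_q(3) \to \overline{H}$ is a coalgebra map, so the formula $\Delta(h_{ij}) = \sum_{\ell} h_{i\ell} \otimes h_{\ell j}$ descends to $\overline{H}$. If $g_{ij} = 0$, applying $\Delta$ gives
$$0 = \Delta(g_{ij}) = g_{ii} \otimes g_{ij} + g_{ij} \otimes g_{jj} + g_{ik} \otimes g_{kj} = g_{ik} \otimes g_{kj}$$
in $\overline{H} \otimes \overline{H}$. This is the only input needed.

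To finish, use the standard fact that a simple tensor $u \otimes v$ in $V \otimes W$ vanishes only if $u = 0$ or $v = 0$. Indeed, if both were nonzero, we could extend each to a basis, and $u \otimes v$ would be a basis element of $V \otimes W$. Hence $g_{ik} = 0$ or $g_{kj} = 0$.

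There is no real obstacle here. The only points to check are that $\Delta$ is well defined on $\overline{H}$, which holds because $I$ is a Hopf ideal, and that the two terms involving $g_{ij}$ vanish because $g_{ij} = 0$. Cocommutativity is not used.
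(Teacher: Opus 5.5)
Your proposal is correct and is exactly the paper's argument: apply $\Delta$ to $g_{ij}=0$ to get $g_{ik}\otimes g_{kj}=0$ in $\overline{H}\otimes\overline{H}$, and then use that a simple tensor vanishes only if one of its factors does. The paper leaves this last step implicit with ``The result follows''; your basis-extension justification supplies it.
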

\begin{proof}
    We have
    $$0 = \Delta(g_{ij}) = \sum_{\ell = 1}^3 g_{i\ell} \otimes g_{\ell j} = g_{ik} \otimes g_{kj}.$$
    The result follows.
\end{proof}

The second observation is that when $g_{ij} = 0$, we can get some further restrictions by referring to the $n = 2$ case. For example, if $g_{12} = 0$, then Lemma \ref{lem:Hopf ideal} implies that $g_{13} = 0$ or $g_{32} = 0$. In the first case, the first row of the matrix $\mathcal{G} = (g_{ij})$ has two zeros and in the second case, the second column of $\mathcal{G}$ has two zeros. In other words,
$$\textbf{Case 1: } \mathcal{G} = \begin{pmatrix}
    g_{11} & 0 & 0 \\
    g_{21} & g_{22} & g_{23} \\
    g_{31} & g_{32} & g_{33}
\end{pmatrix}, \qquad \textbf{Case 2: } \mathcal{G} = \begin{pmatrix}
    g_{11} & 0 & g_{13} \\
    g_{21} & g_{22} & g_{23} \\
    g_{31} & 0 & g_{33}
\end{pmatrix}.$$
As we show in the next proposition, the subalgebra of $\overline{H}$ generated by the entries of $\mathcal{G}_{\hat{1} \hat{1}}$ (in Case 1) or $\mathcal{G}_{\hat{2} \hat{2}}$ (in Case 2) is a Hopf quotient of $H_q(2)$, allowing us to use Theorem \ref{thm:cocommutative quotient 2 variables}.  Before proceeding, it will be useful to have the following calculation at hand.

\begin{lem}\label{lem:easy calculation}
    Choose $i,j,k$ such that $\{i,j,k\} = \{1,2,3\}$.  If $g_{ik} = 0$ or $g_{ji} = 0$, then 
    $$
    g_{ii}g_{jk} = q^{\varepsilon} g_{jk} g_{ii},
    $$
    where $\varepsilon = \begin{cases} 
        0, & i \in \{1, 3 \}, \\
        -2, & (i,j,k) = (2, 1, 3), \\
        2, & (i,j,k) = (2,3,1).
    \end{cases}$
\end{lem}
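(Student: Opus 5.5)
The plan is to read everything off the $2 \times 2$ quantum minor relation \eqref{eq:involutive 2x2 minor}, which holds in $\overline{H}$ since $\overline{H}$ is a quotient of $H_q(3)$. Apply it to the $2\times 2$ sub-matrix of $\mathcal{G}$ with rows $\{i,j\}$ and columns $\{i,k\}$, listed in increasing order. This sub-matrix contains exactly the four entries $g_{ii}, g_{ik}, g_{ji}, g_{jk}$. The hypothesis says that one of $g_{ik}$, $g_{ji}$ vanishes. The relation
$$ad - q^{-1}bc = da - q\,cb$$
then collapses to a two-term identity. The only work is to track which of $a,b,c,d$ each entry becomes, since that depends on the relative order of $i$, $j$, $k$.

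\textbf{Case $i \in \{1,3\}$.} Here $i$ is either smaller than both $j,k$ or larger than both, so $i$ is extreme in both the row and the column ordering.
\begin{itemize}
    \item If $i = 1$: then $a = g_{11}$ and $d = g_{jk}$, while $\{b,c\} = \{g_{1k}, g_{j1}\}$.
    \item If $i = 3$: then $a = g_{jk}$ and $d = g_{33}$, while $b = g_{j3}$ and $c = g_{3k}$.
\end{itemize}
In both subcases $\{b,c\} = \{g_{ik}, g_{ji}\}$. So under the hypothesis both products $bc$ and $cb$ vanish, and the relation becomes $ad = da$. This is exactly $g_{ii}g_{jk} = g_{jk}g_{ii}$, giving $\varepsilon = 0$.

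\textbf{Case $i = 2$.} Now the diagonal entry $g_{22}$ and $g_{jk}$ land on the anti-diagonal instead.
\begin{itemize}
    \item For $(i,j,k) = (2,1,3)$: rows are $1<2$ and columns are $2<3$, so
    $$\begin{pmatrix} a & b \\ c & d \end{pmatrix} = \begin{pmatrix} g_{12} & g_{13} \\ g_{22} & g_{23}\end{pmatrix}.$$
    Here $a = g_{ji}$ and $d = g_{ik}$, so the hypothesis kills $ad$ and $da$. The relation becomes $q^{-1}g_{13}g_{22} = q\,g_{22}g_{13}$, that is, $g_{22}g_{13} = q^{-2}g_{13}g_{22}$, giving $\varepsilon = -2$.
    \item For $(i,j,k) = (2,3,1)$: rows are $2<3$ and columns are $1<2$, so
    $$\begin{pmatrix} a & b \\ c & d \end{pmatrix} = \begin{pmatrix} g_{21} & g_{22} \\ g_{31} & g_{32}\end{pmatrix}.$$
    Here $a = g_{ik}$ and $d = g_{ji}$. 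The relation gives $q^{-1}g_{22}g_{31} = q\,g_{31}g_{22}$, that is, $g_{22}g_{31} = q^{2}g_{31}g_{22}$, giving $\varepsilon = 2$.
\end{itemize}

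There is no real obstacle here: the argument is pure bookkeeping. The only point needing care is the index placement in the $i=2$ cases, and in particular which side of the relation carries the factor $q^{\pm1}$, since this is what fixes the sign of $\varepsilon$. I would double-check it against the already-verified instance $g_{13}g_{22} = q^2 g_{22}g_{13}$ used in Proposition~\ref{prop:generic quotient Bq or Cq}, which is precisely the case $(i,j,k) = (2,1,3)$.
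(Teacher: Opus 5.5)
Your proposal is correct and follows the paper's proof: apply the $2\times 2$ quantum minor relation \eqref{eq:involutive 2x2 minor} to the sub-matrix $\mathcal{G}_{\hat k \hat\jmath}$ (rows $\{i,j\}$, columns $\{i,k\}$), and use that both $g_{ik}g_{ji}$ and $g_{ji}g_{ik}$ vanish. Your placement of $a,b,c,d$ in each case, and the resulting signs of $\varepsilon$, agree with the paper.
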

\begin{proof}
    The relation \eqref{eq:involutive 2x2 minor} applied to the $2 \times 2$ minor $\mathcal{G}_{\hat{k}, \hat{j}}$ has the form 
    $$ g_{ii} g_{jk} - q^{\pm 1} g_{ik} g_{ji} = g_{jk}g_{ii} - q^{\mp 1} g_{ji} g_{ik}$$
    when $i \in \{1, 3 \}$, the form
    $$   g_{ji} g_{ik} -q^{-1} g_{jk} g_{ii} = g_{ik} g_{ji} - q g_{ii} g_{jk}$$
    when $(i,j,k) = (2, 1, 3)$, and the form 
    $$   g_{ik} g_{ji}  -q^{-1} g_{ii} g_{jk}  = g_{ji} g_{ik} - q g_{jk} g_{ii}$$
    when $(i,j,k) = (2, 3, 1)$.
    The result follows immediately since $g_{ik} g_{ji} = g_{ji}g_{ik} = 0$.
\end{proof}

\begin{prop}\label{prop:Hopf subalgebra}
    Choose $i,j,k$ such that $\{i,j,k\} = \{1,2,3\}$ and suppose $g_{ij} = 0$, so that $g_{ik} = 0$ or $g_{kj} = 0$ by Lemma \ref{lem:Hopf ideal}. Let $\mathcal{G} \coloneqq (g_{ij}) \in M_3(\overline{H})$. Then the following hold.
    \begin{enumerate}
        \item Suppose $g_{ik} = 0$ and let
        $$D' \coloneqq \qdet(\mathcal{G}_{\hat{\imath} \hat{\imath}}) = g_{jj} g_{kk} - q^{\varepsilon} g_{jk} g_{kj},$$
        where $\varepsilon = -1$ when  $k > j$ and $\varepsilon = 1$ when $j > k$. Then $D'$ is invertible in $\overline{H}$ and the subalgebra $H' \coloneqq \kk\ang{g_{jj}, g_{kk}, g_{jk}, g_{kj}, (D')^{-1}}$ is a Hopf subalgebra of $\overline{H}$ which is a Hopf quotient of $H_q(2)$.\label{item:g_ik = 0}
        \item Suppose $g_{kj} = 0$ and let
        $$D' \coloneqq \qdet(\mathcal{G}_{\hat{\jmath} \hat{\jmath}}) = g_{ii} g_{kk} - q^{\varepsilon} g_{ik} g_{ki},$$
        where $\varepsilon = -1$ when $k > i$ and $\varepsilon = -1$ when $i > k$.   Then $D'$ is invertible in $\overline{H}$ and the subalgebra $H' \coloneqq \kk\ang{g_{ii}, g_{kk}, g_{ik}, g_{ki}, (D')^{-1}}$ is a Hopf subalgebra of $\overline{H}$ which is a Hopf quotient of $H_q(2)$.\label{item:g_kj = 0}
    \end{enumerate}
\end{prop}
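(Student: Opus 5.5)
The plan is to prove \eqref{item:g_ik = 0} in detail; \eqref{item:g_kj = 0} is the same argument with rows and columns exchanged, so column $j$ of $\mathcal{G}$ plays the role of row $i$. Throughout I order $\{j,k\}$ increasingly, so that $\mathcal{G}_{\hat\imath\hat\imath}$ is the $2\times 2$ submatrix of $\mathcal{G}$ with rows and columns in their natural order. With $g_{ij} = g_{ik} = 0$, row $i$ of $\mathcal{G}$ is $g_{ii}e_i$. The proof has three steps.

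\textbf{Step 1: $g_{ii}$ is group-like.} This is immediate from $\Delta(g_{ii}) = \sum_\ell g_{i\ell}\otimes g_{\ell i} = g_{ii}\otimes g_{ii}$ and $\varepsilon(g_{ii}) = 1$. Hence $g_{ii}$ is invertible, with inverse $S(g_{ii})$.

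\textbf{Step 2: factor $\overline{D}$ and invert $D'$.} Only permutations $\sigma$ with $\sigma(i) = i$ contribute to $\overline{D} = \qdet(\mathcal{G})$.
\begin{itemize}
\item If $i = 1$, then $\overline{D} = g_{11}D'$.
\item If $i = 3$, then $\overline{D} = D' g_{33}$.
\item If $i = 2$, the two surviving terms are $g_{11}g_{22}g_{33} - q^{-3}g_{13}g_{22}g_{31}$. I would use Lemma \ref{lem:easy calculation}, whose hypothesis $g_{21} = 0$ or $g_{23} = 0$ holds, to move $g_{22}$ to the front. This gives $g_{22}g_{13} = q^{-2}g_{13}g_{22}$, so $g_{13}g_{22} = q^{2}g_{22}g_{13}$, and $g_{22}g_{11} = g_{11}g_{22}$. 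Then $\overline{D} = g_{22}(g_{11}g_{33} - q^{-1}g_{13}g_{31}) = g_{22}D'$.
\end{itemize}
In every case $\overline{D}$ is a unit times $D'$, on one side. To get a two-sided inverse of $D'$, I also need $g_{ii}$ to $q$-commute with $D'$. This follows from Lemma \ref{lem:easy calculation} applied to $g_{jk}$ and $g_{kj}$, together with $g_{ii}g_{jj} = g_{jj}g_{ii}$, which comes from the $2\times 2$ minor relation \eqref{eq:involutive 2x2 minor} because $g_{ij} = g_{ik} = 0$. Once this holds, $D'$ is invertible.

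\textbf{Step 3: $H'$ is a Hopf subalgebra and a quotient of $H_q(2)$.}
\begin{itemize}
\item \emph{Coproduct.} For $a,b\in\{j,k\}$ we have $\Delta(g_{ab}) = \sum_{\ell\in\{j,k\}} g_{a\ell}\otimes g_{\ell b} + g_{ai}\otimes g_{ib}$, and the last term vanishes. So $\mathcal{G}' \coloneqq \mathcal{G}_{\hat\imath\hat\imath}$ is a multiplicative matrix, and $D'$ is group-like.
\item \emph{Antipode.} Using the formula $S(g_{ab}) = (-q)^{b-a}\qdet(\mathcal{G}_{\hat b\hat a})\overline{D}^{-1}$, each relevant $2\times 2$ minor contains row $i$. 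The vanishing of $g_{ij}$ and $g_{ik}$ reduces it to $\pm q^{m} g_{ii}\cdot(\text{entry of }\mathcal{G}')$. Combined with Step 2, this shows that $S(\mathcal{G}') = \mathcal{G}'^{-1}$ is the $q$-adjugate of $\mathcal{G}'$ times $(D')^{-1}$. Hence $H'$ is closed under $S$ and is a Hopf subalgebra.
\item \emph{Relations.} The relations \eqref{eq:involutive relations} of Corollary \ref{cor:involutive relations 2 variables} must hold for the entries of $\mathcal{G}'$. The $q$-commutation relations and the $2\times 2$ minor relation are restrictions of \eqref{eq:involutive q commute} and \eqref{eq:involutive 2x2 minor}. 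The remaining relations are consequences of $S(\mathcal{G}')\mathcal{G}' = I$, read exactly as in the proof of Corollary \ref{cor:involutive relations 2 variables}. Therefore $h_{ab}\mapsto g_{ab}$ and $D^{-1}\mapsto (D')^{-1}$ defines a surjective Hopf map $H_q(2)\surj H'$.
\end{itemize}

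The main obstacle is the bookkeeping of powers of $q$, both in the factorization of $\overline{D}$ and in the reduced antipode formula. This is most delicate when $i = 2$ and when $j > k$, where the stated exponent $\varepsilon$ must be checked against the reordered minor.
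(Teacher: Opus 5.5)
Your plan has the same structure as the paper's proof. You factor $\overline{D}$ through $D'$ using Lemma~\ref{lem:easy calculation}, check that $\Delta$ and $S$ preserve $H'$ with the $2\times 2$ formulas, and verify the presentation of Corollary~\ref{cor:involutive relations 2 variables}. Noting that $g_{ii}$ is group-like is a small improvement over the paper: it makes $D' = g_{ii}^{-1}\overline{D}$ (or $\overline{D}g_{33}^{-1}$) a unit at once. So the commutation of $g_{ii}$ with $D'$ is not needed for invertibility, only later to simplify the antipode. The paper instead gets invertibility from $\overline{D} = g_{rr}D' = D'g_{rr}$.

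The gap is in your last bullet. The facts you use there are the Manin relations, multiplicativity of $\mathcal{G}'$, the adjugate formula for $S$, and $S(\mathcal{G}')\mathcal{G}' = I$. All of these already hold in $\aut^r(A_q(2))$ (Theorem~\ref{thm:description of universal Hopf algebra}), and your argument never uses that $\overline{H}$ is involutive. But the remaining relations of $H_q(2)$ come out of the long relation only after involutivity has been used to move $D^{-1}$ to the right; this is how \eqref{eq:general involutive long relation} arises from \eqref{eq:Manin long relation}. Concretely, for $i=1$ the $(2,3)$-entry of $S(\mathcal{G}')\mathcal{G}' = I$ reads
\[
g_{33}(D')^{-1}g_{23} - q\,g_{23}(D')^{-1}g_{33} = 0.
\]
Reading this as in the proof of Corollary~\ref{cor:involutive relations 2 variables}, i.e.\ as if $D'$ were central, gives $g_{33}g_{23} = q\,g_{23}g_{33}$. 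The required relation is $g_{33}g_{23} = q^{-1}g_{23}g_{33}$.

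The missing input is how $D'$ $q$-commutes with the entries of $\mathcal{G}'$; here $g_{23}D' = q^{2}D'g_{23}$ and $g_{33}D' = D'g_{33}$. These follow from \eqref{eq:involutive qdet commutation}, $\overline{D} = g_{11}D'$, and the fact that $g_{11}$ commutes with $g_{23}$ and $g_{33}$, after cancelling the unit $g_{11}$. In general the same argument works with Lemma~\ref{lem:easy calculation}. Alternatively, apply $S^2 = \id$ to your formula for $S(\mathcal{G}')$.

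The paper avoids the issue by reading the relations directly off \eqref{eq:involutive long relation} in $\overline{H}$, where $\overline{D}$ already sits on the right-hand side. There the summand with row index $i$ vanishes, the remaining minors become $g_{ii}$ times entries of $\mathcal{G}'$, and one cancels $g_{ii}$. With this ingredient added, your argument is complete.
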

\begin{proof}
    For \eqref{item:g_ik = 0}, we set $r \coloneqq i$, and for \eqref{item:g_kj = 0}, we set $r \coloneqq j$. 
    Let $n \coloneq \min(\{1,2,3\} \setminus \{r\})$ and $m \coloneqq \max(\{1,2,3\} \setminus \{r\})$.  We either have $g_{rn} = g_{rm} = 0$ or $g_{nr} = g_{mr} = 0$. Thus, using the relations \eqref{eq:involutive 2x2 minor} is it easy to check that $g_{rr}$ commutes with both $g_{mm}$ and $g_{nn}$. Moreover, by Lemma~\ref{lem:easy calculation} we see that $g_{rr}$ commutes with $g_{mn}g_{nm}$ and $g_{nm} g_{mn}$, and thus commutes with $D'$. Now a direct calculation using the formula for $D$ shows that $\overline{D} = g_{rr} D' = D' g_{rr}$. Hence, $g_{rr}$ and $D'$ are units in $\overline{H}$.

    We now check that $H'$ satisfies all the relations of $H_q(2)$ from \eqref{eq:involutive relations}.  We already know that the generators $g_{nn}, g_{mm}, g_{nm}, g_{mn}$ of $H'$ satisfy
    $$g_{nm} g_{nn} = q g_{nn} g_{nm}, \qquad g_{mm} g_{mn} = q g_{mn} g_{mm},$$
    as these are already relations of $H_q(3)$. Furthermore, it follows from \eqref{eq:involutive long relation} that
    $$\overline{D} = \sum_{\ell = 1}^3 (-q)^{n - \ell} \qdet(\mathcal{G}_{\hat{\ell} \hat{n}}) g_{\ell n} = \qdet(\mathcal{G}_{\hat{n} \hat{n}}) g_{n n} + (-q)^{n - m} \qdet(\mathcal{G}_{\hat{m} \hat{n}}) g_{m n},$$
    since $\qdet(\mathcal{G}_{\hat{r}\hat{n}}) g_{rn} = 0$ (in case \eqref{item:g_ik = 0} we have $g_{rn} = 0$, and in case \eqref{item:g_kj = 0} we have $\qdet(\mathcal{G}_{\hat{r}\hat{n}}) = 0$). Using that $g_{rn} g_{nr} = g_{rm} g_{mr} = 0$, we get
    $$\qdet(\mathcal{G}_{\hat{n} \hat{n}}) = g_{rr} g_{mm}, \qquad \qdet(\mathcal{G}_{\hat{m} \hat{n}}) = (-q)^{\delta_{r,2}} g_{rr} g_{nm} = \begin{cases}
        g_{rr} g_{nm}, &\text{if } r \in \{1,3\}, \\
        -q g_{22} g_{13}, &\text{if } r = 2, \\
    \end{cases}$$
    and thus
    $$\overline{D} = g_{rr} g_{mm} g_{nn} - q^{-1} g_{rr} g_{nm} g_{mn} = g_{rr}(g_{mm} g_{nn} - q^{-1} g_{nm} g_{mn}) = g_{rr} D',$$
    since $n - m = -1$ when $i \in \{1,3\}$ and $n - m = -2$ when $i = 2$. Now, the relation \eqref{eq:involutive 2x2 minor} gives
    $$\overline{D} = g_{rr} D' = g_{rr}(g_{nn} g_{mm} - q^{-1} g_{nm} g_{mn}) = g_{rr}(g_{mm} g_{nn} - q g_{mn} g_{nm}).$$
    Combining the above equations and using the fact that $g_{rr}$ is invertible, we deduce that
    $$g_{nn} g_{mm} = g_{mm} g_{nn}, \qquad g_{nm} g_{mn} = q^2 g_{mn} g_{nm}.$$
    Once again using \eqref{eq:involutive long relation}, we get
    $$0 = \sum_{\ell = 1}^3 (-q)^{n - \ell} \qdet(\mathcal{G}_{\hat\ell \hat n}) g_{\ell m} = \qdet(\mathcal{G}_{\hat n \hat n}) g_{n m} + (-q)^{n - m} \qdet(\mathcal{G}_{\hat m \hat n}) g_{m m},$$
    since $\qdet(\mathcal{G}_{\hat{r}\hat{n}}) g_{rm} = 0$. Therefore,
    $$g_{rr} g_{m m} g_{n m} - q^{-1} g_{rr} g_{n m} g_{m m} = 0.$$
    Since $g_{rr}$ is invertible, we conclude that $g_{m m} g_{n m} = q^{-1} g_{n m} g_{m m}$. We can similarly prove that $g_{mn} g_{nn} = q^{-1} g_{nn} g_{mn}$ using the relation
    $$\sum_{\ell = 1}^3 (-q)^{m - \ell} \qdet(\mathcal{G}_{\hat\ell \hat m}) g_{\ell n} = 0$$
    from \eqref{eq:involutive long relation}. Therefore, $H'$ satisfies all the relations from \eqref{eq:involutive relations}.

    It remains to check that $H'$ is a Hopf algebra with the correct Hopf structure. There is nothing to check for $\varepsilon$. For the comultiplication, we have
    $$\Delta(g_{nn}) = \sum_{\ell = 1}^3 g_{n\ell} \otimes g_{\ell n} = g_{nn} \otimes g_{nn} + g_{nm} \otimes g_{mn} \in H' \otimes H',$$
    since $g_{nr} \otimes g_{rn} = 0$. Similarly, we also have
    $$\Delta(g_{mm}) = g_{mm} \otimes g_{mm} + g_{mn} \otimes g_{nm} \in H' \otimes H'.$$
    Next, we have
    $$\Delta(g_{nm}) = \sum_{\ell = 1}^3 g_{n\ell} \otimes g_{\ell m} = g_{nn} \otimes g_{nm} + g_{nm} \otimes g_{mm} \in H' \otimes H',$$
    since $g_{nr} \otimes g_{rm} = 0$, and similarly,
    $$\Delta(g_{mn}) = g_{mn} \otimes g_{nn} + g_{mm} \otimes g_{mn} \in H' \otimes H'.$$
    Therefore, $\Delta$ restricts to a well-defined comultiplication on $H'$ which matches up with the comultiplication of $H_2(2)$. It remains to check the antipode: we have
    $$S(g_{nn}) = \qdet(\mathcal{G}_{\hat{n}\hat{n}}) \overline{D}^{-1} = (g_{rr} g_{mm} - q^{\pm 1} g_{rm} g_{mr}) \overline{D}^{-1} = g_{rr} g_{mm} \overline{D}^{-1},$$
    since $g_{rm} g_{mr} = 0$. Using that $\overline{D} = g_{rr} D'$, and that $g_{rr}$ commutes with $D'$ and $g_{mm}$, we conclude that
    $$S(g_{nn}) = g_{mm} (D')^{-1} \in H'.$$
    Proving that $S(g_{mm}) = g_{nn} (D')^{-1} \in H'$ is identical to the above. If $m - n = 1$, then
    $$S(g_{nm}) = -q \qdet(\mathcal{G}_{\hat{m}\hat{n}}) \overline{D}^{-1} = -q (g_{rr} g_{nm} - q^{\pm 1} g_{rm} g_{nr}) \overline{D}^{-1} = -q g_{rr} g_{nm} \overline{D}^{-1},$$
    since $g_{rm} g_{nr} = 0$. If $m - n = 2$ (in which case $n = 1$, $m = 3$, and $r = 2$), then
    $$S(g_{13}) = q^2 \qdet(\mathcal{G}_{\hat{3}\hat{1}}) \overline{D}^{-1} = q^2 (g_{12} g_{23} - q^{-1} g_{13} g_{22}) \overline{D}^{-1} = -q g_{13} g_{22} \overline{D}^{-1},$$
    since $g_{nr} g_{rm} = g_{12} g_{23} = 0$ in this case. In either case, using that $\overline{D} = g_{rr} D'$, we conclude that
    $$S(g_{nm}) = -q g_{nm} (D')^{-1} \in H'.$$
    Checking that $S(g_{mn}) = -q^{-1} g_{mn} (D')^{-1}$ is similar. Therefore, $S$ restricts to an antipode on $H'$, and thus $H'$ is a Hopf subalgebra of $\overline{H}$.

    We can now see that the map $H_q(2) \to H'$ defined by
    $$g_{11} \mapsto g_{nn}, \qquad g_{12} \mapsto g_{nm}, \qquad g_{21} \mapsto g_{mn}, \qquad g_{22} \mapsto g_{mm}$$
    is a surjective homomorphism of Hopf algebras.
\end{proof}

Proposition \ref{prop:Hopf subalgebra} allows us to reduce to $H_q(2)$, so we can use Theorem \ref{thm:cocommutative quotient 2 variables} to get the following consequence.

\begin{cor}\label{cor:Hopf subalgebra}
    Suppose $q \neq \pm 1$. Choose $i,j,k$ such that $\{i,j,k\} = \{1,2,3\}$ and suppose $g_{ij} = 0$, so that $g_{ik} = 0$ or $g_{kj} = 0$ by Lemma \ref{lem:Hopf ideal}. Then the following hold.
    \begin{enumerate}
        \item If $g_{ik} = 0$, then either $g_{jk} = g_{kj} = 0$, or else only one of $g_{jk}$ and $g_{kj}$ is zero and $g_{jj} = g_{kk}$.\label{item:consequences of g_ik = 0}
        \item If $g_{kj} = 0$, then either $g_{ik} = g_{ki} = 0$, or else only one of $g_{ik}$ and $g_{ki}$ is zero and $g_{ii} = g_{kk}$.\label{item:consequences of g_kj = 0}
    \end{enumerate}
    In either case, the diagonal entries $g_{11}, g_{22}, g_{33}$ pairwise commute, and $\overline{D} = g_{11} g_{22} g_{33}$, so $g_{11}, g_{22}, g_{33}$ are units in $\overline{H}$.
\end{cor}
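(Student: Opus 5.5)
Both cases are symmetric, so I will treat case \eqref{item:consequences of g_ik = 0} and indicate the changes for \eqref{item:consequences of g_kj = 0}. Assume $g_{ij} = g_{ik} = 0$. By Proposition \ref{prop:Hopf subalgebra}\eqref{item:g_ik = 0}, the subalgebra $H' = \kk\ang{g_{jj}, g_{kk}, g_{jk}, g_{kj}, (D')^{-1}}$ is a Hopf subalgebra of $\overline{H}$ and a Hopf quotient of $H_q(2)$. The surjection sends the four generators of $H_q(2)$ to $g_{nn}, g_{nm}, g_{mn}, g_{mm}$, where $\{n,m\} = \{j,k\}$ and $n < m$. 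Since $H'$ is a Hopf subalgebra of the cocommutative Hopf algebra $\overline{H}$, it is cocommutative. So the analysis in the proof of Theorem \ref{thm:cocommutative quotient 2 variables} applies to the homomorphism $H_q(2) \to H'$, where the off-diagonal images are $g_{jk}$ and $g_{kj}$.

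We use the lemmas from Section \ref{sec:two variables} directly rather than the final statement of that theorem. If $g_{jk} = g_{kj} = 0$, we are in the first alternative. Otherwise, since $q \neq 1$, Lemma \ref{lem:diagonals are equal} gives $g_{jj} = g_{kk}$. Both $g_{jk}$ and $g_{kj}$ cannot be nonzero, because Lemma \ref{lem:b and c nonzero} would then force $q^2 = 1$, contrary to $q \neq \pm 1$. Hence exactly one of them is zero, and $g_{jj} = g_{kk}$. Case \eqref{item:consequences of g_kj = 0} is identical, using Proposition \ref{prop:Hopf subalgebra}\eqref{item:g_kj = 0} with the pair $\{i,k\}$.

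For the final assertion, we use the notation $r, n, m$ from the proof of Proposition \ref{prop:Hopf subalgebra}. That proof shows $g_{rr}$ commutes with $g_{nn}$ and $g_{mm}$, and that $\overline{D} = g_{rr}D'$. It also shows $g_{nn}g_{mm} = g_{mm}g_{nn}$, since the relations of $H_q(2)$ hold in $H'$; this also follows from Corollary \ref{cor:involutive relations 2 variables}. So the three diagonal entries pairwise commute. It remains to show $D' = g_{nn}g_{mm}$, which needs $g_{nm}g_{mn} = 0$. This holds because at least one of $g_{nm}, g_{mn}$ vanishes in both alternatives of \eqref{item:consequences of g_ik = 0} or \eqref{item:consequences of g_kj = 0}. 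Therefore $\overline{D} = g_{rr}g_{nn}g_{mm} = g_{11}g_{22}g_{33}$, after reordering with commutativity. Since $\overline{D}$ is a unit and the three factors commute, each $g_{ll}$ is a unit: for example, $g_{11}(g_{22}g_{33}\overline{D}^{-1}) = 1$, and $g_{22}g_{33}\overline{D}^{-1}$ is also a left inverse because $\overline{D}$ commutes with $g_{11}$.

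The main obstacle is making sure the two-variable lemmas apply to $H'$ rather than to $H_q(2)/I$ in the exact form in which they were stated. Those lemmas only use cocommutativity, the relations of Corollary \ref{cor:involutive relations 2 variables}, and the antipode formula. All three transfer through the Hopf surjection $H_q(2) \to H'$ onto a cocommutative Hopf algebra, so the transfer is routine but needs to be checked.
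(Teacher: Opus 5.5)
Your proposal is correct and follows essentially the same route as the paper: you reduce via Proposition \ref{prop:Hopf subalgebra} to a cocommutative Hopf quotient $H'$ of $H_q(2)$ and run the two-variable analysis, citing Lemmas \ref{lem:diagonals are equal} and \ref{lem:b and c nonzero} directly where the paper cites the proof of Theorem \ref{thm:cocommutative quotient 2 variables}. You then use $g_{nm}g_{mn} = 0$ to get $\overline{D} = g_{11}g_{22}g_{33}$ with pairwise commuting diagonal entries, which makes them units, exactly as in the paper's proof.
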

\begin{proof}
    Let $H'$, $m$ and $n$ be as in Proposition \ref{prop:Hopf subalgebra} and its proof, so that $H'$ is generated by $g_{nn}, g_{mm}, g_{nm}, g_{mn}$ and $(D')^{-1}$, where $D' = g_{nn} g_{mm} - q^{-1} g_{nm} g_{mn}$. Since $\overline{H}$ is cocommutative, so is $H'$. Therefore, $H'$ is a cocommutative quotient of $H_q(2)$. By the proof of Theorem \ref{thm:cocommutative quotient 2 variables} and the assumption that $q \neq \pm 1$, it follows that either $H'$ is a quotient of $\kk\ZZ^2$, in which case we have $g_{nm} = g_{mn} = 0$, or $H'$ is a quotient of $A(0,q^{\pm 1})$, in which case one of $g_{nm}$ and $g_{mn}$ is zero and $g_{nn} = g_{mm}$.

    In either case, since $g_{nm} g_{mn} = 0$, we must have $\overline{D} = g_{11} g_{22} g_{33}$ and $g_{11}, g_{22}, g_{33}$ pairwise commute by the relation \eqref{eq:involutive 2x2 minor}.
\end{proof}

\subsubsection{Consequences of cocommutativity}

We now proceed as we did in the two-variable case: we analyze the consequences of requiring $\overline{H}$ to be cocommutative. As one would expect, the conditions we get are more complicated than the ones from Lemma \ref{lem:linear dependence 2 variables}.

\begin{lem}\label{lem:linear dependence 3 variables}
    Choose $i,j,k$ such that $\{i,j,k\} = \{1,2,3\}$. Then the following hold in $\extp^2 \overline{H}$.
    \begin{enumerate}
        \item $g_{12} \wedge g_{21} = g_{31} \wedge g_{13} = g_{23} \wedge g_{32}$. \label{cond:wedge from diagonals}
        \item $(g_{ii} - g_{jj}) \wedge g_{ij} = g_{kj} \wedge g_{ik}$.\label{cond:wedge from ij}
    \end{enumerate}
\end{lem}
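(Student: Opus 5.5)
The plan is to compare $\Delta(g_{ij})$ with its flip $\Delta^{\mathrm{op}}(g_{ij})$, as in Lemma \ref{lem:linear dependence 2 variables}, and to read off the resulting identities in $\extp^2 \overline{H}$. Let $\pi \colon \overline{H} \otimes \overline{H} \to \extp^2 \overline{H}$ be the quotient map $u \otimes v \mapsto u \wedge v$. Its kernel is spanned by the symmetric tensors, so it contains $\Delta(h) - \Delta^{\mathrm{op}}(h)$ for every $h$. Because $\overline{H}$ is cocommutative, $\Delta(h)$ is itself symmetric, and hence $\pi(\Delta(h)) = 0$ for all $h \in \overline{H}$. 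Both parts of the lemma will come from evaluating this vanishing on the generators $g_{ij}$, using $\Delta(g_{ij}) = \sum_\ell g_{i\ell} \otimes g_{\ell j}$.

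For \eqref{cond:wedge from diagonals}, I will take a diagonal entry. For each $i$,
$$0 = \pi(\Delta(g_{ii})) = g_{ii} \wedge g_{ii} + \sum_{\ell \neq i} g_{i\ell} \wedge g_{\ell i} = \sum_{\ell \neq i} g_{i\ell} \wedge g_{\ell i}.$$
This gives three linear relations. Writing $a = g_{12} \wedge g_{21}$, $b = g_{23} \wedge g_{32}$, $c = g_{31} \wedge g_{13}$ and using antisymmetry, the relations for $i = 1, 2, 3$ become
\begin{gather*}
a - c = 0, \\
-a + b = 0, \\
c - b = 0.
\end{gather*}
Hence $a = b = c$, which is exactly \eqref{cond:wedge from diagonals}.

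For \eqref{cond:wedge from ij}, I will take $i \neq j$ and expand
$$0 = \pi(\Delta(g_{ij})) = g_{ii} \wedge g_{ij} + g_{ij} \wedge g_{jj} + g_{ik} \wedge g_{kj}.$$
Antisymmetry gives $g_{ij} \wedge g_{jj} = -g_{jj} \wedge g_{ij}$, so the first two terms combine to $(g_{ii} - g_{jj}) \wedge g_{ij}$. Likewise $g_{ik} \wedge g_{kj} = -g_{kj} \wedge g_{ik}$. Moving this term to the other side yields $(g_{ii} - g_{jj}) \wedge g_{ij} = g_{kj} \wedge g_{ik}$, as claimed.

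There is no real obstacle here. The only points that need care are the fact that cocommutativity is equivalent to $\Delta(h)$ lying in the kernel of $\pi$, and keeping the signs straight so that the orderings of the wedge factors match the statement.
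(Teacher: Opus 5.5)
Your proposal is correct and is essentially the paper's proof: the paper also equates $\Delta(g_{11})$, $\Delta(g_{22})$, and $\Delta(g_{ij})$ with their flips and rearranges in $\extp^2 \overline{H}$, which is the same as your vanishing of $\pi(\Delta(h))$ (you use all three diagonal entries where two suffice). One harmless slip: $\ker \pi$ contains $\Delta(h) + \Delta^{\mathrm{op}}(h)$, not $\Delta(h) - \Delta^{\mathrm{op}}(h)$, which is antisymmetric; your next sentence, that $\Delta(h)$ is symmetric by cocommutativity and so lies in $\ker \pi$, is the correct justification and is all the argument needs.
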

\begin{proof}
    In this proof, we make the identification $a \wedge b = \frac{1}{2}(a \otimes b - b \otimes a)$.

    We have
    $$\Delta(g_{11}) = g_{11} \otimes g_{11} + g_{12} \otimes g_{21} + g_{13} \otimes g_{31} = g_{11} \otimes g_{11} + g_{21} \otimes g_{12} + g_{31} \otimes g_{13},$$
    since $\overline{H}$ is cocommutative. Rearranging, we see that
    $$g_{12} \otimes g_{21} - g_{21} \otimes g_{12} = g_{31} \otimes g_{13} - g_{13} \otimes g_{31}.$$
    It follows that $g_{12} \wedge g_{21} = g_{31} \wedge g_{13}$. We can proceed similarly by considering $\Delta(g_{22})$ to conclude that $g_{12} \wedge g_{21} = g_{23} \wedge g_{32}$, which proves \eqref{cond:wedge from diagonals}.

    For \eqref{cond:wedge from ij}, consider
    $$\Delta(g_{12}) = g_{11} \otimes g_{12} + g_{12} \otimes g_{22} + g_{13} \otimes g_{32} = g_{12} \otimes g_{11} + g_{22} \otimes g_{12} + g_{32} \otimes g_{13}.$$
    Therefore, we get
    $$(g_{11} - g_{22}) \otimes g_{12} - g_{12} \otimes (g_{11} - g_{22}) = g_{32} \otimes g_{13} - g_{13} \otimes g_{32}.$$
    It follows that $(g_{11} - g_{22}) \wedge g_{12} = g_{32} \wedge g_{13}$, which proves \eqref{cond:wedge from ij} in the case $(i,j,k) = (1,2,3)$. The other cases of \eqref{cond:wedge from ij} follow similarly.
\end{proof}

The next result is a well-known lemma which characterizes when $a \wedge b = c \wedge d$, where $a,b,c,d$ are elements of a vector space.

\begin{lem}\label{lem:wedge}
    Let $V$ be a vector space and suppose $a \wedge b = c \wedge d$ in $\extp^2 V$, where $a,b,c,d \in V$. Then either $a \wedge b = c \wedge d = 0$, or there exists a matrix $M \in SL_2(\kk)$ such that
    $$\begin{pmatrix}
        c \\
        d
    \end{pmatrix} = M \begin{pmatrix}
        a \\
        b
    \end{pmatrix}.$$
\end{lem}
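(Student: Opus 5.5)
We reduce everything to the finite-dimensional span $W \coloneqq \spn\{a,b,c,d\} \subseteq V$, since $\extp^2 W \to \extp^2 V$ is injective. We then split into the two cases of the statement. If $a \wedge b = 0$, then also $c \wedge d = 0$ and we are in the first alternative. So assume $a \wedge b \neq 0$. Then $a$ and $b$ are linearly independent, since $a \wedge b = 0$ exactly when $a,b$ are linearly dependent. Likewise $c \wedge d \neq 0$, so $c,d$ are linearly independent as well.

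The key step is to show that $\spn\{c,d\} = \spn\{a,b\}$. For this, pick a linear functional $\phi \in V^*$ vanishing on $a$ and $b$. Contraction gives a linear map
$$\iota_\phi \colon \extp^2 V \to V, \qquad u \wedge v \mapsto \phi(u)v - \phi(v)u.$$
Then $0 = \iota_\phi(a \wedge b) = \iota_\phi(c \wedge d) = \phi(c)d - \phi(d)c$. By the independence of $c$ and $d$, this forces $\phi(c) = \phi(d) = 0$. Since $\spn\{a,b\}$ is the common kernel of all such $\phi$, we get $c,d \in \spn\{a,b\}$. Alternatively, one can extend $a,b$ to a basis $a,b,e_3,\dots$ of $W$, expand $c$ and $d$ in it, and compare coefficients of the basis elements $e_i \wedge e_j$ of $\extp^2 W$. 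We regard this step as the only real content of the lemma; everything else is bookkeeping.

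Once $c,d \in \spn\{a,b\}$, write
$$\begin{pmatrix} c \\ d \end{pmatrix} = M \begin{pmatrix} a \\ b \end{pmatrix}, \qquad M = \begin{pmatrix} \alpha & \beta \\ \gamma & \delta \end{pmatrix} \in M_2(\kk).$$
Bilinearity and antisymmetry give
$$c \wedge d = (\alpha\delta - \beta\gamma)\, a \wedge b = \det(M)\, a \wedge b.$$
Since $a \wedge b \neq 0$, the equality $c \wedge d = a \wedge b$ forces $\det(M) = 1$, that is, $M \in SL_2(\kk)$. This is the second alternative of the statement.
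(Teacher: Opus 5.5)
Your proof is correct and follows the same route as the paper: show that $\spn\{c,d\} = \spn\{a,b\}$, expand $c,d$ in terms of $a,b$, and read off $\det(M) = 1$ from $c \wedge d = \det(M)\, a \wedge b$. The one difference is that you justify the spanning step with the contraction $\iota_\phi$, whereas the paper simply asserts that $\spn\{a,b,c,d\}$ is two-dimensional.
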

\begin{proof}
    Suppose $a \wedge b = c \wedge d \neq 0$. Then it must be the case that $\spn\{a,b,c,d\}$ is two-dimensional, with basis given by $\{a,b\}$ or $\{c,d\}$. In particular, there exist $\alpha, \beta, \gamma, \delta \in \kk$ such that
    $$c = \alpha a + \beta b, \quad d = \gamma a + \delta b.$$
    It follows that
    \begin{align*}
        a \wedge b &= c \wedge d = (\alpha a + \beta b) \wedge (\gamma a + \delta b) \\
        &= \alpha \delta (a \wedge b) + \beta \gamma (b \wedge a) \\
        &= (\alpha \delta - \beta \gamma) a \wedge b.
    \end{align*}
    Therefore, $\alpha \delta - \beta \gamma = 1$, and thus the result follows by setting $M \coloneqq \begin{pmatrix}
        \alpha & \beta \\
        \gamma & \delta
    \end{pmatrix}$.
\end{proof}

\subsubsection{The case \texorpdfstring{$q \neq \pm 1$}{where q is not 1 or -1}}\label{subsec:q^4 neq 1}

We now start working toward a proof of Theorem \ref{thm:generic}. Our first aim is to show that at least one of the generators $g_{ij}$ is zero, which will allow us to use Corollary \ref{cor:Hopf subalgebra}.

In the following result, we show that all the wedge products from Lemma \ref{lem:linear dependence 3 variables}\eqref{cond:wedge from diagonals} are zero using Lemma \ref{lem:wedge}. The proof is immediate from the relations \eqref{eq:involutive relations 3 variables}, unless $q^6 = 1$.

\begin{lem}\label{lem:opposite wedges are zero}
    Suppose $q \neq \pm 1$. Then
    $$g_{12} \wedge g_{21} = g_{31} \wedge g_{13} = g_{23} \wedge g_{32} = 0.$$
\end{lem}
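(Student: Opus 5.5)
The plan is to argue by contradiction. Assume the common value $\omega \coloneqq g_{12} \wedge g_{21} = g_{31} \wedge g_{13} = g_{23} \wedge g_{32}$ from Lemma \ref{lem:linear dependence 3 variables}\eqref{cond:wedge from diagonals} is nonzero. Then $g_{12}, g_{21}$ are linearly independent. By Lemma \ref{lem:wedge}, all six off-diagonal generators lie in the two-dimensional space $V \coloneqq \spn\{g_{12}, g_{21}\}$. The key tool is the relation \eqref{eq:involutive qdet commutation}, $g_{ij}\overline{D} = q^{2(j-i)}\overline{D} g_{ij}$, with $\overline{D}$ invertible. It says that the linear map $c \colon x \mapsto \overline{D} x \overline{D}^{-1}$ acts on $g_{ij}$ by the scalar $q^{-2(j-i)}$. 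On $V$, $c$ is diagonalizable with the basis $g_{12}, g_{21}$ of eigenvectors, with eigenvalues $q^{-2}$ and $q^{2}$. These are distinct because $q \neq \pm 1$. Hence any nonzero element of $V$ that is an eigenvector of $c$ has eigenvalue in $\{q^{-2}, q^{2}\}$ and is a multiple of $g_{12}$ or of $g_{21}$ accordingly.

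First I treat $q^6 \neq 1$. The elements $g_{13}$ and $g_{31}$ are eigenvectors of $c$ with eigenvalues $q^{-4}$ and $q^{4}$, and neither lies in $\{q^{\pm 2}\}$ when $q^2 \neq 1$ and $q^6 \neq 1$. Hence $g_{13} = g_{31} = 0$, so $g_{31} \wedge g_{13} = 0$, contradicting $\omega \neq 0$.

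Next I treat $q^6 = 1$ with $q \neq \pm 1$. The eigenvalue comparison now gives $g_{13} = \alpha g_{21}$, $g_{31} = \beta g_{12}$, $g_{23} = \gamma g_{12}$ and $g_{32} = \delta g_{21}$. Substituting into $g_{31} \wedge g_{13} = \omega$ and $g_{23} \wedge g_{32} = \omega$ gives $-\alpha\beta = 1 = \gamma\delta$, so all four scalars are nonzero. I then feed these into the row $q$-commutation relations \eqref{eq:involutive q commute}:
\begin{itemize}
    \item $g_{13} g_{12} = q g_{12} g_{13}$ gives $g_{21} g_{12} = q g_{12} g_{21}$;
    \item $g_{23} g_{21} = q g_{21} g_{23}$ gives $g_{12} g_{21} = q g_{21} g_{12}$.
\end{itemize}
Together these yield $(1-q^2) g_{12} g_{21} = 0$, so $g_{12}g_{21} = g_{21}g_{12} = 0$. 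The relation $g_{32} g_{31} = q g_{31} g_{32}$ gives nothing new. Using also $g_{12}g_{11} = q g_{11} g_{12}$ together with the analogous column-type relations from \eqref{eq:involutive 2x2 minor}, I would similarly try to show $g_{12}^2 = 0$ and $g_{21}^2 = 0$. The idea is to compare $\overline{D}$-conjugation eigenvalues ($g_{12}^2$ has eigenvalue $q^{-4} = q^2$, which matches no product of diagonal and off-diagonal terms) with the $q$-commutations coming from the $2\times 2$ minors $\mathcal{G}_{\hat 3 \hat 3}$, $\mathcal{G}_{\hat 1 \hat 1}$, $\mathcal{G}_{\hat 2 \hat 2}$.

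The main obstacle is turning this nilpotence into a contradiction. My first attempt is to expand $\overline{D}$ via \eqref{eq:involutive long relation}, or directly from the formula for $D$. Every term involving off-diagonal entries is then a product of elements of $V$ that should vanish by the above, leaving $\overline{D} = g_{11}g_{22}g_{33}$, so each $g_{ii}$ is a unit. Next I would take a relation such as \eqref{eq:involutive long relation} with $i \neq j$, or Lemma \ref{lem:easy calculation}-type minor identities, in which a single off-diagonal generator appears multiplied by units. This should force $g_{12} = 0$ or $g_{21} = 0$, contradicting linear independence. If that route stalls, the fallback is to apply $\Delta$ to $g_{12}g_{21} = 0$. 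After substituting the proportionalities, the expression $\sum g_{1a}g_{2b} \otimes g_{a2}g_{b1}$ contains the term $g_{11}g_{22} \otimes g_{12}g_{21}$, and I would isolate that and the other diagonal-times-off-diagonal terms to derive the same contradiction.
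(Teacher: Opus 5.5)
Your case $q^6 \neq 1$ is correct and is the paper's argument phrased via eigenvalues of $x \mapsto \overline{D}x\overline{D}^{-1}$. For $q^6 = 1$, your reductions $g_{13} = \alpha g_{21}$, $g_{31} = \beta g_{12}$, $g_{23} = \gamma g_{12}$, $g_{32} = \delta g_{21}$ and your derivation of $g_{12}g_{21} = g_{21}g_{12} = 0$ are right. There is one small slip: $g_{31} \wedge g_{13} = \alpha\beta\, g_{12} \wedge g_{21}$, so $\alpha\beta = 1$, not $-1$.

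The gap is the rest of the case $q^6 = 1$, which is where the lemma's real content lies. The mechanism you propose for $g_{12}^2 = 0$ is false. For $q^6 = 1$ the eigenvalue $q^{-4} = q^2$ of $g_{12}^2$ is exactly that of $g_{11}g_{21}$. Indeed, take the relation \eqref{eq:involutive 2x2 minor} for rows $1,3$ and columns $1,2$, and combine it with $g_{21}g_{11} = q g_{11}g_{21}$ (the row-$1$ relation $g_{13}g_{11} = qg_{11}g_{13}$). It reads $(q - q^{-1})\beta\, g_{12}^2 = (q - 1)\delta\, g_{11}g_{21}$. So $g_{12}^2$ is a nonzero multiple of $g_{11}g_{21}$, and nothing you have forces it to vanish.

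Your closing step is also unspecified. A relation like $g_{12}g_{11} = q g_{11}g_{12}$ is compatible with $g_{11}$ a unit and $g_{12} \neq 0$ (as in $\mathcal{A}(0,q)$). So you need an actual relation $u\,g_{12} = 0$ with $u$ a unit, and you never produce one. Lemma \ref{lem:easy calculation} does not apply, since here all off-diagonal entries are nonzero. In your fallback, the term $g_{11}g_{22} \otimes g_{12}g_{21}$ you want to isolate is already zero.

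The missing idea is Lemma \ref{lem:linear dependence 3 variables}\eqref{cond:wedge from ij}, which you never use. Since $g_{32}$ and $g_{13}$ are both multiples of $g_{21}$, it gives $(g_{11} - g_{22}) \wedge g_{12} = g_{32} \wedge g_{13} = 0$. Hence $g_{11} - g_{22} \in \kk g_{12}$, and comparing eigenvalues ($1$ versus $q^{-2}$) yields $g_{11} = g_{22}$; likewise $g_{22} = g_{33}$. The row-$3$ relation $g_{33}g_{31} = qg_{31}g_{33}$ now reads $g_{11}g_{12} = qg_{12}g_{11}$. Together with the row-$1$ relation $g_{12}g_{11} = qg_{11}g_{12}$, it forces $g_{11}g_{12} = g_{12}g_{11} = 0$, and similarly $g_{11}g_{21} = g_{21}g_{11} = 0$. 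The minor relation above then gives $g_{12}^2 = 0$, and similarly $g_{21}^2 = 0$. So $\overline{D} = g_{11}^3$ and $g_{11}$ is a unit, whence $g_{11}g_{12} = 0$ gives $g_{12} = 0$, a contradiction.

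Your purely algebraic route can also be repaired without equal diagonals, but only in a different order:
\begin{enumerate}
\item The minors on rows $1,3$/columns $1,2$ and on rows $1,2$/columns $1,3$ give $g_{12}^2 \in \kk^* g_{11}g_{21}$ and $g_{21}^2 \in \kk^* g_{11}g_{12}$, hence $g_{12}^3 = g_{21}^3 = 0$.
\item Using also $g_{22}g_{12} = q^{-1}g_{12}g_{22}$, one gets $\overline{D} = g_{11}g_{22}g_{33}$ with pairwise commuting factors, so $g_{11}$ is a unit.
\item Then $0 = g_{12}^2 g_{21} \in \kk^* g_{11}g_{21}^2$ gives $g_{21}^2 = 0$, so $g_{11}g_{12} = 0$ and $g_{12} = 0$.
\end{enumerate}
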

\begin{proof}
    Assume, for a contradiction, that $g_{12} \wedge g_{21} = g_{31} \wedge g_{13} = g_{23} \wedge g_{32} \neq 0$ (these are all equal by Lemma \ref{lem:linear dependence 3 variables}\eqref{cond:wedge from diagonals}). Then Lemma \ref{lem:wedge} implies that there exists $M = \begin{pmatrix}
        \alpha & \beta \\
        \gamma & \delta
    \end{pmatrix} \in SL_2(\kk)$ such that $\begin{pmatrix}
        g_{31} \\
        g_{13}
    \end{pmatrix} = M \begin{pmatrix}
        g_{12} \\
        g_{21}
    \end{pmatrix}$. In other words,
    $$g_{31} = \alpha g_{12} + \beta g_{21}, \quad g_{13} = \gamma g_{12} + \delta g_{21}.$$
    Right-multiplying the second equation by $\overline{D}$, we get $g_{13}\overline{D} = \gamma g_{12}\overline{D} + \delta g_{21}\overline{D}$, and therefore
    $$q^4 \overline{D} g_{13} = \gamma q^2 \overline{D} g_{12} + \delta q^{-2} \overline{D} g_{21},$$
    by \eqref{eq:involutive qdet commutation}. Left-multiplying by $q^{-4}\overline{D}^{-1}$, it follows that $g_{13} = \gamma q^{-2} g_{12} + \delta q^{-6} g_{21}$. Thus,
    $$\gamma g_{12} + \delta g_{21} = g_{13} = \gamma q^{-2} g_{12} + \delta q^{-6} g_{21}.$$
    Now, $g_{12}$ and $g_{21}$ are linearly independent by assumption, and thus $\gamma = \gamma q^{-2}$ and $\delta = \delta q^{-6}$. Since we are assuming that $q \neq \pm 1$, it follows that $\gamma = 0$. Furthermore, if $q^6 \neq 1$, then $\delta = 0$, which would contradict $M \in SL_2(\kk)$. Therefore, it must be the case that $q^6 = 1$. Since $M \in SL_2(\kk)$, we have $\alpha \neq 0$ and $\delta = \alpha^{-1}$, so we get $g_{13} = \alpha^{-1} g_{21}$.  By right multiplying the other equation $g_{31} = \alpha g_{12} + \beta g_{21}$ by $\overline{D}$ and arguing similarly, we can that $\beta = 0$, and thus $g_{31} = \alpha g_{12}$.

    We also have $g_{23} \wedge g_{32} = g_{31} \wedge g_{13} \neq 0$, so proceeding as above, there exists another matrix $M' = \begin{pmatrix}
        \lambda & \mu \\
        \nu & \eta
    \end{pmatrix} \in SL_2(\kk)$ such that $\begin{pmatrix}
        g_{31} \\
        g_{13}
    \end{pmatrix} = M' \begin{pmatrix}
        g_{23} \\
        g_{32}
    \end{pmatrix}$. In other words,
    $$g_{31} = \lambda g_{23} + \mu g_{32}, \quad g_{13} = \nu g_{23} + \eta g_{32}.$$
    Similarly as in the previous paragraph, we deduce that $\mu = \nu = 0$ and that $\eta = \lambda^{-1}$, and thus $g_{31} = \lambda g_{23}$ and $g_{13} = \lambda^{-1} g_{32}$. Combining the above, we have
    \begin{equation}\label{eq:q^6 = 1 relations between g_ij}
        g_{31} = \alpha g_{12} = \lambda g_{23}, \quad g_{13} = \alpha^{-1} g_{21} = \lambda^{-1} g_{32}.
    \end{equation}
    Now, Lemma \ref{lem:linear dependence 3 variables}\eqref{cond:wedge from ij} implies that $(g_{11} - g_{22}) \wedge g_{12} = g_{32} \wedge g_{13}$. Combining this with \eqref{eq:q^6 = 1 relations between g_ij}, we get
    $$(g_{11} - g_{22}) \wedge g_{12} = g_{32} \wedge g_{13} = g_{32} \wedge (\lambda^{-1} g_{32}) = 0,$$
    so $g_{12}$ and $g_{11} - g_{22}$ are linearly dependent, say $g_{11} - g_{22} = \xi g_{12}$ for some $\xi \in \kk$. Note that \eqref{eq:involutive qdet commutation} implies that
    $$(g_{11} - g_{22}) \overline{D} = \xi g_{12} \overline{D} = q^2 \xi \overline{D} g_{12} = q^2 \overline{D} (g_{11} - g_{22}) = q^2 (g_{11} - g_{22}) \overline{D}.$$
    Right-multiplying by $\overline{D}^{-1}$, it follows that $g_{11} - g_{22} = q^2 (g_{11} - g_{22})$. Since $q^2 \neq 1$ by assumption, we conclude that $g_{11} = g_{22}$.
    
    We can proceed similarly using Lemma \ref{lem:linear dependence 3 variables} to further deduce that $g_{11} = g_{22} = g_{33}$. Therefore, the above shows that $\overline{H}$ is generated by the three elements $g_{11}$, $g_{12}$, and $g_{21}$.
    
    By \eqref{eq:involutive q commute}, we have $g_{12} g_{11} = q g_{11} g_{12}$. Similarly, \eqref{eq:involutive q commute} also gives $g_{33} g_{31} = q g_{31} g_{33}$, which implies that
    $$g_{11} g_{12} = q g_{12} g_{11},$$
    where we used that $g_{33} = g_{11}$ and that $g_{31} = \alpha g_{12}$ from \eqref{eq:q^6 = 1 relations between g_ij}. But then
    $$g_{12} g_{11} = q g_{11} g_{12} = q^2 g_{12} g_{11},$$
    so $g_{12} g_{11} = 0$, since $q^2 \neq 1$. Similarly, we can also deduce that
    \begin{equation}\label{eq:q^6 = 1 quadratic relations}
        g_{11} g_{12} = g_{12} g_{11} = g_{11} g_{21} = g_{21} g_{11} = g_{12} g_{21} = g_{21} g_{12} = 0.
    \end{equation}
    Now, we can use \eqref{eq:involutive 2x2 minor} to give
    $$g_{11} g_{32} - q g_{12} g_{31} = g_{32} g_{11} - q^{-1} g_{31} g_{12}.$$
    Using that $g_{32} = \lambda \alpha^{-1} g_{21}$ and that $g_{31} = \alpha g_{12}$ from \eqref{eq:q^6 = 1 relations between g_ij}, we deduce that
    $$\lambda \alpha^{-1} g_{11} g_{21} - q \alpha g_{12}^2 = \lambda \alpha^{-1} g_{21} g_{11} - q^{-1} \alpha g_{12}^2.$$
    But we know from \eqref{eq:q^6 = 1 quadratic relations} that $g_{11} g_{21} = g_{21} g_{11} = 0$, so we conclude that $q g_{12}^2 = q^{-1} g_{12}^2$. Since $q^2 \neq 1$, it follows that $g_{12}^2 = 0$. By a completely symmetric argument, we can also deduce that $g_{21}^2 = 0$.

    Recall that
    $$\overline{D} = g_{11}g_{22}g_{33} - q^{-1} g_{11}g_{23}g_{32} - q^{-1} g_{12}g_{21}g_{33} + q^{-2} g_{12}g_{23}g_{31} + q^{-2} g_{13}g_{21}g_{32} - q^{-3} g_{13}g_{22}g_{31}.$$
    Using equations \eqref{eq:q^6 = 1 relations between g_ij} and \eqref{eq:q^6 = 1 quadratic relations}, as well as the fact that $g_{12}^2 = g_{21}^2 = 0$, we see that $\overline{D} = g_{11}^3$. But $\overline{D}$ is invertible in $\overline{H}$, and thus $g_{11}$ must also be invertible. Now, \eqref{eq:q^6 = 1 quadratic relations} implies that $g_{12} = g_{21} = 0$, a contradiction.
\end{proof}

Next, we show that some more of the wedge products from Lemma \ref{lem:linear dependence 3 variables} are zero.

\begin{lem}\label{lem:easy cocommutative consequences}
    Suppose $q \neq \pm 1$. Then
    $$g_{21} \wedge g_{32} = g_{23} \wedge g_{12} = (g_{11} - g_{33}) \wedge g_{13} = (g_{11} - g_{33}) \wedge g_{31} = 0.$$
\end{lem}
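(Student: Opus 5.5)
The plan is to reduce the four wedge products to two, and then kill those two with an eigenvalue argument for conjugation by $\overline{D}$.

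\textbf{Reduction to two wedges.} Lemma \ref{lem:linear dependence 3 variables}\eqref{cond:wedge from ij} with $(i,j,k) = (1,3,2)$ gives
$$(g_{11} - g_{33}) \wedge g_{13} = g_{23} \wedge g_{12}.$$
With $(i,j,k) = (3,1,2)$ it gives
$$(g_{33} - g_{11}) \wedge g_{31} = g_{21} \wedge g_{32}.$$
So it suffices to show that each of these two equalities has both sides equal to zero. The two cases are symmetric, and I treat the first.

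\textbf{Eigenvectors for conjugation by $\overline{D}$.} Let $c \colon \overline{H} \to \overline{H}$ be the linear automorphism $c(x) = \overline{D} x \overline{D}^{-1}$. By \eqref{eq:involutive qdet commutation}, $g_{ij}\overline{D} = q^{2(j-i)}\overline{D} g_{ij}$, so
$$c(g_{ij}) = q^{2(i-j)} g_{ij}.$$
Hence:
\begin{itemize}
\item $g_{11} - g_{33}$ lies in the $1$-eigenspace of $c$;
\item $g_{13}$ lies in the $q^{-4}$-eigenspace;
\item $g_{23}$ and $g_{12}$ both lie in the $q^{-2}$-eigenspace.
\end{itemize}

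\textbf{Deriving the contradiction.} Suppose $(g_{11} - g_{33}) \wedge g_{13} = g_{23} \wedge g_{12} \neq 0$. By Lemma \ref{lem:wedge}, there is $M \in SL_2(\kk)$ with
$$\begin{pmatrix} g_{11} - g_{33} \\ g_{13} \end{pmatrix} = M \begin{pmatrix} g_{23} \\ g_{12} \end{pmatrix}.$$
Then $g_{11} - g_{33}$ lies in $\spn\{g_{23}, g_{12}\}$, which is contained in the $q^{-2}$-eigenspace of $c$. But $g_{11} - g_{33}$ is also a $1$-eigenvector, and $q^{-2} \neq 1$ because $q \neq \pm 1$. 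Eigenvectors of $c$ for distinct eigenvalues are linearly independent, so $g_{11} - g_{33} = 0$. This gives $(g_{11} - g_{33}) \wedge g_{13} = 0$, contradicting the assumption. Therefore
$$g_{23} \wedge g_{12} = (g_{11} - g_{33}) \wedge g_{13} = 0.$$

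\textbf{The second pair.} The same argument applies to $(g_{33} - g_{11}) \wedge g_{31} = g_{21} \wedge g_{32}$. Here $g_{21}$ and $g_{32}$ are $q^{2}$-eigenvectors of $c$, $g_{31}$ is a $q^{4}$-eigenvector, and $g_{33} - g_{11}$ is a $1$-eigenvector. Since $q^2 \neq 1$, the same contradiction forces
$$g_{21} \wedge g_{32} = (g_{11} - g_{33}) \wedge g_{31} = 0.$$

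\textbf{Main obstacle.} I expect no real obstacle. The only point needing care is the direction of the scaling in \eqref{eq:involutive qdet commutation}, and the argument needs only that the diagonal difference and the relevant off-diagonal pair lie in eigenspaces for $1$ and for $q^{\pm 2}$. This avoids the case split at $q^6 = 1$ that was needed in Lemma \ref{lem:opposite wedges are zero}.
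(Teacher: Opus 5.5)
Your proof is correct and is essentially the paper's argument. You both use Lemma \ref{lem:linear dependence 3 variables}\eqref{cond:wedge from ij} to get $(g_{ii}-g_{jj})\wedge g_{ij} = g_{2j}\wedge g_{i2}$ for $\{i,j\}=\{1,3\}$, apply Lemma \ref{lem:wedge}, and compare the scaling factors $1$ versus $q^{\pm 2}$ under the $\overline{D}$-commutation relation. The only cosmetic difference is the endpoint of the contradiction: you conclude $g_{11}=g_{33}$ directly from the eigenspace mismatch, whereas the paper uses linear independence of $g_{2j}, g_{i2}$ to force the coefficients $\alpha=\beta=0$, contradicting $M\in SL_2(\kk)$.
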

\begin{proof}
    Fix $i,j \in \{1,3\}$ with $i \neq j$. By Lemma \ref{lem:linear dependence 3 variables}, we have
    $$(g_{ii} - g_{jj}) \wedge g_{ij} = g_{2,j} \wedge g_{i,2}.$$
    Assume, for a contradiction, that $(g_{ii} - g_{jj}) \wedge g_{ij} = g_{2,j} \wedge g_{i,2} \neq 0$. In particular, this means that $g_{ii} \neq g_{jj}$ and that $g_{ij}$, $g_{2,j}$, and $g_{i,2}$ are all nonzero. By Lemma \ref{lem:wedge}, there exists $M = \begin{pmatrix}
        \alpha & \beta \\
        \gamma & \delta
    \end{pmatrix} \in SL_2(\kk)$ such that
    \begin{equation}\label{eq:SL2 matrix}
        g_{ii} - g_{jj} = \alpha g_{2,j} + \beta g_{i,2}, \quad g_{ij} = \gamma g_{2,j} + \delta g_{i,2}.
    \end{equation}
    Right-multiplying the first of equations in \eqref{eq:SL2 matrix} by $\overline{D}$, we get
    $$(g_{ii} - g_{jj})\overline{D} = \alpha g_{2,j}\overline{D} + \beta g_{i,2} \overline{D}.$$
    By \eqref{eq:involutive qdet commutation}, it follows that
    $$\overline{D}(g_{ii} - g_{jj}) = \alpha q^{2(j - 2)} \overline{D} g_{2,j} + \beta q^{2(2 - i)} \overline{D} g_{i,2}.$$
    Left-multiplying by $\overline{D}^{-1}$, we deduce that
    $$g_{ii} - g_{jj} = \alpha q^{2(j - 2)} g_{2,j} + \beta q^{2(2 - i)} g_{i,2}.$$
    Using that $g_{ii} - g_{jj} = \alpha g_{2,j} + \beta g_{i,2}$, we get
    $$\alpha g_{2,j} + \beta g_{i,2} = \alpha q^{2(j - 2)} g_{2,j} + \beta q^{2(2 - i)} g_{i,2}.$$
    Since $g_{2,j}$ and $g_{i,2}$ are linearly independent, we conclude that $\alpha = \alpha q^{2(j - 2)}$ and $\beta = \beta q^{2(2 - i)}$. But $i,j \in \{1,3\}$, so it follows that $2 - i = j - 2 = \pm 1$. In other words, we have $\alpha = q^{\pm 2} \alpha$ and $\beta = q^{\pm 2} \beta$. By the assumption that $q^2 \neq 1$, we must have $\alpha = \beta = 0$. This contradicts $M \in SL_2(\kk)$.
\end{proof}

We are now able to show that at least one of the generators $g_{ij}$ is zero in $\overline{H}$.

\begin{lem}\label{lem:one of the g_ij is zero}
    Suppose $q \neq \pm 1$. Then $g_{ij} = 0$ for some $i \neq j$.
\end{lem}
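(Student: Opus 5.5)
We argue by contradiction: assume $q \neq \pm 1$ and that all six off-diagonal generators $g_{12}, g_{21}, g_{13}, g_{31}, g_{23}, g_{32}$ are nonzero in $\overline{H}$. The tools are the wedge identities already established, namely Lemma \ref{lem:opposite wedges are zero} and Lemma \ref{lem:easy cocommutative consequences}, together with the $\overline{D}$-twisting relation \eqref{eq:involutive qdet commutation}. Because all entries are nonzero, every vanishing wedge $u \wedge v = 0$ among them becomes a proportionality $v = c\, u$ with $c \in \kk^*$. Lemma \ref{lem:opposite wedges are zero} gives $g_{21} = \lambda g_{12}$, $g_{13} = \mu g_{31}$ and $g_{32} = \nu g_{23}$. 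Lemma \ref{lem:easy cocommutative consequences} gives $g_{32} = a g_{21}$ and $g_{12} = b g_{23}$. So $g_{12}, g_{21}, g_{23}, g_{32}$ are all nonzero multiples of a single element.

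The basic mechanism is the one already used in the proofs of Lemmas \ref{lem:opposite wedges are zero} and \ref{lem:easy cocommutative consequences}. Suppose $g_{kl} = c\, g_{ij}$ with $c \neq 0$. Right-multiply by $\overline{D}$, apply \eqref{eq:involutive qdet commutation}, and cancel the unit $\overline{D}$ on the left. This yields $(q^{2(l-k)} - q^{2(j-i)}) g_{kl} = 0$, so $q^{2(l-k)} = q^{2(j-i)}$. Applying this to $g_{21} = \lambda g_{12}$ gives $q^{-2} = q^{2}$, i.e.\ $q^4 = 1$. Since $q \neq \pm 1$, this already settles everything except $q^2 = -1$.

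So the remaining, and main, case is $q^2 = -1$. Here the twisting by $\overline{D}$ no longer separates $g_{12}$ from $g_{21}$, so the algebra relations must do the work, in the spirit of Lemma \ref{lem:b and c nonzero}. I would write $g_{21}, g_{23}, g_{32}$ as scalar multiples of $g_{12}$ and $g_{13}$ as a multiple of $g_{31}$. I would then feed these into the $q$-commutation relations \eqref{eq:involutive q commute} within rows: for instance $g_{22}g_{21}$ versus $g_{21}g_{22}$ and $g_{23}g_{22}$ versus $g_{22}g_{23}$ now relate the same element $g_{12}$ to $g_{22}$ with the incompatible factors $q^{-1}$ and $q$. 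Since $q^2 \neq 1$, this forces $g_{22}g_{12} = g_{12}g_{22} = 0$. In the same way, comparing \eqref{eq:involutive 2x2 minor} for the minors on rows $\{1,2\}$ should force $g_{12}^2 = 0$.

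I would then bring in the $13/31$ pair. The relation from Lemma \ref{lem:linear dependence 3 variables}\eqref{cond:wedge from ij}, namely $(g_{11}-g_{22}) \wedge g_{12} = g_{32} \wedge g_{13}$, links $g_{13}$ to $g_{12}$. The analogous relations for other $(i,j,k)$ link $g_{31}$ to $g_{12}$. Either these wedges vanish, making $g_{13}$ proportional to $g_{12}$ and contradicting the twisting factors $q^4 = 1 \neq q^2$, or Lemma \ref{lem:wedge} produces an $SL_2(\kk)$ matrix. In the latter case, conjugating by $\overline{D}$ as in Lemma \ref{lem:easy cocommutative consequences} should force enough of its entries to vanish to give a contradiction. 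Once all products of off-diagonal entries with $g_{22}$ and with each other are shown to vanish, the expansion of $\overline{D}$ reduces to (a scalar multiple of) a product of diagonal entries involving $g_{22}$. This makes $g_{22}$ a unit, and then $g_{22}g_{12} = 0$ gives $g_{12} = 0$, the desired contradiction.

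I expect the $q^2 = -1$ bookkeeping to be the real obstacle: tracking which products vanish and confirming that $\overline{D}$ collapses to a unit times diagonal entries. I would first try to mimic the end of the proof of Lemma \ref{lem:opposite wedges are zero} as closely as possible.
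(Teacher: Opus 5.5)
You correctly dispose of $q^4\neq 1$: from $g_{21}=\lambda g_{12}$ with $\lambda\neq0$, relation \eqref{eq:involutive qdet commutation} gives $(q^4-1)g_{21}=0$. For $q^2=-1$, the row-$2$ relations also correctly give $g_{22}g_{12}=g_{12}g_{22}=0$ and $g_{12}^2=0$. The last one comes from $g_{23}g_{21}=qg_{21}g_{23}$, not from the $2\times2$ minors on rows $\{1,2\}$; for instance, columns $\{1,2\}$ only give $g_{11}g_{22}-g_{22}g_{11}=\lambda(q^{-1}-q)g_{12}^2$.

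\textbf{The gap is the treatment of $g_{13},g_{31}$.} When $q^2=-1$, the $\overline{D}$-twisting factor $q^{2(j-i)}$ equals $1$ for $g_{11}-g_{22}$, $g_{13}$, $g_{31}$. It equals $-1$ for $g_{12},g_{21},g_{23},g_{32}$. So in the $SL_2(\kk)$ alternative of Lemma \ref{lem:wedge} for $(g_{11}-g_{22})\wedge g_{12}=g_{32}\wedge g_{13}$, conjugating by $\overline{D}$ only kills the two matrix entries that mix these eigenspaces. It leaves an antidiagonal matrix of determinant $1$. You get $g_{12}\in\kk^*g_{32}$, which you already knew, and $g_{22}=g_{11}+\beta g_{13}$ with $\beta\neq0$, which is fully compatible with the twisting. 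The same happens for every other instance of Lemma \ref{lem:linear dependence 3 variables}\eqref{cond:wedge from ij}. So the step you expect to yield a contradiction yields none. Your final paragraph then assumes, with no mechanism, that all products involving $g_{13},g_{31}$ vanish.

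\textbf{What is missing} is to feed these linear relations back into the algebra relations. The paper does exactly this, uniformly for all $q\neq\pm1$, without splitting off $q^4\neq1$.
\begin{itemize}
\item Use $g_{22}=g_{11}+\beta g_{13}$ together with $g_{33}=g_{11}+\gamma g_{13}$. The latter comes from $(g_{11}-g_{33})\wedge g_{13}=0$ in Lemma \ref{lem:easy cocommutative consequences}, which you did not record.
\item The minor relation for rows and columns $\{1,2\}$, with $g_{12}^2=0$ and $\beta\neq0$, gives $g_{11}g_{13}=g_{13}g_{11}$.
\item Combined with the row-$1$ relation $g_{13}g_{11}=qg_{11}g_{13}$, this forces $g_{11}g_{13}=g_{13}g_{11}=0$.
\item The minor relation for rows and columns $\{1,3\}$ then reduces to $(q-q^{-1})g_{13}^2=0$, since $g_{31}\in\kk^*g_{13}$. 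Hence $g_{13}^2=0$.
\end{itemize}
Now every term of $\overline{D}$ except $g_{11}g_{22}g_{33}=g_{11}^3$ vanishes, so $\overline{D}=g_{11}^3$. Thus $g_{11}$ is a unit, and $g_{11}g_{13}=0$ gives the contradiction $g_{13}=0$. Your proposed ending via $g_{22}g_{12}=0$ would also work at this point, since $g_{11}g_{22}=g_{22}g_{11}=g_{11}^2$ makes $g_{22}$ a unit, but only after these computations.
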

\begin{proof}
    Assume, for a contradiction, that $g_{ij} \neq 0$ for all $i,j$. By Lemma \ref{lem:opposite wedges are zero}, we have
    $$g_{21} = \lambda g_{12}, \qquad g_{31} = \mu g_{13},$$
    for some $\lambda, \mu \in \kk^*$. Furthermore, Lemma \ref{lem:easy cocommutative consequences} implies that
    $$g_{23} = \alpha g_{12}, \qquad g_{32} = \beta g_{12}, \qquad g_{33} = g_{11} + \gamma g_{13},$$
    for some $\alpha, \beta, \gamma \in \kk$ with $\alpha, \beta \neq 0$.
    
    Now, it follows by Lemma \ref{lem:linear dependence 3 variables} that
    $$(g_{11} - g_{22}) \wedge g_{12} = g_{32} \wedge g_{13} = \beta g_{12} \wedge g_{13}.$$
    Assume, for a contradiction, that $g_{12} \wedge g_{13} = 0$, meaning $g_{13} = \xi g_{12}$ for some $\xi \in \kk^*$. Then
    $$g_{13} \overline{D} = \xi g_{12} \overline{D} = \xi q^2 \overline{D} g_{12} = q^2 \overline{D} g_{13} = q^{-2} g_{13} \overline{D},$$
    where we used \eqref{eq:involutive qdet commutation}. This implies that $g_{13} = 0$, a contradiction. Therefore, $g_{12} \wedge g_{13} \neq 0$.

    Since we have
    $$(g_{11} - g_{22}) \wedge g_{12} = \beta g_{12} \wedge g_{13} \neq 0,$$
    Lemma \ref{lem:wedge} implies that there exists a matrix $M = \begin{pmatrix}
        a & b \\
        c & d
    \end{pmatrix} \in SL_2(\kk)$ such that
    $$g_{11} - g_{22} = a g_{12} + b\beta g_{13}, \qquad g_{12} = c g_{12} + d\beta g_{13}.$$
    The linear independence of $g_{12}$ and $g_{13}$ implies that $c = 1$ and $d = 0$. Since $M \in SL_2(\kk)$, it follows that $b = -1$. Therefore,
    $$g_{11} - g_{22} = a g_{12} - \beta g_{13}.$$
    Multiplying both sides by $\overline{D}$. we have
    $$(g_{11} - g_{22}) \overline{D} = (a g_{12} - \beta g_{13})\overline{D} = \overline{D} (a q^2 g_{12} - \beta q^4 g_{13}).$$
    Since $\overline{D}$ commutes with $g_{11}$ and $g_{22}$, we conclude that
    $$g_{11} - g_{22} = a q^2 g_{12} - \beta q^4 g_{13}$$
    upon multiplying by $\overline{D}^{-1}$. Using that $g_{11} - g_{22} = a g_{12} - \beta g_{13}$, we get
    $$a g_{12} - \beta g_{13} = a q^2 g_{12} - \beta q^4 g_{13}.$$
    It follows that $a = a q^2$ and that $\beta = \beta q^4$. However, since $q^2 \neq 1$ by assumption, we conclude that $a = 0$. Therefore, we have $g_{22} = g_{11} + \beta g_{13}$.

    Summarizing, we have
    $$\begin{pmatrix}
        g_{11} & g_{12} & g_{13} \\
        g_{21} & g_{22} & g_{23} \\
        g_{31} & g_{32} & g_{33}
    \end{pmatrix} = \begin{pmatrix}
        g_{11} & g_{12} & g_{13} \\
        \lambda g_{12} & g_{11} + \beta g_{13} & \alpha g_{12} \\
        \mu g_{13} & \beta g_{12} & g_{11} + \gamma g_{13}
    \end{pmatrix}.$$
    We now analyze what the relations \eqref{eq:involutive relations 3 variables} become under the above conditions:
    \begin{itemize}
        \item $g_{23} g_{21} = q g_{21} g_{23}$ implies that $g_{12}^2 = 0$, where we used that $q \neq 1$.
        \item $g_{11} g_{22} - q^{-1} g_{12} g_{21} = g_{22} g_{11} - q g_{21} g_{12}$ implies that $g_{11} g_{13} = g_{13} g_{11}$, where we used that $g_{12}^2 = 0$.
        \item $g_{13} g_{11} = q g_{11} g_{13}$ implies that $g_{11} g_{13} = g_{13} g_{11} = 0$, where we used that $g_{11} g_{13} = g_{13} g_{11}$ and that $q \neq 1$.
        \item $g_{11} g_{33} - q^{-1} g_{13} g_{31} = g_{33} g_{11} - q g_{31} g_{13}$ implies that $g_{13}^2 = 0$, where we used that $g_{11} g_{13} = g_{13} g_{11} = 0$ and that $q^2 \neq 1$.
    \end{itemize}
    It now follows that $\overline{D} = g_{11}^3$, which implies that $g_{11}$ is invertible. But then it follows from the equation $g_{11} g_{13} = 0$ that $g_{13} = 0$, a contradiction.
\end{proof}

Now that we have shown that some $g_{ij}$ must be zero, we will use this to show that many more entries in the matrix $\mathcal{G}$ must be $0$.  Next, we show that at least one of the opposite generators $g_{ij}$ and $g_{ji}$ must be zero for every $i \neq j$. This shows that one of the terms in each of the wedge products from Lemma \ref{lem:linear dependence 3 variables}\eqref{cond:wedge from diagonals} are zero.

\begin{lem}\label{lem:opposite entries are zero}
    Suppose $q \neq \pm 1$ and let $i,j \in \{1,2,3\}$ with $i \neq j$. Then $g_{ij} = 0$ or $g_{ji} = 0$.
\end{lem}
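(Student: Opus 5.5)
The plan is to combine Lemma \ref{lem:one of the g_ij is zero} with the reduction to the two-variable case in Corollary \ref{cor:Hopf subalgebra}. There are only three unordered pairs $\{i,j\}$ with $i \neq j$ in $\{1,2,3\}$. I expect that once one off-diagonal entry of $\mathcal{G}$ vanishes, each of the three pairs is forced to contain a zero entry.

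First, by Lemma \ref{lem:one of the g_ij is zero} (which uses $q \neq \pm 1$), there are indices with $g_{ij} = 0$ for some $i \neq j$. Let $k$ be the third index, so $\{i,j,k\} = \{1,2,3\}$. The pair $\{i,j\}$ is then already settled. By Lemma \ref{lem:Hopf ideal}, either $g_{ik} = 0$ or $g_{kj} = 0$, and I would split into these two cases.

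\emph{Case $g_{ik} = 0$.} Here the pair $\{i,k\}$ is settled by $g_{ik} = 0$. The remaining pair is $\{j,k\}$. Corollary \ref{cor:Hopf subalgebra}\eqref{item:consequences of g_ik = 0} says that either $g_{jk} = g_{kj} = 0$, or exactly one of $g_{jk}, g_{kj}$ is zero. In both alternatives $g_{jk} = 0$ or $g_{kj} = 0$, so every pair is covered.

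\emph{Case $g_{kj} = 0$.} Now the pairs $\{i,j\}$ and $\{k,j\}$ are settled. The remaining pair is $\{i,k\}$. Corollary \ref{cor:Hopf subalgebra}\eqref{item:consequences of g_kj = 0} gives that $g_{ik} = 0$ or $g_{ki} = 0$, which finishes this case.

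The substantive input is the reduction in Proposition \ref{prop:Hopf subalgebra}, together with the two-variable classification (Theorem \ref{thm:cocommutative quotient 2 variables}, with $q \neq \pm 1$). That classification is what excludes both opposite entries of the $2 \times 2$ block being nonzero, and it is already available. So the main thing to double-check is bookkeeping: that the index conventions of Corollary \ref{cor:Hopf subalgebra} really cover the complementary pair in each case.

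As a sanity check, there is also a partial direct argument. By Lemma \ref{lem:opposite wedges are zero}, $g_{ij}$ and $g_{ji}$ are linearly dependent. Suppose $g_{ji} = \lambda g_{ij}$ with $\lambda \neq 0$. Commuting both sides past $\overline{D}$ using \eqref{eq:involutive qdet commutation} gives $q^{4(i-j)} = 1$. This already yields the lemma whenever $q^4 \neq 1$ and $q^8 \neq 1$, but it does not handle the remaining roots of unity. That is why the argument via Corollary \ref{cor:Hopf subalgebra} is the one I would actually write.
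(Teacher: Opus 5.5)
Your proof is correct and uses the same ingredients as the paper: Lemma~\ref{lem:one of the g_ij is zero} gives one vanishing off-diagonal entry, Lemma~\ref{lem:Hopf ideal} gives a second, and Corollary~\ref{cor:Hopf subalgebra} settles the remaining pair. The only difference is presentation: the paper fixes a pair with both entries nonzero, argues by contradiction, and locates the zero entry relative to that pair, whereas you argue directly from a single zero entry and cover all three pairs at once, which avoids the without-loss-of-generality step.
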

\begin{proof}
    Assume, for a contradiction, that $g_{ij}$ and $g_{ji}$ are both nonzero, for some $i \neq j$. By Lemma \ref{lem:one of the g_ij is zero}, it follows that $g_{k\ell} = 0$ for some $k \neq \ell$. Then Lemma \ref{lem:Hopf ideal} implies that $g_{km} = 0$ or $g_{m\ell} = 0$, where $m \in \{1,2,3\} \setminus \{k,\ell\}$. It must be the case that either $k$ or $\ell$ (but not both) is equal to one of $i$ or $j$, since there are only three possibilities for each of these elements. Without loss of generality, we may assume that $k$ or $\ell$ is equal to $i$ (and thus $m = j$).

    \begin{case}
        $k = i$.
    \end{case}

    In this case, the above gives that $g_{j\ell} = 0$, so it follows from Lemma \ref{lem:Hopf ideal} that $g_{i\ell} = 0$. But now Corollary \ref{cor:Hopf subalgebra} implies that $g_{ij}$ or $g_{ji}$ is zero, a contradiction.

    \begin{case}
        $\ell = i$.
    \end{case}

    In this case, we have $g_{kj} = 0$, and thus $g_{ki} = 0$, by Lemma \ref{lem:Hopf ideal}. But now Corollary \ref{cor:Hopf subalgebra} implies that $g_{ij}$ or $g_{ji}$ is zero, a contradiction.
\end{proof}

In the following result, we show that if $g_{ij} \neq 0$, then the diagonal generators $g_{ii}$ and $g_{jj}$ are equal.

\begin{lem}\label{lem:g_ij nonzero implies g_ii = g_jj}
    Suppose $q \neq \pm 1$ and let $i,j \in \{1,2,3\}$ with $i \neq j$. If $g_{ij} \neq 0$, then $g_{ii} = g_{jj}$.
\end{lem}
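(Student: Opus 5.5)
The plan is to combine Lemma \ref{lem:linear dependence 3 variables}\eqref{cond:wedge from ij} with the twisted commutation \eqref{eq:involutive qdet commutation} of each $g_{ab}$ with $\overline{D}$, in the same way as in Lemmas \ref{lem:diagonals are equal} and \ref{lem:easy cocommutative consequences}. Let $k$ be the third index and suppose $g_{ij}\neq 0$. First, Lemma \ref{lem:opposite entries are zero} gives $g_{ji}=0$. Since some off-diagonal entry vanishes, Corollary \ref{cor:Hopf subalgebra} applies. Hence $g_{11},g_{22},g_{33}$ are pairwise commuting units and $\overline{D}=g_{11}g_{22}g_{33}$. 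In particular, $\overline{D}$ commutes with every diagonal entry.

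Lemma \ref{lem:linear dependence 3 variables}\eqref{cond:wedge from ij} gives
$$(g_{ii}-g_{jj})\wedge g_{ij}=g_{kj}\wedge g_{ik}.$$
We split according to whether this wedge vanishes.

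\emph{Case 1: the wedge is nonzero.} Then $g_{kj}$ and $g_{ik}$ are both nonzero, so Lemma \ref{lem:opposite entries are zero} forces $g_{jk}=g_{ki}=0$. Lemma \ref{lem:wedge} yields $M\in SL_2(\kk)$ with
$$g_{ii}-g_{jj}=\alpha g_{kj}+\beta g_{ik}, \qquad g_{ij}=\gamma g_{kj}+\delta g_{ik},$$
where $g_{kj}$ and $g_{ik}$ are linearly independent. Conjugating by $\overline{D}$ fixes the left side of the first equation and scales $g_{kj}$, $g_{ik}$, $g_{ij}$ by $q^{2(j-k)}$, $q^{2(k-i)}$, $q^{2(j-i)}$ respectively. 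Comparing coefficients, we get that $\alpha\neq 0$ forces $q^{2(j-k)}=1$, and $\beta\neq 0$ forces $q^{2(k-i)}=1$. Similarly, $\gamma\neq0$ forces $q^{2(k-i)}=1$, and $\delta\neq0$ forces $q^{2(j-k)}=1$. Since $\det M=1$, either $\alpha\delta\neq0$ or $\beta\gamma\neq0$, and each alternative requires $q^{2(j-k)}=1$ or $q^{2(k-i)}=1$. When $|j-k|=1=|k-i|$ (that is, $k=2$), this contradicts $q^2\neq1$. When one of these differences is $\pm2$, we only get $q^4=1$, so $q=\pm\sqrt{-1}$ remains. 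In that subcase I would instead use the explicit zero pattern $g_{ji}=g_{jk}=g_{ki}=0$. The $q$-commutation relations \eqref{eq:involutive q commute} and \eqref{eq:involutive 2x2 minor} between $g_{ii}$ (or $g_{jj}$) and $g_{kj},g_{ik}$ are twisted by $q^{\pm1}$, not $q^{\pm 2}$. Feeding the linear relation into these relations, in the style of Lemma \ref{lem:b and c nonzero}, and using that the diagonal entries are units, should again force $\alpha=\beta=0$ or a vanishing entry, which is a contradiction.

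\emph{Case 2: the wedge is zero.} Then $g_{ii}-g_{jj}=\xi g_{ij}$ for some $\xi\in\kk$. Conjugating by $\overline{D}$ gives $\xi=q^{2(j-i)}\xi$. This forces $\xi=0$ unless $|i-j|=2$ and $q^4=1$. In that residual situation I would argue as in Lemma \ref{lem:diagonals are equal}, using the antipode. Since the diagonal entries are group-like modulo the now-known zeros, or at least invertible, we can compare $S(g_{ii}-g_{jj})$ with $\xi S(g_{ij})=\xi(-q)^{j-i}\qdet(\mathcal{G}_{\hat\jmath\hat\imath})\overline{D}^{-1}$. Here the minor reduces, via the zero $g_{ji}=0$ and Lemma \ref{lem:Hopf ideal}, to a single product involving $g_{ij}$ and $g_{kk}$. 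This should give $(1-q^{\pm1})\xi g_{ij}=0$ up to a unit, hence $\xi=0$.

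The main obstacle is precisely the case $q^4=1$ with $\{i,j\}$ or $\{i,k\},\{k,j\}$ equal to $\{1,3\}$, where conjugation by $\overline{D}$ gives no information. There I expect to need the finer relations \eqref{eq:involutive q commute}, \eqref{eq:involutive 2x2 minor}, and the antipode formula, together with invertibility of the diagonal entries from Corollary \ref{cor:Hopf subalgebra}.
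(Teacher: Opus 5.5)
Your conjugation-by-$\overline{D}$ argument is correct when $q^4\neq 1$. However, the residual case $q^2=-1$, which you flag yourself, is not proved, and your Case 2 sketch for it relies on a false simplification.

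Take $(i,j)=(1,3)$, $k=2$. The minor in $S(g_{13})$ is $\qdet(\mathcal{G}_{\hat 3\hat 1})=g_{12}g_{23}-q^{-1}g_{13}g_{22}$, and it does not reduce to a single product. From $g_{31}=0$, Lemma \ref{lem:Hopf ideal} only gives $g_{32}=0$ or $g_{21}=0$, which says nothing about $g_{12}g_{23}=g_{ik}g_{kj}$. In Case 2 the vanishing wedge only makes $g_{12}$ and $g_{23}$ linearly dependent. That one of $g_{ik},g_{kj}$ actually vanishes is Lemma \ref{lem:g_ik or g_kj is zero}, whose proof uses the present lemma, so you cannot invoke it. Even granting $g_{12}g_{23}=0$, you would still need to move $g_{22}$ past $g_{13}$ to extract a scalar factor. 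That is Lemma \ref{lem:easy calculation}, which again requires $g_{12}=0$ or $g_{23}=0$.

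The Case 1 residual is also only asserted. It can be completed: $M$ is forced to be diagonal or antidiagonal, so $g_{ij}$ is proportional to $g_{ik}$ or to $g_{kj}$, and two of the relations \eqref{eq:involutive q commute}, \eqref{eq:involutive 2x2 minor} then kill that entry against a unit diagonal entry. As written, though, both residual cases are gaps.

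The missing idea is that you use Corollary \ref{cor:Hopf subalgebra} only for its last sentence, while its dichotomies settle the lemma directly for all $q\neq\pm1$, with no conjugation by $\overline{D}$. This is the paper's proof:
\begin{enumerate}
\item Since $g_{ji}=0$, Lemma \ref{lem:Hopf ideal} gives $g_{ki}=0$ or $g_{jk}=0$.
\item If $g_{ki}=g_{kj}=0$ or $g_{ik}=g_{jk}=0$, apply the corollary to the $\{i,j\}$ block. Since $g_{ij}\neq 0$, this gives $g_{ii}=g_{jj}$.
\item Otherwise, Lemma \ref{lem:opposite entries are zero} forces $g_{ji}=g_{ki}=g_{jk}=0$ with $g_{ik},g_{kj}\neq 0$.
\item Applying the corollary to row $j$ (block $\{i,k\}$) gives $g_{ii}=g_{kk}$, and applying it to column $i$ (block $\{j,k\}$) gives $g_{jj}=g_{kk}$. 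Hence $g_{ii}=g_{jj}$.
\end{enumerate}
The zero pattern in the last step is exactly the one in your Case 1. So that case is contradicted outright, since it yields $g_{ii}=g_{jj}$ while the nonzero wedge requires $g_{ii}\neq g_{jj}$.
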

\begin{proof}
    Suppose $g_{ij} \neq 0$, so $g_{ji} = 0$ by Lemma \ref{lem:opposite entries are zero}. Let $k$ be the unique element of $\{1,2,3\} \setminus \{i,j\}$.  

    \begin{case}\label{case:g_ki = g_kj = 0}
        $g_{ki} = g_{kj} = 0$.   
    \end{case}

    In this case we can apply Corollary~\ref{cor:Hopf subalgebra}\eqref{item:consequences of g_ik = 0} to conclude that $g_{ii} = g_{jj}$, since $g_{ij} \neq 0$.

    \begin{case}\label{case:g_ik = g_jk = 0}
        $g_{ik} = g_{jk} = 0$.
    \end{case}

    Similar to the previous case, except we apply Corollary~\ref{cor:Hopf subalgebra}\eqref{item:consequences of g_kj = 0}.

    \begin{case}
       Neither of the previous two cases holds.
    \end{case}
    
    Note that since $g_{ji} = 0$, it follows from Lemma \ref{lem:Hopf ideal} that either $g_{ki} = 0$ or $g_{jk} = 0$.  If $g_{ki} = 0$ then we must have $g_{kj} \neq 0$; else we are in Case \ref{case:g_ki = g_kj = 0}. Then $g_{kj} \neq 0$ forces $g_{jk} = 0$ by Lemma~\ref{lem:opposite entries are zero}. Now we can assume that $g_{ik} \neq 0$; else 
    we are in Case \ref{case:g_ik = g_jk = 0}.  Thus we have $g_{ji} = g_{ki} = g_{jk} = 0$ while $g_{kj} \neq 0$, $g_{ik} \neq 0$.
    If we start instead with the assumption that $g_{jk} = 0$, it is easy to check that we reach the same conclusion.

    Now using $g_{ji} = g_{jk} = 0$ and $g_{ik} \neq 0$, Corollary~\ref{cor:Hopf subalgebra}\eqref{item:consequences of g_ik = 0} implies $g_{ii} = g_{kk}$.  Similarly, using $g_{ji} = g_{ki} = 0$ and $g_{kj} \neq 0$, Corollary \ref{cor:Hopf subalgebra}\eqref{item:consequences of g_kj = 0} implies $g_{jj} = g_{kk}$.  Therefore, $g_{ii} = g_{jj}$ in this case as well.  
\end{proof}

When some of the diagonal elements $g_{ii}$ are equal, it helps us to show additional $g_{jk}$'s are zero.

\begin{lem}\label{lem:leveraging equal diagonal elements}
    Suppose $q \neq \pm 1$ and choose $i,j,k$ such that $\{i,j,k\} = \{1,2,3\}$.  If $g_{ii} = g_{jj}$ and either $g_{ji} = 0$ or $g_{ik} = 0$, then $g_{jk} = 0$.
\end{lem}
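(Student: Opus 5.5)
The plan is to argue by contradiction: assume $g_{jk} \neq 0$. We then compare two different $q$-commutation relations between $g_{jk}$ and the common diagonal element $g \coloneqq g_{ii} = g_{jj}$. Since $g$ will be a unit, the two relations together will force $g_{jk} = 0$.

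\textbf{Step 1: $g$ is a unit.} By hypothesis $g_{ji} = 0$ or $g_{ik} = 0$, so some off-diagonal $g_{ab}$ vanishes. Corollary \ref{cor:Hopf subalgebra} then applies. It gives that $g_{11}, g_{22}, g_{33}$ pairwise commute, that $\overline{D} = g_{11} g_{22} g_{33}$, and that each diagonal entry is a unit. In particular $g$ is a unit.

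\textbf{Step 2: the first relation.} Lemma \ref{lem:easy calculation} has exactly the hypothesis ``$g_{ik} = 0$ or $g_{ji} = 0$''. It yields
$$g_{ii} g_{jk} = q^{\varepsilon} g_{jk} g_{ii}, \qquad \varepsilon \in \{0, \pm 2\},$$
with $\varepsilon$ determined by $(i,j,k)$. Since $g_{ii} = g$, this reads $g\, g_{jk} = q^{\varepsilon} g_{jk}\, g$.

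\textbf{Step 3: the second relation.} The $q$-commutation relation \eqref{eq:involutive q commute} in row $j$ gives
$$g_{jk} g_{jj} = q^{\pm 1} g_{jj} g_{jk},$$
with sign $+$ if $k > j$ and $-$ if $k < j$. Since $g_{jj} = g$, this reads $g\, g_{jk} = q^{\mp 1} g_{jk}\, g$.

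\textbf{Step 4: comparing.} Combining Steps 2 and 3 gives $(q^{\varepsilon} - q^{\mp 1})\, g_{jk}\, g = 0$. Because $g$ is a unit, either $g_{jk} = 0$ or $q^{\varepsilon \pm 1} = 1$.
\begin{itemize}
\item The exponent $\varepsilon \pm 1$ is odd, because $\varepsilon$ is even.
\item If the exponent is $\pm 1$, then $q^{\pm 1} = 1$ contradicts $q \neq \pm 1$.
\item The only remaining possibility is an exponent $\pm 3$, that is, $q^3 = 1$.
\end{itemize}
To settle this I will go through the admissible triples $(i,j,k)$ and check which ones actually produce exponent $\pm 3$. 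By the definition of $\varepsilon$ in Lemma \ref{lem:easy calculation}, this can only happen when $i = 2$.

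\textbf{Step 5: the case $q^3 = 1$ (main obstacle).} This residual case is where I expect the real work. There I will use the third relation \eqref{eq:involutive qdet commutation}:
$$g_{jk} \overline{D} = q^{2(k-j)} \overline{D} g_{jk}, \qquad \overline{D} = g_{11} g_{22} g_{33}.$$
Write $g_{kk} = h$, which commutes with $g$. Applying Step 2 twice, and moving $g_{jk}$ past $h$ via the $2\times 2$ minor relation \eqref{eq:involutive 2x2 minor} (where the hypothesis kills the cross term), gives a second exponent condition. These relations combine to give $q^{m} g_{jk} = g_{jk}$ for some $m$ that I expect to be $\pm 2$ or $\pm 4$, and hence a contradiction since $q^2 \neq 1$ and $q^3 = 1$.

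If the term involving $h$ does not cooperate, the fallback is Lemma \ref{lem:g_ij nonzero implies g_ii = g_jj}. Since $g_{jk} \neq 0$, it gives $g_{kk} = g_{jj}$, so all three diagonal entries equal $g$ and $\overline{D} = g^3$. Then \eqref{eq:involutive qdet commutation} gives $q^{3\varepsilon} = q^{-2(k-j)}$. Together with $q^{\varepsilon \pm 1} = 1$ and $q^3 = 1$, this forces $q^{2(k-j)} = 1$, which contradicts $q^2 \neq 1$ since $k - j = \pm 1, \pm 2$ and $q^3 = 1$.
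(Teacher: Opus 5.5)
Your Steps 1--4 are exactly the paper's proof: you compare $g_{ii}g_{jk}=q^{\varepsilon}g_{jk}g_{ii}$ from Lemma~\ref{lem:easy calculation} with the row-$j$ relation \eqref{eq:involutive q commute}, then cancel the unit $g_{ii}=g_{jj}$ supplied by Corollary~\ref{cor:Hopf subalgebra}. Step 5 is unnecessary, because the case check you deferred never produces exponent $\pm 3$: the signs of $\varepsilon$ and of the row exponent are correlated. For $i\in\{1,3\}$ one has $\varepsilon=0$; for $(i,j,k)=(2,1,3)$ you compare $q^{-2}$ with $q^{-1}$; for $(2,3,1)$ you compare $q^{2}$ with $q$. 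So in every case the two relations force $q=1$.
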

\begin{proof}
    By Lemma~\ref{lem:easy calculation}, we have $g_{ii} g_{jk} = q^{\varepsilon} g_{jk} g_{ii}$ for the value of 
    $\varepsilon$ calculated there.  In addition, we have $g_{jj} g_{jk} = q^{\beta} g_{jk} g_{jj}$ by relations \eqref{eq:involutive q commute}, where $\beta = 1$ if $j > k$ and $\beta = -1$ if $k > j$.  Now since $g_{ii} = g_{jj}$, combining the two relations we have $(q^{\varepsilon} - q^{\beta}) g_{jk} g_{ii} = 0$.

    Now if $i \in \{1, 3 \}$ we have $\varepsilon = 0$, $\beta \in \{1, -1 \}$; if $(i,j,k) = (2,1,3)$ we have $\varepsilon = -2$, $\beta = -1$; and if $(i,j,k) = (2,3,1)$ we have $\varepsilon = 2$, $\beta = 1$.  In all cases 
    we conclude that $(q-1) g_{jk} g_{ii} = 0$ and since $q \neq 1$, then $g_{jk} g_{ii} = 0$.  Finally, $g_{ii}$ is a unit by Corollary~\ref{cor:Hopf subalgebra}, and so $g_{jk} = 0$.
\end{proof}

We now finish showing that at least one of the terms in each of the wedges appearing in Lemma \ref{lem:linear dependence 3 variables} is zero.

\begin{lem}\label{lem:g_ik or g_kj is zero}
    Suppose $q \neq \pm 1$ and choose $i,j,k$ such that $\{i,j,k\} = \{1,2,3\}$. Then $g_{ik} = 0$ or $g_{kj} = 0$.
\end{lem}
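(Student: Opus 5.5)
Suppose, for a contradiction, that $g_{ik} \neq 0$ and $g_{kj} \neq 0$. By Lemma \ref{lem:opposite entries are zero} we then get $g_{ki} = 0$ and $g_{jk} = 0$. By Lemma \ref{lem:g_ij nonzero implies g_ii = g_jj} we also get $g_{ii} = g_{kk}$ and $g_{kk} = g_{jj}$, so all three diagonal entries $g_{11}, g_{22}, g_{33}$ coincide. The plan is to push this to a contradiction by showing that the remaining off-diagonal entries vanish, and then using the wedge identity of Lemma \ref{lem:linear dependence 3 variables}\eqref{cond:wedge from ij}.

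First, apply Lemma \ref{lem:leveraging equal diagonal elements} to the triple $(k,i,j)$ in the roles of $(i,j,k)$. Since $g_{kk} = g_{ii}$ and $g_{kj} \neq 0$, we cannot use the hypothesis $g_{kj}=0$. Instead we use $g_{ik}$: the lemma with $(i,j,k) \mapsto (k,i,j)$ requires $g_{kk}=g_{ii}$ together with $g_{ik}=0$ or $g_{kj}=0$, neither of which holds. So we choose the triple more carefully. The triple $(k,j,i)$ requires $g_{kk}=g_{jj}$ and ($g_{jk}=0$ or $g_{ki}=0$), and both of these hold. The lemma therefore gives $g_{ji} = 0$.

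Next, we have $g_{ii}=g_{jj}$ and $g_{ji}=0$. Applying the lemma to the triple $(i,j,k)$ needs $g_{ii}=g_{jj}$ and ($g_{ji}=0$ or $g_{ik}=0$), so it gives $g_{jk}=0$, which we already knew. Applying it to the triple $(j,i,k)$ needs $g_{jj}=g_{ii}$ and ($g_{ij}=0$ or $g_{jk}=0$). Since $g_{jk}=0$, this gives $g_{ik}=0$, contradicting the assumption $g_{ik} \neq 0$.

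If the index bookkeeping in Lemma \ref{lem:leveraging equal diagonal elements} turns out differently from what I expect, the fallback is the following. With $g_{ki}=g_{jk}=g_{ji}=0$ and all diagonal entries equal, Lemma \ref{lem:linear dependence 3 variables}\eqref{cond:wedge from ij} gives
$$0 = (g_{ii}-g_{jj}) \wedge g_{ij} = g_{kj} \wedge g_{ik}.$$
Hence $g_{kj}$ and $g_{ik}$ are proportional, say $g_{kj} = \xi g_{ik}$ with $\xi \neq 0$. Commuting both sides with $\overline{D}$ via \eqref{eq:involutive qdet commutation} gives
$$q^{2(j-k)} = q^{2(k-i)},$$
that is, $q^{2(i+j-2k)} = 1$. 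When $k$ is the middle index this is automatic, so in that case I would instead use the relations of Lemma \ref{lem:easy calculation} and \eqref{eq:involutive q commute}. Comparing the $q$-commutation of $g_{ii}=g_{kk}$ with $g_{ik}$ and with $g_{kj}$ forces $(q-1)g_{ik}g_{ii}=0$. Since $g_{ii}$ is a unit by Corollary \ref{cor:Hopf subalgebra}, this gives $g_{ik}=0$.

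The main obstacle is getting the index permutations in Lemma \ref{lem:leveraging equal diagonal elements} right, and making sure its hypotheses (in particular that Corollary \ref{cor:Hopf subalgebra} applies, which needs some $g_{ab}=0$) are met. Here they are, since $g_{ki}=0$.
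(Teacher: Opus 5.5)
Your main argument is correct and is essentially the paper's proof: once you have $g_{ki}=g_{jk}=0$ and $g_{11}=g_{22}=g_{33}$, applying Lemma~\ref{lem:leveraging equal diagonal elements} to the triple $(j,i,k)$ with $g_{jk}=0$ gives $g_{ik}=0$, exactly as the paper does, so your intermediate derivation of $g_{ji}=0$ is never used. Drop the fallback paragraph: besides being unnecessary, it is incomplete, because for $k\in\{1,3\}$ the $\overline{D}$-commutation only yields $q^{6}=1$, which does not contradict $q\neq\pm1$.
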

\begin{proof}
    Assume, for a contradiction, that $g_{ik}$ and $g_{kj}$ are both nonzero. Lemma \ref{lem:opposite entries are zero} implies that $g_{ki} = g_{jk} = 0$, while Lemma \ref{lem:g_ij nonzero implies g_ii = g_jj} implies that $g_{11} = g_{22} = g_{33}$. Now by Lemma~\ref{lem:leveraging equal diagonal elements}, $g_{ii} = g_{jj}$ together with 
    $g_{jk} = 0$ imply $g_{ik} = 0$, a contradiction.
\end{proof}

We summarize the above results in the following corollary.

\begin{cor}\label{cor:many dichotomies}
    Suppose $q \neq \pm 1$ and choose $i,j,k$ such that $\{i,j,k\} = \{1,2,3\}$. Then the following hold in $\overline{H}$.
    \begin{enumerate}
        \item $g_{ij} = 0$ or $g_{ji} = 0$.\label{cond:ij or ji is zero}
        \item $g_{ii} = g_{jj}$ or $g_{ij} = 0$.\label{cond:ii = jj or ij is zero}
        \item $g_{ik} = 0$ or $g_{kj} = 0$.\label{cond:ik or kj is zero}
    \end{enumerate}
\end{cor}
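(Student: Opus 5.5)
This corollary is just a repackaging of the three preceding lemmas, so the plan is to match each item to the lemma that gives it; no new computation is needed. All three items are statements about the fixed cocommutative quotient $\overline{H} = H_q(3)/I$ under the standing assumption $q \neq \pm 1$. That assumption is the only hypothesis the earlier lemmas require, so they apply directly.

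For item \eqref{cond:ij or ji is zero}, I will invoke Lemma \ref{lem:opposite entries are zero} with the pair $i \neq j$. Its conclusion is precisely that $g_{ij} = 0$ or $g_{ji} = 0$.

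For item \eqref{cond:ii = jj or ij is zero}, I will take the contrapositive form of Lemma \ref{lem:g_ij nonzero implies g_ii = g_jj}. If $g_{ij} = 0$ there is nothing to prove. Otherwise $g_{ij} \neq 0$, and that lemma yields $g_{ii} = g_{jj}$.

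For item \eqref{cond:ik or kj is zero}, I will apply Lemma \ref{lem:g_ik or g_kj is zero} to the triple $(i,j,k)$, noting that $\{i,j,k\} = \{1,2,3\}$ is exactly its hypothesis.

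There is no genuine obstacle here. The only point that needs care is checking that the indexing conventions agree, that is, that each lemma is stated for an arbitrary ordered triple or pair as the corollary requires. Lemmas \ref{lem:opposite entries are zero} and \ref{lem:g_ij nonzero implies g_ii = g_jj} are stated for any ordered pair $i \neq j$, and Lemma \ref{lem:g_ik or g_kj is zero} for any ordered triple, so they cover every choice of indices in the corollary.
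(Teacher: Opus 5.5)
Your proposal is correct and is exactly the paper's argument, which proves the corollary by citing Lemmas \ref{lem:opposite entries are zero}, \ref{lem:g_ij nonzero implies g_ii = g_jj}, and \ref{lem:g_ik or g_kj is zero} for items (1), (2), and (3) respectively. Your check that these lemmas hold for arbitrary ordered pairs and triples is the only point needing care, and you handled it.
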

\begin{proof}
    Follows from Lemmas \ref{lem:opposite entries are zero}, \ref{lem:g_ij nonzero implies g_ii = g_jj}, and \ref{lem:g_ik or g_kj is zero}.
\end{proof}

The next result is the last ingredient we need to prove Theorem \ref{thm:generic}.

\begin{prop}\label{prop:off-diagonals are mostly zero}
    Suppose $q \neq \pm 1$. If $g_{ij} \neq 0$ for some $i \neq j$, then $g_{ii} = g_{jj}$, and $g_{k\ell} = 0$ for all $k \neq \ell$ such that $(k,\ell) \neq (i,j)$.
\end{prop}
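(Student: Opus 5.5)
Suppose $g_{ij} \neq 0$ with $i \neq j$, and let $k$ be the remaining index. The first claim, $g_{ii} = g_{jj}$, is Corollary \ref{cor:many dichotomies}\eqref{cond:ii = jj or ij is zero}. For the second claim we must show that the five off-diagonal entries $g_{ji}, g_{ik}, g_{ki}, g_{jk}, g_{kj}$ all vanish. Corollary \ref{cor:many dichotomies}\eqref{cond:ij or ji is zero} gives $g_{ji} = 0$ directly. Lemma \ref{lem:leveraging equal diagonal elements}, applied with $g_{ii} = g_{jj}$ and $g_{ji} = 0$, then gives $g_{jk} = 0$. Applying the same lemma with the roles of $i$ and $j$ swapped (the triple $(j,i,k)$, using $g_{jj} = g_{ii}$ and $g_{ij} \neq 0$) is not directly available, since it needs $g_{ij} = 0$ or $g_{jk} = 0$. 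However, $g_{jk} = 0$ is exactly the second alternative, so it yields $g_{ik} = 0$.

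It remains to show $g_{ki} = 0$ and $g_{kj} = 0$. I would use Corollary \ref{cor:many dichotomies}\eqref{cond:ik or kj is zero} in the form ``$g_{ak} = 0$ or $g_{kb} = 0$'' together with cocommutativity. Lemma \ref{lem:linear dependence 3 variables}\eqref{cond:wedge from ij} for the triple $(i,j,k)$ gives $(g_{ii} - g_{jj}) \wedge g_{ij} = g_{kj} \wedge g_{ik}$, and both sides are already zero, so it carries no information. Instead I would argue by contradiction. Suppose $g_{kj} \neq 0$. By Corollary \ref{cor:many dichotomies}\eqref{cond:ii = jj or ij is zero}, $g_{kk} = g_{jj}$, so all three diagonal entries are equal. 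Now apply Lemma \ref{lem:leveraging equal diagonal elements} to the triple $(k,j,i)$: we have $g_{kk} = g_{jj}$ and $g_{ki} = 0$ or $g_{jk} = 0$, and the latter holds, so $g_{ji} = 0$, which is already known. Applying it to the triple $(j,k,i)$ instead, with $g_{jj} = g_{kk}$ and ``$g_{kj} = 0$ or $g_{ji} = 0$'' satisfied by $g_{ji} = 0$, gives $g_{ki} = 0$. Finally, apply it to the triple $(j,i,k)$ with $g_{jj} = g_{ii}$ and the condition ``$g_{ij} = 0$ or $g_{jk} = 0$'', which holds by $g_{jk} = 0$; this gives $g_{ik} = 0$, consistent but not contradictory. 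To reach a contradiction I would apply it to the triple $(i,k,j)$ with $g_{ii} = g_{kk}$ and ``$g_{ki} = 0$ or $g_{ij} = 0$'', which holds by $g_{ki} = 0$; this gives $g_{kj} = 0$, contradicting the assumption. Hence $g_{kj} = 0$.

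A symmetric argument handles $g_{ki}$. Assuming $g_{ki} \neq 0$ gives $g_{kk} = g_{ii}$. Then Lemma \ref{lem:leveraging equal diagonal elements} for the triple $(j,k,i)$ (or $(k,i,j)$), using the vanishings already established, yields $g_{ki} = 0$, a contradiction. Concretely, take the triple $(j,k,i)$: we have $g_{jj} = g_{kk}$, and the condition ``$g_{kj} = 0$ or $g_{ji} = 0$'' holds, so $g_{ki} = 0$.

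The main obstacle is bookkeeping: finding, for each target entry, a triple in Lemma \ref{lem:leveraging equal diagonal elements} whose hypotheses are met by previously established vanishings and equalities of diagonal entries. Equality of diagonal entries is only available from Corollary \ref{cor:many dichotomies}\eqref{cond:ii = jj or ij is zero} once some entry is assumed nonzero, which is why the last two entries are handled by contradiction.
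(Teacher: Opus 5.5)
Your argument is correct and uses the same ingredients as the paper's proof, namely Corollary~\ref{cor:many dichotomies} together with Lemma~\ref{lem:leveraging equal diagonal elements}. The differences are only in bookkeeping. The paper reads off $g_{ji}=g_{jk}=g_{ki}=0$ at once from the corollary, and then rules out $g_{ik}\neq 0$ and $g_{kj}\neq 0$ by contradiction. For instance, $g_{ki}=0$ is part~\eqref{cond:ik or kj is zero} for the triple $(k,j,i)$, since $g_{ij}\neq 0$. You instead obtain $g_{jk}$ and $g_{ik}$ directly from the lemma and argue by contradiction for $g_{kj}$ and $g_{ki}$. In a write-up, delete the exploratory applications that only reproduce vanishings you already have.
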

\begin{proof}
    The assumption $g_{ij} \neq 0$ immediately implies $g_{ii} = g_{jj}$ and $g_{ji} = g_{jk} = g_{ki} = 0$ by Corollary~\ref{cor:many dichotomies}. Thus, to prove the result in this case, we must show that $g_{ik} = g_{kj} = 0$.
    
    Corollary~\ref{cor:many dichotomies} also shows that we cannot have both $g_{ik}$ and $g_{kj}$ nonzero.  Suppose 
    that $g_{ik} = 0$ but $g_{kj} \neq 0$.  This forces $g_{jj} = g_{kk}$ as well, so $g_{ii} = g_{jj} = g_{kk}$.
    Now Lemma~\ref{lem:leveraging equal diagonal elements} shows that because $g_{ii} = g_{kk}$ and $g_{ik} = 0$, then $g_{ij} = 0$, which is a contradiction.

    Similarly, if $g_{ik} \neq 0$ but $g_{kj} = 0$, we get $g_{ii} = g_{kk}$. Now by Lemma~\ref{lem:leveraging equal diagonal elements}, $g_{ii} = g_{kk}$ and $g_{kj} = 0$ together imply the contradiction $g_{ij} = 0$ again.
\end{proof}

We are now ready to prove Theorem \ref{thm:generic}.

\begin{proof}[Proof of Theorem \ref{thm:generic}]
    Let $C$ be a cocommutative Hopf algebra, and let $\varphi \colon H_q(3) \to C$ be a Hopf algebra homomorphism. Write $g_{ij} \coloneqq \varphi(h_{ij})$. If $g_{ij} = 0$ for all $i \neq j$, then it is easy to see that $\overline{H} \coloneqq \im(\varphi)$ is a quotient of $\kk \ZZ^3$. So, assume that $g_{ij} \neq 0$ for some $i \neq j$. By Proposition \ref{prop:off-diagonals are mostly zero}, it must be the case that $g_{ii} = g_{jj}$ and $g_{k\ell} = 0$ for all $k \neq \ell$ such that $(k,\ell) \neq (i,j)$. It is now clear from Proposition \ref{prop:generic quotient Bq or Cq} that any Hopf map $H_q(3) \to C$, where $C$ is cocommutative, factors through one of the algebras on our list.  From the universal property of $H_q(3)$, it is clear that any coaction on $A_q(3)$ by a cocommutative Hopf algebra factors through one of the given ones.  Finally, it is straightforward to check that the largest group algebra occurring as a Hopf algebra quotient of $\mathcal{B}_{q^{\pm 1}}$ or $\mathcal{C}_{q^{\pm 1}}$ is $\kk \ZZ^2$, so every coacting group algebra is abelian.
\end{proof}

\section{Applications and questions}\label{sec:applications}

We now spend some time presenting some applications of the main results of the paper, as well as some open questions.

\subsection{GK-dimension 1 Hopf coactions on \texorpdfstring{$A_q(2)$}{Aq(2)}}

First, we partially answer the following question of Chan--Walton--Zhang.

\begin{qstn}[{\cite[Question 5.11]{ChanWaltonZhang}}] \label{qstn:noncommutative coactions on Aq(2)}
    Let $q \in \kk$ such that $q$ is not a root of unity. Is there a noncommutative Hopf algebra of GK-dimension 1 which coacts on $A_q(2)$ inner-faithfully?
\end{qstn}

In \cite[Theorem 0.4]{ChanWaltonZhang}, it is shown that if a finite-dimensional Hopf algebra $H$ acts inner-faithfully on $A_q(2)$, where $q$ is not a root of unity, then $H$ is a group algebra. In fact, by examining the proof of \cite[Theorem 0.4]{ChanWaltonZhang}, it is easy to see that $H$ must be the group algebra of a finite abelian group, and that the same result also holds for coactions by finite-dimensional Hopf algebras. Therefore, Chan--Walton--Zhang proved that all finite-dimensional (in other words, GK-dimension 0) inner-faithful coactions on $A_q(2)$ for $q$ not a root of unity are by group algebras of finite abelian groups.

Using Theorem \ref{thm:cocommutative quotient 2 variables}, we know that there are inner-faithful coactions on $A_q(2)$ by noncommutative Hopf algebras of GK-dimension $2$ -- these are the Hopf algebras $\mathcal{A}(0,q^{\pm 1})$ from Definition \ref{dfn:Goodearl-Zhang Hopf algebra}. If $q$ is a root of unity, then it is easy to show that $\mathcal{A}(0,q^{\pm 1})$ has a noncommutative quotient of GK-dimension $1$.

\begin{cor}
    Suppose $q \neq 1$ and $q^n = 1$ for some $n \geq 2$. Then the noncommutative Hopf algebra
    $$\frac{\mathcal{A}(0,q^{\pm 1})}{(x^n - 1)} \cong \frac{\kk\ang{x,y}}{(x^n - 1, yx - q^{\pm 1} xy)}$$
    has GK-dimension 1 and coacts on $A_q(2)$ inner-faithfully. \qed
\end{cor}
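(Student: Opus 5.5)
The corollary packs three claims: the displayed isomorphism, that the quotient has GK-dimension $1$ and is noncommutative, and that it coacts on $A_q(2)$ inner-faithfully. I treat the sign $+1$; the $-1$ case is identical after replacing $q$ by $q^{-1}$ and $I_1$ by $I_2$.

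\textbf{Hopf ideal and isomorphism.} First, $x^n$ is group-like, so $x^n - 1$ is $(1,1)$-skew primitive. Also $S(x^n - 1) = x^{-n} - 1 = -x^{-n}(x^n - 1)$. Hence $(x^n - 1)$ is a Hopf ideal and the quotient $K$ is a Hopf algebra. In $K$ we have $x^{-1} = x^{n-1}$, which gives the presentation $\kk\ang{x,y}/(x^n - 1, yx - qxy)$.

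\textbf{Noncommutativity and GK-dimension.} $K$ is an Ore extension $\kk[x]/(x^n - 1)[y;\sigma]$ with $\sigma(x) = qx$, so it has basis $\{x^i y^j \mid 0 \le i < n,\ j \ge 0\}$. From this basis, $yx - xy = (q-1)xy \ne 0$ because $q \ne 1$, so $K$ is noncommutative. It is also a free module of rank $n$ over $\kk[y]$, which gives $\GKdim K = 1$.

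\textbf{Coaction.} By Proposition \ref{prop:non-group cocommutative quotient}, $\mathcal{A}(0,q) \cong H_q(2)/I_1$, with $x \mapsto g_{11}$ and $y \mapsto g_{11}^{-1}g_{12}$. So $K$ is a Hopf quotient of $H_q(2)$, hence of $\aut^r(A_q(2))$. Composing the universal coaction with this quotient map gives a coaction
$$x_1 \mapsto x_1 \otimes x, \qquad x_2 \mapsto x_1 \otimes xy + x_2 \otimes x$$
(up to the transpose convention). Any Hopf quotient of the universal coacting Hopf algebra coacts through the induced coaction, and that coaction is inner-faithful. The reason: a Hopf subalgebra $H'$ with $\rho(A_1) \subseteq A_1 \otimes H'$ must contain all the matrix coefficients $g_{ij}$ and their antipodes, and these generate the quotient. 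Here the coefficients $x$ and $xy$ lie in $H'$, and so do $S(x) = x^{-1}$ and hence $y$, so $H' = K$.

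\textbf{Main obstacle.} The only point needing care is the inner-faithfulness argument: confirming that the coefficient entries $x$ and $xy$, together with the antipode, generate $K$. They do, so no real obstacle remains.
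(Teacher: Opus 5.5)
Your proposal is correct and takes essentially the route the paper intends: the paper states the corollary with \qed, relying on $\mathcal{A}(0,q^{\pm 1})$ being a Hopf quotient of $\aut^r(A_q(2))$ by Proposition \ref{prop:non-group cocommutative quotient}. Your check of the Hopf ideal $(x^n-1)$, the Ore-extension basis, and inner-faithfulness via the coefficients $x$ and $xy$ supplies the routine details the paper omits. In the basis step, $q^n=1$ is exactly what makes $\sigma$ descend to $\kk[x]/(x^n-1)$, since $\sigma(x^n-1)=q^nx^n-1$; it is worth saying so explicitly.
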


However, if $q$ is not a root of unity, then there are no noncommutative, cocommutative Hopf algebras of GK-dimension $1$ which coact on $A_q(2)$ inner-faithfully, as we show next.

\begin{prop}\label{prop:GKdim 1 cocommutative coactions}
    Suppose $q$ is not a root of unity. Then there are no noncommutative, cocommutative Hopf algebras of GK-dimension $1$ which coact on $A_q(2)$ inner-faithfully.
\end{prop}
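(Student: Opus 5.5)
Suppose $C$ is a noncommutative, cocommutative Hopf algebra of GK-dimension $1$ that coacts on $A_q(2)$ inner-faithfully; we aim for a contradiction. Since $q$ is not a root of unity, $q \neq \pm 1$. The coaction induces a Hopf map $\aut^r(A_q(2)) \to C$, which factors through $H_q(2)$ because $C$ is involutive. Inner-faithfulness makes this map surjective: its image is a Hopf subalgebra through which the coaction factors. By Theorem \ref{thm:cocommutative quotient 2 variables}, $C$ is then a Hopf quotient of $\kk\ZZ^2$ or of $\mathcal{A}(0,q^{\pm 1})$. A quotient of $\kk\ZZ^2$ is commutative, so $C$ must be a Hopf quotient of $\mathcal{A}(0,q^{\epsilon})$ for some $\epsilon \in \{\pm 1\}$. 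By symmetry we may take $\epsilon = 1$. Write $x,y$ for the images in $C$ of the generators of $\mathcal{A}(0,q)$. Then $x$ is group-like, $y$ is primitive, $yx = qxy$, and $C$ is generated by $x^{\pm 1}$ and $y$.

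Next we use noncommutativity. If $y = 0$, then $C$ is commutative, so $y \neq 0$. A nonzero primitive element in characteristic zero generates a copy of the polynomial algebra $\kk[y]$. Indeed, $\kk[y] \cong U(\kk y)$ maps to $C$, and its kernel is a Hopf ideal of a polynomial Hopf algebra in one primitive variable. Such a Hopf ideal is either $0$ or contains $y$, and the latter is excluded because $y \neq 0$. So $y$ is transcendental, and consequently $\GKdim C \geq 1$. If moreover $x$ has infinite order in the group $G(C)$, the goal is to show $\GKdim C = 2$, contradicting $\GKdim C = 1$.

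I expect this step to be the main obstacle. The approach is to show that the monomials $x^a y^b$ for $a \in \ZZ$, $b \in \NN$ are linearly independent in $C$. The standard tool is that in a cocommutative Hopf algebra over an algebraically closed field of characteristic zero, $C \cong U(P(C)) \rtimes \kk G(C)$ by Cartier--Kostant--Gabriel. The elements $y^b$ lie in $U(P(C))$ and are independent, and the distinct group-likes $x^a$ are independent over $U(P(C))$ in the smash product. This gives $\GKdim C \ge 2$, since the span of $\{x^a y^b : |a|,b \le N\}$ grows quadratically.

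It remains to rule out the case where $x$ has finite order, say $x^m = 1$. The relation $yx = qxy$ gives $y = x^m y x^{-m} = q^{-m} y$, where the second equality follows by conjugating repeatedly. Hence $(1 - q^{-m}) y = 0$. Since $q$ is not a root of unity, this forces $y = 0$, which is a contradiction. This is exactly where the hypothesis on $q$ enters, mirroring the corollary above in which $x^n = 1$ works for roots of unity. Therefore no such $C$ exists.
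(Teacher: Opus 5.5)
Your proof is correct and follows essentially the same route as the paper: reduce via Theorem \ref{thm:cocommutative quotient 2 variables} to a quotient of $\mathcal{A}(0,q^{\pm 1})$, use Cartier--Kostant--Gabriel to force the group-like $x$ to have finite order, and then use $yx = q^{\pm 1}xy$ together with $q$ not being a root of unity to force $y = 0$. The only difference is in how the GK-dimension bound is obtained: the paper cites the additivity formula $\GKdim(U(P(H)) \# \kk G(H)) = \GKdim U(P(H)) + \GKdim \kk G(H)$ from \cite[Lemma 2.2]{AndruskiewitschAngionoHeckenberger}, whereas you derive $\GKdim C \geq 2$ directly from the linear independence of the monomials $x^a y^b$ in the smash product basis.
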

\begin{proof}
    Assume, for a contradiction, that $H$ is a noncommutative, cocommutative Hopf algebra of GK-dimension $1$ which coacts on $A_q(2)$ inner-faithfully. By Theorem \ref{thm:cocommutative quotient 2 variables}, it must be the case that $H \cong \mathcal{A}(0,q^{\pm 1})/I$ for some Hopf ideal $I$ of $\mathcal{A}(0,q^{\pm 1})$. Let $\overline{x}$ and $\overline{y}$ be the images of $x$ and $y$ in $H$.
    
    Since $H$ is cocommutative, it follows by the Cartier--Kostant--Gabriel theorem \cite[Theorem 5.10.2]{EGNO} that $H \cong U(P(H)) \# \kk G(H)$, where $P(H) = \kk \overline{y}$ is the Lie algebra of primitive elements of $H$ and $G(H) = \ang{\overline{x}}$ is the group of group-like elements of $H$. It is easy to see that $\overline{y} \neq 0$, as otherwise $H \cong \mathcal{A}(0,q^{\pm 1})/I$ would be commutative. Therefore, $P(H)$ is a one-dimensional Lie algbera, so $\GKdim(U(P(H))) = \GKdim(\kk[\overline{y}]) = 1$.

    By \cite[Lemma 2.2]{AndruskiewitschAngionoHeckenberger}, we see that
    $$1 = \GKdim(H) = \GKdim(U(P(H))) + \GKdim(\kk G(H)) = 1 + \GKdim(G(H)).$$
    Therefore, $\GKdim(\kk G(H)) = 0$, in other words, $G(H)$ is a finite group. It follows that $G(H) = \ang{\overline{x}}$ is a finite cyclic group, so $\overline{x}^n = 1$ for some $n \in \NN$. Hence, we have
    $$\overline{y} = \overline{y} \, \overline{x}^n = q^{\pm n} \overline{x}^n \overline{y} = q^{\pm n} \overline{y}.$$
    However, $q$ is not a root of unity, so $q^n \neq 1$, and thus $\overline{y} = 0$, a contradiction.
\end{proof}

Proposition \ref{prop:GKdim 1 cocommutative coactions} suggests that the answer to Question \ref{qstn:noncommutative coactions on Aq(2)} may be ``no'', but our methods do not yield any information about coactions by non-cocommutative Hopf algebras which may coact on $A_q(2)$.

\subsection{Cosemisimple Hopf coactions on commutative domains}

Next, we consider the following question of Julien Bichon.

\begin{qstn}[Julien Bichon, cf. {\cite[Question 2.21]{WaltonWang}}] \label{qstn:Bichon cosemisimple}
    Is there an infinite-dimensional, noncommutative, cosemisimple, involutive Hopf algebra which coacts on a commutative domain inner-faithfully?
\end{qstn}

Question \ref{qstn:Bichon cosemisimple} is related to \cite[Theorem 4.1]{EtingofWalton}, which shows that if $H$ is a finite-dimensional cosemisimple Hopf algebra which coacts on a commutative domain inner-faithfully, then $H$ is the group algebra of a finite abelian group. Note that, by the Radford--Larson theorem \cite[Theorem 16.1.2]{Radford}, if $H$ is a finite-dimensional Hopf algebra then the following conditions are equivalent:
\begin{itemize}
    \item $H$ is semisimple.
    \item $H$ is cosemisimple.
    \item $H$ is involutive.
\end{itemize}
However, when $H$ is infinite-dimensional, the above are no longer equivalent. Therefore, an analog of \cite[Theorem 4.1]{EtingofWalton} for infinite-dimensional Hopf algebras would necessarily involve choosing some subset of the three conditions above. One such possibility is Question \ref{qstn:Bichon cosemisimple}, which to our knowledge is still an open question.

From Corollary~\ref{cor:involutive relations 2 variables}, we see that the universal involutive Hopf algebra $H_1(2)$ right coacting on $\kk[x,y]$ is commutative (in fact $H_1(2) \cong \mc{O}_{GL_2}$, the coordinate ring of the general linear group).  Thus any involutive Hopf algebra which coacts on $\kk[x,y]$ inner-faithfully must be commutative, as was also noted in \cite[Proposition 5.4]{ChanWaltonZhang}.

Therefore, one natural starting point for Question \ref{qstn:Bichon cosemisimple} is to consider the following question: are there noncommutative involutive Hopf algebras which coact on $\kk[x,y,z]$ inner-faithfully? To answer this, we can study the Hopf algebra $H_1(3)$. However, it is not clear from the presentation given in \eqref{eq:involutive relations} whether $H_1(3)$ is commutative, just like $H_1(2)$ turned out to be commutative. As we show next, the Hopf algebra $H_1(3)$ is noncommutative and has significantly worse properties than $H_1(2)$.

\begin{prop}\label{prop:involutive Hopf algebra is not commutative}
    The Hopf algebra $H_1(3)$ is noncommutative, non-noetherian, and has infinite GK-dimension. In particular, $H_1(3) \not\cong \mc{O}_{GL_3}$.
\end{prop}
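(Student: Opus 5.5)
The plan is to find a Hopf quotient of $H_1(3)$ that is already noncommutative, non-noetherian and of infinite GK-dimension. These three properties then lift to $H_1(3)$: a quotient of a commutative algebra is commutative, a quotient of a noetherian algebra is noetherian, and GK-dimension does not increase under passing to quotients. The final claim $H_1(3)\not\cong \mc{O}_{GL_3}$ is then automatic, since $\mc{O}_{GL_3}$ is commutative and noetherian of GK-dimension $9$.

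When $q=1$, every $q$-power in \eqref{eq:involutive relations 3 variables} disappears. Relation \eqref{eq:involutive q commute} says entries in the same row commute. Relation \eqref{eq:involutive 2x2 minor} reads $ad-bc=da-cb$, \eqref{eq:involutive qdet commutation} says $D$ is central, and \eqref{eq:involutive long relation} is the left adjugate relation. I would impose the quotient used in the proof of Lemma~\ref{lem:opposite wedges are zero}, where the analysis broke down exactly because of the roots of unity $q^6=1$. Concretely, I would take the permutation-type matrix
\[
\mathcal{G}=\begin{pmatrix} a & b & c\\ c & a & b\\ b & c & a\end{pmatrix}
\]
or a similar circulant pattern. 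The row relations say the three generators pairwise commute, which is too much. So I would instead try a pattern where one row holds two entries from different generators, for instance a block form $\mathcal{H}=\begin{pmatrix} u & v & 0\\ w & z & 0\\ 0&0&t\end{pmatrix}$ twisted by permutations.

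The cleaner route I would actually follow is a quotient onto a group algebra. I would look for a Hopf ideal making $\overline H$ equal to $\kk G$ for a group $G$ that is not virtually polycyclic, for example one containing a free group $F_2$. For $\kk F_2$ all three properties hold at once: it is noncommutative, it is non-noetherian because the augmentation ideal of $F_2$ is not finitely generated as a one-sided ideal (equivalently, $F_2$ has non-finitely-generated subgroups), and it has infinite GK-dimension because of exponential growth. To produce it, I would use monomial coactions $x_i\mapsto x_{\pi(i)}\otimes g_i$ on $\kk[x_1,x_2,x_3]$ with signs. For $q=1$, however, Theorem~\ref{thm:generic} does not apply, and gradings of the commutative $A_1(3)$ are abelian. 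So I would drop the group-algebra requirement and instead take a non-cocommutative involutive quotient.

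I would aim for a quotient $\overline H=H_1(3)/J$ in which
\[
\mathcal{H}\mapsto \begin{pmatrix} a & b & 0\\ 0&0&0\\ 0&0&0\end{pmatrix}+\dots
\]
is replaced by a monomial matrix $h_{i\sigma(i)}\mapsto g_{i}$, while every other entry maps to $0$ and $\sigma$ ranges over both the identity and a transposition. Using two monomial patterns, $\mathcal{H}=P+Q$ with $PQ=QP=0$-type orthogonality, gives a free-product-like structure. The key step is to verify that the relations \eqref{eq:involutive relations 3 variables} at $q=1$ force only relations of the form $g_ig_j=g_jg_i$ within a single row and $ad-bc=da-cb$, and nothing that collapses the free part.

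The main obstacle is this verification. I must choose $J$ so that it is a Hopf ideal, compatible with $\Delta$, $\varepsilon$ and $S$ in the manner of Proposition~\ref{prop:non-group cocommutative quotient}, and then exhibit enough elements of the quotient to show exponential growth and non-noetherianity. If I cannot find such a quotient, the fallback is to construct a representation: find explicit matrices (for instance over a free algebra, or operators on some vector space) satisfying all the relations such that the images of two specific words are free. That gives a surjection onto an algebra containing a free subalgebra on two generators, which yields infinite GK-dimension. Non-noetherianity would come from the same free subalgebra combined with a suitable faithful flatness or retraction argument.
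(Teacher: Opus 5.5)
There is a genuine gap. Your reduction is the right one, and it is the paper's: exhibit a Hopf quotient of $H_1(3)$ that is noncommutative, non-noetherian and of infinite GK-dimension, then pull these properties back. But the proposal never produces such a quotient. The circulant pattern is abandoned, the monomial/``$P+Q$'' construction is not well defined, and you yourself flag the verification as the main obstacle. That verification is the whole content of the proof.

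Moreover, the candidates you float would fail. Take the block pattern with $h_{13}=h_{23}=h_{31}=h_{32}=0$. The upper $2\times 2$ block then satisfies the relations of $H_1(2)$ (argue as in Proposition~\ref{prop:Hopf subalgebra}), and $H_1(2)$ is commutative by Corollary~\ref{cor:involutive relations 2 variables}. The $2\times 2$ minor relations involving the zero entries force the image of $h_{33}$ to commute with that block, so this quotient is commutative.

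Your pivot to non-cocommutative quotients is also a wrong turn. Gradings of a commutative domain being abelian only rules out quotients generated by group-likes. It does not rule out cocommutative quotients generated by primitives.

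The missing idea is to go unipotent rather than monomial. At $q=1$, relations \eqref{eq:involutive q commute} and \eqref{eq:involutive 2x2 minor} make entries in a common row commute and impose $[a,d]=[b,c]$ on each $2\times 2$ minor. Nothing forces two entries of a common column to commute.

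The paper takes $I=(h_{11}-1,h_{22}-1,h_{33}-1,h_{12},h_{21},h_{31},h_{32})$, which is a Hopf ideal. Let $a,b$ be the images of $h_{13},h_{23}$. One checks that $D\mapsto 1$ and that every relation in \eqref{eq:involutive relations 3 variables} becomes a tautology. For example, the minor on rows $1,2$ and columns $2,3$ reads $-a=-a$, and the long relation with $(i,j)=(1,3)$ reads $a-a=0$. Since $\Delta(h_{13})$ maps to $a\otimes 1+1\otimes a$, and similarly for $h_{23}$, you get $H_1(3)/I\cong \kk\ang{a,b}=U(\mathfrak{f}_2)$. This free algebra has all three properties.

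This makes your representation-theoretic fallback unnecessary. As stated, that fallback would not settle non-noetherianity anyway: containing a free subalgebra does not by itself stop an algebra from being noetherian, whereas a surjection onto the free algebra does.
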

\begin{proof}
    Let $I \coloneqq (h_{11} - 1, h_{22} - 1, h_{33} - 1, h_{12}, h_{21}, h_{31}, h_{32})$ as an ideal of $H_1(3)$. It is straightforward to check that $I$ is a Hopf ideal. By \eqref{eq:involutive relations 3 variables}, we see that $H_1(3)/I$ is generated by $a \coloneqq h_{13} + I$ and $b \coloneqq h_{23} + I$ with no relations, and that $a$ and $b$ are primitive elements of $H/I$. It follows that $H/I \cong U(\mathfrak{f}_2)$ as Hopf algebras, where $\mathfrak{f}_2$ is the free Lie algebra on two generators. The result follows.
\end{proof}

In the proof of Proposition \ref{prop:involutive Hopf algebra is not commutative}, an inner-faithful coaction of $U(\mathfrak{f}_2)$ is constructed. We mention this explicitly in the following corollary.

\begin{cor}\label{cor:coaction of free Lie algebra}
    The Hopf algebra $U(\mathfrak{f}_2)$ coacts on $\kk[x,y,z]$ inner-faithfully via
    \begin{align*}
        \rho \colon \kk[x,y,z] &\to \kk[x,y,z] \otimes U(\mathfrak{f}_2) \\
        x &\mapsto x \otimes 1 \\
        y &\mapsto y \otimes 1 \\
        z &\mapsto x \otimes a + y \otimes b + z \otimes 1,
    \end{align*}
    where $\mathfrak{f}_2$ is the free Lie algebra on two generators $a$ and $b$. \qed
\end{cor}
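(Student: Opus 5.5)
The plan is to verify directly that $\rho$ is a comodule algebra structure, and then check inner-faithfulness by reading off coefficients. As a cross-check, $\rho$ should also be the composite of the universal coaction of $H_1(3)$ on $\kk[x,y,z]=A_1(3)$ with the Hopf surjection $H_1(3)\to H_1(3)/I\cong U(\mathfrak{f}_2)$ from the proof of Proposition \ref{prop:involutive Hopf algebra is not commutative}. With the convention $x_i\mapsto\sum_j x_j\otimes g_{ji}$, $x_1=x$, $x_2=y$, $x_3=z$, and the images $g_{ii}=1$, $g_{13}=a$, $g_{23}=b$, all other $g_{ij}=0$, one recovers exactly the formulas in the statement. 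The direct check avoids relying on the details of that quotient.

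\textbf{Algebra map.} Since $\kk[x,y,z]$ is the free commutative algebra, $\rho$ extends to an algebra homomorphism as soon as the three images pairwise commute in $\kk[x,y,z]\otimes U(\mathfrak{f}_2)$.
\begin{itemize}
\item $x\otimes 1$ and $y\otimes 1$ commute with each other, since $x$ and $y$ commute.
\item $x\otimes1$ and $y\otimes 1$ commute with $x\otimes a+y\otimes b+z\otimes1$, because $x,y,z$ commute and $1$ is central in $U(\mathfrak{f}_2)$.
\end{itemize}
No relation between $a$ and $b$ is needed at this step, which is why the free Lie algebra suffices.

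\textbf{Comodule axioms.} Since $\rho$ and the relevant structure maps are algebra homomorphisms, it suffices to check coassociativity and counitality on the generators.
\begin{itemize}
\item On $x$ and $y$ both axioms are trivial.
\item For $z$, using that $a$ and $b$ are primitive, both $(\rho\otimes\id)\rho(z)$ and $(\id\otimes\Delta)\rho(z)$ equal
\[
x\otimes a\otimes 1+x\otimes 1\otimes a+y\otimes b\otimes1+y\otimes1\otimes b+z\otimes1\otimes1 .
\]
\item Counitality on $z$ follows from $\varepsilon(a)=\varepsilon(b)=0$.
\end{itemize}
The coaction also preserves the $\NN$-grading, since each generator is sent into the degree-one part tensored with $U(\mathfrak{f}_2)$.

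\textbf{Inner-faithfulness.} Suppose $H'\subseteq U(\mathfrak{f}_2)$ is a Hopf subalgebra with $\rho(\kk[x,y,z])\subseteq \kk[x,y,z]\otimes H'$. Because $x,y,z$ are linearly independent, the coefficients of $x$ and $y$ in $\rho(z)$ must lie in $H'$, so $a,b\in H'$. Since $a$ and $b$ generate $U(\mathfrak{f}_2)$ as an algebra, this forces $H'=U(\mathfrak{f}_2)$.

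There is no real obstacle here. The only point needing care is the coefficient-extraction argument, which relies on the tensor product over a field: choose a basis of $\kk[x,y,z]$ containing $x,y,z$ to make it precise.
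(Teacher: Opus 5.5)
Your argument is correct, but it runs in the opposite direction from the paper's.

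The paper gives no separate proof. The coaction is the universal coaction of $H_1(3)$ on $A_1(3)=\kk[x,y,z]$, pushed forward along $H_1(3)\to H_1(3)/I\cong U(\mathfrak{f}_2)$ from the proof of Proposition \ref{prop:involutive Hopf algebra is not commutative}. The comodule-algebra axioms are therefore inherited, and inner-faithfulness holds because the coefficients $\overline{h_{13}}=a$ and $\overline{h_{23}}=b$ generate the quotient. That route rests on two omitted checks: that $I$ is a Hopf ideal, and that the relations \eqref{eq:involutive relations 3 variables}, including \eqref{eq:involutive long relation}, impose nothing on $a,b$ once $\overline{D}=1$.

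Your direct verification bypasses all of this and is self-contained. It consists of pairwise commutation of $\rho(x),\rho(y),\rho(z)$, coassociativity on $z$ from the primitivity of $a,b$, and coefficient extraction.

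It can also be run backwards. Since $U(\mathfrak{f}_2)$ is cocommutative, hence involutive, the universal property of $H_1(3)$ gives a Hopf map $H_1(3)\to U(\mathfrak{f}_2)$. Its image contains the coefficients $a$ and $b$ of $\rho(z)$, so it is surjective. This already yields the conclusions of Proposition \ref{prop:involutive Hopf algebra is not commutative}.

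What the paper's route adds is sharper information: $U(\mathfrak{f}_2)$ is precisely the quotient of $H_1(3)$ by the explicit Hopf ideal $I$, not just some quotient of $H_1(3)$.
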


In fact, the result from Proposition \ref{prop:involutive Hopf algebra is not commutative} applies to $H_1(n)$ for arbitrary $n \geq 3$.

\begin{cor}\label{cor:involutive Hopf algebra is not commutative}
    If $n \geq 3$, then $H_1(n)$ is noncommutative, non-noetherian, and has infinite GK-dimension.
\end{cor}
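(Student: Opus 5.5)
The plan is to reduce to the case $n = 3$ by constructing an explicit Hopf quotient of $H_1(n)$ that is isomorphic to $U(\mathfrak{f}_2)$, in the same way as in the proof of Proposition \ref{prop:involutive Hopf algebra is not commutative}. All three properties then descend from the quotient. First, $U(\mathfrak{f}_2)$ is noncommutative, so any algebra surjecting onto it is noncommutative. Second, $U(\mathfrak{f}_2)$ is the free algebra $\kk\ang{a,b}$, which is not left or right noetherian, and a quotient of a noetherian ring is noetherian. Third, $\GKdim \kk\ang{a,b} = \infty$, and GK-dimension does not increase under passing to quotients.

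Concretely, I would set
$$I \coloneqq (h_{ii} - 1 \ (1 \le i \le n),\ h_{ij} \ (i \neq j,\ (i,j) \notin \{(1,n),(2,n)\}))$$
in $H_1(n)$. This keeps only the two entries $a = h_{1n}$ and $b = h_{2n}$ in the last column above the diagonal, which is exactly the analogue of the ideal used for $n = 3$. There are two checks to make.

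\textbf{Hopf ideal.} The first check is that $I$ is a Hopf ideal. In the quotient the image of $\mathcal{H}$ is the unipotent matrix $I_n + aE_{1n} + bE_{2n}$. From the comultiplication formula
$$\Delta(h_{ij}) = \sum_k h_{ik} \otimes h_{kj}$$
the generators of $I$ map to $0$ in $\overline{H} \otimes \overline{H}$. This is because any product $g_{ik}g_{kj}$ of two off-diagonal surviving entries vanishes: column $n$ cannot be followed by row $n$ entries other than $g_{nn} = 1$. Under the same comultiplication formula, $\Delta(a) = a \otimes 1 + 1 \otimes a$, and likewise for $b$, so both are primitive. The counit clearly kills $I$. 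For the antipode, the cofactor formula gives $S(h_{ij}) = \pm \det(\mathcal{H}_{\hat\jmath\hat\imath}) D^{-1}$. I need to check that in the quotient the quantum determinant is $\overline{D} = 1$, which holds since the matrix is upper triangular unipotent. I also need that the cofactors of the zero entries vanish, which holds for the same reason. Hence $S(I) \subseteq I$.

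\textbf{The quotient is free.} The second check, which I expect to be the main obstacle, is that $H_1(n)/I$ imposes no relations on $a$ and $b$. I would substitute the unipotent matrix into each relation of Proposition \ref{prop:involutive Hopf algebra} with $q = 1$.

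The $q$-commutation relations only involve pairs of entries in a common row. Every row contains at most one of $a, b$, together with only scalars, so these relations become trivial.

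The $2\times 2$ minor relations read $ad - bc = da - cb$. The only minor containing both $a$ and $b$ is the one on rows $1,2$ and columns $j,n$, and it has the form $\begin{pmatrix} g_{1j} & a \\ g_{2j} & b\end{pmatrix}$. Its first column is a column $j < n$ restricted to rows $1,2$, so its entries are scalars with at most one nonzero. The relation therefore reduces to $\lambda b = \lambda b$ or $\lambda a = \lambda a$, which is trivial. Every other minor has at most one noncommuting entry, so its relation holds trivially as well.

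The relation $h_{ij}D = Dh_{ij}$ is trivial because $\overline{D} = 1$. For the long relation \eqref{eq:general involutive long relation}, I would observe that the adjugate of a unipotent matrix whose nonzero off-diagonal entries all lie in column $n$ is computed by the usual commutative formula. This is because every product occurring in it contains at most one of $a, b$. Hence the long relation holds identically.

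Therefore $H_1(n)/I \cong \kk\ang{a,b}$ with $a, b$ primitive, that is, $H_1(n)/I \cong U(\mathfrak{f}_2)$. Combined with the descent argument in the first paragraph, this proves the corollary.
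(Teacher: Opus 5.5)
Your proof is correct. Its core idea matches the paper's: exhibit a surjection from $H_1(n)$ onto $U(\mathfrak{f}_2) = \kk\ang{a,b}$ and pull back noncommutativity, non-noetherianity and infinite GK-dimension. The organization differs. The paper first maps $H_1(n) \surj H_1(3)$ by the block-diagonal substitution $\mathcal{H} \mapsto \operatorname{diag}(\mathcal{H}_3, I_{n-3})$. It then invokes Proposition~\ref{prop:involutive Hopf algebra is not commutative}, whose quotient keeps $h_{13}$ and $h_{23}$. You skip $H_1(3)$ and kill everything except $h_{1n}, h_{2n}$ directly. The paper's route is shorter because it recycles the $n=3$ case. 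Yours is self-contained, and it actually carries out the relation-by-relation check that the paper calls ``straightforward'' even for $n=3$.

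Three small points.

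First, the Hopf-ideal verification is not needed. All three properties descend along any surjective algebra map, so it suffices that the substitution $\mathcal{H} \mapsto G \coloneqq I_n + aE_{1n} + bE_{2n}$, $D^{-1} \mapsto 1$ respects the relations.

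Second, in that Hopf-ideal paragraph, ``for the same reason'' does not justify why the cofactors of the zero entries vanish. Upper unitriangularity alone does not give this: for $I_3 + xE_{12} + yE_{23}$ the cofactor of the zero $(1,3)$ entry is $xy$. It is the column-$n$ shape that does the work, as you note later.

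Third, for the long relation, knowing that $\operatorname{adj}(G) = I_n - aE_{1n} - bE_{2n}$ comes from the commutative formula is not by itself enough in $\kk\ang{a,b}$. A priori, $\operatorname{adj}(G)\,G$ could contain degree-two words that vanish only after abelianizing. Add the line that $N \coloneqq aE_{1n} + bE_{2n}$ satisfies $N^2 = 0$ in $M_n(\kk\ang{a,b})$, because all nonzero entries of $N$ lie in column $n$ while row $n$ of $N$ is zero. Then $\operatorname{adj}(G)\,G = (I_n - N)(I_n + N) = I_n$ holds noncommutatively.
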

\begin{proof}
    This is easily seen by noting that the map
    \begin{align*}
        \pi \colon H_1(n) &\to H_1(3) \\
        h_{ij} &\mapsto \begin{cases}
            h_{ij}, &\text{if } i,j \leq 3, \\
            1, &\text{if } i = j \geq 4, \\
            0, &\text{otherwise}.
        \end{cases}
    \end{align*}
    is a surjective homomorphism and using Proposition \ref{prop:involutive Hopf algebra is not commutative}.
\end{proof}

\begin{rem}
    Corollary \ref{cor:involutive Hopf algebra is not commutative} is reminiscent of \cite[Proposition 3.4]{Bichon}, which studies Hopf coactions on the algebra $\kk^n$. In particular, \cite[Proposition 3.4]{Bichon} proves that the universal Hopf algebra which coacts on $\kk^n$ is finite-dimensional and commutative if $n \leq 3$, but is infinite-dimensional and noncommutative if $n \geq 4$. It is interesting that a similar phenomenon occurs for involutive Hopf coactions on polynomial rings.
\end{rem}

Although $H_1(n)$ is noncommutative and involutive for $n \geq 3$, it is not clear whether $H_1(n)$ or any its of its noncommutative Hopf quotients are cosemisimple. We state this as a question.

\begin{qstn}
    Let $n \geq 3$. Is $H_1(n)$ cosemisimple? If not, are there any noncommutative cosemisimple Hopf quotients of $H_1(n)$?
\end{qstn}


\begin{thebibliography}{EGMW17}

\bibitem[AAH21]{AndruskiewitschAngionoHeckenberger}
Nicol\'as Andruskiewitsch, Iv\'an Angiono, and Istv\'an Heckenberger, \emph{On finite {GK}-dimensional {N}ichols algebras over abelian groups}, Mem. Amer. Math. Soc. \textbf{271} (2021), no.~1329, ix+125.

\bibitem[Bic08]{Bichon}
Julien Bichon, \emph{Algebraic quantum permutation groups}, Asian-Eur. J. Math. \textbf{1} (2008), no.~1, 1--13.

\bibitem[BR]{BuzagloRogalski}
Lucas Buzaglo and Daniel Rogalski, \emph{Group gradings of skew polynomial rings}, in progress.

\bibitem[CFRS14]{ChervovFalquiRubtsovSilantyev}
A.~Chervov, G.~Falqui, V.~Rubtsov, and A.~Silantyev, \emph{Algebraic properties of {M}anin matrices {II}: {$q$}-analogues and integrable systems}, Adv. in Appl. Math. \textbf{60} (2014), 25--89.

\bibitem[CKWZ16]{ChanKirkmanWaltonZhang}
Kenneth Chan, Ellen Kirkman, Chelsea Walton, and James~J. Zhang, \emph{Quantum binary polyhedral groups and their actions on quantum planes}, J. Reine Angew. Math. \textbf{719} (2016), 211--252.

\bibitem[Cra24]{Crawford}
Simon Crawford, \emph{Group coactions on two-dimensional {A}rtin-{S}chelter regular algebras}, Proc. Amer. Math. Soc. \textbf{152} (2024), no.~11, 4551--4567.

\bibitem[CRS10]{CibilsRedondoSolotar}
Claude Cibils, Mar\'ia~Julia Redondo, and Andrea Solotar, \emph{Connected gradings and the fundamental group}, Algebra Number Theory \textbf{4} (2010), no.~5, 625--648.

\bibitem[CW93]{CohenWestreich}
Miriam Cohen and Sara Westreich, \emph{Central invariants of {$H$}-module algebras}, Comm. Algebra \textbf{21} (1993), no.~8, 2859--2883.

\bibitem[CWW19]{ChirvasituWaltonWang}
Alexandru Chirvasitu, Chelsea Walton, and Xingting Wang, \emph{On quantum groups associated to a pair of preregular forms}, J. Noncommut. Geom. \textbf{13} (2019), no.~1, 115--159.

\bibitem[CWWZ14]{ChanWaltonWangZhang}
Kenneth Chan, Chelsea Walton, Yanhua Wang, and James Zhang, \emph{Hopf actions on filtered regular algebras}, J. Algebra \textbf{397} (2014), 68--90.

\bibitem[CWZ14]{ChanWaltonZhang}
Kenneth Chan, Chelsea Walton, and James Zhang, \emph{Hopf actions and {N}akayama automorphisms}, J. Algebra \textbf{409} (2014), 26--53.

\bibitem[D{\u a}s08]{Dascalescu}
Sorin D{\u a}sc{\u a}lescu, \emph{Group gradings on diagonal algebras}, Arch. Math. (Basel) \textbf{91} (2008), no.~3, 212--217.

\bibitem[EGMW17]{EtingofGoswamiMandalWalton}
Pavel Etingof, Debashish Goswami, Arnab Mandal, and Chelsea Walton, \emph{Hopf coactions on commutative algebras generated by a quadratically independent comodule}, Comm. Algebra \textbf{45} (2017), no.~8, 3410--3412.

\bibitem[EGNO15]{EGNO}
Pavel Etingof, Shlomo Gelaki, Dmitri Nikshych, and Victor Ostrik, \emph{Tensor categories}, Mathematical Surveys and Monographs, vol. 205, American Mathematical Society, Providence, RI, 2015.

\bibitem[EW14]{EtingofWalton}
Pavel Etingof and Chelsea Walton, \emph{Semisimple {H}opf actions on commutative domains}, Adv. Math. \textbf{251} (2014), 47--61.

\bibitem[GKMV24]{GoetzKirkmanMooreVashaw}
Peter Goetz, Ellen~E. Kirkman, W.~Frank Moore, and Kent~B. Vashaw, \emph{Some {A}rtin--{S}chelter regular algebras from dual reflection groups and their geometry}, 2024, arXiv:\href{https://arxiv.org/abs/2410.08959}{\texttt{2410.08959}}.

\bibitem[GZ10]{GoodearlZhang}
K.~R. Goodearl and J.~J. Zhang, \emph{Noetherian {H}opf algebra domains of {G}elfand-{K}irillov dimension two}, J. Algebra \textbf{324} (2010), no.~11, 3131--3168.

\bibitem[HNU{\etalchar{+}}24]{Hexagon}
Hongdi Huang, Van~C. Nguyen, Charlotte Ure, Kent~B. Vashaw, Padmini Veerapen, and Xingting Wang, \emph{Twisting {M}anin's universal quantum groups and comodule algebras}, Adv. Math. \textbf{445} (2024), Paper No. 109651, 55 pp.

\bibitem[KKZ10]{KirkmanKuzmanovichZhang}
E.~Kirkman, J.~Kuzmanovich, and J.~J. Zhang, \emph{Shephard-{T}odd-{C}hevalley theorem for skew polynomial rings}, Algebr. Represent. Theory \textbf{13} (2010), no.~2, 127--158.

\bibitem[KKZ17]{KirkmanKuzmanovichZhang2}
\bysame, \emph{Nakayama automorphism and rigidity of dual reflection group coactions}, J. Algebra \textbf{487} (2017), 60--92.

\bibitem[KO24]{KramerOni}
Ulrich Kr\"ahmer and Blessing~Bisola Oni, \emph{Hopf algebra (co)actions on rational functions}, Algebr. Represent. Theory \textbf{27} (2024), no.~6, 2187--2216.

\bibitem[Man88]{ManinBook}
Yu.~I. Manin, \emph{Quantum groups and noncommutative geometry}, Universit\'e de Montr\'eal, Centre de Recherches Math\'ematiques, Montreal, QC, 1988.

\bibitem[Mon93]{Montgomery}
Susan Montgomery, \emph{Hopf algebras and their actions on rings}, CBMS Regional Conference Series in Mathematics, vol.~82, Conference Board of the Mathematical Sciences, Washington, DC; by the American Mathematical Society, Providence, RI, 1993.

\bibitem[Rad12]{Radford}
David~E. Radford, \emph{Hopf algebras}, Series on Knots and Everything, vol.~49, World Scientific Publishing Co. Pte. Ltd., Hackensack, NJ, 2012.

\bibitem[RVdB17a]{RaedscheldersVanDenBergh}
Theo Raedschelders and Michel Van~den Bergh, \emph{The {M}anin {H}opf algebra of a {K}oszul {A}rtin-{S}chelter regular algebra is quasi-hereditary}, Adv. Math. \textbf{305} (2017), 601--660.

\bibitem[RVdB17b]{RaedscheldersVanDenBergh2}
\bysame, \emph{The representation theory of non-commutative {$\mathcal O({\rm GL}_2)$}}, J. Noncommut. Geom. \textbf{11} (2017), no.~3, 845--885.

\bibitem[Tak90]{Takeuchi}
Mitsuhiro Takeuchi, \emph{A two-parameter quantization of {${\rm GL}(n)$} (summary)}, Proc. Japan Acad. Ser. A Math. Sci. \textbf{66} (1990), no.~5, 112--114.

\bibitem[WW16]{WaltonWang}
Chelsea Walton and Xingting Wang, \emph{On quantum groups associated to non-{N}oetherian regular algebras of dimension 2}, Math. Z. \textbf{284} (2016), no.~1-2, 543--574.

\end{thebibliography}
\end{document}